\numberwithin{definition}{section}
\numberwithin{corollary}{section}
\numberwithin{lemma}{section}
\numberwithin{theorem}{section}
\numberwithin{equation}{section}
\newtheorem{assumption}[theorem]{Assumption}
\definecolor{tumb}{RGB}{0,101,189}
\newcommand{\R}{\mathbb{R}}
\newcommand{\Rn}{\R^n}
\newcommand{\Sn}{\mathbb{S}^{n^2-1}}
\newcommand{\Smn}{\mathbb{S}^{nm-1}}
\newcommand{\Snn}{\mathbb{S}^{n-1}}
\newcommand{\Smm}{\mathbb{S}^{m-1}}
\newcommand{\cD}{\mathcal{D}}
\newcommand{\cI}{\mathcal{I}}  
\newcommand{\cL}{\mathcal{L}} 
\newcommand{\cM}{\mathcal{M}}
\newcommand{\cN}{\mathcal{N}}  
\newcommand{\cS}{{\mathcal{S}}}
\newcommand{\cMC}{\mathcal{MC}}
\newcommand{\cSD}{\mathcal{SD}}
\newcommand{\dist}{\mathrm{dist}} 
\newcommand{\rank}{\mathrm{rank}}
\newcommand{\st}{\mathrm{s.\,t.}}
\DeclareMathOperator*{\argmax}{arg\,max}
\newcommand{\revise}[1]{{\color{black}#1}}
\newcommand{\revisee}[1]{{\color{black}#1}}
\colorlet{color1}{blue}
\colorlet{color2}{red!50!black}
\begin{document}

\title{A New Complexity Metric for Nonconvex Rank-one Generalized Matrix Completion
\thanks{We note that a similar complexity metric based on a special case of instances in Section \ref{sec:aistats} was proposed in our conference paper \cite{yalcin2021factorization}. However, the complexity metric in this work has a different form and is proved to work on a broader set of applications. In addition, we prove several theoretical properties of the metric in this work, which are not included in \cite{yalcin2021factorization}.\\
Corresponding Author: Javad Lavaei\\
Affiliation: Department of Industrial Engineering and Operations Research, University of California, Berkeley\\
\email{lavaei@berkeley.edu}
}
}

\titlerunning{New Complexity Metric for Generalized Matrix Completion}        

\author{    Haixiang Zhang  \and 
            Baturalp Yalcin \and 
            Javad Lavaei    \and 
            Somayeh Sojoudi
}


\institute{Haixiang Zhang  \at
              Department of Mathematics, University of California, Berkeley, CA \\
              \email{haixiang\_zhang@berkeley.edu}            
        \and
           Baturalp Yalcin \at 
           Department of Industrial Engineering and Operations Research, University of California, Berkeley, CA \\
           \email{baturalp\_yalcin@berkeley.edu}  
         \and 
           Javad Lavaei \at 
           Department of Industrial Engineering and Operations Research, University of California, Berkeley, CA \\
           \email{lavaei@berkeley.edu} 
        \and
           Somayeh Sojoudi \at 
           Department of Electrical Engineering and Computer Science, University of California, Berkeley, CA \\
           \email{sojoudi@berkeley.edu}
}

\date{Received: date / Accepted: date}

\maketitle

\begin{abstract}

In this work, we develop a new complexity metric for an important class of low-rank matrix optimization problems in both symmetric and asymmetric cases, where the metric aims to quantify the complexity of the nonconvex optimization landscape of each problem and the success of local search methods in solving the problem. The existing literature has focused on two \revisee{recovery guarantees}. The RIP constant is commonly used to characterize the complexity of matrix sensing problems. On the other hand, the incoherence and the sampling rate are used when analyzing matrix completion problems. The proposed complexity metric has the potential to \revisee{generalize} these two notions and also applies to a much larger class of problems. To mathematically study the properties of this metric, we focus on the rank-$1$ generalized matrix completion problem and illustrate the usefulness of the new complexity metric on three types of instances, namely, instances with the RIP condition, instances obeying the Bernoulli sampling model, and a synthetic example. We show that the complexity metric exhibits a consistent behavior in the three cases, even when other existing conditions fail to provide theoretical guarantees. These observations provide a strong implication that the new complexity metric has the potential to \revisee{generalize} various conditions of optimization complexity proposed for different applications. Furthermore, we establish theoretical results to provide sufficient and necessary conditions for the existence of spurious solutions in terms of the proposed complexity metric. This contrasts with the RIP and incoherence conditions that fail to provide any necessary condition.

\keywords{Matrix completion \and Complexity metric \and Nonconvex optimization \and Global convergence}
\subclass{05C90 \and 65F55 \and 90C26}
\end{abstract}

\section{Introduction}
\label{sec:intro}

A variety of modern signal processing and machine learning applications require solving optimization problems that involve a low-rank matrix variable. More specifically, given measurements to some unknown ground truth matrix $M^*\in\R^{n\times n}$ of rank $r \ll n$, the \textit{low-rank matrix optimization} problem can be formulated as
\begin{align}\label{eqn:obj}
    \min_{M\in\R^{n\times n}} f(M;M^*)\quad \st\quad M\succeq 0,\quad \rank(M) \leq r,
\end{align}
where $f(\cdot;M^*)$ is the loss function that penalizes the mismatch between the input matrix and $M^*$. The goal is to recover the matrix $M^*$ via \eqref{eqn:obj}. Examples of this problem include matrix sensing \cite{recht2010guaranteed,zhang2019sharp,zhang2021general}, matrix completion \cite{candes2009exact,candes2010power,ge2017no}, phase retrieval \cite{candes2015phase,sun2018geometric,chen2019gradient} and robust principle component analysis \cite{candes2011robust,fattahi2020exact}; see the review papers \cite{chen2020nonconvex,chi2019nonconvex} for more applications. \revise{The asymmetric version of problem \eqref{eqn:obj} eliminates the condition $M\succeq0$ and allows $M$ to be a non-square matrix.} To deal with the nonconvex rank constraint, there have been several works on the convex relaxations of problem \eqref{eqn:obj}. More concretely, one may replace the rank constraint with a nuclear norm regularizer \cite{candes2009exact,recht2010guaranteed,candes2010power,candes2011robust,levin2022effect}. The convex relaxation approach is proven to achieve the optimal sampling complexity for various statistical models. \revise{In the special case when $f(\cdot;M^*)$ is a linear function, the sketching method \cite{yurtsever2021scalable} can be applied to accelerate the computation.} However, \revise{for most applications of problem \eqref{eqn:obj},} the convex relaxation approach needs to update a matrix variable in each iteration, which relies on the Singular Value Decomposition (SVD) of the matrix variable. This will lead to an $O(n^3)$ computational complexity in each iteration and an $O(n^2)$ space complexity, which are prohibitively high for large-scale problems; see the numerical comparison in \cite{zheng2015convergent}. 

To improve the computational efficiency, an alternative approach was proposed by Burer and Monteiro \cite{burer2003nonlinear}, which is named as the Burer-Monteiro factorization approach. The factorization approach is based on the fact that the mapping $U \mapsto UU^T$ is surjective onto the manifold of positive semi-definite matrices of rank at most $r$, where $U\in\R^{n\times r}$. Therefore, problem \eqref{eqn:obj} is equivalent to
\begin{align}
\label{eqn:obj-bm}
    \min_{U\in\R^{n\times r}} f(UU^T;M^*),
\end{align}
which is an unconstrained nonconvex problem. A major difficulty about nonconvex optimization problems is the existence of spurious local minima\footnote{A point $U^0$ is called a spurious local minimum if it is a local minimum of problem \eqref{eqn:obj-bm} and $U^0(U^0)^T \neq M^*$.}. In general, common local search methods are only able to guarantee a point approximately satisfying the first-order and the second-order necessary optimality conditions. Therefore, local search methods with a random initialization will likely be stuck at spurious local minima and unable to converge to the global solution.
However, despite the aforementioned issue of nonconvex optimization problems, simple iterative algorithms such as gradient descent and alternating minimization have achieved empirical success in a wide range of applications. In recent years, substantial progress has been made on the theoretical understandings of these algorithms, which generally focused on proving the \revise{absence} of spurious local minima. For example, the alternating minimization algorithm was first studied in \cite{jain2013low,netrapalli2013phase,netrapalli2014non}. The (stochastic) gradient descent algorithm, which is in general easier to implement than the alternating minimization algorithm, was analyzed in \cite{candes2015phase,tu2016low,yi2016fast,chen2019gradient,chen2020nonconvex}. Besides algorithmic analysis, a critical geometric property named the strict-saddle property \cite{sun2018geometric} was established in \cite{ge2017no,sun2018geometric,zhu2018global,zhang2021general}, which can guarantee the polynomial-time global convergence of various saddle-escaping algorithms \cite{cartis2011adaptive,jin2018accelerated,allen2018neon2}. 

\revisee{\textit{Complexity metrics} are useful to characterize the behavior of local search methods for problem \eqref{eqn:obj-bm}.} A small complexity metric implies that the landscape of problem \eqref{eqn:obj-bm} is benign and thus, local search methods with random initialization converge to global solutions with high probability. Otherwise, if the complexity metric takes a large value, problem \eqref{eqn:obj-bm} may have spurious local minima, which will imply the failure of most local search methods. \revisee{However, the existing so-called ``complexity metrics'' for problem \eqref{eqn:obj-bm} are only able to guarantee a benign landscape when the complexity is small and fail to prove the existence of spurious local minima when the complexity is large. To differentiate with true complexity metrics, we use the term \textit{recovery guarantees} to reflect such weaker properties. In addition, the existing recovery guarantees were designed separately for different applications.} As a result, several different \revisee{bounds} were proposed to characterize the optimization complexity of problem \eqref{eqn:obj-bm}. For example, in the context of matrix sensing problems, the following Restrict Isometry Property (RIP) is usually assumed:
\begin{definition}[\cite{recht2010guaranteed,zhu2018global}]
Given natural numbers $r$ and $s$, the function $f(\cdot;M^*)$ is said to satisfy the \textbf{Restricted Isometry Property} (RIP) of rank $(2r,2s)$ for a constant $\delta\in[0,1)$, denoted as $\delta$-RIP$_{2r,2s}$, if 
\begin{align}\label{eqn:rip} 
(1-\delta)\| K\|_F^2 \leq \left[\nabla^2 f(M;M^*)\right] (K,K) \leq (1+\delta)\| K\|_F^2 
\end{align}
holds for all matrices $M,K\in\R^{n\times n}$ such that $\rank(M)\leq 2r,\rank(K) \leq 2s$, where $\left[\nabla^2 f(M;M^*)\right](\cdot,\cdot)$ is the curvature of the Hessian at point $M$.
\end{definition}
One important class of matrix sensing problems is the \textit{linear matrix sensing problem}, which is induced by linear measurements of the ground truth matrix $M^*$. If the $\ell_2$-loss is used, the linear matrix sensing problem can be formulated as
\begin{align}
\label{eqn:ms}
    \min_{U\in\R^{n\times r}} \frac{1}{m} \sum_{i=1}^m \langle A_i, UU^T - M^* \rangle^2,
\end{align}
where $m\in\mathbb{N}$ is the number of measurements modeled by the known measurement matrices $A_i \in \R^{n\times n}$ for all $i\in[m]$. In the special case when each matrix $A_i$ is an independently identically distributed Gaussian random matrix, the $\delta$-RIP$_{2r,2s}$ condition holds with high probability if $m=O(nr\delta^{-2})$ \cite{candes2011tight}. The RIP constant $\delta$ plays a critical role in bounding the optimization complexity of problem \eqref{eqn:obj-bm}. In \cite{bi2021local}, the authors showed that the strict-saddle property holds for problem \eqref{eqn:obj-bm} if the $\delta$-RIP$_{2r,2r}$ condition holds with $\delta < 1/2$ and the ground truth matrix satisfies $\rank(M^*)=r$. On the other hand, counterexamples have been constructed in \cite{zhang2019sharp,zhang2021general} to illustrate that the strict-saddle property can fail under the $\delta$-RIP$_{2r,2r}$ condition with $\delta \geq 1/2$. 

Despite these strong theoretical results under the RIP assumption, there exists a large number of applications that do not satisfy the RIP condition. One of those applications without the RIP condition is the \textit{matrix completion problem}. Given a set of indices $\Omega\subset[n]\times [n]$, the matrix completion problem aims at recovering the low-rank matrix $M^*$ from the available entries $M_{ij}^*$ for $(i,j)\in\Omega$. With the least squares loss function, the matrix completion problem can be formulated as
\begin{align}
\label{eqn:mc}
    \min_{U\in\R^{n\times r}} {\sum}_{(i,j)\in\Omega} \left[ (UU^T)_{ij} - M^*_{ij} \right]^2.
\end{align}
The matrix completion problem \eqref{eqn:mc} is a special case of the matrix sensing problem \eqref{eqn:ms}, where each measurement matrix $A_i$ has exactly one nonzero entry. However, the RIP$_{2r,2r}$ condition does not hold for problem \eqref{eqn:mc} unless all entries of $M^*$ are observed, namely, when $\Omega=[n]\times[n]$. As an alternative to the RIP condition, the optimization complexity of problem \eqref{eqn:mc} is closely related to the incoherence of $M^*$.
\begin{definition}[\cite{candes2009exact}]
Given a constant $\mu\in[1,n]$, the ground truth matrix $M^*$ is said to be \textbf{$\mu$-incoherent} if
\begin{align}\label{eqn:incoh} 
\| e_i ^T V^* \|_F \leq \sqrt{ {\mu r}/{n} },\quad \forall i\in[n], 
\end{align}
where $V^*\Lambda^* (V^*)^T$ is the truncated SVD of $M^*$ and $e_i$ is the $i$-th standard basis of $\Rn$.
\end{definition}
Intuitively, if the ground truth $M^*$ is highly sparse, it is likely that only zero entries of $M^*$ are observed and there is no chance to learn the other entries of the matrix $M^*$. A relatively small incoherence of $M^*$ avoids this extreme case. The most popular statistical model of the measurements for problem \eqref{eqn:mc} is the Bernoulli model, where each entry of $M^*$ is observed independently with probability $p\in(0,1]$. Assuming the Bernoulli model, the incoherence of $M^*$ and the sampling probability $p$ can jointly characterize the complexity of the matrix completion problem. For example, the scaled gradient descent algorithm with a spectral initialization \cite{tong2021accelerating} converges linearly given the condition $p\geq O(\mu r^2\kappa^2 \max(\mu\kappa^2,\log{n})/n )$, where $\kappa:=\sigma_1(M^*)/\sigma_r(M^*)$ is the condition number of $M^*$. In addition, under the assumption that $p\geq O(\mu^4 r^6\kappa^6 \log{n}/n)$, the global convergence was established in \cite{ge2017no} through the strict-saddle property of a regularized version of problem \eqref{eqn:mc}. We note that the dependence on the condition number $\kappa$ may be unnecessary as shown in \cite{hardt2014fast} and that the condition number is equal to $1$ in the rank-$1$ case. On the other hand, the information-theoretical lower bound in \cite{candes2009exact} shows that $p\geq \Theta( \mu r\log(n/\delta) / n )$ is necessary for the exact completion with probability at least $1-\delta$. Therefore, the complexity of problem \eqref{eqn:mc} is closely related to the incoherence of $M^*$ and the sampling probability $p$. In the remainder of this work, we refer to the conditions on the incoherence of $M^*$ and sampling rate $p$ as \textit{incoherence conditions} when there is no confusion in the context. 

To be more rigorous, the RIP condition and the incoherence condition may have a subtle difference in their nature. As a counterpart of the incoherence condition in other low-rank matrix optimization problems, one should consider conditions in terms of the sampling complexity. On the other hand, the RIP condition is a deterministic condition on the loss function and is not related to the underlying random model. However, there is a wide range of problems that satisfy the RIP condition when the sample complexity is sufficiently large. By considering the properties of the RIP condition, we are able to analyze a large number of low-rank matrix optimization problems simultaneously. Therefore, we use the RIP condition instead of conditions based on the sample complexity as a notion of the computational complexity for those problems.

The main issue with the notions of RIP and incoherence is that they require stringent conditions to guarantee the success of local search methods for recovering $M^*$. Whenever these conditions are violated, local search methods may still work successfully, which questions whether these customized notions designed for special cases of the problem truly capture the complexity of the problem in general. Hence, it is natural to ask: 
\begin{quote}
    \textit{Does there exist a complexity metric with two properties: (i) it \revisee{is consistent with existing recovery guarantees} designed for different applications, e.g., the RIP constant $\delta$ and the incoherence $\mu$ combined with the sampling rate $p$, (ii) even when the customized conditions for different applications are violated, it still quantifies the optimization complexity of the problem in the sense that the smaller the value of this metric is, the higher the success of local search methods with random initialization is in finding the ground truth $M^*$?}
\end{quote}
In this work, we provide a partial answer to the question by developing a powerful complexity metric. To analyze the usefulness of this new metric, we focus on the rank-$1$ \revise{generalized matrix completion} problem
\begin{align}
\label{eqn:gmc}
    \min_{u\in\R^{n}} {\sum}_{i,j\in[n]} C_{ij}(u_iu_j - M^*_{ij})^2,
\end{align}
where the ground truth $M^*$ is symmetric and has rank at most $1$. The weights are $C_{ij} \geq0$ for all $i,j\in[n]$. Without loss of generality, we can assume that the matrix $C:=(C_{ij})_{i,j\in[n]}$ is symmetric since otherwise one can replace $C$ with $(C+C^T)/2$, which will not change the optimization landscape. We use $\cMC(C,u^*)$ to denote the instance of problem \eqref{eqn:gmc} with the weight matrix $C$ and the ground truth $M^*=u^*(u^*)^T$, for all $C\in\R^{n\times n}$ and $u^*\in\Rn$. The matrix completion problem \eqref{eqn:mc} is a special case of the generalized matrix completion problem \eqref{eqn:gmc}, where $C_{ij} = 1$ if $(i,j)\in\Omega$ and $C_{ij}=0$ otherwise. 

Moreover, problem \eqref{eqn:gmc} is a special case of the matrix sensing problem \eqref{eqn:ms}, where each measurement only captures one entry of $M^*$. \revise{However, the problem \eqref{eqn:gmc} still contains difficult instances of the matrix sensing problem from the perspective of the RIP condition.} In Section \ref{sec:aistats}, we show that there exists an instance of problem \eqref{eqn:gmc} that satisfies the $1/2$-RIP$_{2,2}$ condition but has spurious local minima. This counterexample implies that the optimal RIP bound in \cite{zhang2019sharp,zhang2021general} still holds for problem \eqref{eqn:gmc} and thus, problem \eqref{eqn:gmc} contains difficult instances of the matrix sensing problem. \revisee{Moreover, we show in Section \ref{sec:rip} that some of the results developed for problem \eqref{eqn:gmc} can be extended to general problem \eqref{eqn:obj-bm}.}

Now, we provide an intuition into the design of our complexity metric for problem \eqref{eqn:gmc}. For a given problem instance of \eqref{eqn:gmc}, if there exist global solutions $u^1,u^2$ such that $u^1(u^1)^T \neq u^2(u^2)^T$, it is impossible to decide which global solution corresponds to $M^*$ from the observations. Intuitively, no matter what optimization algorithm we choose and how much computational effort is exerted, there is a chance that we could not recover $M^*$ by solving problem \eqref{eqn:gmc}.
This observation motivates us to define the complexity metric to be the inverse of the infimum of the distance between any given instance and the set of instances with multiple global solutions. Since problem \eqref{eqn:gmc} is parameterized by the weight matrix $C$ and the global solution $M^*$, we are able to define the metric through norms in Euclidean spaces and their Cartesian products. In addition, in the rank-$1$ case, (random) graph theory serves as an important tool in characterizing the solvability of problem \eqref{eqn:gmc}. These two advantages enable a more thorough analysis of the new complexity metric. The formal definition of the metric is provided in Section \ref{sec:metric}. In this work, we exhibit several pieces of evidence to show that the proposed metric can serve as an alternative to the RIP constant and the incoherence, which are summarized below:
\begin{enumerate}
    \item For problem instances that satisfy the $\delta$-RIP$_{2,2}$ condition, we provide an upper bound on the complexity metric. The upper bound is tightened with extra information about the incoherence of $M^*$. 
    Similarly, for matrix completion problems obeying the Bernoulli sampling model, an upper bound on the complexity metric in terms of the incoherence of $M^*$ is derived. 
    \item We then construct a class of parameterized instances of problem \eqref{eqn:gmc}, \revise{where the RIP condition fails to provide useful guarantees}. A lower bound on the complexity metric is developed to prove that instances whose complexity metric is larger than the lower bound have an exponential number of spurious local minima. \revise{In addition, an upper bound that is consistent with the aforementioned two upper bounds is established to guarantee the \revise{absence} of spurious local minima if the complexity metric is below this bound. The consistency of the upper bounds between different types of models provides strong evidence that the new complexity metric is able to provide theoretical guarantees for different applications, even when the RIP condition or the incoherence condition fails.}
    \item We prove the existence of a non-trivial upper bound on the complexity metric. For all problem instances whose complexity metric is below this upper bound, problem \eqref{eqn:gmc} has no spurious local minima and $M^*$ can be successfully found via local search methods with random initialization. In addition, under a standard bounded-away-from-zero assumption, we show that all instances with a larger complexity metric will possess spurious local minima.
    \item \revise{We extend all results for the symmetric generalized matrix completion problem to the asymmetric case, where low-rank matrices is decomposed in to $UV^T$ for some $U\in\R^{m\times r}$ and $V\in\R^{n\times r}$ in problem \eqref{eqn:obj-bm}.}
\end{enumerate}
Based on the aforementioned results, we make some key conjectures and discuss the potential extensions of the proposed metric to more general cases of the low-rank matrix optimization problem \eqref{eqn:obj}.

\subsection{Related works}

Following the famous \textit{Netflix prize}, the theoretical analysis of problem \eqref{eqn:obj} has attracted a lot of attention in recent years; see the review papers \cite{chen2018harnessing,chi2019nonconvex}. Early attempts mainly focused on the construction of convex relaxations to rank-constrained problems \cite{candes2009exact,candes2010power,recht2010guaranteed,candes2011robust}, where the RIP condition and the incoherence condition were introduced. \revise{Recently, several modified RIP conditions were proposed to better characterize the landscapes of other classes of problems, e.g., the $\ell_1$/$\ell_2$-RIP condition \cite{li2020nonconvex}, the sign-RIP condition \cite{ma2021sign}, and the approximation and sharpness condition \cite{charisopoulos2021low}.}

Although the convex relaxation is usually guaranteed to recover the exact ground truth with almost the optimal sample complexity, the associated algorithms operate in the space of matrix variables and, thus, are computationally inefficient for large-scale problems \cite{zheng2015convergent}. Similar issues are observed for algorithms based on the Singular Value Projection \cite{jain2010guaranteed} and Riemannian optimization algorithms \cite{wei2016guarantees,wei2020guarantees,hou2020fast,ahn2021riemannian,luo2021nonconvex}. The analysis of the convex relaxation approach in the noisy case is recently conducted by bridging the convex and the nonconvex approaches \cite{chen2020noisy,chen2021bridging}. 

To deal with the difficulties in solving large-scale problems, an efficient alternative model \eqref{eqn:obj-bm} using the Burer-Monteiro factorization is considered. Despite the nonconvexity, a growing number of works demonstrated that problem \eqref{eqn:obj-bm} has benign landscapes and, therefore, is amenable for efficient optimization. Theoretical analysis stems from the alternating minimization method \cite{jain2013low,netrapalli2013phase,hardt2014understanding,hardt2014fast,netrapalli2014non,agarwal2016learning}. The alternating minimization method has the advantage that the number of iterations has only logarithmic dependence on the condition number of the ground truth \cite{hardt2014fast}. More recently, this advantage is also achieved by the scaled (sub)gradient descent algorithm \cite{tong2021accelerating,tong2021low,tong2021scaling,zhang2021preconditioned}. 

The gradient descent algorithm has also gained a significant attention due to its simplicity in implementation. In general, there are two ways to apply the gradient descent algorithm. First, the gradient descent algorithm can serve as the local refinement method after a suitable initialization \cite{candes2015phase,tu2016low,sun2016guaranteed,yi2016fast,ajayi2018provably,chen2020nonconvex}. On the other hand, the gradient descent algorithm is proved to converge globally for the phase retrieval problem \cite{chen2019gradient}. More generally, under the strict-saddle property, a number of saddle-escaping algorithms \cite{jin2018accelerated,cartis2011adaptive,allen2018neon2} converge to the global solution in polynomial time; see e.g., \cite{sun2016complete,ge2016matrix,ge2017no,zhu2018global,sun2018geometric,zhang2019sharp,chen2019model,zhang2021general,bi2020global,bi2021local,ma2021sharp}. Moreover, the gradient descent algorithm is proved to have the implicit regularization phenomenon in the over-parameterization case \cite{li2018algorithmic,chou2020gradient,stoger2021small}. 


\subsection{Notation}

The number of elements in a finite set $\cS$ is denoted as $|\cS|$. We use $\overline{\cS}$ to denote the closure of a set $\cS\subset\Rn$. The index set $\{1,\dots,n\}$ is denoted as $[n]$ for all $n\in\mathbb{N}$. The entry-wise $\ell_1$-norm and the Frobenius norm of a matrix $M$ are denoted as $\|M\|_1$ and $\|M\|_F$, respectively. The unit sphere of matrices with non-negative entries denoted as $\Sn_{+,1}$ is the set of all symmetric matrices $X\in\R^{n\times n}$ such that $\|X\|_1 = 1$ and $X_{ij}\geq0$ for all $i,j\in[n]$. Similarly, the unit sphere of vectors $\Snn_1$ is the set of all vectors $x\in\Rn$ such that $\|x\|_1 = 1$.
For every symmetric matrix $M\in\R^{n\times n}$, 
the minimum eigenvalue is denoted as $\lambda_{min}(M)$. 
The $n$-by-$n$ identity matrix is denoted as $\mathcal{I}_n$. 
The notation $M\succeq 0$ means that the matrix $M$ is symmetric and positive semi-definite. The sub-matrix $R_{i:j,k:\ell}$ consists of the $i$-th to the $j$-th rows and the $k$-th to the $\ell$-th columns of matrix $R$. For every vector $x\in\R^n$, the sets of indices corresponding to zero and nonzero components of $x$ are denoted as $\mathcal{I}_0(x)$ and $\mathcal{I}_1(x)$, respectively. For every instance $\cMC(C,u^*)$, we use $\mathbb{G}(C,u^*)=[\mathbb{V}(C,u^*),\mathbb{E}(C,u^*),\mathbb{W}(C,u^*)]$ to denote the associated weighted graph, which is defined in Section \ref{sec:metric}. The unweighted undirected graph $\mathbb{G}$ with node set $\mathbb{V}$ and edge set $\mathbb{E}$ is denoted as $\mathbb{G}=(\mathbb{V},\mathbb{E})$. The objective function of an instance $\cMC(C,u^*)$ is shown as $g(u; C,u^*) := \sum_{i,j\in[n]} C_{ij}(u_iu_j - u^*_{i}u^*_j)^2$. We use $[\nabla^2 g(M;C,u^*)](K,L) := \sum_{i,j,k,\ell} [\nabla^2 g(M;C,u^*)]_{i,j,k,\ell} K_{ij}L_{k,\ell}$ to denote the action of the Hessian $\nabla^2 g(M;C,u^*)$ on any two matrices $K$ and $L$. The notations $a_n = O(b_n)$ and $a_n = \Theta(b_n)$ mean that there exist constants $c_1,c_2 > 0$ such that $a_n\leq c_2 b_n$ and $c_1b_n\leq a_n\leq c_2b_n$ hold for all $n\in\mathbb{Z}$, respectively.

\revise{\subsection{Organization}}

In the remainder of this paper, we first define the proposed complexity metric and derive basic properties of the metric in Section \ref{sec:metric}. In Section \ref{sec:exm}, we analyze this metric under existing conditions, including the RIP condition and the incoherence condition. Section \ref{sec:theory} is devoted to the theoretical guarantees provided by the new complexity metric on the general instances of problem \eqref{eqn:gmc}. The results for the \revisee{rank-$1$} asymmetric generalized matrix completion problem are provided in Appendix \ref{sec:asym}. Finally, we conclude the paper in Section \ref{sec:cls}. Some of the proofs are provided in the appendix.

\section{New complexity metric and basic properties}
\label{sec:metric}

\revise{In this section, we first provide the formal definition of the new complexity and investigate the properties of the proposed metric. More specifically, we show that we are able to utilize the graph theory to estimate the complexity metric and calculate the minimum possible value of the proposed complexity metric in closed form.
} 
Before proceeding to the definitions, we note that the problem \eqref{eqn:gmc} is ``scale-free'' in the sense that the instance $\cMC(\eta_1 C, \eta_2 u^*)$ has the same landscape as $\cMC(C,u^*)$ up to a scaling, where $C\in\R^{n\times n}$, $u^*\in\Rn$ and $\eta_1,\eta_2 > 0$ are constants. Therefore, we may normalize the parameters $C$ and $u^*$ without loss of generality, as follows: 
\begin{assumption}\label{asp:norm}
Assume that $C\in\Sn_{+,1}$ and $u^*\in\Snn_1$, i.e., $\|C\|_1 = \|u^*\|_1 = 1$.
\end{assumption}
The above assumption excludes the degenerate cases when $C = 0$ or $M^*=0$. If $C = 0$, the objective function is always $0$ and it is impossible to recover the ground truth. For the case when $M^* = 0$, we can prove that either $u=0$ is the only stationary point or the instance $\cMC(C,0)$ has multiple different global solutions. In the first situation, the results in \cite{lee2016gradient} imply that randomly initialized gradient descent algorithm will converge to $0$ with probability $1$. In the second situation, the instance is information-theoretically unsolvable. We provide a more detailed analysis in the appendix and assume that Assumption \ref{asp:norm} holds in the remainder of the paper.

The definition of the complexity metric is closely related to the set of instances with multiple ``essentially different'' global solutions. More specifically, the set of degenerate instances is defined as
\begin{align*}
    \cD := \{ (C,u^*) ~|~ &C\in\Sn_{+,1},u^*\in\Snn_1,\\
    &\exists u\in\R^{n}\quad \st\quad g(u; C,u^*) = 0,~ uu^T \neq u^*(u^*)^T \}.
\end{align*}
Since there exist multiple global solutions to problem \eqref{eqn:gmc} if $(C,u^*)\in\cD$, it is information-theoretically impossible to find the ground truth for any instance in $\cD$. Intuitively, we say that the \textit{optimization complexity} of all instances in $\cD$ is infinity. Motivated by the above observation, we introduce the new complexity metric.
\begin{definition}[Complexity Metric]
Given arbitrary parameters $C\in\Sn_{+,1}$, $u^*\in \Snn_1$ and $\alpha \in [0,1]$, the complexity of the instance $\cMC(C,u^*)$ is defined as 
\begin{align}\label{eqn:metric}
    \mathbb{D}_\alpha(C,u^*) := \left[ \inf_{(\tilde{C},\tilde{u}^*)\in\cD} \alpha \|C - \tilde{C}\|_1 + (1-\alpha)\|u^* - \tilde{u}^*\|_1 \right]^{-1}.
\end{align}
\end{definition}
Since the set $\cD$ is bounded, the infimum in the definition is finite. The term inside the inverse operation can be viewed as a weighted distance between the point $(C,u^*)$ and the set $\cD$. In addition, we take the convention that $1/0 = +\infty$ and thus, $\mathbb{D}_\alpha(C,u^*) = +\infty$ for all $(C,u^*)\in\cD$. In this work, we choose the entry-wise $\ell_1$-norm in \eqref{eqn:metric} for the simplicity of calculations. We believe that similar theory can still be derived for other choices of the norm. \revise{We note that a similar complexity was proposed in \cite{renegar1995linear,renegar1996condition} for conic optimization and to the best of authors' knowledge, there is no similar complexity metric for nonconvex optimization problems.}

\revise{For the parameter $\alpha$, we will discuss two potential choices in this section, namely $\alpha^*$ and $\alpha^\diamond$. In the case when $\alpha = \alpha^*$, the range of the complexity metric has the largest size. Intuitively, by choosing $\alpha = \alpha^*$, the difference between the complexities of two instances will be maximized and thus, it is easier to compare the complexities of different instances. On the other hand, when we choose $\alpha = \alpha^\diamond$, the complexity metric attains its minimum possible value if and only if the $0$-RIP$_{2,2}$ condition holds. This is consistent with the intuition that instances with the RIP constant $0$ are the easiest to solve. We note that both $\alpha^*$ and $\alpha^\diamond$ satisfy $1-\alpha = \Theta(1/n)$. Moreover, in Section \ref{sec:exm}, we show that the parameter $\alpha$ strikes a balance between the RIP constant of the instance and the incoherence of the ground truth. It is still an open question what the optimal choice of parameter $\alpha$ is, which may depend on the class of problems under consideration. It may be needed to jointly consider the complexity metric with several different choices of $\alpha$ to determine the solvability of the instance.}

\subsection{Basic properties of the new complexity metric}

We first provide a more concrete characterization of the set $\cD$. In the rank-$1$ case, we are able to exactly describe the set $\cD$ using graph-theoretic notations. We introduce the associated graphs of any instance of the problem. Given an instance $\cMC(C,u^*)$, the weighted graph $\mathbb{G}(C,u^*)=[\mathbb{V}(C,u^*),\mathbb{E}(C,u^*),\mathbb{W}(C,u^*)]$ is defined by
\begin{align*}
    &\mathbb{V}(C,u^*) := [n],\quad \mathbb{E}(C,u^*) := \left\{ \{i,j\}~|~ C_{ij} > 0, i,j\in[n] \right\},\\
    &[\mathbb{W}(C,u^*)]_{ij} := C_{ij},\quad \forall i,j\in[n]\quad\st\quad\{i,j\}\in\mathbb{E}(C,u^*).
\end{align*}
%
To include the information of $u^*$, we define
\begin{align*} 
\mathcal{I}_1(C,u^*) &:= \{i\in[n] ~|~ u_{i}^* \neq 0\},\quad \mathcal{I}_0(C,u^*) := [n]\backslash \mathcal{I}_1(C,u^*),\\
\mathcal{I}_{00}(C,u^*) &:= \{ i\in\mathcal{I}_0(C,u^*) ~|~ \{ i,j \}\notin\mathbb{E}(C,u^*) ,~ \forall j\in\mathcal{I}_1(C,u^*) \}. 
\end{align*}
Intuitively, the sets $\cI_1(C,u^*)$ and $\cI_0(C,u^*)$ contain the locations of the nonzero and zero components of $u^*$.
The subset $\cI_{00}(C,u^*)$ corresponds to indices in $\cI_0(C,u^*)$ that are not connected to any index in $\cI_1(C,u^*)$. We denote the subgraph of $\mathbb{G}(C,u^*)$ induced by the index set $\cI_1(C,u^*)$ as $\mathbb{G}_1(C,u^*)=[\cI_1(C,u^*),\mathbb{E}_1(C,u^*),\mathbb{W}_1(C,u^*)]$, where $\mathbb{E}_1(C,u^*)$ and $\mathbb{W}_1(C,u^*)$ are the edge set and weight set of this subgraph. The following theorem provides an equivalent definition of $\cD$ in terms of $\cI_{00}(C,u^*)$ and $\mathbb{G}_1(C,u^*)$.
\begin{theorem}\label{thm:unique}
Given $C\in\Sn_{+,1}$ and $u^*\in\Snn_1$, it holds that $(C,u^*) \notin \cD$ if and only if
\begin{enumerate}
	\item $\mathbb{G}_1(C,u^*)$ is connected and \revise{not bipartite};
	\item $\{i,i\}\in\mathbb{E}(C,u^*)$ for all $i\in\cI_{00}(C,u^*)$.
\end{enumerate}
%
\end{theorem}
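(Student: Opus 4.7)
The plan is to reformulate the condition $g(u; C, u^*) = 0$ as a system of edge constraints on the associated graph $\mathbb{G}(C, u^*)$, and then show that conditions (1)--(2) are precisely what makes this system rigid up to $uu^T = u^*(u^*)^T$. Concretely, since every term $C_{ij}(u_i u_j - u_i^* u_j^*)^2$ is non-negative, $g(u; C, u^*) = 0$ is equivalent to requiring $u_i u_j = u_i^* u_j^*$ for every $\{i,j\} \in \mathbb{E}(C, u^*)$, where a self-loop $\{i,i\}$ reads $u_i^2 = (u_i^*)^2$. I would prove both directions of the equivalence by analyzing this linear-in-logarithms system on $\mathbb{G}_1(C, u^*)$ together with the cross-edges to $\cI_0(C, u^*)$.

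\textbf{Sufficiency ($\Leftarrow$).} Assume (1) and (2). For $i \in \cI_1(C, u^*)$, define $t_i := u_i / u_i^*$; the edge constraint within $\mathbb{E}_1(C, u^*)$ becomes $t_i t_j = 1$, so consecutive values of $t$ along any walk in $\mathbb{G}_1$ alternate between $t_{i_0}$ and $1/t_{i_0}$. A closed walk of odd length at some base vertex $i_0$, which exists because $\mathbb{G}_1$ is non-bipartite (a self-loop counting as an odd cycle of length one), forces $t_{i_0}^2 = 1$, so $t_{i_0} \in \{+1, -1\}$; connectivity then propagates a common sign $\epsilon \in \{+1,-1\}$ so that $u = \epsilon u^*$ on $\cI_1$. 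For $i \in \cI_0 \setminus \cI_{00}$, an edge to some $j \in \cI_1$ with $u_j = \epsilon u_j^* \neq 0$ forces $u_i = 0$; for $i \in \cI_{00}$, condition (2) supplies the self-loop constraint $u_i^2 = 0$. Thus $u = \epsilon u^*$ and hence $uu^T = u^*(u^*)^T$.

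\textbf{Necessity ($\Rightarrow$).} I would argue the contrapositive by constructing an alternative global solution whenever (1) or (2) fails. If $\mathbb{G}_1$ is disconnected with components $V_1, V_2, \dots$, set $u = u^*$ on $V_1$, $u = -u^*$ on the remaining components, and $u = 0$ on $\cI_0$; every edge constraint is preserved, yet $uu^T$ differs from $u^*(u^*)^T$ on the $V_1 \times V_2$ block, which is nonzero since both components lie in $\cI_1$. If $\mathbb{G}_1$ is connected but bipartite with bipartition $A \sqcup B$, pick $\lambda \notin \{+1, -1\}$ and set $u_i = \lambda u_i^*$ on $A$, $u_i = u_i^*/\lambda$ on $B$, and $u = 0$ on $\cI_0$; all cross-edges between $A$ and $B$ satisfy $u_i u_j = u_i^* u_j^*$, while a diagonal entry $u_i^2 = \lambda^2 (u_i^*)^2$ on $A$ differs from $(u_i^*)^2$. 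Finally, if (1) holds but some $i_0 \in \cI_{00}$ has no self-loop, set $u = u^*$ on $\cI_1$, $u_{i_0} = 1$, and $u = 0$ on the remaining indices of $\cI_0$; all edge constraints are satisfied because $i_0$ has no edges to $\cI_1$ and no self-loop, yet $(uu^T)_{i_0, i_0} = 1 \neq 0$.

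The main obstacle I expect is handling degenerate sub-cases cleanly, particularly the single-vertex case $|\cI_1| = 1$, where ``connected and non-bipartite'' must be read as ``the unique vertex carries a self-loop,'' as well as the disconnected case when one of the components of $\mathbb{G}_1$ is a single isolated vertex. I would cover these by consistently treating a self-loop as an odd closed walk of length one, and by verifying in each construction that assigning $u = 0$ on the residual part of $\cI_0$ automatically satisfies both the $\cI_0$-internal edges and the cross-edges to $\cI_1$, which is immediate since those edges impose $u_i u_j = 0$ and in each construction at least one factor is already forced to vanish.
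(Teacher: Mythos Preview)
Your proposal is correct and follows essentially the same approach as the paper's proof: the same three counterexample constructions for necessity (sign flip across components, scaling $\lambda, 1/\lambda$ across a bipartition, and injecting a nonzero value at an unconstrained index in $\cI_{00}$), and the same odd-cycle-plus-connectivity argument for sufficiency. Your use of the ratios $t_i = u_i/u_i^*$ and a general $\lambda \neq \pm 1$ is a slightly cleaner packaging of exactly the computation the paper carries out with explicit alternating products and the specific choice $\lambda = 2$.
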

\begin{proof}
We first construct counterexamples for the necessity part and then prove the uniqueness of the global minimum (up to a sign flip) for the sufficiency part. For the notational simplicity, we fix the point $(C,u^*)$ and omit them in the notations.
\paragraph{Necessity.} In this part, our goal is to construct a solution $u\in\Rn$ such that
\[ u_iu_j = u^*_{i}u^*_j,\quad \forall \{i,j\}\in\mathbb{E};\quad uu^T \neq u^* (u^*)^T. \]
%
We denote $M^*:=u^* (u^*)^T$ and analyze three different cases below. 

\paragraph{Case I.} First, we consider the case when $\mathbb{G}_1$ is disconnected, which means that there exist two non-empty subsets $\mathcal{I}$ and $\mathcal{J}$ such that
\[ \mathcal{I} \cup \mathcal{J} = \mathcal{I}_1,\quad \mathcal{I} \cap \mathcal{J} = \emptyset;\quad \{i,j\} \notin \mathbb{E}_1 ,\quad \forall i\in\mathcal{I},\ \forall j\in\mathcal{J}. \]
We define the vector $u\in\Rn$ as
\[ u_i := 0,\quad \forall i\in\mathcal{I}_0;\quad u_i = u^*_i,\quad \forall i\in\mathcal{I};\quad u_i = -u^*_i,\quad \forall i\in\mathcal{J}. \]
The above definition leads to
\[ u_iu_j = \begin{cases} -M^*_{ij} & \text{if } i\in \mathcal{I} \text{ and } j\in\mathcal{J} \\ M^*_{ij} & \text{otherwise}.  \end{cases} \]
Since $u^*_i \neq 0$ for all $i\in\mathcal{I}_1$, it follows that $u_iu_j = - M^*_{ij} \neq M^*_{ij}$ for all $\{i,j\}$ such that $i\in\mathcal{I}$ and $j\in\mathcal{J}$. 

\paragraph{Case II.} Next, we consider the case when $\mathbb{G}_1$ is bipartite, which means that there exist two non-empty subsets $\mathcal{I}$ and $\mathcal{J}$ such that
\[ \mathcal{I} \cup \mathcal{J} = \mathcal{I}_1,\quad \mathcal{I} \cap \mathcal{J} = \emptyset;\quad \{i,j\} \notin \mathbb{E}_1 ,\quad \forall i,j\in\cI_1\quad \st\quad i,j\in \mathcal{I} \text{ or } i,j\in\mathcal{J}. \]
In this case, we define the vector $u\in\Rn$ as
\[ u_i := 0,\quad \forall i\in\mathcal{I}_0;\quad u_i := u^*_i / 2,\quad \forall i\in\mathcal{I};\quad u_i := 2u^*_i,\quad \forall i\in\mathcal{J}. \]
Now, we have
\[ u_iu_j = \begin{cases} M^*_{ij}/4 & \text{if } i,j\in \mathcal{I}\\ 4M^*_{ij} & \text{if } i,j\in \mathcal{J}\\ M^*_{ij} & \text{otherwise}.  \end{cases} \]
Since $M^*_{ij}\neq 0$ for all $i,j \in\mathcal{J}$, we have that $u_iu_j = 4M^*_{ij} \neq M^*_{ij}$ for all $i,j \in\mathcal{J}$. 

\paragraph{Case III.} Finally, we check the case when there exists a node $i_0\in\mathcal{I}_{00}$ such that $\{i_0,i_0\}\notin\mathbb{E}$. In this case, we define the vector $u\in\Rn$ as
\[ u_{i_0} := 1,\quad u_i := u^*_i,\quad \forall i\in[n] \backslash \{i_0\}. \]
Now, we have
\[ u_{i_0}u_{i_0} = 1 \neq 0 = M_{i_0i_0}^*,\quad u_iu_j = M^*_{ij},\quad \forall \{i,j\}\in\mathbb{E}. \]

Combining the above three cases completes the proof of the necessity part.

\paragraph{Sufficiency.} We prove that any global solution $u\in\Rn$ to problem \eqref{eqn:gmc} satisfies $uu^T = M^*$, where $M^*:=u^*(u^*)^T$. Since $u$ is a global solution, it follows that
\[ u_iu_j = M^*_{ij},\quad \forall i,j\in[n]\quad \st\quad \{i,j\} \in \mathbb{E}. \]
Since the graph $\mathbb{G}_1$ is \revise{not bipartite}, there exists a cycle with an odd number of edges in $\mathbb{G}_1$. We denote the length of the cycle as $2k+1$, where $k$ is a non-negative integer. Moreover, we denote the edges of the cycle as 
\[ \{i_0,i_1\},\{i_1,i_2\},\dots, \{i_{2k}, i_0\}. \]
Since $\{i_0,\dots,i_{2k}\} \subset \mathcal{I}_1$, we know that
\[ u_iu_j = M^*_{ij} \neq 0,\quad  \forall i,j\in[n]\quad \st\quad \{i,j\} \in \left\{\{i_\ell,i_{\ell+1}\},\ell\in\{0,\dots,2k\}\right\}, \]
where $i_{2k+1} := i_0$. Hence, we can calculate that
\begin{align*} 
u_0 ^2 &= \prod_{\ell=0}^{2k}(u_{i_\ell}u_{i_{\ell+1}})^{(-1)^\ell} = \prod_{\ell=0}^{2k+1}M_{i_\ell i_{\ell+1}}^{(-1)^\ell}
= (u_{i_0}^*)^2. 
\end{align*}
Without loss of generality, assume that $u_{i_0} = u_{i_0}^*$ since otherwise we can consider the solution $-u$ if $u_{i_0} = -u_{i_0}^*$. With the value of $u_{i_0}$ correctly recovered, it follows that
\[ u_{i_1} = \frac{u_{i_0}u_{i_1}}{u_{i_0}} = \frac{u^*_{i_0}u^*_{i_1}}{u^*_{i_0}} = u^*_{i_1}. \]
Similarly, we can utilize the connectivity of $\mathbb{G}_1$ to iteratively obtain $u_i = u_i^*$ for all $i\in\mathcal{I}_1$. 

The remaining part is to show that $u_i = 0$ for all $i\in\mathcal{I}_0$. For every node $i\in\mathcal{I}_0 \backslash \mathcal{I}_{00}$, there exists a node $j\in\mathcal{I}_1$ such that $\{i,j\}\in\mathbb{E}$. This implies that
\[ u_j = u^*_j \neq 0,\quad u_iu_j = M^*_{ij} = 0, \]
Hence, it holds that $u_i = 0$. For every node $i\in\mathcal{I}_{00}$, the assumption in the theorem requires that $\{i,i\}\in\mathbb{E}$, which leads to
\[ u_i^2 = M^*_{ii} = 0. \]
In this case, we also obtain $u_i =0$.
\qed\end{proof}

Since the set $\cD$ is bounded, the infimum in the definition \eqref{eqn:metric} can be attained by using the closure of $\cD$, namely
\begin{align}\label{eqn:metric-new}
    \mathbb{D}_\alpha(C,u^*) = \left[ \min_{(\tilde{C},\tilde{u}^*)\in\overline{\cD}} \alpha \|C - \tilde{C}\|_1 + (1-\alpha)\|u^* - \tilde{u}^*\|_1 \right]^{-1}.
\end{align}
The alternative definition \eqref{eqn:metric-new} simplifies the verification of parameters that attain the infimum. In addition, with the help of Theorem \ref{thm:unique}, we can exactly characterize the closure $\overline{\cD}$, which has a slightly simpler form than $\cD$.
\begin{theorem}\label{thm:unique-closure}
We have the following relation:
\begin{align*} 
\overline{\cD} &= \{ (C,u^*) ~|~C\in\Sn_{+,1},u^*\in\Snn_1, \mathbb{G}_1(C,u^*) \text{ is disconnected or bipartite} \}\\
&\hspace{7em} \cup \{ (C,u^*) ~|~ C\in\Sn_{+,1},u^*\in\Snn_1, \cI_{00}(C,u^*) \text{ is not empty} \}. 
\end{align*}
\end{theorem}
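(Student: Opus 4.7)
Denote by $A$ and $B$ the two sets on the right-hand side of the claimed identity; the plan is to prove the inclusions $A\cup B \subseteq \overline{\cD}$ and $\overline{\cD} \subseteq A\cup B$ separately. The starting point is the reformulation given by Theorem~\ref{thm:unique}: $(C,u^*)\in\cD$ if and only if $\mathbb{G}_1(C,u^*)$ is disconnected or bipartite, or there is some $i\in\cI_{00}(C,u^*)$ with $\{i,i\}\notin\mathbb{E}(C,u^*)$. In particular $\cD\subseteq A\cup B$.

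For $A\cup B\subseteq \overline{\cD}$, I would first handle $A\subseteq \cD$: Cases I and II in the necessity part of Theorem~\ref{thm:unique} each produce an explicit $u\in\R^n$ with $g(u;C,u^*)=0$ and $uu^T\neq u^*(u^*)^T$ whenever $\mathbb{G}_1(C,u^*)$ is disconnected or bipartite. The remaining set $B\setminus A$ consists of instances where $\mathbb{G}_1(C,u^*)$ is connected and non-bipartite, yet some $i_0\in\cI_{00}(C,u^*)$ exists and must have $\{i_0,i_0\}\in\mathbb{E}$ (otherwise Case III would already place the instance in $\cD$). For such $(C,u^*)$, I would construct an approximating sequence by perturbing only the ground truth: let $\tilde u^{(k)}$ agree with $u^*$ except that $\tilde u^{(k)}_{i_0} := 1/k$, and set $u^{*,(k)} := \tilde u^{(k)}/\|\tilde u^{(k)}\|_1$, so $u^{*,(k)}\to u^*$ in $\ell_1$. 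For each finite $k$, one has $\cI_1(C,u^{*,(k)}) = \cI_1(C,u^*)\cup\{i_0\}$, and the defining property of $\cI_{00}$ ensures $C_{i_0 j}=0$ for every $j\in\cI_1(C,u^*)$. Thus $i_0$ is isolated from the rest of $\cI_1(C,u^{*,(k)})$ in $\mathbb{G}_1(C,u^{*,(k)})$, making that graph disconnected, so $(C,u^{*,(k)})\in A\subseteq\cD$, and taking the limit yields $(C,u^*)\in\overline{\cD}$.

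For the reverse inclusion, I would show that $A\cup B$ is closed by proving its complement is open. Fix any $(C,u^*)\notin A\cup B$, so $\mathbb{G}_1(C,u^*)$ is connected and non-bipartite and $\cI_{00}(C,u^*)=\emptyset$. For any perturbation $(\tilde C,\tilde u^*)$ whose total $\ell_1$ distance is strictly smaller than $\min\{\min_{i\in\cI_1(C,u^*)}|u^*_i|,\,\min_{\{i,j\}\in\mathbb{E}(C,u^*)} C_{ij}\}$, each nonzero $u^*_i$ stays nonzero and each positive $C_{ij}$ stays positive, yielding $\cI_1(C,u^*)\subseteq\cI_1(\tilde C,\tilde u^*)$ and $\mathbb{E}_1(C,u^*)\subseteq\mathbb{E}_1(\tilde C,\tilde u^*)$. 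Consequently any odd cycle of $\mathbb{G}_1(C,u^*)$ survives, so $\mathbb{G}_1(\tilde C,\tilde u^*)$ remains non-bipartite; any new index $i\in\cI_1(\tilde C,\tilde u^*)\setminus\cI_1(C,u^*)$ lies in $\cI_0(C,u^*)\setminus\cI_{00}(C,u^*)$ and thus inherits a persistent edge to some $j\in\cI_1(C,u^*)\subseteq\cI_1(\tilde C,\tilde u^*)$, preserving connectivity of $\mathbb{G}_1(\tilde C,\tilde u^*)$; and every $i\in\cI_0(\tilde C,\tilde u^*)\subseteq\cI_0(C,u^*)$ similarly retains such an edge, so $\cI_{00}(\tilde C,\tilde u^*)=\emptyset$. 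Hence $(\tilde C,\tilde u^*)\notin A\cup B$, showing $A\cup B$ is closed, and then $\overline{\cD}\subseteq\overline{A\cup B}=A\cup B$. The main obstacle is the connectivity step in this openness argument: the hypothesis $\cI_{00}(C,u^*)=\emptyset$ is essential, since without it a newly-activated coordinate in $\cI_1$ could arrive isolated, which is exactly the phenomenon exploited in the opposite direction to place $B\setminus A$ inside $\overline{\cD}$.
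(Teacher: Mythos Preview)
Your proposal is correct and follows essentially the same approach as the paper: first showing $A\cup B\subseteq\overline{\cD}$ by approximating points with nonempty $\cI_{00}$ via a perturbation of $u^*$ that makes $\mathbb{G}_1$ disconnected, then showing $\overline{\cD}\subseteq A\cup B$ by proving the complement of $A\cup B$ is open through persistence of nonzero entries under small perturbations. Your version is in fact slightly more careful than the paper's in two respects: you renormalize the perturbed $\tilde u^{(k)}$ to stay in $\Snn_1$, and you spell out explicitly why newly-activated indices in $\cI_1(\tilde C,\tilde u^*)$ remain connected to the old $\cI_1$ (via the hypothesis $\cI_{00}(C,u^*)=\emptyset$), a step the paper glosses over.
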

%


Let the set in the right-hand side of the above equation be called $\mathcal{D}'$. The proof of Theorem \ref{thm:unique-closure} is based on a standard technique that first shows $\bar{\mathcal{D}}\subset \mathcal{D}'$ and then shows $\mathcal{D}'\subset \bar{\mathcal{D}}$. The details can be found in Appendix \ref{adp:unique-closure}.
Using the results in Theorems \ref{thm:unique} and \ref{thm:unique-closure}, we provide an estimate on the scale of the new metric. Since $\cD$ is a bounded set, there exists an upper bound on the minimum possible value of the complexity metric, which is defined below:
\[ \mathbb{D}_\alpha^{min} := \min_{C\in\Sn_{+,1}, u^*\in\Snn_1 }~ \mathbb{D}_\alpha(C,u^*). \]
The next theorem provides the expression of $\mathbb{D}_\alpha^{min}$. 
\begin{theorem}\label{thm:max-dist}
Suppose that $n\geq 5$. Then, it holds that
\begin{align*}
    \mathbb{D}_\alpha^{min} = \begin{cases}
    \frac{n}{4\alpha} & \text{if } \alpha \leq \frac{n^2-3n-2}{n^2-5n+4}\\
    \frac{n^2}{2(1-\alpha)(n-2)n + 4\alpha} &\text{if } \frac{n}{n+2}\leq \alpha \leq \frac{n}{n+1}\\
    \frac{n(n+1)}{2(1-\alpha)(n-2)(n+1) + 4} &\text{if } \alpha \geq \frac{n}{n+1}.
    \end{cases}
\end{align*}
In the regime $(n^2-3n-2)/(n^2-5n+4) \leq \alpha \leq {n}/(n+2)$, we have the estimate
\[ \mathbb{D}_\alpha^{min} \in \left[ \frac{n}{4\alpha}, \frac{n^2}{4\alpha(n-1)} \right]. \]
%
%
%
\end{theorem}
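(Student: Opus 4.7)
I approach Theorem \ref{thm:max-dist} by recasting $1/\mathbb{D}^{min}_\alpha$ as the value of the max-min
\[
\max_{(C,u^*)}\ \min_{(\tilde C,\tilde u^*)\in\overline{\cD}}\ \bigl[\alpha\|C-\tilde C\|_1+(1-\alpha)\|u^*-\tilde u^*\|_1\bigr].
\]
By Theorem \ref{thm:unique-closure}, an attack point $(\tilde C,\tilde u^*)$ lies in $\overline{\cD}$ for one of three combinatorial reasons: $\mathbb{G}_1(\tilde C,\tilde u^*)$ is disconnected, $\mathbb{G}_1(\tilde C,\tilde u^*)$ is bipartite, or $\cI_{00}(\tilde C,\tilde u^*)$ is nonempty. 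The inner minimum therefore decomposes into three canonical attacks, each further parameterized by the attacker's choice of support $S=\cI_1(\tilde u^*)\subseteq[n]$ (and, in the bipartite case, a bipartition of $S$). For a fixed $S$, the optimal $\tilde u^*$ is the one that renormalizes the restriction of $u^*$ to $S$, so $(1-\alpha)\|u^*-\tilde u^*\|_1$ reduces to an explicit function of $S$ and of the mass of $u^*$ on $[n]\setminus S$. By a standard mass-transport argument (zeroed mass must be reallocated), the optimal $\tilde C$ zeros out a combinatorially-determined set of entries and redistributes the freed mass, giving $\alpha\|C-\tilde C\|_1=2\alpha\cdot(\text{mass of zeroed entries})$. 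The inner minimum thus becomes an explicit combinatorial program over $S$ and attack type.

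To tackle the outer maximization, I use the permutation symmetry of the problem (joint permutations of the coordinates of $u^*$ and the rows and columns of $C$) to reduce to candidates where $u^*$ is uniform on a symmetric support and $C$ has a single uniform diagonal value $c_2$ together with a single uniform off-diagonal value $c_1$, subject to $nc_2+n(n-1)c_1=1$. A direct verification (for instance, showing that placing additional zeros in $u^*$ exposes a cheap $\cI_{00}$-attack) licenses this reduction. Solving the resulting low-dimensional max-min by balancing binding attacks yields the three exact formulas: for small $\alpha$ only the $|S|=n$ disconnect binds, and the maximizer has $c_2=0$ and $c_1=1/[n(n-1)]$ with $d=4\alpha/n$; for intermediate $\alpha$ the $|S|=n$ disconnect ties with the $|S|=1$ self-loop bipartize attack, producing the denominator $2(1-\alpha)(n-2)n+4\alpha$; and for large $\alpha$ the $|S|=1$ self-loop bipartize attack ties with the $|S|=2$ edge-disconnect attack, forcing $c_2=2c_1=2/[n(n+1)]$ and the Case 3 formula. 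The sandwich bound in the uncovered middle regime follows by keeping the Case 1 configuration as a lower bound on $\mathbb{D}^{min}_\alpha$ and exhibiting a small variant to supply the upper bound $n^2/[4\alpha(n-1)]$.

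The principal obstacle I anticipate is justifying the symmetry reduction, since $d(C,u^*)$ is neither globally convex nor concave. My plan is to prove it by direct comparison: given an arbitrary $(C,u^*)$, I construct the associated symmetric $(C',u^*')$ of the hypothesized form and show $d(C',u^*')\geq d(C,u^*)$ by matching each attack on $(C',u^*')$ with a no-more-expensive attack on $(C,u^*)$, using averaging inequalities on the attack-cost function within each attack type. A secondary obstacle is ruling out non-canonical attacks, namely unbalanced bipartitions of $S$, $\cI_{00}$-attacks that zero several rows simultaneously, and hybrid attacks that zero entries outside the natural combinatorial support; I plan to dispose of these by pointwise monotonicity comparisons against the canonical attacks already enumerated, which in each case dominate the exotic ones after the normalization constraint is applied.
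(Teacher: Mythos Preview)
Your approach is essentially the same as the paper's. The paper organizes it slightly differently: rather than arguing a symmetry reduction on $(C,u^*)$ directly, it proves two matching lemmas showing that $(\mathbb{D}_\alpha^{min})^{-1}=\max_{c\in[0,1/(n(n-1))]}g(\alpha,c)$, where $g(\alpha,c)$ is the minimum of six explicit affine functions in the scalar $c$ (the average off-diagonal entry of $C$). The upper bound on $(\mathbb{D}_\alpha^{min})^{-1}$ comes from exactly your averaging idea---for an arbitrary $(C,u^*)$, average the cost of each attack type over all symmetric placements of $S$, so the bound depends only on $c$---and the lower bound comes from exhibiting the symmetric instance (uniform $u^*$, constant diagonal and off-diagonal in $C$) and checking that its distance to $\overline{\cD}$ is exactly $g(\alpha,c)$. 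The final case analysis then reduces to a one-dimensional piecewise-linear max-min in $c$, handled by locating intersection points of the six pieces.

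One small correction: in the regime $n/(n+2)\le\alpha\le n/(n+1)$ the binding constraints are the $|S|=2$ disconnect attack (your $g_1$-type term, increasing in $c$) and the $|S|=2$ bipartite attack (decreasing in $c$), not the $|S|=n$ disconnect and $|S|=1$ bipartite. This does not affect your strategy, only the bookkeeping of which pair of affine pieces intersects at the optimum, and you would catch it once you write out the six pieces explicitly.
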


\revisee{The proof of Theorem \ref{thm:max-dist} can be found in Appendix \ref{adp:max-dist}.} Now, we provide the proof of Theorem \ref{thm:max-dist}.
The results of Theorem \ref{thm:max-dist} imply that in the regime where $\alpha\geq\Theta(1)$ and $1-\alpha \geq \Theta(n^{-1})$, we have $\mathbb{D}_\alpha^{min} = O\left(n\right)$. This suggests that $n^{-1}\mathbb{D}_\alpha(C,u^*)$ may be a dimension-free complexity metric; see more examples supporting this claim in Section \ref{sec:exm}. In addition, the minimum possible value of the complexity is attained at
\[ \alpha^* := (n^2-5n+4)/(n^2-3n-2). \]
Hence, the set of possible values of the complexity metric attains the maximum size by choosing $\alpha=\alpha^*$. This observation hints that $\alpha^*$ may be the optimal choice of $\alpha$ since it may enable the metric to differentiate instances with different complexities to the maximum degree. Using the exact formulation of $g(\alpha,c)$ in Lemma \ref{lem:max_dist-1}, we plot the minimum possible value of the complexity metric both without scaling and after scaling by $n^{-1}$ in Figure \ref{fig:1}.
\begin{figure}[t]
\centering
\includegraphics[scale=0.43]{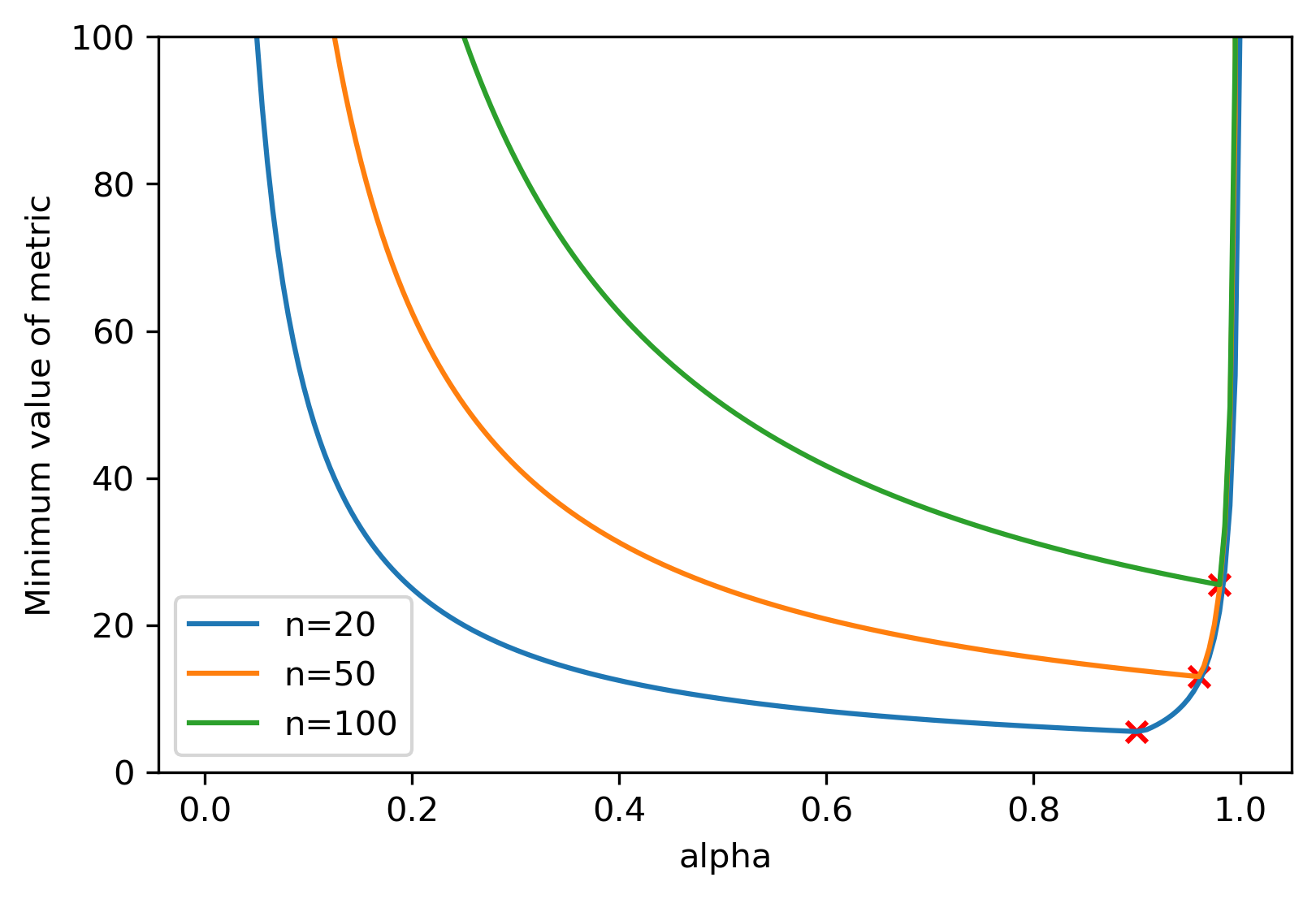}
\includegraphics[scale=0.43]{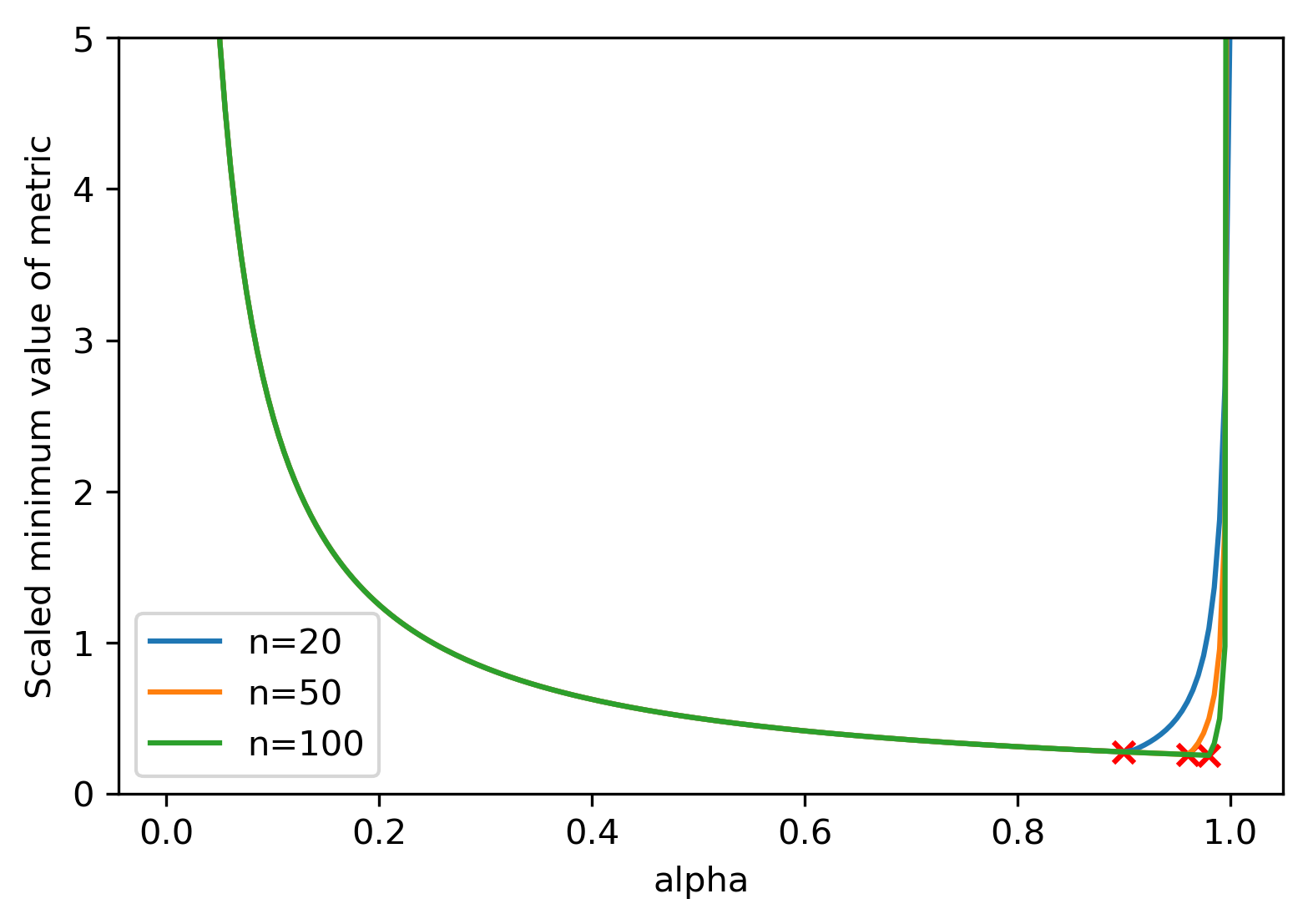}
\caption{Comparison of $\mathbb{D}_\alpha^{min}$ for $n=20,50,100$. The red ``$\times$'' sign refers to the value at $\alpha^*$. In the right plot, the complexity metric is scaled by $n^{-1}$.}
\label{fig:1}
\end{figure}
From the numerical results, we can see that the complexity scales with $n$ if $\alpha$ is smaller than $\alpha^*$, which is consistent with Theorem \ref{thm:max-dist}. If $\alpha$ is larger than $\alpha^*$, the complexity metric for different values of $n$ approximately lies on the same curve. 

In the following theorem, we show that if $\alpha=\alpha^*$, the instances that attain the minimum value of the complexity metric are unique up to sign flips to components of the global solution.
\begin{theorem}\label{thm:optimal}
Suppose that $n\geq 5$ and the instance $\cMC(C,u^*)$ satisfies
\[ \mathbb{D}_{\alpha^*}(C,u^*) = {n}/(4\alpha^*). \]
Then, it holds that
\[ |u^*_i| = 1/n ,\quad C_{ii}=0,\quad \forall i\in[n];\quad C_{ij}=1/[n(n-1)],\quad \forall i,j\in[n],~i\neq j. \]
\end{theorem}

\revisee{The proof of Theorem \ref{thm:optimal} can be found in Appendix \ref{adp:optimal}.}
The above theorem states that if we choose the weight $\alpha = \alpha^*$, the ``easiest'' instance is unique up to a change in the signs of the components of the global solution $u^*$. In the next theorem, we show that a similar property as $\alpha^*$ holds if we set $\alpha$ to be 
\[ \alpha^{\diamond} := n/(n+2). \]
%
%
\begin{theorem}\label{thm:optimal-2}
Suppose that $n\geq 5$ and the instance $\cMC(C,u^*)$ satisfies
\[ \mathbb{D}_{\alpha^\diamond}(C,u^*) = \mathbb{D}_\alpha^{min} = {n(n+2)}/[4(n-1)]. \]
Then, it holds that
\[ |u^*_i| = 1/n ,\quad \forall i\in[n];\quad C = n^{-2} I_n. \]
\end{theorem}

Since the proof is similar to that of Theorem \ref{thm:optimal}, we omit it for brevity. The above theorem implies that the weight matrix $C$ of the ``easiest'' instances is a constant multiple of the identity matrix $I_n$, which satisfies the $\delta$-RIP$_{2,2}$ condition with $\delta=0$. This is consistent with the common sense that the RIP constant $\delta$ being $0$ is the optimal situation. Hence, Theorem \ref{thm:optimal-2} suggests that the choice $\alpha^\diamond = n/(n+2)$ may potentially be the optimal choice of $\alpha$. On the other hand, we will prove in Section \ref{sec:small} that the ``easiest'' instances in Theorems \ref{thm:optimal} and \ref{thm:optimal-2} all have a benign landscape in the sense that they satisfy the strict-saddle property \cite{sun2018geometric}, which guarantees the polynomial-time global convergence of various algorithms. If the weight $\alpha$ is different from $\alpha^*$ and $\alpha^\diamond$, there may exist multiple ``essentially'' different instances attaining the minimum complexity.

\section{Connections to existing results}
\label{sec:exm}

In this section, we provide estimates of the proposed complexity metric on two well-studied problem instances and a synthetic problem. More specifically, we consider matrix sensing problems satisfying the RIP condition and matrix completion problems under the Bernoulli sampling model. In addition, we construct a class of instances parameterized by a single parameter. We estimate the threshold of the parameter that separates instances with a desirable optimization landscape from those with a bad landscape. The results in the synthetic example show that our proposed complexity metric has the potential to provide guarantees on the optimization landscape when the RIP condition fails.

\subsection{Matrix sensing problem: RIP condition}
\label{sec:rip}

We first consider instances of problem \eqref{eqn:gmc} that satisfy the $\delta$-RIP$_{2,2}$ condition, where $\delta\in[0,1)$ is the RIP constant. However, the constraint that $C\in\Sn_{+,1}$ is inconsistent with the RIP condition \eqref{eqn:rip} in the sense that the entries of $C$ are averagely on the scale of $n^{-2}$, but the RIP condition requires that the entries of $C$ be on the scale of $O(1)$. Therefore, we generalize the definition of the RIP condition to deal with the inconsistent scaling:
\begin{definition}
Given natural numbers $r$ and $s$, the function $f(\cdot;M^*)$ is said to satisfy the \textbf{Restricted Isometry Property} (RIP) of rank $(2r,2s)$ for a constant $\delta\in[0,1)$, denoted as $\delta$-RIP$_{2r,2s}$, if there exist constants $c_1,c_2 \geq 0$ such that $c_2/c_1=(1+\delta)/(1-\delta)$ and
\begin{align}\label{eqn:rip-1} 
    c_1\| K\|_F^2 \leq \left[\nabla^2 f(M;M^*)\right] (K,K) \leq c_2\| K\|_F^2 
\end{align}
holds for all matrices $M,K\in\R^{n\times n}$ such that $\rank(M)\leq 2r,\rank(K) \leq 2s$.
\end{definition}
The above definition of the RIP condition is scale-free in the sense that for any constant $c>0$, the function $cf(\cdot;M^*)$ satisfies the $\delta$-RIP$_{2r,2s}$ condition if and only if $f(\cdot;M^*)$ satisfies the same condition. 

\revisee{Since the instances satisfying the RIP condition have a benign optimization landscape, we expect that the complexity metric is upper-bounded for those instances. By suitably generalizing the definitions of $\mathbb{D}_\alpha(C,u^*)$ and $\cD$, we provide an upper bound for problem \eqref{eqn:obj-bm} under the RIP condition. Note that the ground truth $M^*$ is not necessarily rank-$1$ in this part. Instead, we assume that $M^* = U^*(U^*)^T$ is rank-$r$, where $U^*$ belongs to $\R^{n\times r}$. For problem \eqref{eqn:obj-bm}, each instance is defined by the loss function $f(\cdot;\cdot)$ and the ground truth $M^*$. We assume that the $M^*$ is a global optimum of the loss function, namely, 
\begin{align}\label{eqn:normalization-3}
f(M^*;M^*) &= \min_{K\in\mathbb{R}^{n\times n}} {f}(K;{M}^*),\quad\forall M^*\in\R^{n\times n}\quad \st \quad M^*\succeq0,~ \rank(M^*)=r. 
\end{align}
In the special case when $f(\cdot;\cdot)$ is the weighted $\ell_2$-loss function in \eqref{eqn:gmc}, the above condition implies that $C_{ij}\geq 0$ for all $i,j\in[n]$. Similar to the normalization constraint $C\in\Sn_{+,1}$, we assume that objective function $f(\cdot;M^*)$ is normalized in the sense that
\begin{align}\label{eqn:normalize-1} 
{\sum}_{i,j\in[n]} \left[f(M^* + E_{ij}; M^*) - f(M^*;M^*) \right] = 1.
\end{align}
For the normalization constraint $u^*\in\Snn_1$, we assume that the global truth $M^*$ satisfies
\begin{align}\label{eqn:normalization-2}
\|U^*\|_1 = 1.
\end{align}
%
The set of degenerate instances is given by
\begin{align*}
    \cD := \bigg\{ (f, M^*)& ~\bigg|~f(\cdot;\cdot) \text{ and } M^* \text{ satisfy \eqref{eqn:normalization-3}-\eqref{eqn:normalization-2}},\\
    &\exists M\neq M^*\quad \st\quad f(M;M^*) = f(M^*;M^*),~ M^*\succeq0,~\rank(M^*)=r \bigg\}.
\end{align*}
The ``entry-wise $\ell_1$-norm'' between two arbitrary functions $h^1(\cdot)$ and $h^2(\cdot)$ with the domain $\R^{n\times n}$ is defined as the restricted $\ell_\infty$-Lipschitz constant of $h^1-h^2$. Namely, we define $\|h^1-h^2\|_1$ to be
\begin{align*}
     \|h^1-h^2\|_1 := \sup_{K,L\in\R^{n\times n}} &\frac{\left| \left(h^1(K) - h^2(K) \right) - \left(h^1(L) - h^2(L) \right) \right|}{\max_{i,j\in[n]} (K_{ij} - L_{ij})^2}\\
     \st \quad &K\neq L,\quad \rank(K-L) \leq 2r.
\end{align*}
For every constant $\alpha\in[0,1]$, the distance between two instances $(f,M^*)$ and $(\tilde{f},\tilde{M}^*)$ is defined as
\begin{align*}
    &\dist_\alpha\left[ (f,M^*), (\tilde{f},\tilde{M}^*) \right] := \alpha\|f(\cdot;M^*) - \tilde{f}(\cdot;\tilde{M}^*)\|_1 + (1-\alpha)\|U^* - \tilde{U}^*\|_1,
\end{align*}
where $U^*,\tilde{U}^*\in\R^{n\times r}$ satisfy $U^*(U^*)^T=M^*$ and $\tilde{U}^*(\tilde{U}^*)^T=\tilde{M}^*$. Finally, the complexity metric is given by
\begin{align}\label{eqn:metric-general}
    \mathbb{D}_\alpha(f,M^*) := \left[\inf_{(\tilde{f},\tilde{M}^*)\in\cD} \dist_\alpha\left[ (f,M^*), (\tilde{f},\tilde{M}^*) \right]\right]^{-1}.
\end{align}
We note that the definitions of $\cD$ and $\mathbb{D}_\alpha(f,M^*)$ are consistent with those of instance \eqref{eqn:gmc}. The following theorem provides an upper bound on the complexity metric of any instance satisfying the RIP$_{2,2}$ condition.
\begin{theorem}\label{thm:rip-1}
Let $\alpha\in[0,1]$ and $\delta\in[0,1)$ be two constants. Suppose that the function $f(\cdot;M^*)$ satisfies the $\delta$-RIP$_{2r,2r}$ condition and the normalization constraint \eqref{eqn:normalize-1}, where $r$ is the rank of $M^*$. Then, it holds that
\[ \mathbb{D}_\alpha(f,M^*) \leq \frac{n^2(1+\delta)}{\alpha(1-\delta)}
\]
\end{theorem}
\begin{proof}
We fix the instance $(f,M^*)$ and assume that $(\tilde{f},\tilde{M}^*)\in\cD$. Suppose that the matrix $M \neq \tilde{M}^*$ satisfies
\[ \tilde{f}(M;\tilde{M}^*) = \tilde{f}(\tilde{M}^*;\tilde{M}^*). \]
We first consider the case when $M \neq M^*$. In this case, we can estimate that
\begin{align}\label{eqn:rip-general-1}
    &\|f(\cdot;M^*) - \tilde{f}(\cdot;\tilde{M}^*)\|_1\\
    \nonumber&\hspace{3em}\geq \frac{\left| \left[ f(M;M^*) - \tilde{f}(M;\tilde{M}^*) \right] - \left[ f(M^*;M^*) - \tilde{f}(M^*;\tilde{M}^*) \right] \right|}{\max_{i,j\in[n]} (M_{ij} - M_{ij}^*) ^2}\\
    \nonumber&\hspace{3em}= \frac{\left| \left[ f(M;M^*) - f(M^*;M^*) \right] + \left[ \tilde{f}(M^*;\tilde{M}^*) - \tilde{f}(M;\tilde{M}^*) \right] \right|}{\max_{i,j\in[n]} (M_{ij} - M_{ij}^*) ^2}\\
    \nonumber&\hspace{3em}= \frac{\left| \left[ f(M;M^*) - f(M^*;M^*) \right] + \left[ \tilde{f}(M^*;\tilde{M}^*) - \tilde{f}(\tilde{M}^*;\tilde{M}^*) \right] \right|}{\max_{i,j\in[n]} (M_{ij} - M_{ij}^*) ^2}\\
    \nonumber&\hspace{3em}\geq \frac{f(M;M^*) - f(M^*;M^*)}{\max_{i,j\in[n]} (M_{ij} - M_{ij}^*) ^2} \geq \frac{(c_1/2)\cdot \|M-M^*\|_F^2}{\max_{i,j\in[n]} (M_{ij} - M_{ij}^*) ^2} \geq \frac{c_1}{2},
\end{align}
where $c_1$ is the constant in the RIP condition of $f(\cdot;M^*)$. The second inequality is due to
\[ f(M;M^*) - f(M^*;M^*) \geq 0,\quad \tilde{f}(M^*;\tilde{M}^*) - \tilde{f}(\tilde{M}^*;\tilde{M}^*) \geq 0. \]
The second last inequality follows from the global optimality of $M^*$ and the second inequality after inequality (12) in \cite{zhang2021general}, namely,
\[ f(M;M^*) \geq f(M^*;M^*) + \frac{c_1}{2} \| M - M^*\|_F^2,\quad \forall M\in\R^{n\times n},~ \rank(M) \leq r. \]
Now, we provide a lower bound on $c_1$. Using the normalization constraint \eqref{eqn:normalize-1} and the stationarity of $M^*$, it holds that
\begin{align*}
    1 = \sum_{i,j\in[n]} \left[f(M^* + E_{ij}; M^*) - f(M^*;M^*) \right] \leq \frac{c_2}{2} \cdot \sum_{i,j\in[n]} \|E_{ij}\|_F^2 = \frac{c_2 n^2}{2},
\end{align*}
which implies that $c_2 \geq 2 n^{-2}$. Using the relation $c_2/c_1 = (1+\delta)/(1-\delta)$, we obtain that
\[ c_1 \geq \frac{2(1-\delta)}{n^2(1+\delta)}. \]
By substituting into inequality \eqref{eqn:rip-general-1}, it follows that
\[ \|f(\cdot;M^*) - \tilde{f}(\cdot;\tilde{M}^*)\|_1 \geq \frac{1-\delta}{n^2(1+\delta)}. \]
which leads to $\dist_\alpha[ (f,M^*), (\tilde{f},\tilde{M}^*) ] \geq \alpha(1-\delta)/[n^2(1+\delta)]$. Now, the desired bound on $\mathbb{D}_\alpha(f,M^*)$ follows from taking the inverse. In the case when $M = M^*$, we can replace $M$ with $\tilde{M}^*$ and the proof can be done in the same way.
\qed\end{proof}
}

We note that the upper bound on $\mathbb{D}_\alpha(C,u^*)$ is increasing in $\delta$, which is consistent with the intuition that a smaller $\delta$ will lead to a better optimization landscape. 
Moreover, in the case when $\alpha(1-\delta) = \Theta(1)$, the upper bound is on the order of $O(n^2)$, which is $O(n)$ larger than the minimum possible complexity metric in Theorem \ref{thm:max-dist}. Now, we provide a remedy to the aforementioned issue for problem \eqref{eqn:gmc}. With the knowledge about the incoherence of the global solution, we can improve the upper bound on the complexity metric.
%
\begin{theorem}\label{thm:rip-2}
Suppose that the instance $\cMC(C,u^*)$ satisfies the $\delta$-RIP$_{2,2}$ condition and $u^*$ has incoherence $\mu$. Then, it holds that
\[ \mathbb{D}_\alpha(C,u^*) \leq \max\left\{\frac{n(1+\delta)}{4\alpha(1-\delta)}, \frac{1}{2(1-\alpha)\mu} \right\} \times \min\left\{ \left(\frac{1}{\mu} - \frac{1}{n}\right)^{-1}, 3\mu \right\}. \]
\end{theorem}

\revisee{The proof of Theorem \ref{thm:rip-2} can be found in Appendix \ref{adp:rip-2}.}
From the above theorem, we can use the weight $\alpha$ to control the balance between the RIP constant $\delta$ and the incoherence $\mu$. If we choose $1-\alpha = \Theta (n^{-1})$, then the complexity can be upper-bounded by
\[ \mathbb{D}_\alpha(C,u^*) = \mu n \cdot \max\left\{ O\left(  \frac{1+\delta}{1-\delta} \right), O\left(\frac{1}{\mu}\right) \right\} = O\left( \mu n \cdot  \frac{1+\delta}{1-\delta} \right). \]
In addition, if it holds that $\mu = O(1)$ and $(1-\delta)^{-1}=O(1)$, then the complexity is upper-bounded by $O(n)$, which matches the minimum possible complexity in Theorem \ref{thm:max-dist} up to a constant. Although the complexity metric may have a large value for extreme instances (i.e., instances with a large incoherence), the complexity of regular instances achieves the optimal value up to a constant.
\revise{Furthermore, we conjecture in Section \ref{sec:theory} that the condition $\mathbb{D}_\alpha(C,u^*) = O(n\mu/\alpha)$ is sufficient to guarantee the success of local search methods. Assuming that this conjecture is true, then the condition $(1-\delta)^{-1}=O(1)$ alone is sufficient to guarantee that the optimization landscapes are benign regardless of the value of the incoherence $\mu$. This is consistent with the existing results on the RIP condition.
}
%
We conclude the discussion of instances with the RIP condition by showing that the dependence of $\delta$ in Theorem \ref{thm:rip-2} is tight up to a constant.
\begin{theorem}\label{thm:rip-3}
Suppose that $n\geq 4$, $\alpha\in[0,1]$, $\mu\in[1,n]$ and $\delta\in[0,1)$. Let $\ell := \lceil n/\mu \rceil$. 
Then, there exists an instance $\cMC(C,u^*)$ such that $\cMC(C,u^*)$ satisfies the $\delta$-RIP$_{2,2}$ condition, $u^*$ has incoherence $\mu$ and
\[ \mathbb{D}_\alpha(C,u^*) \geq \frac{n(1+\delta)}{4\alpha(1-\delta)} \cdot \min\left\{ \frac{n\mu}{\mu \ell - \mu}, \mu \right\}. \]
\end{theorem}

\revisee{The proof of Theorem \ref{thm:rip-3} can be found in Appendix \ref{adp:rip-3}.}

\subsection{Matrix completion problem: Bernoulli model and incoherence condition}
\label{sec:mc}

Next, we consider instances $\cMC(C,u^*)$ of problem \eqref{eqn:gmc} where the global solution $u^*$ is $\mu$-incoherent and the random weight matrix $C$ obeys the Bernoulli model. Similar to the RIP condition, we need to generalize the definition of the Bernoulli model under the normalization constraint.
\begin{definition}
Given the sampling rate $p\in(0,1]$, a random matrix $C\in\Sn_{+,1}$ is said to obey the \textbf{Bernoulli model} if
\[ C_{ij} = \frac{\delta_{ij}}{\sum_{k,\ell\in[n]} \delta_{k\ell}},\quad \forall i,j\in[n], \]
where $\{\delta_{k\ell}|k,\ell\in[n]\}$ are independent Bernoulli random variables with the parameter $p$. 
\end{definition}
We note that the above model is well defined only when $\sum_{i,j}\delta_{ij} > 0$, which happens with probability $1-(1-p)^{n^2} \geq 1-\exp(-n^2p)$. This probability is sufficiently large if $n^2 p\gg 1$. In \cite{candes2010power}, the authors showed that $p\geq \Theta( \mu \log{n} / n )$ is necessary and under this condition, the success probability is at least $1 - O(n^{-\mu n})$. Therefore, we only focus on the case when the event $\sum_{i,j}\delta_{ij} > 0$ happens.
In the existing literature \cite{candes2009exact,ge2016matrix,chen2020nonconvex}, the instances obeying the Bernoulli model are proven to have no spurious local minima. We show that our complexity metric is able to characterize this property by proving an upper bound on the complexity metric.
%
\begin{theorem}\label{thm:incoh}
Given $\mu\in[1,n]$ and $p\in(0,1]$, suppose that the weight matrix $C$ obeys the Bernoulli model with the parameter $p$ and that $u^*$ has incoherence $\mu$. If $\eta>2$ is a constant and the sampling rate satisfies
\[ p \geq  \min\left\{ 1,  \frac{16(1+\eta\mu)\log{n} + 16}{n} \right\}, \]
it holds with probability at least $1 - 3n^{-\eta/2 + 1}$ that
\[ \revise{\mathbb{D}_\alpha(C,u^*) \leq \max \left\{ \frac{3n}{4\alpha}, \frac{1}{2(1-\alpha)\mu} \right\}  \times \min\left\{ \left(\frac{1}{\mu} - \frac{1}{n}\right)^{-1}, 3\mu \right\}}. \]
\end{theorem}

\revisee{The proof of Theorem \ref{thm:incoh} can be found in Appendix \ref{apd:incoh}.}
By Theorem \ref{thm:incoh}, if $1-\alpha = \Theta(n^{-1}\mu^{-1})$, then the complexity of instances obeying the Bernoulli model is on the order of $\Theta[ n^2\mu /(n - \mu) ]$. If the incoherence $\mu = O(1)$, the complexity is on the order of $O(n)$, which matches the minimum possible complexity up to a constant. Therefore, the proposed metric can also serve as a good indicator for the matrix completion problem with the Bernoulli model. Finally, we note that the bound $p\geq \Theta( \mu \log{n} / n )$ is optimal up to a constant \cite{candes2010power}; see also the discussions in Appendix E of \cite{fattahi2020exact}.

Finally, we note that problem \eqref{eqn:gmc} may still have spurious local minima when the sampling probability $p$ and the incoherence $\mu$ satisfy the condition in Theorem \ref{thm:incoh}. In the existing literature, the global convergence of randomly initialized local search methods is established for problem \eqref{eqn:gmc} only under an extra regularizer or an extra constraint on the incoherence of $u$. That being said, our proposed complexity metric correctly reflects the commonsense that the matrix completion problem is generally easier to solve when the incoherence is small or when the sampling rate $p$ is large. When the complexity is small, it is possible to apply local search methods to find the ground truth. The local search methods may be different for different classes of low-rank matrix optimization problems. In addition, the new complexity metric has the advantage that it is able to simultaneously capture the RIP condition, the incoherence condition and potentially other existing complexity metrics.

\subsection{One-parameter class of instances}
\label{sec:aistats}

In Sections \ref{sec:rip} and \ref{sec:mc}, we provided several upper bounds on the complexity metric. In this part, we consider a class of instances that are parameterized by a single parameter $\epsilon\in[0,1]$. Intuitively, when the parameter grows from $0$ to $1$, the optimization landscape of the instance becomes more benign. Unlike the previous results in this section, the analysis of the small parameter case provides necessary conditions for the existence of spurious local minima. More specifically, we fix $\mathbb{G} = (\mathbb{V}, \mathbb{E})$ to be an unweighted undirected graph without self-loops, where the node set is $\mathbb{V} = [n]$. We consider the maximal independent set of $\mathbb{G}$, which is defined as follows:
\begin{definition}
For an undirected graph $\mathbb{G} = (\mathbb{V}, \mathbb{E})$, a set $\cS\subset\mathbb{V}$ is called an \textit{independent set} if no two nodes in $\cS$ are adjacent. The set $\cS$ is called a \textit{maximal independent set} if it is an independent set with the maximum number of nodes \footnote{We note that this definition is different from the common definition of maximum independent set, which only requires that a maximum independent set is not a proper subset of an independent set.}. 
\end{definition}
Suppose that $\cS\subset [n]$ is a maximal independent set of $\mathbb{G}$. For every $\epsilon\in[0,1]$, the instance $\cMC(C^\epsilon,u^*)$ is defined by
\begin{align}\label{eqn:syn}
    C^\epsilon_{ij} &:= {\epsilon}/{Z_\epsilon},\quad \forall i,j\in\cS \quad \st\quad i\neq j;\quad C^{\epsilon}_{ij} := {1}/{Z_\epsilon},\quad \text{if }\{i,j\}\in\mathbb{E};\\
    \nonumber C^{\epsilon}_{ii} &:= {1}/{Z_\epsilon},\quad \forall i\in[n],\quad C^{\epsilon}_{ij} := 0,\quad \text{otherwise},\\
    \nonumber u_i^* &:= {1}/{m},\quad \forall i\in\cS;\quad u_{i}^* := 0,\quad \forall i\notin\cS,
\end{align}
where $m := |\cS|$ and $Z_\epsilon := 2|\mathbb{E}| + n + m(m-1)\epsilon$ is the normalization constant. In the remainder of this subsection, we assume without loss of generality that $\cS = [m]$.

First, we study for what values of $\epsilon$ the instance $\cMC(C^\epsilon, u^*)$ has benign landscape or has spurious local minima. The following theorem guarantees that the threshold $\epsilon = \Theta(m^{-1})=\Theta(\mu/n)$ separates the regimes where the instance possesses and does not possess spurious local minima, where $\mu:=n/m$ denotes the incoherence of $u^*$.
\begin{theorem}\label{thm:upper-lower}
If $\epsilon \geq \Theta(m^{-1})$, the instance $\cMC(C^\epsilon, u^*)$ does not have \textit{spurious second-order critical points}\footnote{A point $u\in\Rn$ is called a spurious second-order critical point if it satisfies the first-order and the second-order necessary optimality conditions and $uu^T \neq u^*(u^*)^T$.} (SSCPs), namely, all second-order critical points are global minima associated with the ground truth solution $M^*$. If $\epsilon = O(m^{-1})$, the instance $\cMC(C^\epsilon, u^*)$ has at least $O(2^{m/2})$ spurious local minima.
\end{theorem}

\revisee{The proof of Theorem \ref{thm:upper-lower} can be found in Appendix \ref{apd:upper-lower}.} In the case when $m=2$, the proof of Theorem \ref{thm:upper-lower} (more specifically, Theorem \ref{thm:even}) states that the instance $\cMC(C^\epsilon,u^*)$ has spurious local minima if $\epsilon < 1/3$. The condition $\epsilon=1/3$ corresponds to the $\delta$-RIP$_{2,2}$ condition holding with $\delta = 1/2$. Therefore, the RIP constant $\delta \leq 1/2$ is necessary for the instance $\cMC(C^\epsilon, u^*)$ to have no spurious local minima. Combined with the results in \cite{zhang2021general,bi2021local}, we can see that the one-parameter group $\cMC(C^\epsilon, u^*)$ also contains difficult instances of the general problem \eqref{eqn:obj-bm}.

Furthermore, we note that the constants in \revise{the proof of Theorem \ref{thm:upper-lower}} are not optimal.
We conjecture that the instance $\cMC(C^\epsilon, u^*)$ has spurious solutions if $\epsilon < (m+1)^{-1} + o(m^{-1})$ and does not have spurious solutions if $\epsilon > (m+1)^{-1} + o(m^{-1})$.
We numerically verify this conjecture in the special case when $m=n$. In numerical examples, we consider the scaled parameter $\eta := (n+1)\epsilon$. For each instance, we implement the randomly initialized gradient descent algorithm for $200$ times and check the number of implements for which the distance between the last iterate and $\pm u^*$ has Frobenius norm at most $10^{-5}$. The results are plotted in Figure \ref{fig:2}.
%
\begin{figure}[t]
\centering
\includegraphics[scale=0.43]{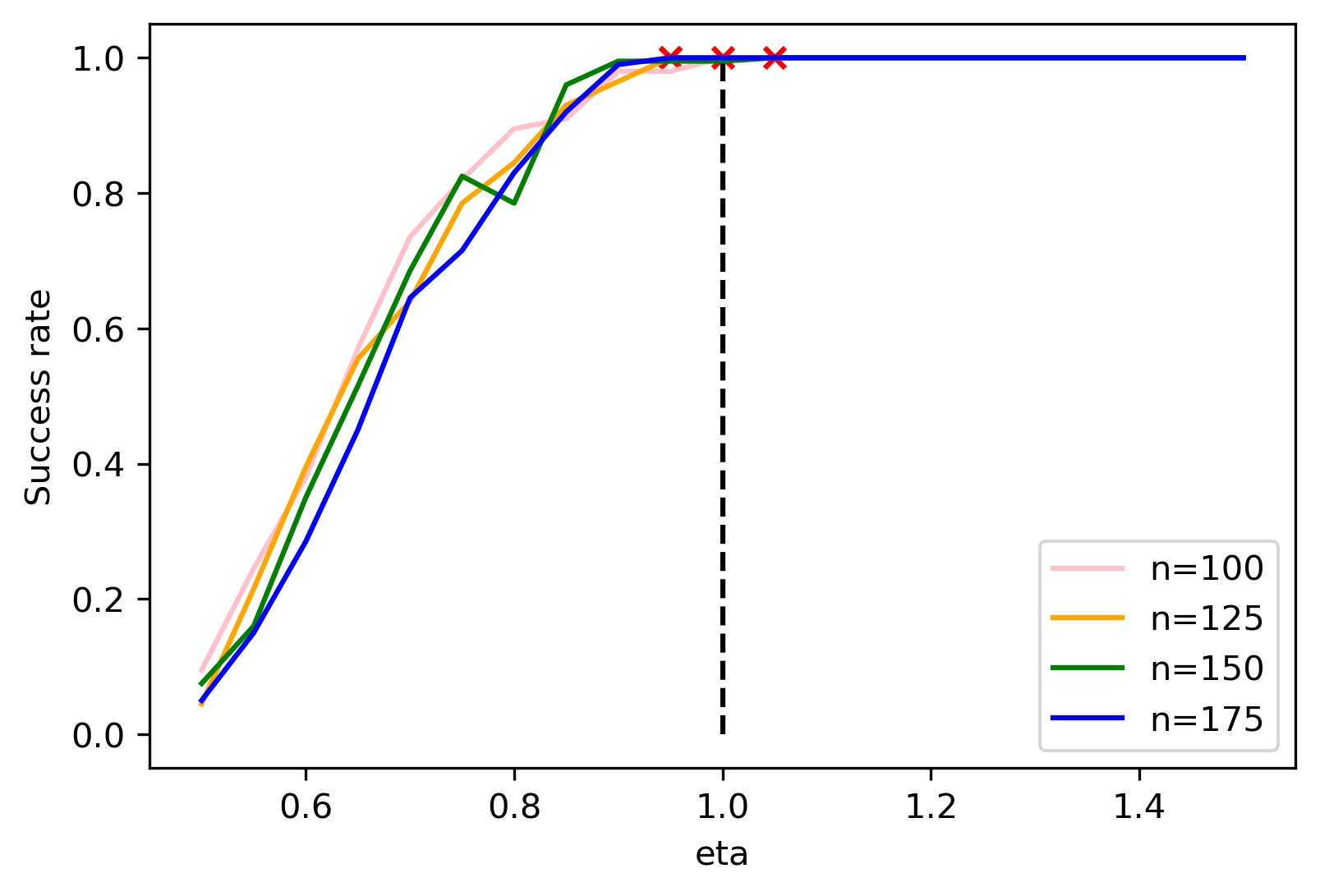}
\includegraphics[scale=0.43]{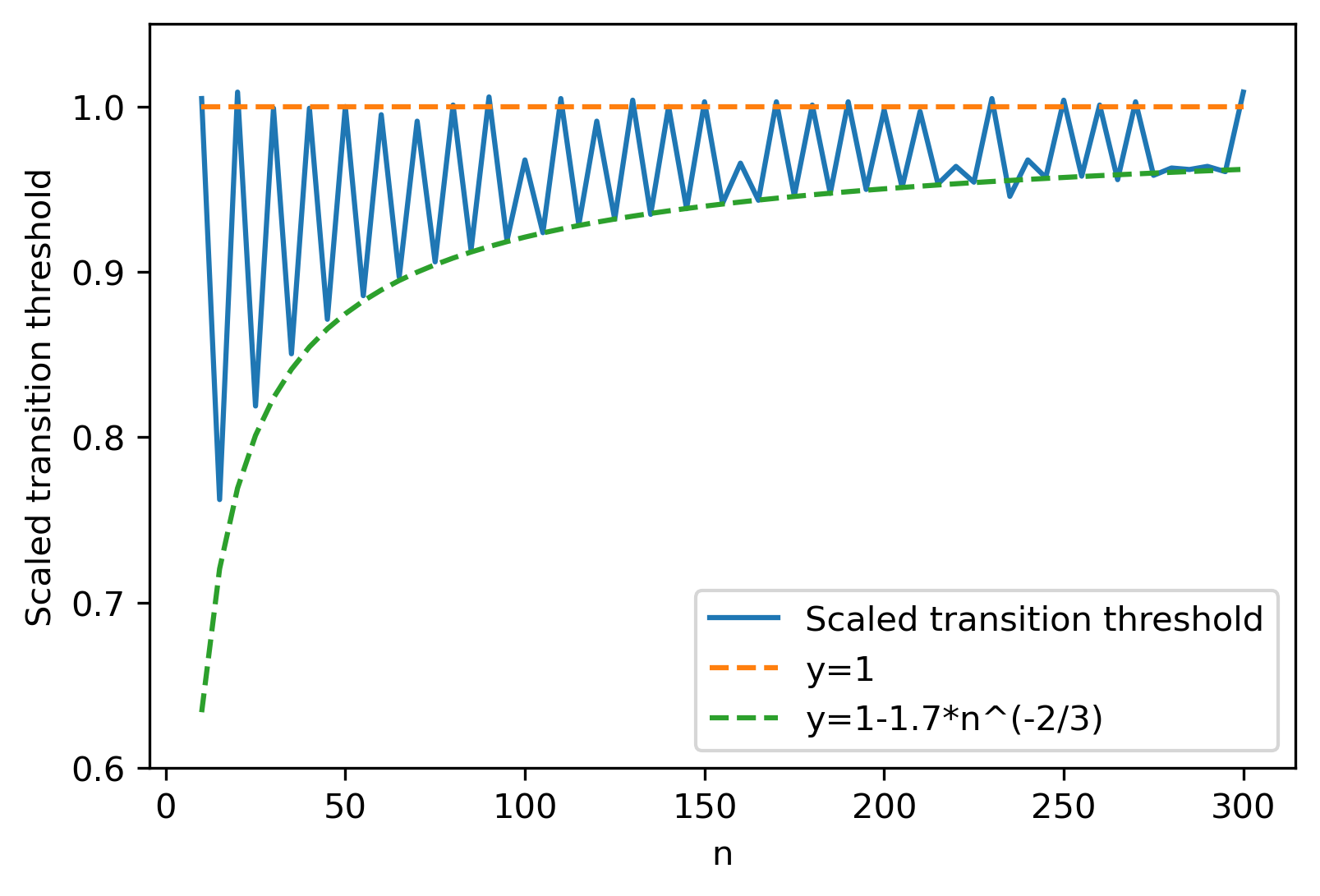}
\caption{The left plot shows the transitions of the success rate of the gradient descent algorithm when $n=100,125,150,175$. The red ``$\times$'' sign refers to the \textit{transition threshold}, i.e., the smallest value of $\eta$ that attains $100\%$ success rate. In the right plot, the transition thresholds of $\eta$ are compared with the curves $y=1$ and $y=1-1.7(n+1)^{-2/3}$.}
\label{fig:2}
\end{figure}
In the left plot, we can see that in most cases, the success rate grows with the parameter $\eta$, which is proportional to $\epsilon$. This indicates that the optimization landscape becomes more benign when $\epsilon$ is larger. In addition, the transition thresholds of $\eta$ are very close to $1$ (to be more accurate, the thresholds of $\eta$ are between $0.95$ and $1.05$). This observation is consistent with our conjecture. In the right plot, we compare the transition thresholds of $\eta$ against the constant number $1$. We observe that the thresholds are approximately located between $1$ and $1-1.7(n+1)^{-2/3}$, which implies that the original thresholds of $\epsilon$ are between $(n+1)^{-1}$ and $(n+1)^{-1} - 1.7(n+1)^{-5/3}$. Hence, the thresholds become close to $(n+1)^{-1}$ when $n$ is large, which is also consistent with our conjecture. Moreover, we can see that the threshold of $\eta$ is not monotone in $n$ and is slightly smaller when $n$ is odd.

Finally, we transform the estimates on the parameter $\epsilon$ to the complexity metric.
\begin{theorem}\label{thm:aistats-1}
Suppose that $n\geq m \geq 36$, $\alpha\in[0,1]$ and $\epsilon\in[0,1]$. Then, the following statements hold true:
\begin{enumerate}
    \item If
    \[ \mathbb{D}_\alpha(C^\epsilon, u^*) \leq \left[\frac{36\alpha}{n^2} + \min\left\{ 72\alpha \cdot \frac{ m}{n^2}, 2(1-\alpha) \right\}\right]^{-1} , \]
    then the instance $\cMC(C^\epsilon,u^*)$ has no spurious local minima;
    \item If
    \[ \mathbb{D}_\alpha(C^\epsilon, u^*) \geq \frac{18}{17} \max\left\{ \frac{13 n^2}{2\alpha}, \frac{1}{2(1-\alpha)} \right\}, \]
    then the instance $\cMC(C^\epsilon,u^*)$ has spurious local minima.
\end{enumerate}
\end{theorem}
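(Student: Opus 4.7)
The plan is to combine Theorems \ref{thm:upper}, \ref{thm:even}, and \ref{thm:odd} with explicit estimates of $d_\alpha := [\mathbb{D}_\alpha(C^\epsilon, u^*)]^{-1}$ as a function of $\epsilon$, $m$, and $n$. Both parts proceed by contrapositive.

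For Part 1, I aim to show that $\epsilon \leq 18/m$ implies $d_\alpha < \frac{36\alpha}{n^2} + \min\{\frac{72\alpha m}{n^2}, 2(1-\alpha)\}$, so the assumed bound on $\mathbb{D}_\alpha$ compels $\epsilon > 18/m$ and Theorem \ref{thm:upper} rules out spurious solutions. The required upper bound on $d_\alpha$ is obtained by exhibiting two explicit instances in $\overline{\cD}$ close to $(C^\epsilon, u^*)$ and taking the smaller of their distances. Construction (i) zeroes a single row and column of $\tilde C$ restricted to $[m] \times [m]$ and renormalizes, making $\mathbb{G}_1$ disconnected on $[m]$; by the renormalization trick behind Lemma \ref{lem:rip-2}, this costs $4\alpha(m-1)\epsilon/Z_\epsilon$, and using $\epsilon \leq 18/m$ together with $n \leq Z_\epsilon \leq 2n^2$ produces the $\alpha m/n^2$ scaling. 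Construction (ii) collapses $\tilde u^*$ onto a proper subset of $[m]$ and (if needed) couples it with a small $\tilde C$-perturbation creating an $\cI_{00}$-vertex, pushing the bulk of the cost onto $\|u^* - \tilde u^*\|_1 \leq 2(1-\alpha)$. The additive $\frac{36\alpha}{n^2}$ summand is extracted using $Z_\epsilon \leq 2n^2$ as slack in construction (i), after which the minimum of the two costs gives the claimed threshold.

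For Part 2, I aim to show that $\epsilon \geq \frac{1}{13(m+1)}$ forces $d_\alpha > \frac{17}{18}\min\{\frac{2\alpha}{13n^2}, 2(1-\alpha)\}$, so the assumed bound on $\mathbb{D}_\alpha$ compels $\epsilon < \frac{1}{13(m+1)}$ and Theorem \ref{thm:odd} (or the weaker Theorem \ref{thm:even} when $m$ is even) delivers spurious local minima. Lower bounding $d_\alpha$ requires showing that every $(\tilde C, \tilde u^*) \in \overline{\cD}$ is at least this far from $(C^\epsilon, u^*)$. Using the characterization in Theorem \ref{thm:unique-closure}, I split into (a) $\mathbb{G}_1(\tilde C, \tilde u^*)$ disconnected or bipartite, and (b) $\cI_{00}(\tilde C, \tilde u^*) \neq \emptyset$, further sub-dividing by the relation between $\cI_1(\tilde u^*)$ and $[m]$. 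In every sub-case, the structural requirement on $\tilde C$ gives a Lemma \ref{lem:rip-2}-type lower bound on $\|C^\epsilon - \tilde C\|_1$ proportional to $\epsilon/Z_\epsilon$ or $1/Z_\epsilon$, whereas any change to the support of $\tilde u^*$ contributes at least $2/m$ to $\|u^* - \tilde u^*\|_1$ by the uniform-value structure $u^*_i = 1/m$ on $[m]$. Combining these with the universal estimate $Z_\epsilon \leq 2n^2$ yields the claimed lower bound uniformly in the graph $\mathbb{G}$.

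The main obstacle lies in Part 2, where the $\cI_{00}$-based scenario is graph-dependent: if some $i_0 \in [n] \setminus [m]$ has only a single neighbor in $[m]$ in the original graph $\mathbb{G}$, then zeroing one pair of $\tilde C$-entries of weight $1/Z_\epsilon$ already induces a degenerate instance at cost only $4\alpha/Z_\epsilon$. The universal estimate $Z_\epsilon \leq 2n^2$ rescales this worst case to $\Omega(\alpha/n^2)$, which together with the hypothesis $m \geq 36$ still clears the claimed threshold $\frac{17\alpha}{117 n^2}$. A secondary difficulty is matching the exact constants $\frac{18}{17}$, $13$, $36$, and $72$ rather than only their orders of magnitude, which necessitates a careful joint optimization of $\tilde C$- and $\tilde u^*$-perturbations within each sub-case rather than loose estimates of the individual terms.
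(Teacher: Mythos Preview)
Your overall strategy---contrapositive via Theorems \ref{thm:upper}--\ref{thm:odd}, coupled with explicit constructions in $\overline{\cD}$ to bound $d_\alpha$ above and below---matches the paper's approach. The paper packages the distance computation into a closed-form lemma (Lemma \ref{lem:aistats-1}) and then plugs in the two threshold values of $\epsilon$, but this is essentially the same work you outline.

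However, Part 1 has a real gap. Your construction (i) gives $d_\alpha \leq 4\alpha(m-1)\epsilon/Z_\epsilon$, and with $\epsilon\le 18/m$ this becomes at most $72\alpha/Z_\epsilon$. To reach the stated threshold you need this to be at most $\frac{36\alpha}{n^2}+\frac{72\alpha m}{n^2}$, i.e.\ you need $Z_\epsilon \gtrsim n^2/m$. Your stated bound $Z_\epsilon\ge n$ only yields $72\alpha/n$, which exceeds the required threshold whenever $m<n$. The missing ingredient is Tur\'an's theorem: because $\cS=[m]$ is a \emph{maximal} independent set of $\mathbb{G}$, the graph $\mathbb{G}$ contains no independent set of size $m+1$, hence $|\mathbb{E}|\ge n^2/(2m)$ and therefore $Z_\epsilon\ge 2|\mathbb{E}|\ge n^2/m$. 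This graph-theoretic lower bound on the edge count is what drives the $\alpha m/n^2$ scaling; nothing in your proposal supplies it. Relatedly, your remark that ``the additive $\frac{36\alpha}{n^2}$ summand is extracted using $Z_\epsilon\le 2n^2$ as slack'' has the inequality the wrong way around: an \emph{upper} bound on $Z_\epsilon$ gives a \emph{lower} bound on $c/Z_\epsilon$, so it cannot help you upper-bound $d_\alpha$.

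For Part 2 your plan is essentially correct, but two constants need tightening to hit the exact statement. First, you can take $Z_\epsilon\le n^2$ rather than $2n^2$: since $[m]$ is independent in $\mathbb{G}$, the $\epsilon$-weighted entries (inside $[m]\times[m]$) and the $\mathbb{G}$-edges (outside $[m]\times[m]$) occupy disjoint off-diagonal positions, so $2|\mathbb{E}|+m(m-1)\le n(n-1)$ and hence $Z_\epsilon\le n^2$. Second, the paper works with the uniform threshold $\epsilon<\delta/m$ with $\delta=1/26$ (which implies both the even and odd hypotheses of Theorems \ref{thm:even}--\ref{thm:odd}), rather than $\epsilon<1/(13(m+1))$; this is what makes the factor $13$ appear cleanly. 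With these two adjustments your case analysis goes through with the stated constants, and the $m\ge 36$ hypothesis is exactly what turns the $(m-2)$ prefactor into the $17/18$.
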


\revisee{The proof of Theorem \ref{thm:aistats-1} can be found in Appendix \ref{apd:aistats-1}.} In the case when $1-\alpha \geq \Theta(m/n^2)$, the upper bound on $\mathbb{D}_\alpha(C^\epsilon,u^*)$ is on the order of $O(n\mu/\alpha)$, where $\mu := n/m$ is the incoherence of $u^*$. This result is consistent with the upper bounds in Sections \ref{sec:rip} and \ref{sec:mc}. In addition, the RIP constant is $1-O(1/m)$ if $\epsilon=O(1/m)$, which shows that the proposed complexity metric can provide better guarantees on the optimization complexities than the RIP constant. On the other hand, the lower bound in Theorem \ref{thm:aistats-1} is on the order of $O(n^2/\alpha)$ in the case when $1-\alpha\geq\Theta(n^{-2})$. 

\revise{
In summary, we have provided a consistent upper bound on the complexity metric that is on the order of $\Theta(n\mu/\alpha)$ for all three examples ($\Theta[n\mu/\alpha \cdot (1+\delta)/(1-\delta)]$ for the RIP case) if we choose $1-\alpha=O(n^{-1})$. These theoretical results provide strong evidence that our proposed complexity metric is able to capture the properties of the optimization landscape for several different models, even when other existing conditions fail to provide theoretical guarantees; see the comparison of the condition and our complexity metric in Section \ref{sec:aistats}. In Section \ref{sec:theory}, we make some conjectures based on these observations and provide a partial theoretical explanation.}

\section{Theoretical results for general instances}
\label{sec:theory}

In this section, we provide a theoretical analysis for the proposed complexity metric \eqref{eqn:metric-new} on the general problem \eqref{eqn:gmc}. Intuitively, we expect the problem \eqref{eqn:gmc} to have a benign landscape when the complexity metric is small and vice versa. We first prove that the proposed complexity metric is able to provide a sufficient condition on the absence of SSCPs of problem \eqref{eqn:gmc}. Then, we construct another complexity metric that lower-bounds the metric \eqref{eqn:metric} and show that the alternative complexity metric is able to provide necessary conditions on the absence of SSCPs.

Recalling the analysis in Section \ref{sec:exm}, one might have the following questions:
%
Suppose that $1-\alpha \geq \Theta(n^{-1})$ and the solution $u^*$ is $\mu$-incoherent. Can we find two constants $\delta,\Delta > 0$ such that
\begin{enumerate}
    \item If $\mathbb{D}_\alpha(C,u^*) \leq \delta \mu n / \alpha$, the instance $\cMC(C,u^*)$ has no SSCPs;
    \item If $\mathbb{D}_\alpha(C,u^*) \geq \Delta n^2 / \alpha$, the instance $\cMC(C,u^*)$ has SSCPs?
\end{enumerate}

Suppose that the first property in the above question holds. The results in Section \ref{sec:rip} imply that the proposed complexity metric guarantees the absence of SSCPs when the RIP constant is $O[(\delta-1)/(\delta+1)]$, which is independent of $\mu$. In addition, the matrix completion problem under the Bernoulli model does not have SSCPs when $p\geq O( \mu \log{n}/n )$, which matches the lower bound in \cite{candes2010power}. In Section \ref{sec:small}, we prove a weaker version of the first property in the case when $\alpha$ is equal to $\alpha^*$ or $\alpha^\diamond$, which are defined in Section \ref{sec:metric}. We note that both $\alpha^*$ and $\alpha^\diamond$ satisfy the condition that $1 - \alpha = \Theta(n^{-1})$. On the other hand, in Section \ref{sec:large}, we refute the second property in the above question by constructing counterexamples. This observation implies that similar to the RIP constant and the incoherence, the proposed complexity metric cannot provide necessary conditions on the absence of spurious local solutions. However, if we substitute the degenerate set $\cD$ with a slightly smaller set, we prove that the complexity metric is able to provide a necessary condition.

\subsection{Small complexity case}
\label{sec:small}

We first consider instances with a small complexity metric. In the case when $\alpha$ is equal to $\alpha^*$ or $\alpha^\diamond$, we prove that $\mathbb{D}_\alpha(C,u^*)\leq \delta n / \alpha$ serves as a sufficient condition for the absence of SSCPs, where $\delta>0$ is an absolute constant. Since the incoherence $\mu$ is at least $1$, the aforementioned condition is weaker than the first property in the aforementioned question. By Theorem \ref{thm:max-dist}, the minimum possible value of the complexity metric is on the order of $O(n/\alpha)$. In this subsection, we show that the constant $\delta$ can be chosen such that $\delta n / \alpha$ is strictly larger than the minimum possible complexity.
The following theorem deals with the case when $\alpha = \alpha^*$.
\begin{theorem}\label{thm:small-1}
Suppose that $n\geq 5$ and $\alpha = \alpha^*$. Then, there exists a constant $\delta > 1/4$ such that for every instance $\cMC(C,u^*)$ satisfying
\[ \mathbb{D}_{\alpha^*}(C,u^*) \leq {\delta n}/{\alpha^*}, \]
the instance $\cMC(C,u^*)$ does not have any SSCPs.
\end{theorem}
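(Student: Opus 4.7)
The plan is to combine the uniqueness theorem for the minimum-complexity instance (Theorem \ref{thm:optimal}) with a compactness-continuity argument. By Theorem \ref{thm:optimal}, the set $\cA$ of instances $(C, u^*)$ achieving the minimum complexity $\mathbb{D}_{\alpha^*}(C,u^*) = n/(4\alpha^*)$ is finite: the weight matrix is uniquely determined by $C^\circ_{ij} = 1/[n(n-1)]$ for $i\neq j$ and $C^\circ_{ii} = 0$, while $u^*$ is pinned down up to the $2^n$ sign patterns of its entries, each of magnitude $1/n$. Since coordinate-wise sign flips of $u^*$ yield equivalent optimization landscapes on $\cMC(C,u^*)$, it suffices to study a single representative instance $(C^\circ,u^\circ)$ with $u^\circ_i = 1/n$.

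First I would verify that this representative easiest instance admits no SSCPs. The first-order optimality condition reduces to the cubic $u_k^3 - u_k(S_2 + 1/n^2) + S_1/n^2 = 0$ (the same cubic for every $k$), where $S_1 = \sum_j u_j$ and $S_2 = \sum_j u_j^2$, so the coordinates of any critical $u$ take at most three distinct values. A case analysis over the multiplicities of these values, combined with the explicit Hessian
\[
    [\nabla^2 g(u; C^\circ, u^\circ)]_{kk} = \tfrac{4}{n(n-1)} {\sum}_{j\neq k} u_j^2, \qquad [\nabla^2 g(u; C^\circ, u^\circ)]_{k\ell} = \tfrac{4}{n(n-1)}(2u_ku_\ell - 1/n^2),
\]
would exhibit a direction of negative curvature at every non-global critical configuration; at the true global optima $\pm u^\circ$ the Hessian is strictly positive definite, with eigenvalues $8/n^3$ (simple) and $4(n-2)/[n^3(n-1)]$, so $\pm u^\circ$ are isolated strict local minima.

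Second, I would promote this pointwise statement to a uniform strict-saddle bound on an open neighborhood of $\cA$. Using the positive definiteness of the Hessian at each $\pm u^\circ$ together with the quantitative negative curvature obtained at every non-global critical configuration in Step 1, and combining this with a norm bound on critical points derived from the stationarity identity $\sum_{ij} C_{ij} u_i u_j (u_i u_j - u^*_i u^*_j) = 0$, I would produce constants $\rho, \epsilon > 0$ such that whenever $(C,u^*)$ lies within Euclidean distance $\rho$ of some element of $\cA$, every second-order critical point $v$ of $\cMC(C,u^*)$ either satisfies $vv^T = u^*(u^*)^T$ or violates the second-order necessary condition by having a Hessian eigenvalue below $-\epsilon$. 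Finally, the map $(C,u^*) \mapsto \mathbb{D}_{\alpha^*}(C,u^*)$ is lower semi-continuous (as the reciprocal of a continuous distance function to the closed set $\overline{\cD}$), so the sublevel sets $\{(C,u^*)\in\Sn_{+,1}\times\Snn_1 : \mathbb{D}_{\alpha^*}(C,u^*) \leq \delta n/\alpha^*\}$ are compact and shrink to $\cA$ as $\delta \downarrow 1/4$; therefore some $\delta > 1/4$ places the entire sublevel set inside the $\rho$-neighborhood, yielding the claim.

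The main obstacle is the second step. The pointwise strict-saddle analysis at the specific instance $(C^\circ,u^\circ)$ does not a priori preclude the sudden birth of new SSCPs when $C$ and $u^*$ are perturbed, particularly because the ``easiest'' weight matrix has vanishing diagonal, so Lemma \ref{lem:rip} and related RIP tools are unavailable. Converting the cubic-equation structure at $(C^\circ,u^\circ)$ into a robust, quantitative statement that survives entrywise perturbations of both $C$ and $u^*$, and simultaneously ruling out critical points with large norm or with a coordinate pattern far from the ones enumerated in Step 1, appears to be the most delicate part of the argument.
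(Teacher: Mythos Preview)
Your proposal follows essentially the same three-stage architecture as the paper: (i) establish a quantitative strict-saddle property at the minimum-complexity instances $\cM$ (the paper's Lemma~\ref{lem:ssp}), (ii) show this survives small perturbations of $(C,u^*)$ (the paper's Lemma~\ref{lem:pert}), and (iii) use compactness of the parameter space together with Theorem~\ref{thm:optimal} to extract $\delta>1/4$. Your dual formulation of step (iii) via shrinking sublevel sets is equivalent to the paper's complement-set argument.

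Two remarks on where the paper's implementation differs from your sketch. First, for step (i) the paper does not classify the roots of your cubic directly; instead, after rescaling to $x_i=nu_i$, it splits into three regimes according to the sign and size of $\sum_i x_i$ and produces explicit descent directions in each, yielding the uniform constants $\beta(\eta),\gamma(\eta)$ needed downstream. Your cubic route should also work but would require the same quantitative output. Second, and more importantly, your proposed norm bound from the identity $\sum_{ij}C_{ij}u_iu_j(u_iu_j-u_i^*u_j^*)=0$ is not by itself enough to confine critical points, precisely because $C^\circ_{ii}=0$ leaves the diagonal $u_i^4$ terms uncontrolled; the paper's Lemma~\ref{lem:pert} handles the region $\|u\|_2>8n+1$ by a separate argument (solving the stationarity equations for $u_i$ and deriving a contradiction), and only then invokes Lipschitz continuity of $\|\nabla g\|_\infty$ and $\lambda_{\min}(\nabla^2 g)$ on the remaining compact ball. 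You correctly flagged this step as the delicate one.
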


Since the minimum possible complexity metric is $n/(4\alpha^*)$, the upper bound in Theorem \ref{thm:small-1} is \textit{non-trivial} in the sense that there exist instances satisfying the inequality.
By Theorem \ref{thm:optimal}, the minimum complexity metric $n/(4\alpha^*)$ is only attained by instances in $\cM$, where
\begin{align*} 
\cM := \Bigg\{ (C,u^*) ~\Bigg|~ |u^*_i| &= \frac1n,~ C_{ii}=0,~\forall i\in[n],~
C_{ij} =\frac{1}{n(n-1)},~ \forall i,j\in[n],~i\neq j \Bigg\}. 
\end{align*}
In the next lemma, we prove the strict-saddle property \cite{sun2018geometric} of the $\ell_1$-norm for instances in $\cM$, which can be viewed as a robust version of the \revise{absence} of SSCPs.
\begin{lemma}\label{lem:ssp}
Suppose that $n\geq2$ and $(C^0,u^0)\in\cM$. Then, there exist a positive constant $\eta_0$ and two positive-valued functions $\beta(\eta)$ and $\gamma(\eta)$ such that for all $\eta \in(0,\eta_0]$ and $u\in\mathbb{R}^n$, at least one of the following properties holds:
\begin{enumerate}
    \item $\min\{\|u - u^*\|_1,\|u + u^*\|_1\} \leq \eta$;
    \item $\|\nabla g(u;C,u^*)\|_\infty \geq \beta(\eta)$;
    \item $\lambda_{min}[\nabla^2 g(u;C,u^*)]\leq -\gamma(\eta)$.
\end{enumerate}
\end{lemma}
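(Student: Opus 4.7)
The plan has two parts: first, show that every instance $(C^0,u^0)\in\cM$ admits no spurious second-order critical points (SSCPs); second, upgrade this to the quantitative strict-saddle estimate by a standard compactness argument.

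For the first part, by the permutation and sign-flip symmetries of $\cM$ one may reduce to $u^0=(1/n)\mathbf{1}$, and the rescaling $x:=nu$ turns $g(u;C^0,u^0)$ into a positive scalar multiple of $h(x):=\sum_{i\neq j}(x_ix_j-1)^2$. Writing $S_k:=\sum_i x_i^k$, the first-order condition at a critical point is the common depressed cubic
\[
x_i^3-(S_2+1)x_i+S_1=0,\qquad \forall\, i\in[n],
\]
so every coordinate is a real root of this cubic, and the three roots sum to zero by Vieta's formulas. One then enumerates critical points by the number $t$ of distinct coordinate values. In the case $t=1$ (all $x_i\equiv r$), the cubic collapses to $(n-1)r(1-r^2)=0$, giving $r\in\{0,\pm1\}$; the values $r=\pm1$ yield the global minima $\pm u^0$, and at $x=0$ the Hessian equals $8I_n-4J_n$, whose smallest eigenvalue $8-4n$ is strictly negative, so $x=0$ is not a SSCP. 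In the case $t=2$ with multiplicities $(k,n-k)$ and values $(a,b)$, Vieta applied to the cubic (whose third root is $-(a+b)$) yields $(1-k)a^2+ab+(k+1-n)b^2=1$ together with $ab(a+b)=ka+(n-k)b$; for $2\leq k\leq n-2$ the left-hand side of the first equation is the negation of a positive-definite quadratic form (discriminant $1-4(k-1)(n-k-1)\leq -3$), hence cannot equal $1$, while for the boundary cases $k\in\{1,n-1\}$ eliminating $a$ via the first equation and substituting into the second forces $(n-1)(n-2)b^3=0$, degenerating into the $t=1$ case. In the case $t=3$ with multiplicities $(n_a,n_b,n_c)$, a parallel calculation shows that the Vieta identity becomes $(n-n_b-1)a^2+(2n_c-1)ab+(n-n_a-1)b^2=-1$; this is a quadratic form with positive leading coefficient whose positive-definiteness condition reduces to $4(n_an_b+n_an_c+n_bn_c)>4n-3$, and this holds because the elementary symmetric polynomial $e_2(n_a,n_b,n_c)$ is minimized on the simplex $n_a+n_b+n_c=n$, $n_a,n_b,n_c\geq 1$ at the extremal multiplicities $(1,1,n-2)$ with value $2n-3>n-\tfrac{3}{4}$, so the form is positive definite and cannot equal $-1$. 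Combining the three cases, the only critical points of $h$ are $\pm\mathbf{1}$ and $0$, and the spurious point $0$ fails the second-order test, so $(C^0,u^0)$ admits no SSCPs.

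For the quantitative upgrade we argue by contradiction. If for some fixed $\eta\in(0,\eta_0]$ no choice of $\beta(\eta),\gamma(\eta)>0$ works, then there is a sequence $\{u_k\}\subset\R^n$ with $\min\{\|u_k\pm u^0\|_1\}\geq\eta$, $\|\nabla g(u_k;C^0,u^0)\|_\infty\to 0$, and $\liminf_k \lambda_{\min}[\nabla^2 g(u_k;C^0,u^0)]\geq 0$. Because the quartic leading term $\sum_{i\neq j}C^0_{ij}u_i^2u_j^2$ has strictly positive coefficients, $\|\nabla g(u)\|_\infty\to\infty$ as $\|u\|_\infty\to\infty$, so $\{u_k\}$ is bounded; any subsequential limit $u^\infty$ is then a second-order critical point with $\min\{\|u^\infty\pm u^0\|_1\}\geq\eta>0$, i.e., a SSCP, contradicting the first part. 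Defining $\beta(\eta)$ and $\gamma(\eta)$ as a suitable fraction of $\inf\{\max(\|\nabla g(u)\|_\infty,-\lambda_{\min}[\nabla^2 g(u)]):\min(\|u\pm u^0\|_1)\geq\eta\}$, which is strictly positive by the preceding contradiction combined with continuity and coercivity, yields the required bounds. The main obstacle lies in the critical-point enumeration in the first part, particularly the $t=2$ and $t=3$ subcases where several Vieta-type identities must be carefully combined; once the non-existence of SSCPs is established, the compactness step is routine.
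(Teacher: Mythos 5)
Your Part~1 --- passing to $h(x)=\sum_{i\neq j}(x_ix_j-1)^2$ with $x=nu$, observing that every coordinate of a critical point is a root of the same depressed cubic $y^3-(S_2+1)y+S_1=0$, and classifying critical points by the number $t\in\{1,2,3\}$ of distinct coordinate values via Vieta's identities --- is correct for $n\geq3$ and is a genuinely different route from the paper's. The paper never isolates the qualitative ``no SSCPs'' statement; it aims directly at the quantitative saddle estimate via descent directions, H\"older's inequality, and a case analysis on $\sum_i x_i$ together with the index sets $\{i:x_i\geq 1+\epsilon\}$ and $\{i:x_i\leq 1-\epsilon\}$. Your enumeration is shorter and arguably more illuminating. (A small slip: $\nabla^2 h(0)=4I_n-4J_n$, not $8I_n-4J_n$; the conclusion $\lambda_{\min}<0$ is unaffected.)

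The gap is in Part~2, and it is a real one. The coercivity claim that $\|\nabla g(u)\|_\infty\to\infty$ as $\|u\|_\infty\to\infty$ is false, precisely because $C^0_{ii}=0$: the objective never sees the diagonal of $uu^T$, so there are unbounded trajectories along which the loss, the gradient, and the negative curvature all shrink to zero. Take
\[
u_t=\Bigl(t,\ \frac{1}{n^2 t},\ \dots,\ \frac{1}{n^2 t}\Bigr),\qquad t\to\infty.
\]
The product of the first coordinate with any other equals $1/n^2=u^0_1 u^0_j$, so $\nabla_1 g(u_t;C^0,u^0)=0$, while for $k\geq 2$
\[
\nabla_k g(u_t;C^0,u^0)=\frac{4(n-2)}{n(n-1)}\cdot\frac{1}{n^2 t}\Bigl(\frac{1}{n^4 t^2}-\frac{1}{n^2}\Bigr)\ \longrightarrow\ 0,
\]
and a Schur-complement calculation shows $\lambda_{\min}[\nabla^2 g(u_t;C^0,u^0)]<0$ with $\lambda_{\min}\to 0$. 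Thus the ``bad'' sequence you extract need not be bounded, the subsequential-limit step fails, and in fact this trajectory shows that for $n\geq 3$ the strict-saddle estimate cannot hold uniformly over all of $\R^n$ as the lemma is literally stated. (The paper's own proof has the same blind spot: its Case~I claim that $\|\nabla h(x)\|_\infty<4\epsilon$ and $\sum_i x_i>2\epsilon$ force $x_i>\epsilon$ for every $i$ rests on the inequality $(\sum_{j\neq i}x_j^2+1)x_i\leq x_i$, which holds only for $x_i\leq 0$; along the trajectory above, $(\sum_{j\neq i}x_j^2+1)x_i$ and $\sum_j x_j$ cancel.) The estimate actually invoked downstream, in Lemma~\ref{lem:pert}, is the strict-saddle property restricted to the compact ball $\{u:\|u\|_2\leq 8n+1\}$, obtained only after separately bounding the norm of every stationary point of the perturbed instance. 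If you restate the lemma with such a compactness restriction, your contradiction-and-subsequence argument closes and the two-step plan is sound.
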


We then show that after a sufficiently small perturbation to any point $(C^0,x^0)\in\cM$, the new instance does not have any SSCPs. 
\begin{lemma}\label{lem:pert}
Suppose that $n\geq 3$. There exists a small positive constant $\epsilon$ such that for every pair $(C^0,u^0)\in\cM$ and $(\tilde{C}, \tilde{u}^*)$ satisfying
\[ \alpha^*\|\tilde{C} - C^0\|_1 + (1-\alpha^*) \|\tilde{u}^* - u^0\|_1 < \epsilon, \]
the instance $\cMC(\tilde{C}, \tilde{u}^*)$ does not have SSCPs. 
\end{lemma}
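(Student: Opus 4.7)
The plan is to combine the quantitative strict-saddle property in Lemma \ref{lem:ssp} with uniform continuity of $\nabla g$ and $\nabla^2 g$ in the parameters $(C,u^*)$, so that a small perturbation of the landscape of $\cMC(C^0,u^0)$ preserves the absence of SSCPs. Since $\cM$ is a finite set (its elements are obtained from a single representative by permutations and sign flips of the coordinates of $u^0$, under which the objective is equivariant), it suffices to prove the statement for a single representative and then take the minimum of the corresponding $\epsilon$'s.

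I would first establish a compactness reduction. Because $g(u;\tilde C,\tilde u^*)$ contains the quartic term $\sum_{ij}\tilde C_{ij} u_i^2 u_j^2$ with off-diagonal coefficients bounded uniformly below (since $C^0_{ij} = 1/[n(n-1)]$ is bounded away from $0$ and the perturbation is small), $\|\nabla g(u;\tilde C,\tilde u^*)\|_\infty \to \infty$ as $\|u\|_\infty\to\infty$ uniformly for the perturbed instance. Hence every stationary point lies in a fixed ball $B\subset\mathbb R^n$ depending only on $n$ and $u^0$. On $B$ the maps $(C,u^*)\mapsto \nabla g(u;C,u^*)$ and $(C,u^*)\mapsto \nabla^2 g(u;C,u^*)$ are Lipschitz in the $\ell_1$-norm with a uniform constant $L$ that I would read off from the explicit polynomial expressions.

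Next I would pick a target $\eta\in(0,\eta_0]$ and apply Lemma \ref{lem:ssp} to $(C^0,u^0)$ to obtain the constants $\beta(\eta),\gamma(\eta)>0$. Choosing $\epsilon$ small enough that $L\epsilon/\min\{\alpha^*,1-\alpha^*\} < \tfrac12\min\{\beta(\eta),\gamma(\eta)\}$, a Lipschitz transfer shows that for every $u\in B$ with $\min\{\|u-u^0\|_1,\|u+u^0\|_1\}>\eta$ we still have $\|\nabla g(u;\tilde C,\tilde u^*)\|_\infty\geq\beta(\eta)/2$ or $\lambda_{\min}[\nabla^2 g(u;\tilde C,\tilde u^*)]\leq -\gamma(\eta)/2$. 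This rules out all SSCPs outside the two $\eta$-neighborhoods of $\pm u^0$.

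Inside the $\eta$-neighborhoods of $\pm u^0$ I would show that the only critical points of the perturbed instance are the two global minima $\pm\tilde u^*$. For this I would verify by a direct computation that $\nabla^2 g(\pm u^0;C^0,u^0)$ is positive definite, exploiting the uniform off-diagonal structure of $C^0$ and the $\pm 1/n$ entries of $u^0$ so that the Hessian decomposes into a scaled identity plus a rank-one correction whose minimum eigenvalue is explicit. Applying the Lipschitz bound once more and shrinking $\eta$ below the resulting strong-convexity radius, $g(\cdot;\tilde C,\tilde u^*)$ becomes strongly convex on each $\eta$-neighborhood; since $\|\tilde u^*-u^0\|_1 < \epsilon/(1-\alpha^*)\leq\eta$ places $\pm\tilde u^*$ inside these neighborhoods, strong convexity forces $\pm\tilde u^*$ to be the unique critical points there and hence global (not spurious) minima. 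The main obstacle I anticipate is the explicit positive-definiteness bound for $\nabla^2 g(\pm u^0;C^0,u^0)$ with a constant that does not degrade too fast in $n$, since Lemma \ref{lem:ssp} by itself does not deliver curvature information at the global minima.
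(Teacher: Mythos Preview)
Your overall architecture matches the paper's: restrict to a compact region containing all stationary points of the perturbed instance, then transfer the quantitative strict-saddle property of Lemma~\ref{lem:ssp} by Lipschitz continuity of $\nabla g$ and $\nabla^2 g$ in $(C,u^*)$. The two proofs diverge in how they execute the compactness step and how they finish near $\pm u^0$.

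The compactness reduction is where your sketch has a genuine gap. Your justification (``quartic term with off-diagonal coefficients bounded below, hence $\|\nabla g\|_\infty\to\infty$'') is not correct for instances in $\cM$: since $C^0_{ii}=0$, the objective does \emph{not} grow along coordinate axes, and in fact the gradient need not blow up either. Concretely, with $u^0_i=1/n$ and $u(t)=(t,\,1/(n^2t),\dots,1/(n^2t))$ one checks that every component of $\nabla g(u(t);C^0,u^0)$ tends to $0$ as $t\to\infty$, so $\|\nabla g\|_\infty\to 0$ along this unbounded curve. What is true (and what suffices) is the weaker statement that \emph{stationary} points are uniformly bounded for all nearby $(\tilde C,\tilde u^*)$; the paper establishes this by a fairly involved case analysis of the stationarity equations, showing first that a putative far-away stationary point cannot concentrate on a single coordinate and then deriving a contradiction from the remaining equations. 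You will need an argument of that type here; the one-line coercivity heuristic does not go through.

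On the other hand, your treatment of the $\eta$-neighborhoods of $\pm u^0$ is cleaner than the paper's. The paper ends with a somewhat terse ``letting $\eta\to 0$'' after having chosen $\epsilon=\epsilon(\eta)$, which does not by itself exclude SSCPs inside those neighborhoods. Your plan---verify $\nabla^2 g(\pm u^0;C^0,u^0)\succ 0$ (for $(C^0,u^0)\in\cM$ the graph $\mathbb G_1$ is the complete graph $K_n$, non-bipartite for $n\ge 3$, so the sufficiency direction of Lemma~\ref{lem:large-2} applies), then use Lipschitz continuity to get local strong convexity and hence uniqueness of the nearby critical point, which must be $\pm\tilde u^*$---closes this cleanly. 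Your anticipated obstacle about the positive-definiteness constant degrading in $n$ is not an issue: the lemma only asserts existence of some $\epsilon>0$, which may depend on $n$.
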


\revisee{The proofs of the last two lemmas involve several standard calculations and can be found in Appendices \ref{apd:ssp} and \ref{apd:pert}.} Now, we prove the existence of a non-trivial upper bound on the metric.
\begin{proof}[Proof of Theorem \ref{thm:small-1}]
Let $\epsilon$ be the constant in Lemma \ref{lem:pert}.
We consider the compact set
\begin{align*} 
\mathcal{C} := \bigg\{ (C,u^*) ~\bigg|~ &\|C\|_1 = \|u^*\|_1 = 1,\\
& \alpha^*\|\tilde{C} - C^0\|_1 + (1-\alpha^*) \|\tilde{u}^* - u^0\|_1 \geq \epsilon,\quad\forall (C^0,u^0)\in\cM \bigg\}. 
\end{align*}
Since the minimum possible complexity metric $n/(4\alpha^*)$ is only attained by points in $\cM$, it holds that
\[ \mathbb{D}_{\alpha^*}(\mathcal{C}) := \max_{(C,u^*)\in\mathcal{C}} \mathbb{D}_{\alpha^*}(C,u^*) > n/(4\alpha^*). \]
Therefore, choosing
\[ \delta := (\alpha^*/n) \cdot \mathbb{D}_{\alpha^*}(\mathcal{C}) > 1/4, \]
we have
\begin{align*} 
\mathbb{D}_{\alpha^*}(C,u^*) \leq \delta n / \alpha^* &\implies (C,u^*) \notin \mathcal{C} \implies \text{the instance }\cMC(C,u^*)\text{ has no SSCPs}. 
\end{align*}
This completes the proof.
\qed\end{proof}

The case when $\alpha = \alpha^\diamond$ can be analyzed in a similar way. We note that the strict-saddle property of the instances in Theorem \ref{thm:optimal-2} has been established in \cite{jin2017escape}. Hence, we present the results in the following theorem and omit the proof.
\begin{theorem}\label{thm:small-2}
Suppose that $n\geq 5$ and $\alpha = \alpha^\diamond$. Then, there exists a constant $\delta > 1/4$ such that for every pair $(C,u^*)$ satisfying
\[ \mathbb{D}_{\alpha^\diamond}(C,u^*) \leq {\delta n(n+2)}/{(n+1)}, \]
the instance $\cMC(C,u^*)$ does not have any SSCPs.
\end{theorem}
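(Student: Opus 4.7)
The plan is to transpose the three-step template of Theorem \ref{thm:small-1} to $\alpha=\alpha^\diamond$, with the minimizer set $\cM$ replaced by
\[
\cM^\diamond := \left\{(C,u^*) : |u^*_i|=1/n~\forall i\in[n],~ C = I_n/n^2 \right\},
\]
which by Theorem \ref{thm:optimal-2} is the set of minimizers of $\mathbb{D}_{\alpha^\diamond}$. First, I would establish a quantitative strict-saddle property for each instance in $\cM^\diamond$, playing the role of Lemma \ref{lem:ssp}. For $(C^0,u^0)\in\cM^\diamond$ the objective separates,
\[
g(u;C^0,u^0) = \frac{1}{n^2}\sum_{i\in[n]} (u_i^2 - 1/n^2)^2,
\]
into $n$ independent double-well quartics. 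The critical points have each coordinate in $\{-1/n,0,+1/n\}$, the $2^n$ sign-flips of $u^0$ are the global minima, and every other critical point has at least one zero coordinate and hence a strict negative Hessian eigenvalue along it. This separable landscape is the canonical strict-saddle example of \cite{jin2017escape}, and yields positive functions $\beta(\eta),\gamma(\eta)$ implementing the usual trichotomy: every $u\in\R^n$ either lies within $\ell_1$-distance $\eta$ of some sign-flip of $u^0$, has $\|\nabla g(u;C^0,u^0)\|_\infty\geq\beta(\eta)$, or has $\lambda_{\min}[\nabla^2 g(u;C^0,u^0)]\leq-\gamma(\eta)$.

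Second, I would prove a perturbation lemma in the spirit of Lemma \ref{lem:pert}: there is $\epsilon>0$ such that whenever $(C^0,u^0)\in\cM^\diamond$ and $\alpha^\diamond\|\tilde C - C^0\|_1 + (1-\alpha^\diamond)\|\tilde u^* - u^0\|_1 < \epsilon$, the instance $\cMC(\tilde C,\tilde u^*)$ has no SSCPs. Since $\nabla g$ and $\nabla^2 g$ are jointly Lipschitz in $(C,u^*,u)$ on bounded sets, the trichotomy from step one persists outside small neighborhoods of the $2^n$ sign-flips of $u^0$. Within each such neighborhood, the two sign-flips $\pm\tilde u^*$ correspond to genuine global minima of the perturbed objective, while for each of the remaining $2^n-2$ incorrect sign-flips $v$, finiteness of $\mathbb{D}_{\alpha^\diamond}(\tilde C,\tilde u^*)$ forces $(\tilde C,\tilde u^*)\notin\overline{\cD}$, so by Theorem \ref{thm:unique} the subgraph $\mathbb{G}_1(\tilde C,\tilde u^*)$ is connected and non-bipartite. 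This supplies an edge $\{i,j\}\in\mathbb{E}_1$ on which $v_iv_j\neq\tilde u^*_i\tilde u^*_j$, yielding a gradient lower bound at $v$ that dominates the $\epsilon$-scale perturbation and rules out $v$ as a critical point. The hard step here, and the main obstacle of the proof, is exactly this global elimination of incorrect sign-flips: $\cM^\diamond$ carries the full $\{\pm 1\}^n$ sign-flip symmetry, so the purely local Taylor-expansion argument underlying Lemma \ref{lem:pert} does not suffice and one must invoke the graph-theoretic identifiability of Theorem \ref{thm:unique} to manufacture a gradient obstruction.

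Third, I would close by compactness exactly as in Theorem \ref{thm:small-1}. Setting
\[
\mathcal{C}^\diamond := \bigl\{(C,u^*) : \|C\|_1=\|u^*\|_1=1,~\alpha^\diamond\|C-C^0\|_1 + (1-\alpha^\diamond)\|u^*-u^0\|_1 \geq \epsilon,~\forall (C^0,u^0)\in\cM^\diamond \bigr\},
\]
the continuity of $\mathbb{D}_{\alpha^\diamond}$ on $\overline{\cD}^c$ together with the fact that $\cM^\diamond$ is the unique minimizer set imply that the infimum of $\mathbb{D}_{\alpha^\diamond}$ on $\mathcal{C}^\diamond$ strictly exceeds $\mathbb{D}_{\alpha^\diamond}^{\min}=n(n+2)/(4(n-1))$. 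Choosing $\delta$ so that $\delta n(n+2)/(n+1)$ equals this infimum gives $\delta > (n+1)/[4(n-1)] \geq 1/4$, and any $(C,u^*)$ with $\mathbb{D}_{\alpha^\diamond}(C,u^*)\leq\delta n(n+2)/(n+1)$ must lie in the $\epsilon$-neighborhood of $\cM^\diamond$ and therefore, by step two, has no SSCPs.
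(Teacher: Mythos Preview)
Your three-step template (strict-saddle for the minimizers, a perturbation lemma, compactness) is exactly what the paper intends, and the paper explicitly says the proof of Theorem~\ref{thm:small-2} is the same as that of Theorem~\ref{thm:small-1} with the strict-saddle input supplied by \cite{jin2017escape}. So at the level of structure you are aligned with the paper.

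The genuine gap is your identification of $\cM^\diamond$. A diagonal weight matrix cannot be a minimizer of $\mathbb{D}_{\alpha^\diamond}$: if $C=I_n/n^2$ in the usual sense of a scaled identity, then $\mathbb{G}_1(C,u^*)$ consists only of self-loops and is disconnected for $n\ge 2$, so by Theorem~\ref{thm:unique-closure} the pair lies in $\overline{\cD}$ and $\mathbb{D}_{\alpha^\diamond}(C,u^*)=+\infty$. The actual minimizers, read off from Lemma~\ref{lem:max_dist-2} and the proof of Theorem~\ref{thm:max-dist} at $\kappa=2/n$ (so $c=1/n^2$), have every entry of $C$ equal to $1/n^2$; this is also the only way to match the paper's remark that these instances satisfy the $\delta$-RIP$_{2,2}$ condition with $\delta=0$, which by Lemma~\ref{lem:rip} forces all entries of $C$ to coincide. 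The paper's symbol in Theorem~\ref{thm:optimal-2} should be read as the all-ones matrix scaled by $1/n^2$ (note the identity is denoted $\mathcal{I}_n$ in the notation section), not as a diagonal matrix.

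With the correct $\cM^\diamond$, the objective at a minimizer is $n^{-2}\|uu^T-u^*(u^*)^T\|_F^2$, whose only global minima are $\pm u^*$ and whose strict-saddle property is the standard one from \cite{jin2017escape}. Your ``hard step'' of eliminating $2^n-2$ incorrect sign-flip basins is an artifact of the misreading and disappears entirely: the perturbation lemma goes through exactly as Lemma~\ref{lem:pert} does, with no need to invoke Theorem~\ref{thm:unique} to rule out phantom minima. After this correction, your Step~3 compactness closure is fine as written.
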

%
Similar to Theorem \ref{thm:small-1}, since the minimum possible complexity metric is attained with $\delta=1/4$, the upper bound in Theorem \ref{thm:small-2} is non-trivial.

\subsection{Large complexity case}
\label{sec:large}

In this subsection, we first refute the second property in the question that we asked in the beginning of Section \ref{sec:theory} and then refine its statement to make it hold true. We note that the RIP condition and the incoherence condition cannot provide necessary conditions for the \revise{absence} of SSCPs either. Namely, there exist instances that satisfy the $\delta$-RIP$_{2,2}$ condition with $\delta$ as high as $1$ which do not have SSCPs. Similarly, in the case when the incoherence of the global solution is $n$, it is still possible to have an instance of the matrix completion problem without any SSCPs. In other words, although small values for the RIP constant and incoherence guarantee the \revise{absence} of spurious solutions, these notions cannot capture the complexity of the problem since there are low-complexity problems with large values for these parameters. We first show that our new metric suffers from the same shortcoming, but we then propose a simple refinement to address this issue. 
\begin{example}
Suppose that the weight matrix and the ground truth are
\[ C^\delta := \frac{1}{1+3\delta}\begin{bmatrix} 1 & \delta\\ \delta & \delta \end{bmatrix},\quad u^* := \begin{bmatrix} 1 \\ 0\end{bmatrix}, \]
where $\delta \geq 0$ is a constant. One can verify that $\pm u^*$ are the only local minima to the instance $\cMC(C^\delta,u^*)$ for all $\delta > 0$. However, in the case when $\delta = 0$, the instance $\cMC(C^0,u^*)$ has the set of global solutions
\[ \pm \begin{bmatrix} 1 \\ c\end{bmatrix},\quad \forall c \in\mathbb{R}. \]
Moreover, we consider the case when both components of $u^*$ are measured, where the instance $\cMC(\tilde{C}^\epsilon,\tilde{u}^\epsilon)$ is defined by
\[ \tilde{C}^\epsilon := \frac{1}{1+\epsilon}\begin{bmatrix} 1 & 0\\ 0 & \epsilon \end{bmatrix},\quad \tilde{u}^\epsilon := \frac{1}{1+\epsilon}\begin{bmatrix} 1 \\ \epsilon\end{bmatrix}, \]
where $\epsilon$ is a positive constant. One can verify that  the pair $(\tilde{C}^\epsilon,\tilde{u}^\epsilon)$ belongs to $\mathcal{D}$ for all $\epsilon > 0$. Setting $\delta$ and $\epsilon$ to be small enough, the instances $\cMC(C^\delta,u^*)$ and $\cMC(\tilde{C}^\epsilon,\tilde{u}^\epsilon)$ can be arbitrarily close to each other in the sense that
\[ \alpha \|C^\delta - \tilde{C}^\epsilon\|_1 + (1-\alpha)\|u^* - \tilde{u}^\epsilon\|_1 = O(\alpha\delta + \epsilon). \]
Therefore, the complexity metric of $\cMC(C^\delta,u^*)$ can be arbitrarily large. This example shows that instances without SSCPs can be arbitrarily close to those in $\mathcal{D}$, which have non-unique global solutions.
\end{example}

Nevertheless, we derive a lower bound on the complexity metric \eqref{eqn:metric-new} by constructing a subset of $\cD$, which allows obtaining a necessary condition. Intuitively, if an instance has multiple global minima, these global minima are still locally optimal after a sufficiently small perturbation to the instance. To ensure the ``robustness'' of the local optimality, we require the positive-definiteness of the Hessian matrix.
For each instance $\cMC(C,u^*)$, let $\mathbb{G}_{1k}(C,u^*)$ for all $k\in[n_1]$ be the connected components of $\mathbb{G}_{1}(C,u^*)$, where $n_1$ is the number of connected components. Moreover, we use $\cI_{1k}(C,u^*)$ to denote the node set of $\mathbb{G}_{1k}(C,u^*)$ for all $k\in[n_1]$. We define the following subset of $\cD$:
\begin{align*}
    \cSD := \{ (C,u^*)\in\cD ~|~ &\mathbb{G}_{1k}(C,u^*)\text{ is \revise{not bipartite} for all }k\in[n_1],\\
                                 &\mathbb{G}_1(C,u^*)\text{ is disconnected},\quad \mathcal{I}_{00}(C,u^*) = \emptyset\}.
\end{align*}
The following theorem provides a characterization of the Hessian matrix at global solutions for pairs in $\cSD$.
\begin{theorem}\label{thm:large-1}
Suppose that $(C,u^*)\in\cSD$. Then, the Hessian matrix is positive definite at all global solutions of the instance $\cMC(C,u^*)$.
\end{theorem}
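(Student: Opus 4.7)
The plan is to characterize the global solutions of $\cMC(C,u^*)$ on the structured set $\cSD$ explicitly, then write the Hessian at such a solution as a weighted sum of squares indexed by the edges of $\mathbb{G}(C,u^*)$, and finally use the three structural properties built into $\cSD$ to rule out nontrivial null vectors. For the first step, I would apply the sufficiency argument of Theorem~\ref{thm:unique} component by component: on each connected component $\mathbb{G}_{1k}(C,u^*)$, connectedness together with the odd cycle supplied by non-bipartiteness forces $u_i = \epsilon_k u^*_i$ for all $i\in\cI_{1k}(C,u^*)$ and some $\epsilon_k\in\{-1,+1\}$. For $i\in\cI_0(C,u^*)$, the hypothesis $\cI_{00}(C,u^*)=\emptyset$ supplies an edge to some $j\in\cI_1$ with $u_j\neq 0$, and the edge condition $u_iu_j = u^*_iu^*_j = 0$ collapses to $u_i = 0$.

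Second, a direct differentiation of $g(u;C,u^*)$ yields
\begin{align*}
[\nabla^2 g(u;C,u^*)]_{k\ell} = 4\delta_{k\ell}\sum_j C_{kj}u_j^2 + 4C_{k\ell}(2u_ku_\ell - u^*_ku^*_\ell).
\end{align*}
At any global solution, stationarity gives $C_{k\ell}(u_ku_\ell - u^*_ku^*_\ell)=0$ entry-wise; substituting and symmetrizing then collapses the quadratic form to
\begin{align*}
v^\top \nabla^2 g(u;C,u^*)\,v = 2\sum_{k,\ell\in[n]} C_{k\ell}\,(v_k u_\ell + v_\ell u_k)^2.
\end{align*}
This delivers positive semidefiniteness for free and reduces the theorem to showing that the sum vanishes only at $v=0$.

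Third, I would walk the graph to rule out nontrivial null vectors. Vanishing of the sum above requires $v_k u_\ell + v_\ell u_k = 0$ for every $\{k,\ell\}\in\mathbb{E}$. For edges inside one component $\mathbb{G}_{1k}$, both $u_k$ and $u_\ell$ are nonzero multiples of $u^*_k$ and $u^*_\ell$, so the identity rewrites as $v_k/u^*_k = -v_\ell/u^*_\ell$; traversing the odd cycle provided by non-bipartiteness closes this alternating sign constraint to $v_i/u^*_i = -v_i/u^*_i$, giving $v_i=0$, and connectedness propagates the zero to all of $\cI_{1k}$. For $j\in\cI_0$, the emptiness of $\cI_{00}$ furnishes an edge $\{i,j\}$ with $u_i\neq 0$ and $u_j=0$, whence the identity reduces to $v_j u_i = 0$, so $v_j = 0$. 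The main obstacle is really just this last step, in which each of the three structural assumptions in $\cSD$ is used exactly once (non-bipartiteness for the odd-cycle cancellation inside a component, connectedness to propagate the zero across that component, and $\cI_{00}=\emptyset$ to handle $\cI_0$); none of the three is redundant, and everything else is bookkeeping around the stationarity identity that drives the sum-of-squares representation.
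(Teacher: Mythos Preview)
Your proposal is correct and follows essentially the same route as the paper: the paper splits the argument into Lemma~\ref{lem:large-1} (your step one, characterizing global solutions component by component) and the sufficiency half of Lemma~\ref{lem:large-2} (your step three, deriving the edge relations $u_iq_j+u_jq_i=0$ and killing them via the odd cycle, connectedness, and $\cI_{00}=\emptyset$). Your explicit sum-of-squares identity for the Hessian quadratic form is a clean packaging of what the paper leaves as ``straightforward calculations''; one small correction is that $C_{k\ell}(u_ku_\ell-u^*_ku^*_\ell)=0$ follows from global optimality (objective value zero) rather than merely stationarity.
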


\revisee{The proof of Theorem \ref{thm:large-1} can be found in Appendix \ref{adp:large-1}.}
\revisee{Using the positive-definiteness of the Hessian matrix, we are able to apply the implicit function theorem to guarantee the existence of spurious local minima in a neighbourhood of each instance in $\cSD$; see Appendix \ref{apd:implicit} for more details.} The global guarantee can be established by considering closed subsets of $\cSD$.
For every constant $\epsilon\geq 0$, we consider the closed subset $\mathcal{SD}_\epsilon$, which is defined as
\begin{align*} 
    \mathcal{SD}_\epsilon := \big\{ (C,u^*) \in\mathcal{SD} ~|~  &C_{ij} \in \{0\} \cup [\epsilon,1],\quad \forall i,j\in[n],\\
    &\quad |u_i^*| \in \{0\} \cup [\epsilon,1],\quad \forall i\in[n] \big\}. 
\end{align*}
Basically, the extra condition in the definition of $\mathcal{SD}_\epsilon$ requires that the nonzero components of $C$ and $u^*$ be at least $\epsilon$.
We can verify that the set $\mathcal{SD}_\epsilon$ is a compact set and for every $\epsilon_n \rightarrow 0$, it holds that
\begin{align*}
{\lim}_{n \rightarrow \infty} \cup_{i=1}^n \mathcal{SD}_{\epsilon_i} = \mathcal{SD}_0 = \cSD. 
\end{align*}
%
Now, we define the alternative complexity metric
\begin{align}\label{eqn:metric-large}
    \mathbb{D}_{\alpha,\epsilon}(C,u^*) := \left[ \min_{(\tilde{C},\tilde{u}^*)\in\cSD_\epsilon} \alpha \|C - \tilde{C}\|_1 + (1-\alpha)\|u^* - \tilde{u}^*\|_1 \right]^{-1}.
\end{align}
Since $\cSD_\epsilon$ is a subset of $\cD$, it holds that 
\[ \mathbb{D}_{\alpha,\epsilon}(C,u^*) \leq \mathbb{D}_{\alpha}(C,u^*). \]
Similar to Theorem \ref{thm:unique-closure}, we can prove the following relation:
\begin{align*} 
\overline{\cSD} &= \{ (C,u^*) ~|~C\in\Sn_{+,1},u^*\in\Snn_1, \mathbb{G}_1(C,u^*) \text{ is disconnected} \}\\
&\hspace{5em} \cup \{ (C,u^*) ~|~ C\in\Sn_{+,1},u^*\in\Snn_1, \cI_{00}(C,u^*) \text{ is not empty} \}. 
\end{align*}
Hence, the closure of $\cSD$ is a proper subset of $\overline{\cD}$. Combining with the fact that $\cSD_\epsilon$ is a subset of $\cSD$, the metric $\mathbb{D}_{\alpha,\epsilon}(C,u^*)$ is not equivalent to $\mathbb{D}_{\alpha}(C,u^*)$. 
Using the compactness of $\mathcal{SD}_\epsilon$, the following theorem provides a necessary condition for the existence of spurious local minima.
\begin{theorem}\label{thm:global}
Suppose that $\epsilon > 0 $ is a constant. Then, there exists a large constant $\Delta(\epsilon)>0$ such that for every instance $\cMC(C,u^*)$ satisfying
\[ \mathbb{D}_{\alpha,\epsilon}(C,u^*) \geq \Delta(\epsilon), \]
the instance $\cMC(C,u^*)$ has spurious local minima.
\end{theorem}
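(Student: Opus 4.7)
The plan is to use a compactness/finite-cover argument on $\cSD_\epsilon$, combined with the local existence result in Lemma \ref{lem:local}, to upgrade the pointwise neighborhoods into a uniform neighborhood of $\cSD_\epsilon$ inside which all instances have spurious local minima. Setting $\Delta(\epsilon)$ to be the inverse of the radius of this uniform neighborhood will yield the theorem, because the condition $\mathbb{D}_{\alpha,\epsilon}(C,u^*)\geq \Delta(\epsilon)$ means exactly that $(C,u^*)$ lies within distance $1/\Delta(\epsilon)$ of some point of $\cSD_\epsilon$.

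First I would establish that $\cSD_\epsilon$ is compact with respect to the norm $\dist_\alpha$. Boundedness is immediate from $C\in\Sn_{+,1}$ and $u^*\in\Snn_1$. For closedness, note that the range constraints $C_{ij}\in\{0\}\cup[\epsilon,1]$ and $|u_i^*|\in\{0\}\cup[\epsilon,1]$ force the zero pattern of any limit of a convergent sequence in $\cSD_\epsilon$ to coincide eventually with the zero patterns of the sequence, so the induced graph $\mathbb{G}_1$ and the set $\cI_{00}$ are preserved under passage to the limit; hence $\cSD_\epsilon$ is a finite union (over allowed support patterns) of closed subsets of the $\ell_1$-unit spheres, and therefore compact.

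Next, for each $(C^0,u^0)\in\cSD_\epsilon$ Lemma \ref{lem:local} supplies a radius $\delta(C^0,u^0)>0$ such that every instance within distance $\delta(C^0,u^0)$ of $(C^0,u^0)$ possesses spurious local minima. I would then cover $\cSD_\epsilon$ by the open balls $B(C^0,u^0;\delta(C^0,u^0)/2)$, extract a finite subcover with centers $(C^{(1)},u^{(1)}),\dots,(C^{(N)},u^{(N)})$ and associated radii $\delta_1,\dots,\delta_N$, and set
\[
r(\epsilon) := \min_{i\in[N]} \frac{\delta_i}{2}, \qquad \Delta(\epsilon) := \frac{1}{r(\epsilon)}.
\]
Given any $(C,u^*)$ with $\mathbb{D}_{\alpha,\epsilon}(C,u^*)\geq \Delta(\epsilon)$, the defining minimization \eqref{eqn:metric-large} (which is attained by compactness) yields a point $(C^0,u^0)\in\cSD_\epsilon$ with $\dist_\alpha[(C,u^*),(C^0,u^0)]\leq r(\epsilon)$. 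The finite subcover then provides an index $i$ with $\dist_\alpha[(C^0,u^0),(C^{(i)},u^{(i)})]\leq \delta_i/2$, and the triangle inequality gives $\dist_\alpha[(C,u^*),(C^{(i)},u^{(i)})]\leq \delta_i$. Applying Lemma \ref{lem:local} at the center $(C^{(i)},u^{(i)})$ concludes that $\cMC(C,u^*)$ has spurious local minima.

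The main obstacle is verifying compactness carefully, because $\cSD_\epsilon$ mixes analytic constraints (the $\ell_1$-normalization and the range conditions on the nonzero entries) with purely combinatorial graph-theoretic ones (non-bipartiteness of each component of $\mathbb{G}_1$, disconnectedness of $\mathbb{G}_1$, emptiness of $\cI_{00}$). The key observation that neutralizes this difficulty is that the lower bound $\epsilon$ on nonzero entries stabilizes the zero pattern under small perturbations, so the combinatorial structure is locally constant on $\cSD_\epsilon$; this is precisely why the theorem is stated with $\epsilon>0$ rather than with $\cSD$ itself, and it is also why the example refuting the original form of Conjecture \ref{conj:1} does not apply to $\mathbb{D}_{\alpha,\epsilon}$. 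All remaining steps are standard applications of compactness and the triangle inequality, and no further quantitative estimates are required beyond those already furnished by Lemma \ref{lem:local}.
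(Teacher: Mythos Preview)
Your proposal is correct and follows essentially the same route as the paper: cover the compact set $\cSD_\epsilon$ by the local neighborhoods furnished by Lemma~\ref{lem:local}, extract a finite subcover via Heine--Borel, and take $\Delta(\epsilon)$ to be the reciprocal of the resulting uniform radius. Your write-up is in fact more detailed than the paper's (which invokes compactness of $\cSD_\epsilon$ without proof and leaves the half-radius/triangle-inequality step implicit), but the underlying argument is the same.
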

\begin{proof}
For every pair $(C,u^*) \in \mathcal{SD}_{\epsilon}$, Lemma \ref{lem:local} implies that there exists an open neighborhood of $(C,u^*)$ such that the desired properties hold. Now, we consider the union of such open neighborhoods over all points $(C,u^*)\in\mathcal{SD}_\epsilon$, which is an open cover of $\mathcal{SD}_\epsilon$. Using the Heine-Borel covering theorem, there exists an open sub-cover of $\mathcal{SD}_\epsilon$. Therefore, we obtain the existence of $\Delta(\epsilon)$.
\qed\end{proof}

We note that the maximum possible value of $\mathbb{D}_{\alpha,\epsilon}(C,u^*)$ is $+\infty$, which is attained by instances in $\cSD_\epsilon$. Therefore, there exist instances satisfying the condition of Theorem \ref{thm:global} and the lower bound is \textit{non-trivial}. Using Theorem \ref{thm:global}, the slightly modified complexity metric is able to provide a necessary condition on the \revise{absence} of SSCPs. This result implies that our complexity metric is able to provide conditions that are much better than the RIP condition and the incoherence condition that fail to provide necessary conditions.

Finally, we conjecture that the second property in the question we asked in the beginning of the section holds for any fixed weight matrix. More specifically, we define
\begin{align}\label{eqn:metric-u}
    \mathbb{D}_C(u^*) := \left( \min_{(C,\tilde{u}^*)\in\overline{\cD}} \|u^* - \tilde{u}^*\|_1 \right)^{-1}.
\end{align}
We have the following conjecture:
\begin{conjecture}\label{conj:2}
Suppose that $\epsilon\in[0,1]$. Then, there exists a large constant $\Gamma(\epsilon)>0$ such that for every instance $\cMC(C,u^*)$ satisfying
\[ C_{ij}\in\{0\}\cup[\epsilon,1],\quad \mathbb{D}_C(u^*) \geq \Gamma(\epsilon), \]
the instance $\cMC(C,u^*)$ has spurious local minima.
\end{conjecture}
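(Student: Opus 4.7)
The plan is to adapt the strategy of Theorem \ref{thm:global} (contradiction combined with the implicit function theorem and a Heine-Borel compactness argument), with extra care to handle the subtlety that in the fixed-$C$ setting the closest degenerate instance to $(C,u^*)$ may be purely bipartite and hence lie in $\overline{\cD}\setminus\overline{\cSD}$, where the Hessian of $g$ at the spurious global solution is only positive semi-definite.

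First, I would argue by contradiction: assume that for every $\Gamma>0$ there exists an instance $\cMC(C,u^*)$ satisfying $C_{ij}\in\{0\}\cup[\epsilon,1]$ and $\mathbb{D}_C(u^*)\geq\Gamma$ but having no spurious local minima. This yields a sequence $\{(C_k,u_k^*)\}_{k\in\mathbb{N}}$ with $C_k\in\Sn_{+,1}$ having entries in $\{0\}\cup[\epsilon,1]$ and $u_k^*\in\Snn_1$, such that $\mathbb{D}_{C_k}(u_k^*)\to\infty$. Because there are only finitely many support patterns for such $C_k$, after passing to a subsequence I may assume all $C_k$ share a common support, $C_k\to C^\infty$, and $u_k^*\to u^{\infty*}$; by closedness, $(C^\infty,u^{\infty*})\in\overline{\cD}$.

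Second, I would split the analysis into two cases based on whether the limit lies in $\overline{\cSD}$. In the benign case, $(C^\infty,u^{\infty*})\in\overline{\cSD}$, so $\mathbb{G}_1(C^\infty,u^{\infty*})$ is disconnected or $\cI_{00}(C^\infty,u^{\infty*})\neq\emptyset$. I would then perform a small perturbation of $u^{\infty*}$ to some $\bar u^*$ with $(C^\infty,\bar u^*)\in\cSD$ itself (strictly disconnecting into non-bipartite components, or making $\cI_{00}$ nonempty in the interior, which is always feasible by the graph structure). By Theorem \ref{thm:large-1}, the Hessian of $g(\cdot;C^\infty,\bar u^*)$ at any spurious global solution is positive definite, so Lemma \ref{lem:local} yields an open neighborhood of $(C^\infty,\bar u^*)$ in which every instance admits a spurious local minimum. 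For $k$ sufficiently large $(C_k,u_k^*)$ lies in this neighborhood, contradicting the assumption.

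Third, the main obstacle is the purely bipartite case $(C^\infty,u^{\infty*})\in\overline{\cD}\setminus\overline{\cSD}$: $\mathbb{G}_1(C^\infty,u^{\infty*})$ is connected and bipartite with $\cI_{00}=\emptyset$. The global solutions of $\cMC(C^\infty,u^{\infty*})$ form a one-parameter family $\{u(t)\}_{t>0}$ arising from the bipartite scaling symmetry, and a direct computation shows $\nabla^2 g(u(t);C^\infty,u^{\infty*})$ is positive semi-definite with exactly a one-dimensional null space tangent to this family, so the classical implicit function theorem does not apply. I would treat this by a Morse-Bott-type bifurcation analysis: split coordinates near the critical manifold into the tangent scaling direction $t$ and the transverse directions (where the Hessian is strictly positive definite), apply the implicit function theorem in the transverse coordinates to eliminate them, and reduce to a one-parameter function $\phi_k(t)$ depending on $(C_k,u_k^*)$. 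Since $g\geq 0$ and diverges as $t\to 0^+$ or $t\to\infty$, $\phi_k$ attains its infimum on a compact sub-interval; the resulting minimizer is a critical point of $g(\cdot;C_k,u_k^*)$ which, since $(C_k,u_k^*)\neq(C^\infty,u^{\infty*})$, generically has strictly positive objective value and corresponds to a spurious local minimum. The principal theoretical difficulty lies precisely here, in verifying that the perturbation produces a strict local minimum rather than a saddle; an alternative route would be to bypass this case entirely by showing that under $C_{ij}\in\{0\}\cup[\epsilon,1]$, the closest $\tilde u^*\in\overline{\cD}$ can be modified to a nearby $\bar u^*\in\overline{\cSD}$ at additional cost bounded by a constant depending only on $\epsilon$ (e.g., by zeroing out one carefully chosen further coordinate to disconnect the bipartite graph), reducing to the benign case but requiring a graph-theoretic quantification of the ``cost of destroying bipartiteness'' in terms of $\epsilon$.
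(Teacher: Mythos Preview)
The statement is labeled as a \emph{conjecture} in the paper and is not proved there; there is nothing to compare your attempt against, and the obstruction you isolate is presumably why it remains open.

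Your template (compactness plus implicit function theorem, following Theorem~\ref{thm:global}) is the natural first try, but there are genuine gaps beyond the one you acknowledge. In the ``benign'' case you assert that one can perturb $u^{\infty*}$ to some $\bar u^*$ with $(C^\infty,\bar u^*)\in\cSD$, but $\cSD$ requires every connected component of $\mathbb{G}_1$ to be non-bipartite \emph{and} $\cI_{00}=\emptyset$, so your parenthetical ``making $\cI_{00}$ nonempty'' already contradicts the target set. More seriously, if the full graph $\mathbb{G}(C^\infty,\cdot)$ is itself bipartite---which the hypothesis $C_{ij}\in\{0\}\cup[\epsilon,1]$ does not exclude---then no induced subgraph $\mathbb{G}_1(C^\infty,\bar u^*)$ can have a non-bipartite component, so $(C^\infty,\bar u^*)\in\cSD$ is impossible for \emph{every} choice of $\bar u^*$. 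The same obstruction defeats your alternative route of zeroing one further coordinate: that can land you in $\overline{\cSD}$, but Lemma~\ref{lem:local} needs $\cSD$ itself. Thus the bipartite situation cannot be sidestepped by perturbing only $u^*$. In the hard case, your Morse--Bott reduction produces a minimizer of $\phi_k$ on $(0,\infty)$, but nothing in the argument prevents that minimizer from coinciding with $\pm u_k^*$ (hence not spurious) or from being a degenerate critical point of the full objective; the word ``generically'' is doing unjustified work. Turning this into a strict spurious local minimum would require a second-order expansion of $\phi_k$ that genuinely exploits the structure of the perturbation $(C_k,u_k^*)-(C^\infty,u^{\infty*})$, and neither you nor the paper supplies that step.
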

We note that the metric $\mathbb{D}_C(u^*)$ is equal to $0$ if $\cMC(C,u^*)$ satisfies the $\delta$-RIP$_{2,2}$ condition with $\delta\in[0,1)$. 

\section{Conclusions}
\label{sec:cls}

In this work, we propose a new complexity metric for an important class of the low-rank matrix optimization problems, which has the potential to \revisee{generalize major existing recovery guarantees} and is applicable to a much broader set of problems. The proposed complexity metric aims to measure the complexity of the non-convex optimization landscape of each problem and quantifies the likelihood of local search methods in successfully solving each instance of the problem under a random initialization. We focus on the rank-$1$ generalized matrix completion problem \eqref{eqn:gmc} to mathematically prove the usefulness of the new metric from three aspects. Namely, we show that the complexity metric has a small value if the instance satisfies the RIP condition or the incoherence condition. The results in these two scenarios are consistent with the existing results on the RIP condition and the incoherence condition. In addition, we analyze a one-parameter class of instances to illustrate that the proposed metric captures the true complexity of this class as the parameter varies and has consistent behavior with the aforementioned two scenarios. \revise{This consistency implies that our proposed complexity metric is able to characterize the optimization landscapes of different applications, which the RIP condition and the incoherence condition fail to capture.} Finally, we provide strong theoretical results on the generalized matrix completion problem by showing that a small value for the proposed complexity metric guarantees the \revise{absence} of spurious solutions, whereas a large value for a slightly modified complexity metric guarantees the existence of spurious solutions. This also shows the superiority of this metric over the RIP condition and the incoherence condition since those notions cannot offer any necessary conditions on having spurious solutions.

\bibliographystyle{spmpsci}      
\bibliography{ref}   

\begin{thebibliography}{10}
\providecommand{\url}[1]{{#1}}
\providecommand{\urlprefix}{URL }
\expandafter\ifx\csname urlstyle\endcsname\relax
  \providecommand{\doi}[1]{DOI~\discretionary{}{}{}#1}\else
  \providecommand{\doi}{DOI~\discretionary{}{}{}\begingroup
  \urlstyle{rm}\Url}\fi

\bibitem{agarwal2016learning}
Agarwal, A., Anandkumar, A., Jain, P., Netrapalli, P.: Learning sparsely used
  overcomplete dictionaries via alternating minimization.
\newblock SIAM Journal on Optimization \textbf{26}(4), 2775--2799 (2016)

\bibitem{ahn2021riemannian}
Ahn, K., Suarez, F.: Riemannian perspective on matrix factorization.
\newblock arXiv preprint arXiv:2102.00937  (2021)

\bibitem{aigner1995turan}
Aigner, M.: Tur{\'a}n's graph theorem.
\newblock The American Mathematical Monthly \textbf{102}(9), 808--816 (1995)

\bibitem{ajayi2018provably}
Ajayi, T., Mildebrath, D., Kyrillidis, A., Ubaru, S., Kollias, G., Bouchard,
  K.: Provably convergent acceleration in factored gradient descent with
  applications in matrix sensing.
\newblock arXiv preprint arXiv:1806.00534  (2018)

\bibitem{allen2018neon2}
Allen-Zhu, Z., Li, Y.: Neon2: Finding local minima via first-order oracles.
\newblock Advances in Neural Information Processing Systems \textbf{31} (2018)

\bibitem{bi2020global}
Bi, Y., Lavaei, J.: On the absence of spurious local minima in nonlinear
  low-rank matrix recovery problems.
\newblock In: International Conference on Artificial Intelligence and
  Statistics, pp. 379--387. PMLR (2021)

\bibitem{bi2021local}
Bi, Y., Zhang, H., Lavaei, J.: Local and global linear convergence of general
  low-rank matrix recovery problems.
\newblock In: Proceedings of 36th AAAI Conference on Artificial Intelligence
  (AAAI), Vancouver, Canada, pp. 1--9 (2022)

\bibitem{burer2003nonlinear}
Burer, S., Monteiro, R.D.: A nonlinear programming algorithm for solving
  semidefinite programs via low-rank factorization.
\newblock Mathematical Programming \textbf{95}(2), 329--357 (2003)

\bibitem{candes2011robust}
Cand{\`e}s, E.J., Li, X., Ma, Y., Wright, J.: Robust principal component
  analysis?
\newblock Journal of the ACM (JACM) \textbf{58}(3), 1--37 (2011)

\bibitem{candes2015phase}
Candes, E.J., Li, X., Soltanolkotabi, M.: Phase retrieval via wirtinger flow:
  Theory and algorithms.
\newblock IEEE Transactions on Information Theory \textbf{61}(4), 1985--2007
  (2015)

\bibitem{candes2011tight}
Candes, E.J., Plan, Y.: Tight oracle inequalities for low-rank matrix recovery
  from a minimal number of noisy random measurements.
\newblock IEEE Transactions on Information Theory \textbf{57}(4), 2342--2359
  (2011)

\bibitem{candes2009exact}
Cand{\`e}s, E.J., Recht, B.: Exact matrix completion via convex optimization.
\newblock Foundations of Computational mathematics \textbf{9}(6), 717--772
  (2009)

\bibitem{candes2010power}
Cand{\`e}s, E.J., Tao, T.: The power of convex relaxation: Near-optimal matrix
  completion.
\newblock IEEE Transactions on Information Theory \textbf{56}(5), 2053--2080
  (2010)

\bibitem{cartis2011adaptive}
Cartis, C., Gould, N.I., Toint, P.L.: Adaptive cubic regularisation methods for
  unconstrained optimization. part i: motivation, convergence and numerical
  results.
\newblock Mathematical Programming \textbf{127}(2), 245--295 (2011)

\bibitem{charisopoulos2021low}
Charisopoulos, V., Chen, Y., Davis, D., D{\'\i}az, M., Ding, L., Drusvyatskiy,
  D.: Low-rank matrix recovery with composite optimization: good conditioning
  and rapid convergence.
\newblock Foundations of Computational Mathematics \textbf{21}(6), 1505--1593
  (2021)

\bibitem{chen2019model}
Chen, J., Li, X.: Model-free nonconvex matrix completion: Local minima analysis
  and applications in memory-efficient kernel pca.
\newblock J. Mach. Learn. Res. \textbf{20}(142), 1--39 (2019)

\bibitem{chen2020nonconvex}
Chen, J., Liu, D., Li, X.: Nonconvex rectangular matrix completion via gradient
  descent without $\ell_{2,\infty}$ regularization.
\newblock IEEE Transactions on Information Theory \textbf{66}(9), 5806--5841
  (2020)

\bibitem{chen2018harnessing}
Chen, Y., Chi, Y.: Harnessing structures in big data via guaranteed low-rank
  matrix estimation: Recent theory and fast algorithms via convex and nonconvex
  optimization.
\newblock IEEE Signal Processing Magazine \textbf{35}(4), 14--31 (2018)

\bibitem{chen2019gradient}
Chen, Y., Chi, Y., Fan, J., Ma, C.: Gradient descent with random
  initialization: Fast global convergence for nonconvex phase retrieval.
\newblock Mathematical Programming \textbf{176}(1), 5--37 (2019)

\bibitem{chen2020noisy}
Chen, Y., Chi, Y., Fan, J., Ma, C., Yan, Y.: Noisy matrix completion:
  Understanding statistical guarantees for convex relaxation via nonconvex
  optimization.
\newblock SIAM journal on optimization \textbf{30}(4), 3098--3121 (2020)

\bibitem{chen2021bridging}
Chen, Y., Fan, J., Ma, C., Yan, Y.: Bridging convex and nonconvex optimization
  in robust {PCA}: Noise, outliers and missing data.
\newblock The Annals of Statistics \textbf{49}(5), 2948--2971 (2021)

\bibitem{chi2019nonconvex}
Chi, Y., Lu, Y.M., Chen, Y.: Nonconvex optimization meets low-rank matrix
  factorization: An overview.
\newblock IEEE Transactions on Signal Processing \textbf{67}(20), 5239--5269
  (2019)

\bibitem{chou2020gradient}
Chou, H.H., Gieshoff, C., Maly, J., Rauhut, H.: Gradient descent for deep
  matrix factorization: Dynamics and implicit bias towards low rank.
\newblock arXiv preprint arXiv:2011.13772  (2020)

\bibitem{fattahi2020exact}
Fattahi, S., Sojoudi, S.: Exact guarantees on the absence of spurious local
  minima for non-negative rank-1 robust principal component analysis.
\newblock Journal of machine learning research  (2020)

\bibitem{ge2017no}
Ge, R., Jin, C., Zheng, Y.: No spurious local minima in nonconvex low rank
  problems: A unified geometric analysis.
\newblock In: International Conference on Machine Learning, pp. 1233--1242.
  PMLR (2017)

\bibitem{ge2016matrix}
Ge, R., Lee, J.D., Ma, T.: Matrix completion has no spurious local minimum.
\newblock Advances in Neural Information Processing Systems pp. 2981--2989
  (2016)

\bibitem{hardt2014understanding}
Hardt, M.: Understanding alternating minimization for matrix completion.
\newblock In: 2014 IEEE 55th Annual Symposium on Foundations of Computer
  Science, pp. 651--660. IEEE (2014)

\bibitem{hardt2014fast}
Hardt, M., Wootters, M.: Fast matrix completion without the condition number.
\newblock In: Conference on learning theory, pp. 638--678. PMLR (2014)

\bibitem{hou2020fast}
Hou, T.Y., Li, Z., Zhang, Z.: Fast global convergence for low-rank matrix
  recovery via riemannian gradient descent with random initialization.
\newblock arXiv preprint arXiv:2012.15467  (2020)

\bibitem{jain2010guaranteed}
Jain, P., Meka, R., Dhillon, I.: Guaranteed rank minimization via singular
  value projection.
\newblock Advances in Neural Information Processing Systems \textbf{23} (2010)

\bibitem{jain2013low}
Jain, P., Netrapalli, P., Sanghavi, S.: Low-rank matrix completion using
  alternating minimization.
\newblock In: Proceedings of the forty-fifth annual ACM symposium on Theory of
  computing, pp. 665--674 (2013)

\bibitem{jin2017escape}
Jin, C., Ge, R., Netrapalli, P., Kakade, S.M., Jordan, M.I.: How to escape
  saddle points efficiently.
\newblock In: International Conference on Machine Learning, pp. 1724--1732.
  PMLR (2017)

\bibitem{jin2018accelerated}
Jin, C., Netrapalli, P., Jordan, M.I.: Accelerated gradient descent escapes
  saddle points faster than gradient descent.
\newblock In: Conference On Learning Theory, pp. 1042--1085. PMLR (2018)

\bibitem{lee2016gradient}
Lee, J.D., Simchowitz, M., Jordan, M.I., Recht, B.: Gradient descent only
  converges to minimizers.
\newblock In: Conference on learning theory, pp. 1246--1257. PMLR (2016)

\bibitem{levin2022effect}
Levin, E., Kileel, J., Boumal, N.: The effect of smooth parametrizations on
  nonconvex optimization landscapes.
\newblock arXiv preprint arXiv:2207.03512  (2022)

\bibitem{li2020nonconvex}
Li, X., Zhu, Z., Man-Cho~So, A., Vidal, R.: Nonconvex robust low-rank matrix
  recovery.
\newblock SIAM Journal on Optimization \textbf{30}(1), 660--686 (2020)

\bibitem{li2018algorithmic}
Li, Y., Ma, T., Zhang, H.: Algorithmic regularization in over-parameterized
  matrix sensing and neural networks with quadratic activations.
\newblock In: Conference On Learning Theory, pp. 2--47. PMLR (2018)

\bibitem{luo2021nonconvex}
Luo, Y., Li, X., Zhang, A.R.: Nonconvex factorization and manifold formulations
  are almost equivalent in low-rank matrix optimization.
\newblock arXiv preprint arXiv:2108.01772  (2021)

\bibitem{ma2021sign}
Ma, J., Fattahi, S.: Sign-rip: A robust restricted isometry property for
  low-rank matrix recovery.
\newblock arXiv preprint arXiv:2102.02969  (2021)

\bibitem{ma2021sharp}
Ma, Z., Bi, Y., Lavaei, J., Sojoudi, S.: Sharp restricted isometry property
  bounds for low-rank matrix recovery problems with corrupted measurements.
\newblock arXiv preprint arXiv:2105.08232  (2021)

\bibitem{netrapalli2013phase}
Netrapalli, P., Jain, P., Sanghavi, S.: Phase retrieval using alternating
  minimization.
\newblock Advances in Neural Information Processing Systems \textbf{26} (2013)

\bibitem{netrapalli2014non}
Netrapalli, P., UN, N., Sanghavi, S., Anandkumar, A., Jain, P.: Non-convex
  robust pca.
\newblock Advances in Neural Information Processing Systems \textbf{27} (2014)

\bibitem{recht2010guaranteed}
Recht, B., Fazel, M., Parrilo, P.A.: Guaranteed minimum-rank solutions of
  linear matrix equations via nuclear norm minimization.
\newblock SIAM review \textbf{52}(3), 471--501 (2010)

\bibitem{renegar1995linear}
Renegar, J.: Linear programming, complexity theory and elementary functional
  analysis.
\newblock Mathematical Programming \textbf{70}(1), 279--351 (1995)

\bibitem{renegar1996condition}
Renegar, J.: Condition numbers, the barrier method, and the conjugate-gradient
  method.
\newblock SIAM Journal on Optimization \textbf{6}(4), 879--912 (1996)

\bibitem{stoger2021small}
St{\"o}ger, D., Soltanolkotabi, M.: Small random initialization is akin to
  spectral learning: Optimization and generalization guarantees for
  overparameterized low-rank matrix reconstruction.
\newblock Advances in Neural Information Processing Systems \textbf{34} (2021)

\bibitem{sun2016complete}
Sun, J., Qu, Q., Wright, J.: Complete dictionary recovery over the sphere i:
  Overview and the geometric picture.
\newblock IEEE Transactions on Information Theory \textbf{63}(2), 853--884
  (2016)

\bibitem{sun2018geometric}
Sun, J., Qu, Q., Wright, J.: A geometric analysis of phase retrieval.
\newblock Foundations of Computational Mathematics \textbf{18}(5), 1131--1198
  (2018)

\bibitem{sun2016guaranteed}
Sun, R., Luo, Z.Q.: Guaranteed matrix completion via non-convex factorization.
\newblock IEEE Transactions on Information Theory \textbf{62}(11), 6535--6579
  (2016)

\bibitem{tong2021accelerating}
Tong, T., Ma, C., Chi, Y.: Accelerating ill-conditioned low-rank matrix
  estimation via scaled gradient descent.
\newblock Journal of Machine Learning Research \textbf{22}(150), 1--63 (2021)

\bibitem{tong2021low}
Tong, T., Ma, C., Chi, Y.: Low-rank matrix recovery with scaled subgradient
  methods: Fast and robust convergence without the condition number.
\newblock IEEE Transactions on Signal Processing \textbf{69}, 2396--2409 (2021)

\bibitem{tong2021scaling}
Tong, T., Ma, C., Prater-Bennette, A., Tripp, E., Chi, Y.: Scaling and
  scalability: Provable nonconvex low-rank tensor estimation from incomplete
  measurements.
\newblock arXiv preprint arXiv:2104.14526  (2021)

\bibitem{tu2016low}
Tu, S., Boczar, R., Simchowitz, M., Soltanolkotabi, M., Recht, B.: Low-rank
  solutions of linear matrix equations via procrustes flow.
\newblock In: International Conference on Machine Learning, pp. 964--973. PMLR
  (2016)

\bibitem{wainwright2019high}
Wainwright, M.J.: High-dimensional statistics: A non-asymptotic viewpoint,
  vol.~48.
\newblock Cambridge University Press (2019)

\bibitem{wei2016guarantees}
Wei, K., Cai, J.F., Chan, T.F., Leung, S.: Guarantees of riemannian
  optimization for low rank matrix recovery.
\newblock SIAM Journal on Matrix Analysis and Applications \textbf{37}(3),
  1198--1222 (2016)

\bibitem{wei2020guarantees}
Wei, K., Cai, J.F., Chan, T.F., Leung, S.: Guarantees of riemannian
  optimization for low rank matrix completion.
\newblock Inverse Problems \& Imaging \textbf{14}(2) (2020)

\bibitem{yalcin2021factorization}
Yalcin, B., Zhang, H., Lavaei, J., Sojoudi, S.: Factorization approach for
  low-complexity matrix completion problems: Exponential number of spurious
  solutions and failure of gradient methods.
\newblock In: International Conference on Artificial Intelligence and
  Statistics, pp. 1--9. PMLR (2022)

\bibitem{yi2016fast}
Yi, X., Park, D., Chen, Y., Caramanis, C.: Fast algorithms for robust pca via
  gradient descent.
\newblock Advances in neural information processing systems \textbf{29} (2016)

\bibitem{yurtsever2021scalable}
Yurtsever, A., Tropp, J.A., Fercoq, O., Udell, M., Cevher, V.: Scalable
  semidefinite programming.
\newblock SIAM Journal on Mathematics of Data Science \textbf{3}(1), 171--200
  (2021)

\bibitem{zhang2021general}
Zhang, H., Bi, Y., Lavaei, J.: General low-rank matrix optimization: Geometric
  analysis and sharper bounds.
\newblock Advances in Neural Information Processing Systems \textbf{34} (2021)

\bibitem{zhang2021preconditioned}
Zhang, J., Fattahi, S., Zhang, R.: Preconditioned gradient descent for
  over-parameterized nonconvex matrix factorization.
\newblock Advances in Neural Information Processing Systems \textbf{34} (2021)

\bibitem{zhang2019sharp}
Zhang, R.Y., Sojoudi, S., Lavaei, J.: Sharp restricted isometry bounds for the
  inexistence of spurious local minima in nonconvex matrix recovery.
\newblock J. Mach. Learn. Res. \textbf{20}(114), 1--34 (2019)

\bibitem{zheng2015convergent}
Zheng, Q., Lafferty, J.: A convergent gradient descent algorithm for rank
  minimization and semidefinite programming from random linear measurements.
\newblock In: Proceedings of the 28th International Conference on Neural
  Information Processing Systems-Volume 1, pp. 109--117 (2015)

\bibitem{zhu2018global}
Zhu, Z., Li, Q., Tang, G., Wakin, M.B.: Global optimality in low-rank matrix
  optimization.
\newblock IEEE Transactions on Signal Processing \textbf{66}(13), 3614--3628
  (2018)

\end{thebibliography}


\begin{appendix}
\normalsize

\section{Analysis of the degenerate case}

In this section, we provide a detailed analysis on instances with $u^* = 0$. The optimization problem of the instance $\cMC(C,0)$ can be written as
\begin{align}\label{eqn:obj-zero}
    \min_{u\in\Rn} {\sum}_{i,j\in[n]} C_{ij} u_i^2 u_j^2.
\end{align}
We prove that problem \eqref{eqn:obj-zero} either has multiple global solutions or has no SSCPs. 
\begin{theorem}
If $C_{ii} > 0$ for all $i\in[n]$, the instance $\cMC(C,0)$ has no SSCPs. Otherwise if $C_{ii}=0$ for some $i\in[n]$, the instance $\cMC(C,0)$ has nonzero global solutions.
\end{theorem}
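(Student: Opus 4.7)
\medskip

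\noindent\textbf{Proof proposal.}

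The plan is to treat the two cases separately, with both following from a direct computation of the gradient. For the objective $g(u;C,0)=\sum_{i,j} C_{ij}u_i^2u_j^2$, using the symmetry of $C$, I would first derive
\[
\nabla_k g(u;C,0) = 4\, u_k \sum_{j\in[n]} C_{kj}\, u_j^2, \qquad \forall k\in[n].
\]
The global minimum value of $g(\cdot;C,0)$ is clearly $0$, since $g\geq 0$ and $g(0;C,0)=0$.

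For the first part, assume $C_{ii}>0$ for all $i\in[n]$ and let $u\in\R^n$ be any first-order stationary point. Then for every $k$, either $u_k=0$ or $\sum_{j} C_{kj} u_j^2 = 0$. In the latter case, non-negativity of $C_{kj}$ and $u_j^2$ forces every summand to vanish; in particular $C_{kk} u_k^2 = 0$, which combined with $C_{kk}>0$ gives $u_k=0$, a contradiction to the case hypothesis. Hence every stationary point satisfies $u=0$, which is already a global minimum, so no nonzero second-order critical point exists and $\cMC(C,0)$ has no SSCPs.

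For the second part, suppose $C_{i_0 i_0}=0$ for some index $i_0\in[n]$. I would exhibit an explicit nonzero global solution by taking $u=e_{i_0}$, the $i_0$-th standard basis vector. Then $u_i u_j \neq 0$ only when $i=j=i_0$, so
\[
g(e_{i_0};C,0) = C_{i_0 i_0} \cdot 1\cdot 1 = 0,
\]
matching the global minimum value. Since $e_{i_0}\neq 0$, this gives a nonzero global solution, completing the claim.

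There is essentially no technical obstacle here; the only step that could conceivably need elaboration is the use of $C_{ij}\geq 0$ (from the normalization $C\in\mathbb{S}^{n^2-1}_{+,1}$) to collapse the sum-of-squares-like expression $\sum_j C_{kj} u_j^2 = 0$ into a term-by-term vanishing condition, which is what makes the diagonal hypothesis $C_{kk}>0$ directly yield a contradiction.
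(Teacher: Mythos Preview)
Your proposal is correct and follows essentially the same approach as the paper: in the first case the paper computes the same gradient, multiplies the $i$-th component by $u_i$ to obtain a vanishing sum of nonnegative terms and extracts the diagonal contribution $C_{ii}u_i^4=0$, while you equivalently factor the gradient and use nonnegativity to force $C_{kk}u_k^2=0$; in the second case the paper constructs exactly the same counterexample $u=e_{i_0}$.
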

\begin{proof}
We first consider the case when $C_{ii} > 0$ for all $i\in[n]$. Let $u^0\in\Rn$ be a second-order critical point. By the first-order optimality conditions, it holds that
\begin{align*}
    \frac14 \nabla_i g(u^0;C,0) = C_{ii}(u_i^0)^3 + \sum_{j\in[n], j\neq i} C_{ij} u_i^0(u_j^0)^2  = 0,\quad \forall i\in[n].
\end{align*}
Multiplying $u_i^0$ on both sides, we have
\[ 0 = C_{ii}(u_i^0)^4 + \sum_{j\in[n], j\neq i} C_{ij} (u_i^0)^2(u_j^0)^2 \geq C_{ii}(u_i^0)^4 \geq 0, \]
which implies that $C_{ii}(u_i^0)^4 = 0$. Since $C_{ii} > 0$, it follows that
\[ u_i^0 = 0,\quad \forall i\in[n]. \]
Hence, $u^0=0$ is the unique second-order critical point.

Next, we consider the case when there exists an index $i_0$ such that $C_{i_0i_0} = 0$. In this case, define $u^0\in\Rn$ by
\[ u^0_{i_0} = 1,\quad u^0_i = 0,\quad \forall i\in[n]\backslash\{i_0\}. \]
Then, we have
\[ \left[u^0(u^0)^T\right]_{i_0i_0} = 1,\quad \left[u^0(u^0)^T\right]_{ij} = 0,\quad \text{otherwise}. \]
Since the $(i_0,i_0)$ entry is not observed, the point $u^0$ leads to the same measurements as $u^*=0$. Therefore, $u^0$ is a nonzero global solution to the instance $\cMC(C,0)$.

\qed\end{proof}

\section{Proofs in Section \ref{sec:metric}}

\subsection{Proof of Theorem \ref{thm:unique-closure}}
\label{adp:unique-closure}

\begin{proof}
%
We denote the set on the right-hand side as $\cD'$.
We first prove that
\begin{align}\label{eqn:unique-1} 
\overline{\cD} &\supset \cD'.  
\end{align}
Suppose that $(C, u^*)\in\cD'$. If $\mathbb{G}_1(C,u^*)$ is disconnected or bipartite, the instance $\cMC(C,u^*)$ already belongs to $\cD$ and, therefore, belongs to the closure $\overline{\cD}$. We only need to consider the case when $\cI_{00}(C,u^*)$ is not empty.
For every constant $\epsilon > 0$, we construct a new global solution $\tilde{u}^*$ as follows:
\[ \tilde{u}^*_{i} := \begin{cases}  u^*_{i} + \epsilon & \text{if }i\in\cI_{00}(C,u^*)\\ u^*_{i} & \text{otherwise}. \end{cases} \]
Let $\tilde{M}^* := \tilde{u}^*(\tilde{u}^*)^T$. For the instance $\cMC(C,\tilde{u}^*)$, we have
\[ \cI_1(C,\tilde{u}^*) = \cI_1(C,u^*) \cup \cI_{00}(C,u^*). \]
By the definition of $\cI_{00}(C,u^*)$, the nodes in $\cI_1(C,u^*)$ and $\cI_{00}(C,u^*)$ are disconnected. Therefore, the new subgraph $\mathbb{G}_1(C,\tilde{u}^*)$ is disconnected and the new instance $\cMC(C,\tilde{u}^*)$ belongs to $\cD$. By letting $\epsilon\rightarrow0$, it follows that $(C,u^*)$ is a limit point of $\cD$ and belongs to $\overline{\cD}$. This completes the proof of the relation \eqref{eqn:unique-1}.

Then, we prove the other direction $\overline{\cD}\subset\cD'$. By Theorem \ref{thm:unique}, we have $\cD\subset \cD'$.
Hence, it remains to prove that the set $\cD'$ is closed. Equivalently, we prove that $(\cD')^c$ is open, where $(\cD')^c$ is the complementary set with respect to $\R^{n\times n} \times \R^{n}$. Suppose that $(C, u^*)\in(\cD')^c$. If $\|C\|_1 \neq 1$ or $\|u^*\|_1 \neq 1$, changing $C$ and $u^*$ by a small perturbation will not make $\|C\|_1 = \|u^*\|_1 = 1$. Now, we only consider the case when $\|C\|_1 = \|u^*\|_1 = 1$. Since $(C,u^*)\in(\cD')^c$, the subgraph $\mathbb{G}_1(C,u^*)$ is connected and \revise{not bipartite} and the set $\cI_{00}(C,u^*) = \emptyset$. Denote
\[ \epsilon := \min\left\{\min_{C_{ij} > 0}  C_{ij}, \min_{u_i^* \neq 0} |u_{i}^*| \right\} > 0. \]
Suppose that we add a sufficiently small perturbation to the point $(C,u^*)$ such that each component of $C$ and $u^*$ is changed by at most $\epsilon/2$. Then, all nonzero components of $C$ and $u^*$ are still nonzero after the perturbation. Therefore, the edges of the subgraph $\mathbb{G}_1(C,M^*)$ are not deleted after the perturbation and, thus, the subgraph is still connected and \revise{not bipartite}. Similarly, after perturbation, each node in $\cI_0(C,M^*)$ either becomes nonzero or is connected to $\mathbb{G}_1(C,M^*)$, which implies that $\cI_{00}(C,M^*)$ is still an empty set. Therefore, the perturbed instance still belongs to $(\cD')^c$. Hence, the set $(\cD')^c$ is open and we obtain the relation $\overline{\cD} \subset \cD'$. 
\qed\end{proof}

\subsection{Proof of Theorem \ref{thm:max-dist}}
\label{adp:max-dist}

The proof of Theorem \ref{thm:max-dist} relies on the following two lemmas, which transform the computation of $\mathbb{D}_\alpha^{min}$ \revise{into} a one-dimensional optimization problem. The first lemma upper-bounds the maximum possible distance.
\begin{lemma}\label{lem:max_dist-1}
Suppose that $n\geq 2$. It holds that
\[ \left(\mathbb{D}_\alpha^{min}\right)^{-1} \leq \max_{c\in\left[0,\frac{1}{n(n-1)}\right]} g(\alpha,c), \]
where the function $g(\alpha,c)$ is defined by
\begin{align*} 
    g(\alpha,c) := \min\bigg\{ &2(1-\alpha)\cdot \frac{n-2}{n} + 4\alpha c, \quad 4\alpha(n-1)c, \\
    &2(1-\alpha)\cdot \frac{n-4}{n} + 2\alpha\left(\frac4n - 4(n-2)c\right), \\
    &2(1-\alpha)\cdot \frac{n-3}{n} + 2\alpha\left(\frac3n - (3n-5)c\right), \\
    &2(1-\alpha)\cdot \frac{n-2}{n} + 2\alpha\left(\frac2n - 2(n-1)c\right), \\
    &2(1-\alpha)\cdot \frac{n-1}{n} + 2\alpha\left(\frac1n - (n-1)c\right) \bigg\}.
\end{align*}
\end{lemma}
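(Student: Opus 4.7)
The plan is to rewrite $(\mathbb{D}_\alpha^{min})^{-1}$ as the maximum, over all normalized $(C,u^*)$, of the weighted $\ell_1$-distance to $\overline{\cD}$, and for each such $(C,u^*)$ to exhibit six candidate perturbations in $\overline{\cD}$ whose costs reproduce the six expressions inside the $\min$ defining $g(\alpha,c)$. By Theorem~\ref{thm:unique-closure}, a candidate $(\tilde C,\tilde u^*)$ lies in $\overline{\cD}$ as soon as it makes $\mathbb{G}_1(\tilde C,\tilde u^*)$ disconnected, makes it bipartite, or makes $\cI_{00}(\tilde C,\tilde u^*)$ non-empty, and each of the six templates below targets one of these three conditions.

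First I would reduce to the symmetric instance $u^*=\tfrac{1}{n}(1,\dots,1)^T$ and $C=d\,\mathcal{I}_n+c\,(\mathbf{1}\mathbf{1}^T-\mathcal{I}_n)$ with $d=1/n-(n-1)c$ and $c\in[0,\tfrac{1}{n(n-1)}]$. Both the distance functional and the set $\overline{\cD}$ are invariant under simultaneous permutations of the indices of $C$ and of the coordinates of $u^*$, which invites a symmetrization argument; since the distance is 1-Lipschitz but not concave in $(C,u^*)$, a plain averaging step is unavailable, so I would handle the reduction on a per-construction basis by choosing, in each of the six templates, the cheapest entries of $C$ and $u^*$ (the node of smallest weighted degree, the smallest self-loops, and so on) and then upper-bounding the resulting cost by the corresponding symmetric-case cost with $c=c(C,u^*):=\min_{i\in[n]}\tfrac{1}{n-1}\sum_{j\neq i}C_{ij}$, which a double-counting bound $\sum_{i,j}C_{ij}\le 1$ forces into $[0,\tfrac{1}{n(n-1)}]$.

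All six constructions then follow a common template: fix $k\in\{0,n-4,n-3,n-2,n-1\}$, replace $u^*$ by the uniform unit vector on an $(n-k)$-element subset (contributing $2k(1-\alpha)/n$ to the cost), and then modify $C$ to land in $\overline{\cD}$. For $k=0$, I would isolate one node by zeroing its $n-1$ off-diagonal entries and redepositing the removed mass $2(n-1)c$ on a single entry (Term 2, $C$-cost $4(n-1)c$). For $k=n-2$, I would either delete the one interior edge to disconnect $\mathbb{G}_1$ (Term 1, $C$-cost $4c$) or delete the two surviving self-loops to make $\mathbb{G}_1$ bipartite (Term 5, $C$-cost $4d$). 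For $k=n-3$ I would delete the three self-loops and one edge of the resulting triangle, leaving a bipartite path (Term 4, $C$-cost $6d+4c$). For $k=n-4$ I would delete the four self-loops and the two edges within a fixed bipartition of the complete graph on four vertices (Term 3, $C$-cost $8d+8c$). For $k=n-1$ I would delete the single surviving self-loop (Term 6, $C$-cost $2d$). Substituting $d=1/n-(n-1)c$ turns these six $C$-costs into $4(n-1)c$, $4c$, $4/n-4(n-1)c$, $6/n-(6n-10)c$, $8/n-8(n-2)c$, and $2/n-2(n-1)c$; after multiplying by $\alpha$ and adding the $u^*$-contribution $2k(1-\alpha)/n$, these reproduce exactly the six expressions inside the $\min$ defining $g(\alpha,c)$. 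Taking the minimum over the six candidates bounds the distance by $g(\alpha,c)$, and taking the maximum over $c$ yields the claim.

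The principal obstacle is the reduction step: without concavity of the distance-to-set functional, the claim that the worst $(C,u^*)$ is symmetric cannot be obtained by an averaging argument, and must instead be verified construction by construction against the symmetric instance with $c(C,u^*)$ as above, showing that any entry-wise asymmetry can be exploited to make at least one template no more expensive than its symmetric counterpart. A secondary technical point is the bookkeeping in each $C$-modification: after deleting mass we must redeposit it to maintain $\|\tilde C\|_1=1$, which doubles each removed-mass cost and is already absorbed into the values $4(n-1)c$, $4c$, $4d$, $6d+4c$, $8d+8c$, $2d$ cited above.
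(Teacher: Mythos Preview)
Your six construction templates and their symmetric-case costs are correct and match the paper's. The gap is in the reduction step, and it stems from a misdiagnosis of why averaging is or is not available.

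You reject averaging because the distance-to-$\overline{\cD}$ functional is not concave in $(C,u^*)$. But the paper never averages over instances $(C,u^*)$; it averages over the \emph{instantiation} of each template. For a fixed $(C,u^*)$ and a fixed template (say the disconnection template with $|\cI_1|=k$), every choice of $k$-subset $\cI_1\subset[n]$ and pivot $i_0\in\cI_1$ produces an inequality of the form
\[
\mathbb{T}_\alpha(C,u^*)\;\le\;2(1-\alpha)\,\|u^*_{\cI_0}\|_1\;+\;4\alpha\!\!\sum_{j\in\cI_1\setminus\{i_0\}}\!C_{i_0j}.
\]
The left-hand side is a single number, so averaging the right-hand side over all $i_0\in\cI_1$ and then over all $k$-subsets $\cI_1$ is perfectly valid and yields
\[
\mathbb{T}_\alpha(C,u^*)\;\le\;2(1-\alpha)\,\frac{n-k}{n}\;+\;4\alpha(k-1)\,\eta,\qquad \eta:=\frac{1}{n(n-1)}\sum_{i\neq j}C_{ij}.
\]
The same averaging over subsets works for each bipartite template. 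This simultaneously averages the $u^*$-cost and the $C$-cost, so all six bounds hold for the \emph{same} parameter $c=\eta\in[0,\tfrac{1}{n(n-1)}]$, giving $\mathbb{T}_\alpha(C,u^*)\le g(\alpha,\eta)$ directly for arbitrary $(C,u^*)$. No reduction to a symmetric instance is needed.

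Your proposed workaround---pick the cheapest entries and compare with the symmetric case at $c=\min_i\tfrac{1}{n-1}\sum_{j\neq i}C_{ij}$---does not close the gap, because the $C$-cost and the $u^*$-cost are coupled through the choice of $\cI_1$: the subset that makes the $C$-modification cheapest may force a large $\|u^*_{\cI_0}\|_1$, and conversely. You would need all six template bounds to hold simultaneously with the \emph{same} value of $c$, and there is no evident reason the minimum-degree $c$ achieves this for general $(C,u^*)$. The averaging-of-bounds device is exactly what decouples the two costs and delivers a common $c$.
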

\begin{proof}
Denote the distance between $(C,u^*)$ and $\cD$ as
\[ \mathbb{T}_\alpha(C,u^*) := \min_{(\tilde{C},\tilde{u}^*)\in\overline{\cD}} \alpha \|C - \tilde{C}\|_1 + (1-\alpha)\|u^* - \tilde{u}^*\|_1. \]
%
%
We fix the pair $(C,u^*)$ and let
\[ \eta := \frac{1}{n(n-1)} \sum_{i,j\in[n], i\neq j} C_{ij} \in \left[0,\frac{1}{n(n-1)}\right]. \]
Using the condition $\|C\|_1 = 1$, it follows that
\[ \theta := \frac{1}{n} \sum_{i\in[n]} C_{ii} = \frac1n \left( 1 - \sum_{i,j\in[n], i\neq j} C_{ij} \right) = \frac1n - (n-1)\eta \in[0,n^{-1}]. \]
Our goal is to prove that
\begin{align*}
    \mathbb{T}_\alpha(C,u^*) \leq g(\alpha,\eta).
\end{align*}
%
%
In the remainder of the proof, we upper-bound the distance $\mathbb{T}_\alpha(C,u^*)$ by constructing some instances in $\overline{\cD}$. 

We first consider those instances in $\overline{\cD}$ with a disconnected subgraph $\mathbb{G}_1$. For every $k\in\{2,\dots,n\}$, let $\cI_1$ be a subset of $[n]$ satisfying $|\cI_1|=k$ and $\cI_0 := [n]\backslash\cI_1$. Suppose that $\epsilon>0$ is a sufficiently small constant. For every $i_0\in\cI_1$, we consider the pair $(\tilde{C},\tilde{u}^*)$, where
\begin{align}\label{eqn:upper-7}
    \tilde{u}^*_i = 0,\quad \forall i\in\cI_0;\quad \tilde{u}^*_i = (1-\epsilon)u^*_i + \epsilon \cdot \frac{\left\| u^*_{\cI_1} \right\|_1}{|\cI_1|} + \frac{\left\| u^*_{\cI_0} \right\|_1}{|\cI_1|}  ,\quad \forall i\in\cI_1
\end{align}
and
\begin{align*}
    \tilde{C}_{i_0j} &= \tilde{C}_{ji_0} = 0,\quad \forall j\in\cI_1 \backslash\{i_0\};\\
    \tilde{C}_{ij} &= C_{ij} + \frac{2}{n^2 - 2(k-1)}{\sum}_{j\in\cI_1 \backslash\{i_0\}} C_{i_0j},\quad \text{otherwise}.
\end{align*}
By choosing a sufficiently small $\epsilon$, it can be shown that
\[ \cI_1(\tilde{C},\tilde{u}^*) = \cI_1;\quad \cI_0(\tilde{C},\tilde{u}^*) = \cI_0. \]
The node $i_0$ is disconnected from other nodes in $\mathbb{G}_1(\tilde{C},\tilde{u}^*)$ and, therefore, $(\tilde{C},\tilde{u}^*)\in\overline{\cD}$. 
%
%
%
The distance between $u^*$ and $\tilde{u}^*$ is
\begin{align}\label{eqn:upper-1}
    \|u^* - \tilde{u}^*\|_1 &= 2 \left\| u^*_{\cI_0} \right\|_1 + 2\epsilon\left\| u^*_{\cI_1} \right\|_1 \leq 2 \left\| u^*_{\cI_0} \right\|_1 + 2\epsilon.
\end{align}
In addition, the distance between $C$ and $\tilde{C}$ can be calculated as
\begin{align}\label{eqn:upper-2}
    \|C - \tilde{C}\|_1 &= 4 {\sum}_{j\in\cI_1 \backslash\{i_0\}} C_{i_0j}.
\end{align}
Combining inequalities \eqref{eqn:upper-1} and \eqref{eqn:upper-2}, we have
\begin{align}\label{eqn:upper-3}
    \mathbb{T}_\alpha(C,u^*) \leq 2(1-\alpha)\left\| u^*_{\cI_0} \right\|_1 + 4\alpha {\sum}_{j\in\cI_1 \backslash\{i_0\}} C_{i_0j} + 2\epsilon.
\end{align}
%
Taking the average of inequality \eqref{eqn:upper-3} over $i_0$ for $\cI_1$, we have
\begin{align}\label{eqn:upper-5}
    \mathbb{T}_\alpha(C,u^*) \leq 2(1-\alpha)\left\| u^*_{\cI_0} \right\|_1 + 4\alpha (k-1) {\sum}_{i,j\in\cI_1, i\neq j} C_{ij} + 2\epsilon.
\end{align}
Then, we take the average of \eqref{eqn:upper-5} over $\cI_1$ for all $k$-element subsets of $[n]$, which leads to
\begin{align*}
    \mathbb{T}_\alpha(C,u^*) \leq 2(1-\alpha) \cdot \frac{n-k}{n} + 4\alpha (k-1) \eta + 2\epsilon.
\end{align*}
By setting $\epsilon \rightarrow 0$, we obtain that 
\begin{align}\label{eqn:upper-6}
    \mathbb{T}_\alpha(C,u^*) \leq 2(1-\alpha) \cdot \frac{n-k}{n} + 4\alpha (k-1) \eta.
\end{align}
Since inequality \eqref{eqn:upper-6} is linear in $k$, the minimum of the right-hand side over $k\in\{2,\dots,n\}$ is attained by either $2$ or $n$. Hence, it holds that
\begin{align}\label{eqn:upper-12}
    \mathbb{T}_\alpha(C,u^*) \leq \min\left\{ 2(1-\alpha) \cdot \frac{n-2}{n} + 4\alpha \eta, 4\alpha(n-1)\eta \right\}.
\end{align}
Using a similar analysis, we can obtain inequality \eqref{eqn:upper-6} by considering instances in $\overline{\cD}$ whose $\cI_{00}$ is non-empty.

Finally, we check those instances in $\overline{\cD}$ whose $\mathbb{G}_{1}$ is bipartite. Let $\cI_1$ be a subset of $[n]$ satisfying $|\cI_1| = 4$, and let $\cI_0 = [n]\backslash\cI_1$. We define $\tilde{u}^*$ in the same way as \eqref{eqn:upper-7}. For every subset $\cI_{11}\subset \cI_1$ such that $|\cI_{11}|=2$, the new weight matrix is defined as
\begin{align*}
    \tilde{C}_{ii} &= 0,\quad \forall i\in\cI_1;\quad \tilde{C}_{ij} = 0,\quad \forall i,j\in\cI_{11};\quad \tilde{C}_{ij} = 0,\quad \forall i,j\in\cI_1 \backslash\cI_{11};\\
    \tilde{C}_{ij} &= C_{ij} + \frac{2}{n^2 - 8}\left(\sum_{i\in\cI_1} C_{ii} + \sum_{i,j\in\cI_{11},i\neq j} C_{ij} + \sum_{i,j\in\cI_1\backslash\cI_{11},i\neq j} C_{ij} \right).
\end{align*}
The distance between $C$ and $\tilde{C}$ is
\begin{align*}
    \|C - \tilde{C}\|_1 = 2\left(\sum_{i\in\cI_1} C_{ii} + \sum_{i,j\in\cI_{11},i\neq j} C_{ij} + \sum_{i,j\in\cI_1\backslash\cI_{11},i\neq j} C_{ij} \right)
\end{align*}
Therefore, the maximum distance is bounded by
\begin{align}\label{eqn:upper-8}
    \mathbb{T}_\alpha(C,u^*) &\leq 2(1-\alpha)\left\| u^*_{\cI_0} \right\|_1\\
    \nonumber &\quad + 2\alpha\left(\sum_{i\in\cI_1} C_{ii} + \sum_{i,j\in\cI_{11},i\neq j} C_{ij} + \sum_{i,j\in\cI_1\backslash\cI_{11},i\neq j} C_{ij} \right) + 2\epsilon.
\end{align}
By taking the average of \eqref{eqn:upper-8} over $\cI_{11}$ for all $2$-element subsets of $\cI_1$, it follows that
\begin{align}\label{eqn:upper-9}
    \mathbb{T}_\alpha(C,u^*) &\leq 2(1-\alpha)\left\| u^*_{\cI_0} \right\|_1 + 2\alpha\left(\sum_{i\in\cI_1} C_{ii} + \frac{1}{3}\sum_{i,j\in\cI_{1},i\neq j} C_{ij} \right) + 2\epsilon.
\end{align}
Furthermore, we take the average of \eqref{eqn:upper-9} over $\cI_{1}$ for all $4$-element subsets of $[n]$, which gives
\begin{align*}
    \mathbb{T}_\alpha(C,u^*) &\leq 2(1-\alpha)\cdot \frac{k}{n} + 2\alpha\left( 4\theta + 4\eta \right) + 2\epsilon.
\end{align*}
By letting $\epsilon\rightarrow0$, we conclude that
\begin{align}\label{eqn:upper-10}
    \mathbb{T}_\alpha(C,u^*) &\leq 2(1-\alpha)\cdot \frac{4}{n} + 2\alpha\left( 4\theta + 4\eta \right).
\end{align}
By applying a similar technique to subsets of $[n]$ with $1,2,3$ elements, the distance can be bounded as
\begin{align}\label{eqn:upper-11}
    \mathbb{T}_\alpha(C,u^*) &\leq 2(1-\alpha)\cdot \frac{3}{n} + 2\alpha\left( 3\theta + 2\eta \right),\\
    \nonumber\mathbb{T}_\alpha(C,u^*) &\leq 2(1-\alpha)\cdot \frac{2}{n} + 2\alpha\cdot 2\theta,\\
    \nonumber\mathbb{T}_\alpha(C,u^*) &\leq 2(1-\alpha)\cdot \frac{1}{n} + 2\alpha\cdot  \theta.
\end{align}
By combining inequalities \eqref{eqn:upper-6}, \eqref{eqn:upper-10} and \eqref{eqn:upper-11} and recalling the relation that $\theta=1/n - (n-1)\eta$, it follows that
\begin{align*}
    \mathbb{T}_\alpha(C,u^*) &\leq g(\alpha,\eta).
\end{align*}
Now, we take the maximum over $C\in\Sn_{+,1}$ and $u^*\in\Snn_1$, which is equivalent to taking the maximum over $\eta\in\left[0,\frac{1}{n(n-1)}\right]$ in the right-hand side. This yields that 
\begin{align*}
    \max_{\|C\|_1=\|u^*\|_1=1} \mathbb{T}_\alpha(C,u^*) &\leq \max_{c\in\left[0,\frac{1}{n(n-1)}\right]} g(\alpha,c).
\end{align*}
This completes the proof.
\qed\end{proof}

We denote $g_i(\alpha,c)$ be the $i$-th term in the above minimization for all $i\in\{1,\dots,6\}$.
The next lemma proves the other direction.
\begin{lemma}\label{lem:max_dist-2}
Suppose that $n\geq 2$. It holds that
\[ \left(\mathbb{D}_\alpha^{min}\right)^{-1} \geq \max_{c\in\left[0,\frac{1}{n(n-1)}\right]} g(\alpha,c), \]
where the function $g(\alpha,c)$ is defined in Lemma \ref{lem:max_dist-1}.
\end{lemma}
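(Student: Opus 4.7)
For each $c \in [0, 1/(n(n-1))]$, I would exhibit a specific instance $(C_c, u^*_c)$ and show that its weighted distance to $\overline{\cD}$ is at least $g(\alpha, c)$; taking the supremum over $c$ then gives the lemma. Define $u^*_c := (1/n, \dots, 1/n) \in \Rn$ and let $C_c$ be the symmetric matrix with every diagonal entry equal to $d := 1/n - (n-1)c$ and every off-diagonal entry equal to $c$. A direct computation shows $\|u^*_c\|_1 = \|C_c\|_1 = 1$ and $d \geq 0$ on the stated range of $c$. The six terms defining $g(\alpha, c)$ correspond to six canonical ways of perturbing this specific instance into $\overline{\cD}$: disconnecting the subgraph on two nonzero coordinates ($g_1$), disconnecting the full graph by isolating one node ($g_2$), bipartitioning the subgraph on four, three, or two coordinates ($g_3, g_4, g_5$), and reducing the support of $\tilde u^*$ to one coordinate with no self-loop ($g_6$).

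\textbf{Reduction to case analysis.} Fix $(\tilde C, \tilde u^*) \in \overline{\cD}$. Replacing each coordinate of $\tilde u^*$ by its absolute value preserves $\tilde u^*(\tilde u^*)^T$, $\mathbb{G}_1$, and $\cI_{00}$ but can only decrease $\|\tilde u^* - u^*_c\|_1$ since $u^*_c \geq 0$, so WLOG $\tilde u^* \geq 0$. Setting $k := |\cI_1(\tilde u^*)|$, the $n-k$ zero coordinates contribute $(n-k)/n$ to $\|\tilde u^* - u^*_c\|_1$, and the triangle inequality combined with $\sum_{i \in \cI_1} \tilde u^*_i = 1$ adds another $(n-k)/n$, giving $\|\tilde u^* - u^*_c\|_1 \geq 2(n-k)/n$. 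On the $C$-side, a mass-conservation observation applies: if a total mass $X$ is removed from positive entries of $C_c$ to form $\tilde C$, an equal mass $X$ must be added back to preserve $\|\tilde C\|_1 = 1$, hence $\|\tilde C - C_c\|_1 \geq 2X$.

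\textbf{Three cases.} By Theorem~\ref{thm:unique-closure}, the instance $(\tilde C, \tilde u^*)$ lies in $\overline{\cD}$ for one of three reasons. (i) If $\mathbb{G}_1(\tilde C, \tilde u^*)$ is disconnected, any witnessing bipartition of $\cI_1$ separates at least $\min_{a \in \{1, \dots, k-1\}} a(k-a) = k-1$ off-diagonal pairs of $K_k$, each of weight $c$ in $C_c$, so $X \geq 2(k-1)c$ and the total cost is at least $H_k := 2(1-\alpha)(n-k)/n + 4\alpha(k-1)c$; this is linear in $k \in \{2, \dots, n\}$ and minimized at the endpoints $H_2 = g_1$ and $H_n = g_2$. (ii) If $\mathbb{G}_1$ is bipartite, the $k$ self-loops on $\cI_1$ must vanish (mass $kd$) and any bipartition must zero at least $e_k^{\min} := \binom{k}{2} - \lfloor k/2\rfloor\lceil k/2\rceil$ within-part off-diagonals (mass $2c\,e_k^{\min}$), yielding total cost at least $F_k := 2(1-\alpha)(n-k)/n + 2\alpha(kd + 2c\,e_k^{\min})$; substitution gives $F_k = g_{7-k}$ for $k = 1, 2, 3, 4$. (iii) If $\cI_{00} \neq \emptyset$, the designated node has all $k$ incident weight-$c$ edges to $\cI_1$ zeroed, giving $X \geq 2kc$ and total cost at least $H_k + 4\alpha c$, strictly dominated by case (i).

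\textbf{Main obstacle and conclusion.} The delicate step is closing case (ii) for $k \geq 5$, which is not directly listed among $g_1, \dots, g_6$. Here one verifies the combinatorial inequality $e_k^{\min} \geq k - 1$, with equality exactly at $k = 5$ (via the explicit formulas $(k-1)^2/4$ for odd $k$ and $k(k-2)/4$ for even $k$); combined with $d \geq 0$, this yields $F_k \geq H_k \geq \min(g_1, g_2)$, ensuring that no additional bipartite regime undercuts the six listed terms. Combining all cases gives $\alpha\|\tilde C - C_c\|_1 + (1-\alpha)\|\tilde u^* - u^*_c\|_1 \geq \min_i g_i(\alpha, c) = g(\alpha, c)$, so $\mathbb{D}_\alpha(C_c, u^*_c)^{-1} \geq g(\alpha, c)$; taking the supremum over $c$ yields the lemma.
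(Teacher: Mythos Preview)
Your proposal is correct and follows essentially the same approach as the paper: construct the symmetric instance with uniform $u^*$ and constant off-diagonal/diagonal weights, then lower-bound the distance to $\overline{\cD}$ by the same case analysis (disconnected, bipartite, $\cI_{00}\neq\emptyset$) on the size $k$ of $\cI_1(\tilde u^*)$. Your treatment is slightly more explicit than the paper's in two places---the mass-conservation argument for $\|\tilde C-C_c\|_1\geq 2X$ and the verification that $e_k^{\min}\geq k-1$ for $k\geq 5$ in the bipartite case---but the structure and the instance are identical.
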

\begin{proof}
Let $\eta\in\left[0,\frac{1}{n(n-1)}\right]$ and define the pair $(C,u^*)$ according to
\begin{align*}
    u^*_i := \frac1n,~ C_{ii} := \frac{1}{n} - (n-1)\eta,\quad \forall i\in[n];\quad C_{ij} := \eta,\quad \forall i,j\in[n]\quad \st\quad i\neq j.
\end{align*}
Our goal is to prove that
\[ \mathbb{T}_\alpha(C,u^*) \geq g(\alpha,\eta). \]
Suppose that $(\tilde{C},\tilde{u}^*)\in\overline{\cD}$ attains the distance $\mathbb{T}_\alpha(C,u^*)$, namely,
\begin{align*}
    \mathbb{T}_\alpha(C,u^*) = \alpha\|C - \tilde{C}\|_1 + (1-\alpha)\|u^*-\tilde{u}^*\|_1.
\end{align*}
We analyze three different cases.

\paragraph{Case I.} We first consider the case when $\mathbb{G}_1(\tilde{C},\tilde{u}^*)$ is disconnected. Denote $k:=|\cI_1(\tilde{C},\tilde{u}^*)|$. The distance between $u^*$ and $\tilde{u}^*$ is lower-bounded by
\begin{align}\label{eqn:lower-1}
    \|u^* - \tilde{u}^*\|_1 \geq 2\| u^*_{\cI_0(\tilde{C},\tilde{u}^*)} - \tilde{u}^*_{\cI_0(\tilde{C},\tilde{u}^*)} \|_1 = 2\| u^*_{\cI_0(\tilde{C},\tilde{u}^*)}\|_1 = \frac{2(n-k)}{n}.
\end{align}
Since there are $k$ nodes in $\mathbb{G}_1(\tilde{C},\tilde{u}^*)$, we need to eliminate at least $k-1$ edges that are not self-loops to make the graph disconnected. Therefore, at least $2(k-1)$ non-diagonal weights of $\tilde{C}$ are $0$ and the distance between $C$ and $\tilde{C}$ is at least
\begin{align}\label{eqn:lower-2}
    \|C - \tilde{C}\|_1 \geq 2 \cdot 2(k-1)\eta = 4(k-1)\eta.
\end{align}
Combining inequalities \eqref{eqn:lower-1} and \eqref{eqn:lower-2}, we obtain that
\begin{align}\label{eqn:lower-3}
    \mathbb{T}_\alpha(C,u^*) \geq 2(1-\alpha) \cdot \frac{n-k}{n} + 4\alpha(k-1)\eta.
\end{align}

\paragraph{Case II.} For the case when $\cI_{00}(\tilde{C},\tilde{u}^*)$ is not empty, similar estimations as \textit{Case I} can be derived and inequality \eqref{eqn:lower-3} also holds true.

\paragraph{Case III.} Finally, we consider the case when $\mathbb{G}_1(\tilde{C},\tilde{u}^*)$ is bipartite. Denote $k:=|\cI_1(\tilde{C},\tilde{u}^*)|$. If $k\geq 5$, we need to eliminate at least $k-1$ edges that are not self-loops to make the graph bipartite. Thus, we can follow the same proof as \textit{Case I} to arrive at inequality \eqref{eqn:lower-3}. If $k=4$, we need to eliminate at least $2$ edges that are not self-loops and $4$ self-loops to make the graph bipartite. Therefore, at least $4$ non-diagonal weights and $4$ diagonal weights of $\tilde{C}$ are $0$, and the distance between $C$ and $\tilde{C}$ is at least
\begin{align}\label{eqn:lower-4}
    \|C - \tilde{C}\|_1 \geq 2 \left[4\eta + 4\left( \frac1n - (n-1) \eta \right) \right] = 2 \left[ \frac4n - (4n-8)\eta \right].
\end{align}
Combining inequalities \eqref{eqn:lower-1} and \eqref{eqn:lower-4} yields that
\begin{align}\label{eqn:lower-5}
    \mathbb{T}_\alpha(C,u^*) \geq 2(1-\alpha) \cdot \frac{n-4}{n} + 2\alpha \left[ \frac4n - (4n-8)\eta \right].
\end{align}
The cases when $k=1,2,3$ can be analyzed similarly, leading to
\begin{align}\label{eqn:lower-6}
    \mathbb{T}_\alpha(C,u^*) &\geq 2(1-\alpha) \cdot \frac{n-3}{n} + 2\alpha \left[ \frac3n - (3n-5)\eta \right],\\
    \nonumber\mathbb{T}_\alpha(C,u^*) &\geq 2(1-\alpha) \cdot \frac{n-2}{n} + 2\alpha \left[ \frac2n - (2n-2)\eta \right],\\
    \nonumber\mathbb{T}_\alpha(C,u^*) &\geq 2(1-\alpha) \cdot \frac{n-1}{n} + 2\alpha \left[ \frac1n - (n-1)\eta \right].
\end{align}
By combining \textit{Cases I-III}, it follows that
\begin{align*}
    \mathbb{T}_\alpha(C,u^*) \geq g(\alpha,\eta).
\end{align*}
Choosing $\eta$ to be the maximizer
\[ \eta^* := \argmax_{c\in\left[0,\frac{1}{n(n-1)}\right]} g(\alpha,c), \]
we have
\begin{align*}
    \mathbb{T}_\alpha(C,u^*) \geq \max_{c\in\left[0,\frac{1}{n(n-1)}\right]} g(\alpha,c).
\end{align*}
Taking the maximum over $C\in\Sn_{+,1}$ and $u^*\in\Snn_1$ gives rise to the desired conclusion.
\qed\end{proof}

\begin{proof}[Proof of Theorem \ref{thm:max-dist}]
By the results of Lemmas \ref{lem:max_dist-1} and \ref{lem:max_dist-2}, we only need to compute $\max_{c\in\left[0,\frac{1}{n(n-1)}\right]} g(\alpha,c)$. Let $\kappa := (1-\alpha)/\alpha\in[0,+\infty]$. We study three cases below.

\paragraph{Case I.} We first consider the case when $\kappa\geq 2(n-3) / [(n-4)(n-1)]$. We prove that $g(\alpha,c) = g_2(\alpha,c)$. Since $g_2(\alpha,c)$ has a larger gradient than $g_1(\alpha,c)$ and the function $g_i(\alpha,c)$ is decreasing in $c$ for $i=3,4,5,6$, we only need to show that
\begin{align}\label{eqn:max_dist-1-1}
g_i\left(\alpha, \frac{1}{n(n-1)}\right) \geq g_2\left(\alpha, \frac{1}{n(n-1)}\right),\quad \forall i\in\{1,3,4,5,6\}. 
\end{align}
The above inequality with $i=1$ is equivalent to $\kappa \geq {2}/(n-1)$, which is guaranteed by the assumption that $\kappa\geq 2(n-3) / [(n-4)(n-1)]$. 
For $i\in\{3,4,5,6\}$, the inequality \eqref{eqn:max_dist-1-1} is equivalent to
\begin{align*}
    \kappa \geq \max\left\{ \frac{2(n-3)}{(n-1)(n-4)}, \frac{2(n-2)}{(n-1)(n-3)}, \frac{2}{n-2}, \frac{2}{n-1} \right\} = \frac{2(n-3)}{(n-1)(n-4)}.
\end{align*}
Therefore, it holds that
\[ g(\alpha,c) = g_2(\alpha,c) = 4\alpha(n-1)c. \]
whose maximum is attained at $c=[n(n-1)]^{-1}$ and
\[ \max_{C,u^*}\mathbb{T}_\alpha(C,u^*) = g_2\left(\alpha, \frac{1}{n(n-1)}\right) = \frac{4\alpha}{n}. \]

\paragraph{Case II.} Then, we consider the case when $\kappa \leq 2/n$. In this case, we prove that the maximum is achieved by the intersection point between $g_1(\alpha,c)$ (an increasing function in $c$) and $\min\{g_5(\alpha,c),g_6(\alpha,c)\}$ (a decreasing function in $c$). The intersection points between $g_1(\alpha,C)$ and the other five functions are
\begin{align*}
    \frac{\kappa}{2n},\quad \frac{2-\kappa}{n(2n-3)},\quad \frac{3-\kappa}{3n(n-1)},\quad \frac{1}{n^2},\quad \frac{1+\kappa}{n(n+1)}.
\end{align*}
In the regime $\kappa \leq 1/n$, we have
\begin{align*}
    \frac{\kappa}{2n} &\leq \frac{1+\kappa}{n(n+1)} \leq \frac{1}{n^2} \leq \min\left\{ \frac{2-\kappa}{n(2n-3)}, \frac{3-\kappa}{3n(n-1)} \right\},
\end{align*}
which implies that the maximum is attained at $c=(1+\kappa)/[n(n+1)]$. Hence, the maximum distance is
\[ \max_{C,u^*}\mathbb{T}_\alpha(C,u^*) = g_1\left(\alpha, \frac{1+\kappa}{n(n+1)}\right) = \frac{2(1-\alpha)(n-2)(n+1) + 4}{n(n+1)}. \]
In the regime $1/n \leq \kappa \leq 2/n$, we have
\begin{align*}
    \frac{\kappa}{2n} & \leq \frac{1}{n^2} \leq \frac{1+\kappa}{n(n+1)} \leq \min\left\{ \frac{2-\kappa}{n(2n-3)}, \frac{3-\kappa}{3n(n-1)} \right\},
\end{align*}
which implies that the maximum is attained at $c=1/n^2$. Hence, the maximum distance is
\[ \max_{C,u^*}\mathbb{T}_\alpha(C,u^*) = g_1\left(\alpha, \frac{1}{n^2}\right) = \frac{2(1-\alpha)(n-2)n + 4\alpha}{n^2}. \]

\paragraph{Case III.} We finally consider the case when $2/n \leq \kappa \leq 2(n-3) / [(n-4)(n-1)]$. In this regime, the intersection point between $g_2(\alpha,c)$ and $g_5(\alpha,c)$ is 
\[ \frac{\kappa(n-2)+2}{4n(n-1)} \leq \frac{\kappa}{2n}. \]
This implies that $g_2(\alpha,c)$ intersects with $g_5(\alpha,c)$ before $g_1(\alpha,c)$. Therefore, the maximum is attained at one of the intersects between $g_2(\alpha,c)$ and $g_i(\alpha,c)$ for $i=3,4,5,6$. By calculating the four intersects, the optimal $c$ that achieves the maximum is given by
\[ c^*(\kappa) := \min\left\{ \frac{\kappa(n-4)+4}{n(6n-10)}, \frac{\kappa(n-3)+3}{n(5n-7)}, \frac{\kappa(n-2)+2}{4n(n-1)},\frac{\kappa(n-1)+1}{3n(n-1)} \right\}, \]
which is an increasing function in $\kappa$. If $\kappa = 2/n$, we can estimate that
\begin{align}\label{eqn:max-dist-4} 
&c^*(\kappa)\\
\nonumber= &\min\left\{ \frac{2(n-4)/n+4}{n(6n-10)}, \frac{2(n-3)/n+3}{n(5n-7)}, \frac{2(n-2)/n+2}{4n(n-1)},\frac{2(n-1)/n+1}{3n(n-1)} \right\} \\
\nonumber= &\min\left\{ \frac{3n-4}{n^2(3n-5)}, \frac{5n-6}{n^2(5n-7)}, \frac{1}{n^2},\frac{3n-2}{n^2(3n-3)} \right\} = \frac{1}{n^2}.
\end{align}
Similarly, if $\kappa = 2(n-3) / [(n-4)(n-1)]$, it holds that
\begin{align}\label{eqn:max-dist-5} 
    c^*(\kappa) = \frac{1}{n(n-1)}.
\end{align}
Combining \eqref{eqn:max-dist-4} and \eqref{eqn:max-dist-5}, we have
\[ c^*(\kappa) \in \left[ \frac{1}{n^2}, \frac{1}{n(n-1)} \right],\quad \forall \kappa \in \left[\frac2n, \frac{2(n-3)}{(n-4)(n-1)}\right]. \]
Therefore, the maximum distance satisfies the bound
\[ \max_{C,u^*}\mathbb{T}_\alpha(C,u^*) = g_2\left[\alpha, c^*(\kappa) \right] \in \left[ \frac{4\alpha(n-1)}{n^2}, \frac{4\alpha}{n} \right]. \]
This completes the proof.
\qed\end{proof}

\subsection{Proof of Theorem \ref{thm:optimal}}
\label{adp:optimal}

\begin{proof}
By the assumption that the complexity metric of $(C,u^*)$ is finite, we have that $(C,u^*)\notin \overline{\cD}$. It follows from Theorem \ref{thm:unique-closure} that the subset $\mathcal{I}_{00}(C,u^*)$ is empty and that $\mathbb{G}_1(C,u*)$ is connected and \revise{not bipartite}. Let $k:=|\cI_1(C,u^*)|$. For each node $i_0\in\cI_1(C,u^*)$, we define the new weight matrix $\tilde{C}$ as
\begin{align*}
    \tilde{C}_{i_0j} &= \tilde{C}_{ji_0} = 0,\quad \forall j\in\cI_1(C,u^*) \backslash\{i_0\};\\
    \tilde{C}_{ij} &= C_{ij} + \frac{2}{n^2 - 2(k-1)}{\sum}_{j\in\cI_1(C,u^*) \backslash\{i_0\}} C_{i_0j},\quad \text{otherwise}.
\end{align*}
The subgraph $\mathbb{G}_1(\tilde{C},u^*)$ is disconnected and, therefore, we have $(\tilde{C},u^*)\in\overline{\cD}$. It follows that
\begin{align}\label{eqn:optimal-1} 
\frac{4\alpha^*}{n} = [\mathbb{D}_{\alpha^*}(C,u^*)]^{-1} \leq \alpha^*\|C - \tilde{C}\|_1 = 4\alpha^*{\sum}_{j\in\cI_1(C,u^*) \backslash\{i_0\}} C_{i_0j}. 
\end{align}
For each node $i_0\in\cI_0(C,u^*)$, a similar construct of $\tilde{C}$ leads to
\begin{align}\label{eqn:optimal-2}
\frac{4\alpha^*}{n} = [\mathbb{D}_{\alpha^*}(C,u^*)]^{-1} \leq 4\alpha^*\sum_{j\in\cI_1(C,u^*)} C_{i_0j}. 
\end{align}
By summing inequality \eqref{eqn:optimal-1} over $i_0$ for all nodes in $\cI_1(C,u^*)$ and summing inequality \eqref{eqn:optimal-2} over $i_0$ for all nodes in $\cI_0(C,u^*)$, it follows that
\begin{align}\label{eqn:optimal-3}
    4\alpha^* &\leq 4\alpha^*\left[ \sum_{i,j\in\cI_1(C,u^*),i\neq j} C_{ij} + \sum_{i\in\cI_1(C,u^*), j\in\cI_0(C,u^*)} C_{ij} \right]\\
    \nonumber&\leq  4\alpha^*\sum_{i,j\in[n],i\neq j} C_{ij} \leq 4\alpha^*,
\end{align}
where all inequalities should hold with equality. Since the last inequality in \eqref{eqn:optimal-3} holds with equality, we obtain that 
\[ C_{ii} = 0,\quad \forall i\in [n]. \]
It follows from the equality of inequalities \eqref{eqn:optimal-1} and \eqref{eqn:optimal-2} that
\begin{align}\label{eqn:optimal-4}
    \sum_{j\in\cI_1(C,u^*)\backslash\{i\}} C_{ij} &= \frac{1}{n},~ \forall i\in\cI_1(C,u^*);\quad \sum_{j\in\cI_1(C,u^*)} C_{ij} = \frac{1}{n},~ \forall i\in\cI_0(C,u^*).
\end{align}
Using the condition that $\|C\|_1 = 1$, the above equalities imply that all weights of $C$ are limited to edges with a node in $\cI_1(C,u^*)$. Namely, we have
\begin{align}\label{eqn:optimal-5}
    {\sum}_{j\in\cI_0(C,u^*)} C_{ij} &= 0,\quad \forall i\in\cI_1(C,u^*).
\end{align}
If $\cI_o(C,u^*)$ is not empty, the above equality contradicts the second equality in \eqref{eqn:optimal-4}. Hence, the point $(C,u^*)$ satisfies that $\cI_0(C,u^*)=\emptyset$. By a similar analysis of the bipartite instance in Lemma \ref{lem:max_dist-1}, for every $4$-element subset $\{i,j,k,\ell\}$ of $[n]$, it holds that
\begin{align*}
    2(1-\alpha^*)( 1 - |u_i^*| - |u_j^*| - |u_k^*| - |u_\ell^*| ) + 4\alpha^*( C_{ij} + C_{k\ell} ) = {4\alpha^*}/{n}.
\end{align*}
Taking the average of the above equality over $\{i,j,k,\ell\}$ for all $4$-element subsets of $[n-1]$, we obtain that
\begin{align*}
    &2(1-\alpha^*)\left( 1 - \frac{3\| u^*_{1:n-1} \|_1}{n-1}  \right) + 4\alpha^* \frac{2}{(n-1)(n-2)}\| C_{1:n-1,1:n-1}\|_1 = \frac{4\alpha^*}{n}.
\end{align*}
Using the first equality in \eqref{eqn:optimal-4} and the symmetry of $C$, it holds that $\| C_{1:n-1,1:n-1}\|_1 = 1-2/n$. Substituting into the above equality, we know
\begin{align*}
    2(1-\alpha^*)\left( 1 - \frac{3\| u^*_{1:n-1} \|_1}{n-1}  \right) = 4\alpha^* \cdot  \frac{n-3}{n(n-1)}.
\end{align*}
By recalling that $\alpha^* = (n-1)(n-4)/(n^2-3n-2)$, the above inequality leads to
\begin{align*}
    \| u^*_{1:n-1} \|_1 = (n-1)/{n},
\end{align*}
which is equivalent to $|u_n^*| = 1/n$. By the same proof technique, we conclude that
\begin{align*}
    |u_i^*| = 1/n,\quad \forall i\in[n].
\end{align*}
By substituting back into equality \eqref{eqn:optimal-5}, it holds for all $4$-element subsets $\{i,j,k,\ell\}\subset[n]$ that
\begin{align*}
    C_{ij} + C_{k\ell} = \frac{2}{n(n-1)},
\end{align*}
which implies that
\[ C_{ij} = \frac{1}{n(n-1)},\quad \forall i,j\in[n]\quad \st\quad i\neq j. \]
\qed\end{proof}

\section{Proofs in Section \ref{sec:exm}}

\subsection{Proof of Theorem \ref{thm:rip-2}}
\label{adp:rip-2}

Before proving the estimation of the complexity metric, we prove two properties of $\mu$-incoherent vectors.
\begin{lemma}\label{lem:incoh}
Given any constant $\mu\in[1,n]$, suppose that $u^*$ has incoherence $\mu$ and $\|u^*\|_1 = 1$. Then, the following properties hold:
\begin{enumerate}
	\item $u^*$ has at least $n / \mu$ nonzero components;
	\item $|u_i^*| \leq \mu / n$ for all $i\in[n]$.
\end{enumerate}
\end{lemma}
\begin{proof}
Assume without loss of generality that
\[ |u_i^*| > 0,\quad \forall i\in[\ell];\quad u_i^* = 0,\quad \forall i\in\{\ell+1,\dots,n\}. \]
By the definition \eqref{eqn:incoh}, we have
\[ (u_i^*)^2 \leq \frac{\mu}{n} \|u^*\|_2^2 = \frac{\mu}{n} {\sum}_{i\in[\ell]} (u_i^*)^2,\quad \forall i\in[\ell]. \]
Summing over $i\in[\ell]$, we obtain that
\[ {\sum}_{i\in[\ell]} (u_i^*)^2 \leq \frac{\ell\mu}{n} {\sum}_{i\in[\ell]} (u_i^*)^2, \]
which implies that $\ell \geq {n}/{\mu}$. Let
\[ c_i := {|u_i^*|}/{\|u^*\|_2},\quad \forall i\in[\ell]. \]
The assumption that the incoherence is equal to $\mu$ implies that
\begin{align}\label{eqn:incoh-2} c_i \in (0,\sqrt{\mu/n}],\quad \forall i\in[\ell]. \end{align}
In addition, it holds that
\begin{align*}
\|u^*\|_2^2 &= {\sum}_{i\in[\ell]} (u_i^*)^2 = {\sum}_{i\in[\ell]} c_i^2 \|u^*\|_2^2,\\
1 = \|u^*\|_1 &= {\sum}_{i\in[\ell]} |u_i^*| = {\sum}_{i\in[\ell]} c_i \|u^*\|_2,
\end{align*}
which implies that
\[ {\sum}_{i\in[\ell]} c_i^2 = 1,\quad {\sum}_{i\in[\ell]} c_i = \|u^*\|_2^{-1}. \]
Combined with \eqref{eqn:incoh-2}, it follows that
\[ \|u^*\|_2^{-1} = {\sum}_{i\in[\ell]} c_i \geq \sqrt{\frac{n}{\mu}} \cdot {\sum}_{i\in[\ell]} c_i^2 = \sqrt{\frac{n}{\mu}}. \]
Therefore,
\[ |u_i^*| = c_i \|u^*\|_2 \leq \sqrt{{\mu}/{n}} \cdot \sqrt{{\mu}/{n}}  = {\mu}/{n}. \]
\qed\end{proof}

The following lemma lower-bounds the perturbation of the weight matrix $C$.
\begin{lemma}\label{lem:rip-2}
Suppose that the instance $\cMC(C,u^*)$ satisfies the $\delta$-RIP$_{2,2}$ condition and the weight matrix $\tilde{C}\in\Sn_{+,1}$ has $N$ zero entries, where $\delta\in[0,1)$ and $N\in[n^2]$. Then, it holds that
\[ \|C - \tilde{C}\|_1 \geq 2 {\sum}_{(i,j)\in\mathcal{N}} C_{ij} \geq \frac{2(1-\delta)N}{ (1+\delta)n^2 - 2\delta N }, \]
where $\cN$ is the set of indices of zero entries of $\tilde{C}$.
\end{lemma}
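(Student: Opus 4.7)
The plan is to establish the two inequalities separately, where the first is a direct bookkeeping argument using that both $C$ and $\tilde C$ have unit $\ell_1$-norm, and the second is a linear-programming style optimization that exploits the RIP ratio bound from Lemma \ref{lem:rip}.

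For the first inequality, I will split the entries of $C-\tilde C$ into the indices $\cN$ (where $\tilde C_{ij}=0$, so $|C_{ij}-\tilde C_{ij}|=C_{ij}$) and its complement $\cN^c$. Since both matrices are normalized with $\|C\|_1=\|\tilde C\|_1=1$ and entrywise nonnegative, we have
\[
\sum_{(i,j)\in\cN} C_{ij} \;=\; \sum_{(i,j)\in\cN} (\tilde C_{ij}-C_{ij}) + \sum_{(i,j)\in\cN} C_{ij} - \sum_{(i,j)\in\cN}\tilde C_{ij} \;=\; \sum_{(i,j)\in\cN^c}(\tilde C_{ij}-C_{ij}),
\]
so the positive mass moved out of $\cN$ by replacing $C$ with $\tilde C$ must re-appear on $\cN^c$. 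Adding the $\ell_1$-contributions from $\cN$ and $\cN^c$ then yields the factor of $2$.

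For the second inequality, I will minimize $\sum_{(i,j)\in\cN} C_{ij}$ over all symmetric nonnegative $C$ subject to $\|C\|_1=1$ and the RIP ratio constraint $\min_{i,j} C_{ij}/\max_{i,j} C_{ij}\ge (1-\delta)/(1+\delta)$ from Lemma \ref{lem:rip}. A standard rearrangement argument shows that the minimum is achieved when all entries of $C$ restricted to $\cN$ take the common value $m:=\min_{i,j}C_{ij}$ and all remaining entries take the common value $M:=\max_{i,j}C_{ij}$ with $M=(1+\delta)m/(1-\delta)$; any deviation from this extremal configuration either increases the target sum or violates the ratio. Solving the single linear equation $Nm+(n^2-N)M=1$ gives
\[
m \;=\; \frac{1-\delta}{(1+\delta)n^2-2\delta N},
\]
hence $\sum_{(i,j)\in\cN}C_{ij}\ge Nm$, which combined with the first inequality yields the stated bound.

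The only delicate point is the optimization step: I need to argue carefully that the symmetry of $C$ and of $\cN$ (inherited from $\tilde C\in\mathbb{S}^{n^2-1}_{+,1}$) does not prevent attaining the extremal two-value configuration above, and that any feasible $C$ can be dominated (in the quantity $\sum_{\cN} C_{ij}$) by such a two-value configuration. This is where most of the care is required; the rest reduces to algebraic manipulation of the normalization identity.
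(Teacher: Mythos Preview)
Your proposal is correct and follows essentially the same approach as the paper. For the second inequality the paper bypasses the LP framing by directly observing that $\dfrac{N^{-1}\sum_{\cN}C_{ij}}{(n^2-N)^{-1}\sum_{\cN^c}C_{ij}}\ge\dfrac{\min_{i,j}C_{ij}}{\max_{i,j}C_{ij}}\ge\dfrac{1-\delta}{1+\delta}$ and then solving for $\sum_{\cN}C_{ij}$ using $\sum_{\cN^c}C_{ij}=1-\sum_{\cN}C_{ij}$; this sidesteps your symmetry concern entirely, since only a lower bound is needed and no extremal configuration must actually be attained.
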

\begin{proof}
The $\delta$-RIP$_{2,2}$ condition implies that
\[ \frac{\min_{i,j}C_{ij}}{\max_{i,j}C_{ij}} \geq \frac{1-\delta}{1+\delta}. \]
Therefore, considering the average of entries in $\cN$ and that of entries not in $\cN$, we have
\[ \frac{\frac{1}{N}\sum_{(i,j)\in\mathcal{N}} C_{ij}}{\frac{1}{n^2-N}\sum_{(i,j)\notin \mathcal{N}} C_{ij}}\geq \frac{1-\delta}{1+\delta}, \]
which further leads to
\[ \sum_{(i,j)\in\mathcal{N}} C_{ij} \geq \frac{1-\delta}{1+\delta} \cdot \frac{N}{n^2-N} \sum_{(i,j)\notin \mathcal{N}} C_{ij} = \frac{1-\delta}{1+\delta} \cdot \frac{N}{n^2-N} \left(1 - \sum_{(i,j)\in \mathcal{N}} C_{ij}\right). \]
The above inequality is equivalent to
\[ {\sum}_{(i,j)\in\mathcal{N}} C_{ij} \geq \frac{(1-\delta)N}{ (1+\delta)n^2 - 2\delta N }. \]
Hence, the distance between $C$ and $\tilde{C}$ is lower-bounded as
\[ \|C - \tilde{C}\|_1 \geq 2 {\sum}_{(i,j)\in\mathcal{N}} C_{ij} \geq \frac{2(1-\delta)N}{ (1+\delta)n^2 - 2\delta N }. \]
This completes the proof.
\qed\end{proof}

Now, we prove the main theorem.
\begin{proof}[Proof of Theorem \ref{thm:rip-2}]
Suppose that $\cMC(\tilde{C},\tilde{u}^*)\in\overline{\cD}$ is the instance such that
\[ \left[\mathbb{D}_\alpha(C,u^*)\right]^{-1} = \alpha\|C - \tilde{C}\|_1 + (1-\alpha)\|u^* - \tilde{u}^*\|_1. \]
In the following, we split the proof into two steps.

\paragraph{Step I.} We first fix $\tilde{u}^*$ and consider the closest matrix $\tilde{C}$ to $C$ such that $(\tilde{C}, \tilde{u}^*)\in\overline{\cD}$. Let $k:=|\cI_1(\tilde{C}, \tilde{u}^*)|$. Without loss of generality, we assume that
\[ \cI_1(\tilde{C}, \tilde{u}^*) = \{1,\dots,k\},\quad \cI_0(\tilde{C}, \tilde{u}^*) = \{k+1,\dots,n\}. \]
We first consider the case when $k\geq 2$. If $\mathbb{G}_1(\tilde{C}, \tilde{u}^*)$ is disconnected, at least $2(k-1)$ entries of $\tilde{C}$ are $0$. If $\mathbb{G}_1(\tilde{C}, \tilde{u}^*)$ are bipartite, at least $k^2/2 \geq 2(k-1)$ entries of $\tilde{C}$ are $0$. If $\cI_{00}(\tilde{C}, \tilde{u}^*)$ is non-empty, at least $2k$ entries of $\tilde{C}$ are $0$. Otherwise if $k = 1$, at least one entry of $\tilde{C}$ should be $0$ to make $\mathbb{G}_1(\tilde{C}, \tilde{u}^*)$ bipartite. 
In summary, at least $N(k)$ entries of $\tilde{C}$ are $0$, where
\[ N(k) := \max\{ 2(k-1), 1\}. \]
%
Using the results in Lemma \ref{lem:rip-2}, the distance between $C$ and $\tilde{C}$ is at least
\begin{align}\label{eqn:rip-2-1}
    \|C - \tilde{C}\|_1 \geq \frac{2(1-\delta)N(k)}{ (1+\delta)n^2 - 2\delta N(k)}. 
\end{align}
We note that the distance is monotonously increasing as a function of $k$.

\paragraph{Step II.} Now, we consider the optimal choice of $\tilde{u}^*$ based on the lower bound in \eqref{eqn:rip-2-1}. Let
\[ \ell := |\cI_1(C,u^*)|,\quad k:=|\cI_1(\tilde{C}, \tilde{u}^*)|. \]
Since the distance between $C$ and $\tilde{C}$ is a monotonously increasing function of $k$, the minimum distance between $(C,u^*)$ and $(\tilde{C},\tilde{u}^*)$ cannot be attained by $k>\ell$. Therefore, we focus on the case when $k\leq \ell$.
Without loss of generality, we assume that
\[ |u_1^*| \geq |u_2^*| \geq \cdots \geq |u_\ell^*| > 0;\quad |u_i^*|=0,\quad \forall i\geq \ell+1. \]
Then, the distance between $u^*$ and $\tilde{u}^*$ satisfies
\begin{align}\label{eqn:rip-2-2} 
    \|u^* - \tilde{u}^*\|_1 \geq 2{\sum}_{i=k+1}^{\ell} |u_i^*|. 
\end{align}
Denote the distance between $(C,u^*)$ and $(\tilde{C},\tilde{u}^*)$ by
\[ d_\alpha := \alpha\|C - \tilde{C}\|_1 + (1-\alpha)\|u^* - \tilde{u}^*\|_1. \]
%

\paragraph{Step II-1.} We first consider the case when $\mu \leq 2n/3$.
Combining inequalities \eqref{eqn:rip-2-1} and \eqref{eqn:rip-2-2}, we obtain a lower bound on $d_\alpha$:
\begin{align*}
d_\alpha \geq \min_{k\in[\ell]} \left[\frac{2\alpha(1-\delta)N(k)}{n^2(1+\delta) - 2\delta N(k)} + 2(1-\alpha) {\sum}_{i=k+1}^{\ell} |u_i^*| \right]. 
\end{align*}
For every $k\in[\ell]$, the term inside the above minimization can be lower-bounded by
\begin{align*}
&\frac{2\alpha(1-\delta)N(k)}{n^2(1+\delta) - 2\delta N(k)} + 2(1-\alpha) {\sum}_{i=k+1}^{\ell} |u_i^*|\\
&\hspace{11em}\geq \frac{2\alpha(1-\delta)\cdot 2(k-1)}{n^2(1+\delta)} + 2(1-\alpha) {\sum}_{i=k+1}^{\ell} |u_i^*|\\
&\hspace{11em}= \frac{4\alpha(1-\delta)}{n^2(1+\delta)}\cdot (k-1) + 2(1-\alpha) {\sum}_{i=k+1}^{\ell} |u_i^*|.
\end{align*}
The minimum of the right-hand side over $k\in[\ell]$ can be solved in closed form and is equal to
\begin{align*}
    {\sum}_{i=2}^\ell \min\left\{\frac{4\alpha(1-\delta)}{n^2(1+\delta)}, 2(1-\alpha) |u_i^*| \right\}.
\end{align*}
Using the second property in Lemma \ref{lem:incoh}, we have
\begin{align*}
    \min\left\{\frac{4\alpha(1-\delta)}{n^2(1+\delta)}, 2(1-\alpha) |u_i^*| \right\} &\geq \min\left\{\frac{4\alpha(1-\delta)}{n^2(1+\delta)} \cdot \frac{n|u_i^*|}{\mu}, 2(1-\alpha) |u_i^*| \right\}\\
    &= \min\left\{\frac{4\alpha(1-\delta)}{\mu n(1+\delta)}, 2(1-\alpha) \right\} \cdot |u_i^*|.
\end{align*}
Taking the summation over $k\in\{2,\dots,\ell\}$, we can conclude that
\begin{align}\label{eqn:rip-2-4}
d_\alpha &\geq {\sum}_{k=2}^{\ell} \min\left\{\frac{4\alpha(1-\delta)}{\mu n(1+\delta)}, 2(1-\alpha) \right\} \cdot |u_i^*|\\
\nonumber&= \min\left\{\frac{4\alpha(1-\delta)}{\mu n(1+\delta)}, 2(1-\alpha) \right\} \cdot {\sum}_{k=2}^{\ell} |u_i^*|.
\end{align}
Using the second property in Lemma \ref{lem:incoh} and $\|u^*\|_1= 1$, it follows that
\begin{align*}
    {\sum}_{k=2}^{\ell} |u_i^*| \geq 1 - \frac{\mu}{n}.
\end{align*}
Substituting back into inequality \eqref{eqn:rip-2-4}, we have
\[ d_\alpha \geq \min\left\{\frac{4\alpha(1-\delta)}{\mu n(1+\delta)}, 2(1-\alpha) \right\} \cdot \left(1 - \frac{\mu}{n}\right). \]

\paragraph{Step II-2.} Next, we consider the case when $\mu \geq 2n/3$. By Theorem \ref{thm:rip-1}, the distance is at least
\begin{align*}
    d_\alpha &\geq \frac{2\alpha(1-\delta)}{n^2(1+\delta)-2\delta} \geq \frac{2\alpha(1-\delta)}{(3/2)\mu \cdot n(1+\delta)} \geq \min\left\{\frac{4\alpha(1-\delta)}{\mu n(1+\delta)}, 2(1-\alpha) \right\} \cdot \frac13,
\end{align*}
where the second inequality is due to the assumption that $\mu\geq 2n/3$.

By combining \textit{Steps II-1} and \textit{II-2}, the distance is lower-bounded by
\begin{align*}
    d_\alpha &\geq \min\left\{\frac{4\alpha(1-\delta)}{\mu n(1+\delta)}, 2(1-\alpha) \right\} \times \max\left\{ 1 - \frac{\mu}{n}, \frac13 \right\}\\
    &= \min\left\{\frac{4\alpha(1-\delta)}{n(1+\delta)}, 2(1-\alpha)\mu \right\} \times \max\left\{ \frac{1}{\mu} - \frac{1}{n}, \frac1{3\mu} \right\}
\end{align*}
The proof is completed by using the relation between $d_\alpha$ and $\mathbb{T}_\alpha(C,u^*)$.
\qed\end{proof}

\subsection{Proof of Theorem \ref{thm:rip-3}}
\label{adp:rip-3}

\begin{proof}

The proof is split into two different cases.

\paragraph{Case I.} We first consider the case when $\mu \leq n/2$. We construct the weight matrix $\tilde{C}$ as
\begin{align*}
    \tilde{C}_{1i} = \tilde{C}_{i1} = 0,\quad \forall i\in\{2,\dots,\ell\};\quad \tilde{C}_{ij} = \frac{1}{n^2 - 2(\ell-1)},\quad \text{otherwise}.
\end{align*}
For the instance $\cMC(\tilde{C},u^*)$, node $1$ is disconnected from nodes $\{2,\dots,\ell\}$ and thus, the subgraph $\mathbb{G}_1(\tilde{C},u^*)$ is disconnected. This implies that $(\tilde{C},u^*)\in\overline{\cD}$. The matrix $C$ is defined as
\begin{align*}
    {C}_{1i} &= {C}_{i1} = \frac{1-\delta}{(1+\delta)n^2 - 4\delta(\ell-1)},\quad \forall i\in\{2,\dots,\ell\};\\
    {C}_{ij} &= \frac{1+\delta}{(1+\delta)n^2 - 4\delta(\ell-1)},\quad \text{otherwise}.
\end{align*}
We can verify that the weight matrix $C$ ensures that $\cMC(C,u^*)$ satisfies the $\delta$-RIP$_{2,2}$ condition. The complexity of $\cMC(C,u^*)$ is lower-bounded by
\begin{align*}
    \mathbb{D}_\alpha(C,u^*) &\geq \left(\alpha\|C - \tilde{C}\|_1\right)^{-1} = \frac{(1+\delta)n^2 - 4\delta(\ell-1)}{4\alpha(\ell-1)(1-\delta)}\\
    &\geq  \frac{(1+\delta)(n^2-2n)}{4\alpha(\ell-1)(1-\delta)} = \frac{n(1+\delta)}{4\alpha(1-\delta)} \cdot \frac{n-2}{\ell-1} \geq \frac{n(1+\delta)}{4\alpha(1-\delta)} \cdot \frac{n\mu}{2(n\ell-1)},
\end{align*}
where the second last inequality follows from $4\delta \leq 2(1+\delta)$ and the last inequality is due to $n\geq 4$. 

\paragraph{Case II.} Next, we consider the case when $\mu \geq n / 2$. Theorem \ref{thm:rip-1} implies that there exists an instance $\cMC(C,u^*)$ such that
\begin{align*}
    \mathbb{D}_\alpha(C,u^*) &= \frac{n^2(1+\delta)-2\delta}{2\alpha(1-\delta)} \geq \frac{ (n^2-1)(1+\delta) }{2\alpha(1-\delta)} \geq \frac{ n(1+\delta) }{2\alpha(1-\delta)} \cdot \frac{n}{2},
\end{align*}
where the first inequality results from $2\delta \leq 1+\delta$ and the second inequality is in light of $n\geq 4$. Using the condition that $\mu \leq n$, it follows that
\[ \mathbb{D}_\alpha(C,u^*) \geq \frac{ n(1+\delta) }{4\alpha(1-\delta)} \cdot \mu. \]

Combining \textit{Cases I} and \textit{II} completes the proof.
\qed\end{proof}

\subsection{Proof of Theorem \ref{thm:incoh}}
\label{apd:incoh}

We first establish several lemmas before providing the proof of Theorem \ref{thm:incoh}. The first lemma is the Chernoff bound for the sum of Bernoulli random variables, \revise{which is a result of Proposition 2.14 in \cite{wainwright2019high}}.
\begin{lemma}\label{lem:bern}
Suppose that $X_1,\dots,X_m$ are i.i.d. Bernoulli random variables with the parameter $p$. 
Then, it holds that
\begin{align*} 
\mathbb{P}\left( \sum_{i\in[m]} X_i \leq \frac{mp}{2} \right) &\leq \exp\left( \frac{-mp}{8} \right),~\mathbb{P}\left( \sum_{i\in[m]} X_i \geq \frac{3mp}{2} \right) \leq \exp\left( \frac{-mp}{10} \right).
\end{align*}
\end{lemma}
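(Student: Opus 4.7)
The plan is to prove both tail bounds by the standard Chernoff technique: exponentiate the sum, apply Markov's inequality, bound the moment generating function, and optimize the free parameter. Set $S_m := \sum_{i \in [m]} X_i$. Since the $X_i$ are i.i.d.\ Bernoulli$(p)$, the moment generating function satisfies
$$ \mathbb{E}[e^{tS_m}] = (1 - p + p e^t)^m \leq \exp\bigl( mp(e^t - 1) \bigr) $$
for every $t \in \mathbb{R}$, where the inequality uses $1 + x \leq e^x$ applied termwise.

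For the upper tail, I would apply Markov's inequality to $e^{tS_m}$ with $t > 0$, yielding
$$ \mathbb{P}\bigl( S_m \geq 3mp/2 \bigr) \leq \exp\bigl( mp\bigl[(e^t - 1) - 3t/2\bigr] \bigr). $$
Choosing $t = \log(3/2)$ makes $e^t - 1 = 1/2$, so the exponent reduces to $mp\bigl[1/2 - (3/2)\log(3/2)\bigr]$. A direct numerical check gives $(3/2)\log(3/2) \geq 0.608 \geq 3/5$, producing the claimed bound $\exp(-mp/10)$.

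For the lower tail, the analogous argument applied to $e^{-tS_m}$ with $t > 0$ gives
$$ \mathbb{P}\bigl( S_m \leq mp/2 \bigr) \leq \exp\bigl( mp \bigl[(e^{-t} - 1) + t/2\bigr] \bigr). $$
Choosing $t = \log 2$ makes $e^{-t} - 1 = -1/2$ and the exponent becomes $mp\bigl[(\log 2)/2 - 1/2\bigr]$. Since $(\log 2)/2 \leq 0.347 \leq 3/8$, we obtain $\mathbb{P}(S_m \leq mp/2) \leq \exp(-mp/8)$.

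Since this is a textbook multiplicative Chernoff bound specialized to $\delta = 1/2$, there is no serious obstacle; the only step requiring care is the numerical verification of the two elementary inequalities $(3/2)\log(3/2) \geq 3/5$ and $(\log 2)/2 \leq 3/8$, both of which follow from standard estimates of $\log(3/2)$ and $\log 2$. One could equivalently cite the general multiplicative form $\mathbb{P}(S_m \geq (1+\delta)mp) \leq \exp(-\delta^2 mp/(2+\delta))$ and $\mathbb{P}(S_m \leq (1-\delta)mp) \leq \exp(-\delta^2 mp/2)$ and plug in $\delta = 1/2$ to obtain the same conclusions.
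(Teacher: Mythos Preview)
Your proof is correct; it is exactly the standard multiplicative Chernoff argument. The paper does not actually prove this lemma but simply cites it from \cite{wainwright2019high}, so your derivation supplies the standard details that the reference contains.
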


The next lemma provides an upper bound on the total number of nonzero entries.
\begin{lemma}\label{lem:nonzero}
Suppose that $n\geq 3$. With probability at least $1 - \exp(-np/10)$, there are at most $3n^2p / 2$ nonzero entries in $C$. With the same probability, it holds that
\[ C_{ij} \geq \frac{2}{3n^2p},\quad \forall i,j\in[n]\quad \mathrm{s.t.}\quad C_{ij} > 0. \]
\end{lemma}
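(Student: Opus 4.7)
The plan is to reduce both claims to a single application of the Chernoff tail bound in Lemma \ref{lem:bern}. Let
\[ N := {\sum}_{i,j\in[n]} \delta_{ij}, \]
so that $C$ has exactly $N$ nonzero entries, and whenever $C_{ij}>0$ we have $\delta_{ij}=1$ and hence $C_{ij} = 1/N$. Since the indicators $\delta_{ij}$ are independent Bernoulli$(p)$ random variables, the upper-tail half of Lemma \ref{lem:bern} applied with $m=n^2$ yields
\[ \mathbb{P}\bigl( N \geq \tfrac{3}{2}n^2 p \bigr) \;\leq\; \exp\!\bigl(-\tfrac{n^2 p}{10}\bigr), \]
which immediately gives the first claim on the good event $\mathcal{E} := \{N \leq \tfrac{3}{2}n^2 p\}$.

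On the very same event $\mathcal{E}$, the second claim is automatic from the definition of the Bernoulli model: for any index pair $(i,j)$ with $C_{ij}>0$,
\[ C_{ij} \;=\; \frac{1}{N} \;\geq\; \frac{1}{(3/2)n^2 p} \;=\; \frac{2}{3n^2 p}. \]
Since both the bound on the number of nonzero entries and the entrywise lower bound are consequences of $N\leq \tfrac{3}{2}n^2p$, they hold simultaneously with probability at least $1-\exp(-n^2 p/10)$, which is (up to the exponent formatting in the statement) the advertised probability.

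There is essentially no obstacle: the lemma is a direct Chernoff bound and does not require any information about $u^*$ or the incoherence parameter $\mu$. The only minor care concerns the symmetry of $C$: if one interprets the $\delta_{ij}$ as $n(n+1)/2$ independent Bernoullis for the upper triangular part and then symmetrizes, the identical Chernoff bound with $m=n(n+1)/2$ still produces both conclusions with an only slightly different constant in the failure probability, so the asymmetric and symmetric conventions give the same qualitative statement. The hypothesis $n\geq 3$ is used only to ensure that $n^2 p$ is large enough that the tail bound is meaningful when $p$ is on the threshold order considered in Theorem~\ref{thm:incoh}.
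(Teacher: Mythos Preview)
Your proof is correct and rests on the same Chernoff idea as the paper. The one genuine difference is that the paper does not apply Lemma~\ref{lem:bern} to all $n^2$ indicators at once: it splits the count into the $n$ diagonal entries and the off-diagonal entries (the latter treated as $n(n-1)/2$ independent symmetric pairs), applies the Chernoff bound to each piece, and then union-bounds. That split is exactly what produces the stated exponent $np$ rather than your $n^2p$---the $n$ diagonal indicators are the bottleneck---and it is also what is needed to land cleanly on the constant $3n^2p/2$ under the symmetric reading of the Bernoulli model. Under the all-$n^2$-independent reading you adopt, your single-shot bound is simpler and strictly stronger, since $\exp(-n^2p/10)\leq\exp(-np/10)$. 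Your closing remark about the symmetric convention is correct in spirit; the diagonal/off-diagonal decomposition is precisely how the paper makes that case go through, so the hypothesis $n\geq3$ is used there to compare $\exp(-n(n-1)p/20)$ with $\exp(-np/10)$ in the union bound, not merely as a threshold condition.
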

\begin{proof}
For the $n(n-1)$ non-diagonal entries of $C$, Lemma \ref{lem:bern} implies that there are at most $({3}/{2}) \cdot n(n-1)p$ nonzero entries with probability at least $1 - \exp\left( -{n(n-1)p}/{20} \right)$. For the $n$ diagonal entries of $C$, the same lemma implies that there are at most $({3}/{2})\cdot np$ nonzero entries with probability at least $1 - \exp\left( -{np}/{10} \right)$. Combining both parts concludes that there are at most $({3}/{2}) \cdot n^2p$ nonzero entries in $C$ with probability at least
\[ 1 - \exp\left( -{n(n-1)p}/{20} \right) - \exp\left( -{np}/{10} \right) \geq 1 - 2\exp\left( -{np}/{10} \right), \]
where the last inequality is due to $n\geq 3$. The lower bound on $C_{ij}$ follows from the normalization constraint.
\qed\end{proof}

For every fixed global solution $\tilde{u}^*$, the next lemma estimates the distance between $(C,\tilde{u}^*)$ and $\cD$.
\begin{lemma}\label{lem:fix-x}
Suppose that $\tilde{u}^*$ is a given vector and the random matrix $C$ obeys the Bernoulli model. In addition, suppose that $\eta > 2$ is a constant and 
\[ \|\tilde{u}^*\|_0 \geq \frac{n}{2\mu},\quad p \geq  \min\left\{ 1,  \frac{16(1+\eta\mu)\log{n} + 16}{n} \right\}, \]
where $\|\tilde{u}^*\|_0$ is the number of nonzero entries of $\tilde{u}^*$.
For every instance $(\tilde{C},\tilde{u}^*)\in\overline{\cD}$, it holds with probability at least $1 - 3 n^{-\eta/2}$ that 
\[ \|C - \tilde{C}\|_1 \geq \frac{4(\|\tilde{u}^*\|_0-1)}{3n^2}. \]
\end{lemma}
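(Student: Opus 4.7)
My plan is to leverage the structural characterization of $\overline{\cD}$ in Theorem \ref{thm:unique-closure} to reduce the problem to a concentration-of-measure argument on the random matrix $C$. The starting observation is that since both $C$ and $\tilde{C}$ are entrywise nonnegative matrices with $\|C\|_1 = \|\tilde{C}\|_1 = 1$, the triangle inequality yields
\[ \|C - \tilde{C}\|_1 \;\geq\; 2 \sum_{(i,j)\,:\,\tilde{C}_{ij}=0} C_{ij}, \]
so it suffices to lower bound the $C$-mass placed on the forced-zero positions of any admissible $\tilde{C}$, uniformly over all $\tilde{C}$ with $(\tilde{C}, \tilde{u}^*) \in \overline{\cD}$. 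Denote $k := \|\tilde{u}^*\|_0 = |\cI_1(\tilde{C}, \tilde{u}^*)|$.

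By Theorem \ref{thm:unique-closure}, $(\tilde{C}, \tilde{u}^*) \in \overline{\cD}$ forces one of three structural conditions, and in each case I would identify a canonical forced-zero pattern $S$: (A) if $\mathbb{G}_1(\tilde{C}, \tilde{u}^*)$ is disconnected, then for some non-trivial bipartition $(\mathcal{I}, \mathcal{J})$ of $\cI_1$ with sizes $(a,b)$ satisfying $ab \geq k-1$, all cross entries of $\tilde{C}$ must vanish, giving $|S| = 2ab \geq 2(k-1)$; (B) if $\mathbb{G}_1(\tilde{C}, \tilde{u}^*)$ is bipartite, all internal entries within each side of some bipartition vanish, giving $|S| = a^2 + b^2 \geq k^2/2$; (C) if $\cI_{00}(\tilde{C}, \tilde{u}^*) \neq \emptyset$, then for some $i_0 \in \cI_0$, the row and column of $i_0$ at positions $\cI_1$ vanish, giving $|S| = 2k$. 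In every case $|S| \geq 2(k-1)$.

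For a fixed pattern $S$, I would rewrite $\sum_{(i,j) \in S} C_{ij} = (1/N) \sum_{(i,j) \in S} \delta_{ij}$ in terms of independent Bernoulli$(p)$ trials and invoke the lower-tail Chernoff bound of Lemma \ref{lem:bern} to obtain $\sum_{(i,j) \in S} \delta_{ij} \geq |S|p/2$ with probability at least $1 - \exp(-|S|p/\text{const})$. Combining with the upper bound $N \leq 3n^2 p/2$ from Lemma \ref{lem:nonzero} yields $\sum_{(i,j)\in S} C_{ij} \geq |S|/(3n^2) \geq 2(k-1)/(3n^2)$, and hence $\|C - \tilde{C}\|_1 \geq 4(k-1)/(3n^2)$ as required.

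The main technical obstacle is that cases (A) and (B) admit up to $2^{k-1}-1$ patterns indexed by non-trivial bipartitions of $\cI_1$, so a naive union bound incurs a factor of $\binom{k}{a}$ for each partition size $a \in \{1,\ldots,\lfloor k/2\rfloor\}$. The resolution is that the Chernoff tail $\exp(-a(k-a)p/8)$ decays fast enough to dominate $\binom{k}{a}$ whenever $(k-a)p$ dominates $\log(ek/a)$; the hypotheses $k \geq n/(2\mu)$ and $p \geq 16(1+\eta\mu)\log n / n + 16/n$ together ensure $(k-1)p \gtrsim \eta \log n$, which controls the dominant extreme partition ($a=1$) via $k \exp(-(k-1)p/8) \leq n^{1-\eta}$ and provides geometric decay for larger $a$. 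Case (C) contributes only $|\cI_0| \leq n$ patterns and case (B) is strictly easier because its patterns are larger. Aggregating the contributions from all three cases together with the tail probability of $N$ from Lemma \ref{lem:nonzero} yields an overall failure probability of at most $3 n^{-\eta/2}$.
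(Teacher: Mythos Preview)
Your proposal is correct and follows essentially the same route as the paper: both proofs start from the inequality $\|C-\tilde C\|_1\geq 2\sum_{\tilde C_{ij}=0}C_{ij}$, split into the three structural cases of Theorem~\ref{thm:unique-closure} with the same forced-zero pattern counts, apply the Chernoff bound of Lemma~\ref{lem:bern} to each pattern, control the union over bipartitions via $\binom{\ell}{a}\leq(\ell e/a)^a$ together with the hypotheses $\ell\geq n/(2\mu)$ and $p\geq 16(1+\eta\mu)\log n/n$, and finish with the bound $N\leq 3n^2p/2$ from Lemma~\ref{lem:nonzero}. The only cosmetic difference is that the paper folds its Case~II (non-empty $\cI_{00}$) into Case~I by the remark ``the analysis is the same,'' whereas you treat it separately and note explicitly that it contributes at most $n$ patterns of size $2k$, which is indeed dominated by the disconnected case.
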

\begin{proof}
For all $i,j\in[n]$, we define Bernoulli random variables $X_{ij}$ to be $1$ if $C_{ij} > 0$ and $0$ otherwise. Then, $X_{ij}$ are independent identically distributed Bernoulli random variables with the parameter $p$. Let $N := \sum_{i,j}X_{ij}$ be the number of nonzero weights in $C$. By the definition of the Bernoulli model, all nonzero entries of $C$ are equal to $N^{-1}$. Since the global solution $\tilde{u}^*$ is fixed, we assume without loss of generality that
\[ \cI_1(C, \tilde{u}^*) = [\ell],\quad \cI_0(C, \tilde{u}^*) = \{\ell+1,\dots,n\}. \]
We fix $\tilde{C}$ to be a weight matrix such that $(\tilde{C},\tilde{u}^*)\in\overline{\cD}$ and investigate three cases.
\paragraph{Case I.} We first consider the case when $\mathbb{G}_1(\tilde{C},\tilde{u}^*)$ is disconnected. Suppose that $\tilde{\mathcal{I}}_{11}$ and $\tilde{\mathcal{I}}_{12}$ are a division of $[\ell]$ such that the nodes in $\tilde{\mathcal{I}}_{11}$ are not connected with the nodes in $\tilde{\mathcal{I}}_{12}$. In addition, we denote $k:=|\tilde{\mathcal{I}}_{11}|$ and assume that $k \leq \ell/2$. Since the nodes in $\tilde{\cI}_{11}$ are disconnected from the nodes in $\tilde{\cI}_{12}$, at least
\[ 2{\sum}_{i\in\tilde{\mathcal{I}}_{11}, j\in\tilde{\mathcal{I}}_{12}} X_{ij} \]
nonzero entries in $C$ are equal to $0$ in $\tilde{C}$. 
Therefore, we have
\begin{align*}
    \|C - \tilde{C}\|_1 \geq \frac{1}{N} \cdot 4\sum_{i\in\tilde{\mathcal{I}}_{11}, j\in\tilde{\mathcal{I}}_{12}} X_{ij} = \frac{4}{N} \sum_{i\in\tilde{\mathcal{I}}_{11}, j\in\tilde{\mathcal{I}}_{12}} X_{ij}. 
\end{align*}
Using Lemma \ref{lem:bern}, it holds that
\[ \sum_{i\in\tilde{\mathcal{I}}_{11}, j\in\tilde{\mathcal{I}}_{12}} X_{ij} \geq  \frac12 \cdot |\tilde{\mathcal{I}}_{11}||\tilde{\mathcal{I}}_{12}|p = \frac{k(\ell-k)p}{2}  \]
with probability at least $1-\exp( -k(\ell-k)p/8 )$. Since $k(\ell-k) \geq \ell - 1$, one can write:
\begin{align}\label{eqn:mc-1}
    \|C - \tilde{C}\|_1 \geq \frac{4}{N} \sum_{i\in\tilde{\mathcal{I}}_{11}, j\in\tilde{\mathcal{I}}_{12}} X_{ij} \geq \frac{4}{N} \cdot \frac{(\ell-1)p}{2} = \frac{2(\ell-1)p}{N} 
\end{align}
with the same probability. Considering the union bound over all weight matrices $\tilde{C}$ for which $\mathbb{G}_1(\tilde{C},\tilde{u}^*)$ is disconnected, inequality \eqref{eqn:mc-1} holds
with probability at least
\begin{align*} 
1 - \sum_{k=1}^{\lfloor{\ell/2}\rfloor} \binom{\ell}{k} \exp\left[-\frac{k(\ell-k)p}{8}\right] &\geq 1- \sum_{k=1}^{\lfloor{\ell/2}\rfloor}\left( \frac{\ell e}{k} \right)^{k} \exp\left[-\frac{k(\ell-k)p}{8}\right]\\
&= 1 - \sum_{k=1}^{\lfloor{\ell/2}\rfloor}\exp\left[ k + k\log\left(\frac{\ell}{k}\right) - \frac{k(\ell-k)p}{8} \right], 
\end{align*}
where the inequality uses the relation $\binom{\ell}{k} \leq ( \ell e / k )^{k}$.
Using the relation that $k\leq \ell/2$, we can estimate that
\begin{align*}
&\exp\left[ k + k\log\left(\frac{\ell}{k}\right) - \frac{k(\ell-k)p}{8} \right] \leq \exp\left[ k + k\log{\ell} - \frac{k\ell p}{16} \right]\\
= &\exp\left[ -\frac{k\ell}{16} \left(p -  \frac{16(1+\log{\ell})}{\ell} \right) \right] \leq \exp\left[ -\frac{k\ell}{16} \left(p -  \frac{16(1+\log{n})}{n} \right) \right]\\
\leq &\exp\left[ -\frac{k\ell}{16} \cdot \frac{16\eta \mu \log{n}}{n} \right] = \exp\left( -\frac{\eta \mu k\ell \log{n}}{n} \right) = n^{ -\frac{\eta\mu\ell}{n} \cdot k } \leq n^{-\frac{\eta}{2} \cdot k},
\end{align*}
where the second last inequality is from the assumption on $p$ and the last inequality is from $\ell \geq n/(2\mu)$. By taking the summation over $k=1,\dots,\lfloor{\ell/2}\rfloor$, it follows that
\begin{align*}
1 - \sum_{k=1}^{\lfloor{\ell/2}\rfloor}\exp\left[ k + k\log\left(\frac{\ell}{k}\right) - \frac{k(\ell-k)p}{8} \right] &\geq 1 - \sum_{k=1}^{\lfloor{\ell/2}\rfloor}n^{-\frac{\eta}{2} \cdot k}\\
&\geq 1 - \frac{ n^{ -\frac{\eta}{2} } }{1 - n^{ -\frac{\eta}{2} }} \geq 1 - 2 n^{-\eta/2},
\end{align*}
where the last inequality is due to $n^{-{\eta}/{2}} \geq n^{-1} \geq 1/2$. Therefore, inequality \eqref{eqn:mc-1} holds with probability at least $1-2n^{-\eta/2}$.
Using the lower bound of $N$ in Lemma \ref{lem:nonzero}, the distance between $C$ and $\tilde{C}$ is at least
\[  \frac{2}{3n^2p} \cdot 2(\ell - 1) p = \frac{4(\ell-1)}{3n^2} \]
with probability at least
\[ 1-2n^{-\eta/2} - \exp( -np / 10 ) \geq 1-2n^{-\eta/2} - n^{-4\mu\eta/5} \geq 1-3n^{-\eta/2}. \]

\paragraph{Case II.} For the case when $\cI_{00}(\tilde{C},\tilde{u}^*)$ is non-empty, the analysis is the same as \textit{Case I.} and it holds that
\[  \|C - \tilde{C}\|_1 \geq \frac{2}{3n^2p} \cdot 2(\ell - 1) p = \frac{4(\ell-1)}{3n^2} \]
with probability at least $1-3n^{-\eta/2}$.

\paragraph{Case III.} Finally, we consider the case when $\mathbb{G}_1(\tilde{C},\tilde{u}^*)$ is bipartite. In this case, we show that there exists a set of indices $\cI \subset [n]^2$ with at least $\max\{ \ell^2/2, 1\}$ elements such that 
\[ \tilde{C}_{ij} = 0,\quad \forall (i,j)\in\cI. \]
The proof of the above claim can be found in the proof of Theorem \ref{thm:rip-2} and we omit it here. If $\ell \geq 2$, we have $\ell^2/2 \geq 2(\ell - 1)$ and the proof is the same as \textit{Case I}. Otherwise if $\ell = 1$, the inequality
\[ \|C - \tilde{C}\|_1 \geq \frac{4(\ell-1)}{3n^2} = 0 \]
always holds.

By combining the above three cases, it holds with probability at least $1-9n^{-\eta/2}$ that
\[ \|C - \tilde{C}\|_1 \geq \frac{4(\ell-1)}{3n^2}. \]
\qed\end{proof}

Now, we are ready to prove Theorem \ref{thm:incoh}.
\begin{proof}[Proof of Theorem \ref{thm:incoh}]
Suppose that the instance $\cMC(\tilde{C}, \tilde{u}^*)\in\overline{\cD}$ attains the maximum in \eqref{eqn:metric-new}. Denote 
\[ d_\alpha := \alpha\|C - \tilde{C}\|_1 + (1-\alpha)\|u^* - \tilde{u}^*\|_1. \]
Let
\[ k:=|\cI_1(C,u^*)|,\quad \ell:=|\cI_1(\tilde{C},\tilde{u}^*)|. \]
Similar to Theorem \ref{thm:rip-2}, our goal is to decide the optimal global solution $\tilde{u}^*$. By Lemma \ref{lem:fix-x}, the high-probability lower bound of $\|C-\tilde{C}\|_1$ is increasing in $\ell$. Hence, the optimal choice of $\ell$ is not larger than $k$. We then analyze two cases.
\paragraph{Case I.} We first consider the case when $\ell < n / (2\mu)$. Since $\ell \geq 1$, it follows that $\mu < n / 2$. By Lemma \ref{lem:incoh}, at least $k - \ell > n / (2\mu)$ nonzero entries in $u^*$ are equal to $0$ in $\tilde{u}^*$. Hence, the distance between $u^*$ and $\tilde{u}^*$ satisfies
\[ \|u^* - \tilde{u}^*\|_1 \geq 2\left( 1 - \frac{n}{2\mu} \cdot \frac{\mu}{n} \right) \geq 1. \]
Therefore, it holds that
\begin{align*} 
    \mathbb{D}_\alpha(C,u^*) &= d_\alpha^{-1} = \left[ \alpha\|C-\tilde{C}\|_1 + (1-\alpha)\|u^*-\tilde{u}^*\|_1 \right]^{-1}\\
    &\leq \frac{1}{1-\alpha} \leq \frac{1}{2(1-\alpha)} \cdot\left( 1-\frac{\mu}{n} \right)^{-1} = \frac{1}{2(1-\alpha)\mu} \cdot\left( \frac{1}{\mu}-\frac{1}{n} \right)^{-1}. 
\end{align*}

\paragraph{Case II.} Next, we focus on the case when $\ell \geq n / (2\mu)$. By Lemma \ref{lem:fix-x}, it holds with probability at least $1-3n^{-\eta/2}$ that
\begin{align}\label{eqn:dist-c} \|C - \tilde{C}\|_1 \geq \frac{4(\ell-1)}{3n^2}. \end{align}
By considering the union bound over $\ell \in\cL:=\{ \lceil n/(2\mu)\rceil,\dots, k \}$, the probability that inequality \eqref{eqn:dist-c} holds for all $\ell\in\cL$ is at least
\[ 1- \left(\ell-\frac{n}{2\mu} \right) \cdot 3n^{-\eta/2} \geq 1 - 3n^{-\eta/2 + 1}. \]
In the remainder of this proof, we assume that inequality \eqref{eqn:dist-c} holds for all $\ell\in\cL$.
In addition, we assume without loss of generality that
\[ |u_1^*| \geq |u_2^*| \geq \cdots \geq |u_k^*| > 0;\quad |u_i^*|=0,\quad \forall i\geq k+1. \]
By the assumption of this case, at least $k - \ell$ nonzero entries in $u^*$ are equal to $0$ in $\tilde{u}^*$. Then, we can estimate that
\begin{align*}
d_\alpha &\geq \min_{n/(2\mu) \leq \ell\leq k}\left[ \frac{4\alpha(\ell-1)}{3n^2} + 2(1-\alpha) \sum_{i=\ell+1}^{k} |u_i^*|\right]\\
&\geq \min_{1\leq \ell\leq k} \left[\frac{4\alpha(k-1)}{3n^2} + 2(1-\alpha) \sum_{i=\ell+1}^{k} |u_i^*|\right].
\end{align*}
The above minimization problem can be solved in closed form, which leads to
\begin{align*}
    d_\alpha &\geq {\sum}_{\ell=1}^k \min \left\{ \frac{4\alpha}{3n^2}, 2(1-\alpha)|u_i^*| \right\}.
\end{align*}
By the second property in Lemma \ref{lem:incoh}, we have
\begin{align*}
    d_\alpha &\geq \sum_{i=2}^k \min \left\{ \frac{4\alpha}{3\mu n}|u_i^*|, 2(1-\alpha)|u_i^*| \right\} = \min \left\{ \frac{4\alpha}{3\mu n}, 2(1-\alpha) \right\} \sum_{i=2}^k |u_i^*|\\
    &\geq \min \left\{ \frac{4\alpha}{3\mu n}, 2(1-\alpha) \right\} \cdot \left(1 - \frac{\mu}{n}\right) = \min \left\{ \frac{4\alpha}{3n}, 2(1-\alpha)\mu \right\} \cdot \left(\frac{1}{\mu} - \frac{1}{n}\right).
\end{align*}
The desired upper bound follows from $\mathbb{D}_\alpha(C,u^*) = d_\alpha^{-1}$.

By combining the above two cases, the distance $d_\alpha$ satisfies
\begin{align} \label{eqn:mc-rev-1}
\mathbb{D}_\alpha(C,u^*) \leq \max \left\{ \frac{3n}{4\alpha}, \frac{1}{2(1-\alpha)\mu} \right\} \cdot \left(\frac{1}{\mu} - \frac{1}{n}\right)^{-1} 
\end{align}
with probability at least $1 - 3n^{-\eta/2 + 1}$.

In the case when $\mu \geq n / 16$, the sampling probability $p$ is equal to $1$ and the instance $\cMC(C,u^*)$ satisfies the RIP$_{2,2}$ condition with $\delta= 0$. Hence, we can utilize the upper bound in Theorem \ref{thm:rip-2} to obtain
\[ \mathbb{D}_\alpha(C,u^*) \leq \max\left\{\frac{n}{4\alpha}, \frac{1}{2(1-\alpha)\mu} \right\} \times \min\left\{ \left(\frac{1}{\mu} - \frac{1}{n}\right)^{-1}, 3\mu \right\}. \]
Combing with the upper bound in \eqref{eqn:mc-rev-1}, we conclude the proof of the theorem.
\qed\end{proof}

\subsection{Reduction of problem \eqref{eqn:syn}}

Before discussing the properties of problem instances in Section \ref{sec:aistats}, we prove that the SSCPs of the instance $\cMC(C^\epsilon, u^*)$ are closely related to those of the $m$-dimensional problem
\begin{align}\label{eqn:obj-aistats}
    \min_{x\in\mathbb{R}^{m}} \quad {\sum}_{i\in[m]} (x_i^2 - 1)^2 + \epsilon {\sum}_{i,j\in[m], i\neq j} (x_i x_j-1)^2.
\end{align}
\begin{lemma}\label{lem:aistats}
If problem \eqref{eqn:obj-aistats} has no SSCPs, then the instance $\cMC(C^\epsilon, u^*)$ has no SSCPs. In addition, given a number $N\in\mathbb{N}$, suppose that problem \eqref{eqn:obj-aistats} has $N$ SSCPs with nonzero components at which the objective function has a positive definite Hessian matrix. Then, the instance $\cMC(C^\epsilon, u^*)$ has at least $N$ spurious local minima.
\end{lemma}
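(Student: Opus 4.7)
The strategy is to analyze both optimality conditions of $g(\cdot;C^\epsilon,u^*)$ and show that they decouple into a copy of the conditions for problem \eqref{eqn:obj-aistats} on the subspace indexed by $\cS$, plus a harmless diagonal block on the complementary coordinates. Write $m=|\cS|$ and split $u=(v,w)$ with $v\in\mathbb{R}^{|\cS|}$ and $w\in\mathbb{R}^{n-|\cS|}$.

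First I would compute the partial derivatives of $g$ in the $w$-direction. For $k\notin\cS$, the value $u_k^*$ is zero and $C^\epsilon_{kj}\neq 0$ only if $j=k$ or $\{k,j\}\in\mathbb{E}$, so a direct computation gives
$$ \frac{\partial g}{\partial u_k}(u) \;=\; \frac{4u_k}{Z_\epsilon}\Bigl[u_k^2+\sum_{j:\{k,j\}\in\mathbb{E}} u_j^2\Bigr], $$
which forces $u_k=0$ at any stationary point. Hence every stationary point of $g$ lies in the subspace $W:=\{u:w=0\}$. Because $\cS$ is independent, the restriction of $g$ to $W$ under the linear rescaling $x_i = m u_i$ ($i\in\cS$) equals exactly $\phi(x)/(Z_\epsilon m^4)$, where $\phi$ is the objective of \eqref{eqn:obj-aistats}. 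This already produces a bijection between the stationary points of $g$ and those of $\phi$, and spuriousness matches because $uu^T=u^*(u^*)^T$ iff $x=\pm\mathbf{1}_m$.

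Next I would pass to the Hessian. At any point $u=(v,0)\in W$, a short computation shows that the mixed block $\partial^2 g/\partial u_k\partial u_\ell$ with $k\in\cS$, $\ell\notin\cS$ vanishes (using $u_\ell=u_\ell^*=0$ and $k\neq\ell$), the block $H_{vv}$ equals $\nabla^2\phi(x)/(Z_\epsilon m^2)$ after the rescaling, and the block $H_{ww}$ is diagonal with $k$-th entry proportional to $\sum_{j\in\cS,\{k,j\}\in\mathbb{E}}x_j^2$, hence always PSD. Consequently $\nabla^2 g(u)\succeq0$ iff $\nabla^2\phi(x)\succeq0$, so the absence of SSCPs for \eqref{eqn:obj-aistats} forces the absence of SSCPs for $\cMC(C^\epsilon,u^*)$, which is part one.

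For part two, given $N$ SSCPs $x^{(1)},\dots,x^{(N)}$ of \eqref{eqn:obj-aistats} with all components nonzero and $\nabla^2\phi\succ0$, lift each via $u_i=x_i/m$ for $i\in\cS$ and $u_k=0$ otherwise. The bijection above shows these are distinct spurious stationary points of $g$, and $H_{vv}$ is PD by hypothesis; to conclude that the full Hessian is PD (so that each lift is a strict local minimum, and in particular a spurious local minimum), I need $H_{ww}$ to be strictly PD. This is the main obstacle and the only place both hypotheses bite: by \emph{maximality} of $\cS$, every $k\notin\cS$ has at least one neighbor $j\in\cS$, and the nonzero-components assumption ensures the corresponding $x_j$ is nonzero, making the $k$-th diagonal entry of $H_{ww}$ strictly positive. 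Dropping either hypothesis would allow the lift to be a degenerate saddle rather than a local minimum and would break the count.
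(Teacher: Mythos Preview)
Your proof is correct and follows essentially the same route as the paper's: force $w=0$ from the first-order conditions, rescale $x=mv$ to identify the restricted problem with \eqref{eqn:obj-aistats}, and exploit the block-diagonal Hessian at points of $W$. One point worth noting: you explicitly justify strict positive definiteness of $H_{ww}$ in part two by invoking the maximality of $\cS$ (every $k\notin\cS$ has a neighbour in $\cS$) together with the nonzero-components hypothesis, whereas the paper simply asserts that the second diagonal block has ``positive diagonal entries''; your version makes transparent exactly where both hypotheses are consumed.
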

\begin{proof}
To prove the first part of the theorem, we assume that problem \eqref{eqn:obj-aistats} has no SSCPs. Suppose that $u^0\in\Rn$ is a second-order critical point of the instance $\cMC(C^\epsilon, u^*)$. Calculating the gradient of $g(u;C,u^*)$ with respect to $u_i$ for any index $i\geq m$ leads to
\begin{align*}
    Z_\epsilon \nabla_i g(u^0;C^\epsilon,u^*) = 4(u_i^0)^3 + 4{\sum}_{j\in[n], \{i,j\}\in\mathbb{E}} u_i^0(u_j^0)^2 = 0,
\end{align*}
where $\nabla_i g(\cdot;C^\epsilon,u^*)$ is $i$-th component of the gradient. By multiplying $u_i^0$ on both sides, it follows that
\[ 4(u_i^0)^4 + 4(u_i^0)^2{\sum}_{j\in[n], \{i,j\}\in\mathbb{E}}(u_j^0)^2 = 0, \]
which implies that $u_i^0 = 0$ for all $i\in\{m+1,\dots,n\}$. Calculating the gradient and the Hessian matrix with respect to $u_{1:m}$ yields that 
\begin{align*}
    Z_\epsilon \nabla_i g(u^0; C^\epsilon,u^*) &= 4\epsilon {\sum}_{j\in[m],j\neq i} u_j^0(u_i^0u_j^0 - 1/m^2)\\
    &\hspace{10em}+ 4u_i^0[(u_i^0)^2 - 1/m^2],\quad \forall i\in[m];\\
    Z_\epsilon \nabla_{ii}^2 g(u^0; C^\epsilon,u^*) &= 12 (u_i^0)^2 - 4/m^2 + 4\epsilon {\sum}_{j\in[m], j\neq i} (u_j^0)^2,\quad \forall i\in[m];\\
    Z_\epsilon \nabla_{ij}^2 g(u^0; C^\epsilon,u^*) &= 4\epsilon (2u_i^0u_j^0 - 1),\quad \forall i,j\in[m]\quad \st\quad i\neq j,
\end{align*}
where $\nabla_{ij} g(\cdot;C^\epsilon,u^*)$ is the $(i,j)$-th component of the Hessian matrix. By defining $x^0\in\R^m$ as $x_i^0 := m u_i^0$ for all $i\in[m]$, the above gradient and Hessian matrix turn out to be the same as those of problem \eqref{eqn:obj-aistats}. Since the first $m$ entries of $\nabla g(u^0;C^\epsilon,u^*)$ are $0$ and the first $m$-by-$m$ principle sub-matrix of $\nabla^2 g(u^0;C^\epsilon,u^*)$ is positive semi-definite, the point $x^0$ is a second-order critical point of problem \eqref{eqn:obj-aistats}. In addition, the point $u^0$ is a global optimum if and only if $|u_i^0|=1/m$ for all $i\in[m]$, which is further equivalent to $x_i^0=1$ for all $i\in[m]$ and $x^0$ is the global solution to problem \eqref{eqn:obj-aistats}. Therefore, the point $x^0$ is a SSCP if $u^0$ is a SSCP, which is a contradiction to the assumption that problem \eqref{eqn:obj-aistats} has no SSCPs. Therefore, the point $u^0$ is a global minimum of the instance $\cMC(C^\epsilon, u^*)$.

For the second part of the theorem, suppose that $x^0$ is a SSCP of problem \eqref{eqn:obj-aistats}, where the Hessian matrix is positive definite and $x^0_i \neq 0$ for all $i\in[m]$. We construct $u^0\in\Rn$ by setting $u^0_i := m^{-1} x^0_i$ for all $i\in[m]$ and $u^0_i=0$ for all $i\in\{m+1,\dots,n\}$. By similar calculations, we can prove that the Hessian matrix at $u^0$ is a block diagonal matrix with two blocks, where the first block is $H(x;\epsilon)$ and the second block is a diagonal matrix with positive diagonal entries. Moreover, the gradient at $u^0$ is equal to $0$. Hence, $u^0$ is a SSCP with a positive definite Hessian matrix. The construction shows that the mapping from $x^0$ to $u^0$ is injective.
\qed\end{proof}

\subsection{Proof of Theorem \ref{thm:upper-lower}}
\label{apd:upper-lower}

To simplify the notations in the following proofs, we denote the gradient and the Hessian matrix of the objective function of problem \eqref{eqn:obj-aistats} by
\begin{align*} 
\mathrm{g}_i(x;\epsilon) &:= 4\left[ x_i^3 - x_i + \epsilon {\sum}_{j\neq i} x_j (x_ix_j - 1) \right],\quad\forall i\in[m];\\
H_{ii}(x;\epsilon) &:= 4\left[ 3x_i^2 - 1 + \epsilon {\sum}_{j\neq i} x_j^2 \right],\quad\forall i\in[m];\\
H_{ij}(x;\epsilon) &:= 4\epsilon (2x_ix_j - 1),\quad \forall i,j\in[m]\quad \mathrm{s.t.}\quad i\neq j.
\end{align*}
The following theorem guarantees that the instance $\cMC(C^\epsilon, u^*)$ does not have spurious local minima when $\epsilon \geq O(m^{-1})$.
\begin{theorem}\label{thm:upper}
If $\epsilon > 18 / m$, the instance $\cMC(C^\epsilon, u^*)$ does not have SSCPs, namely, all second-order critical points are global minima associated with the ground truth solution $M^*$. 
\end{theorem}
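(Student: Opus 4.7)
My plan is to reduce via Lemma~\ref{lem:aistats} to showing that problem~\eqref{eqn:obj-aistats} has no SSCPs when $\epsilon > 18/m$, then analyze an arbitrary second-order critical point $x \in \R^m$ through the two summary scalars $s := \mathbf{1}^\top x$ and $q := \|x\|^2$, together with the abbreviations $\alpha := 1 + \epsilon(m-1)$, $\beta := 3 + \epsilon(m-3)$, and $S := (1-\epsilon)q + \epsilon s^2$. The first-order condition $\mathrm{g}_i(x;\epsilon) = 0$ can be rearranged into the uniform cubic identity $(1-\epsilon)x_i^3 + [\epsilon(q+1) - 1]x_i = \epsilon s$ holding for every coordinate, so each $x_i$ is a root of a single cubic polynomial. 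Summing this identity after multiplying by $x_i$ yields the global relation $(1-\epsilon)\sum_i x_i^4 = S - \epsilon q^2$, to which the Cauchy--Schwarz inequality $\sum_i x_i^4 \geq q^2/m$ applies to produce the lower bound $S \geq q^2\alpha/m$; the trivial Cauchy--Schwarz bound $s^2 \leq mq$ supplies the matching upper bound $S \leq q\alpha$.

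I then plan to extract information from the second-order condition by testing the Hessian along the rank-two family $v = \mathbf{1} + \lambda x$ for $\lambda \in \R$. Expanding $v^\top H(x;\epsilon) v$ and using the FOC identities to eliminate $\sum_i x_i^3$ and $\sum_i x_i^4$ produces a quadratic $A\lambda^2 + B\lambda + C$ with closed-form coefficients $A = 8S$, $B = 16 s\alpha$, and $C = 4[q\beta + 2\epsilon s^2 - m\alpha]$. Because $A \geq 0$ automatically, positive semidefiniteness of $H(x;\epsilon)$ on $\mathrm{span}\{\mathbf{1}, x\}$ reduces to the discriminant condition $B^2 \leq 4AC$, which collapses to $SR \geq 2 s^2\alpha^2$ with $R := 2S + \alpha(q - m)$. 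Inserting the upper bound $S \leq q\alpha$ into $R \geq 2s^2\alpha^2/S$ and combining with the matching upper bound $R \leq \alpha(3q - m)$ yields the structural constraint $s^2 \leq q(3q - m)/2$.

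To close the argument, I will combine the two inequalities above. Rewriting $S \geq q^2\alpha/m$ as $m\epsilon s^2 \geq q[q\alpha - m(1-\epsilon)]$ and substituting the upper bound on $s^2$ just derived yields, after expanding and cancelling, the clean inequality $(q - m)[\epsilon(m+2) - 2] \geq 0$. The hypothesis $\epsilon > 18/m$ is stronger than $\epsilon > 2/(m+2)$, so $\epsilon(m+2) - 2 > 0$, forcing $q \geq m$; together with the trivial bound $q \leq m$ this gives $q = m$. Plugging $q = m$ back into $S \geq q^2\alpha/m$ yields $s^2 \geq m^2$, which combined with $s^2 \leq mq = m^2$ turns Cauchy--Schwarz into equality and forces $x_i^2 = 1$ for all $i$ with a common sign, i.e., $x = \pm\mathbf{1}$. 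This contradicts the assumption that $x$ is a spurious critical point.

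\emph{Main obstacle.} The delicate step will be the choice of the rank-two test family $v = \mathbf{1} + \lambda x$: testing the Hessian only along a single direction such as $v = \mathbf{1}$ recovers just the one-sided bound $q\beta + 2\epsilon s^2 \geq m\alpha$, which admits a non-trivial gap $q < m$. Exploiting the full $\lambda$-variation is what couples $s$ and $q$ tightly enough to produce $s^2 \leq q(3q - m)/2$; pairing this upper bound with the Cauchy--Schwarz lower bound $S \geq q^2\alpha/m$ coming from the FOC identity is the crucial move that collapses the system to a single one-parameter inequality in $q$.
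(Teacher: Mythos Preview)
Your proof is correct, and it takes a genuinely different---and sharper---route than the paper.

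Both arguments reduce via Lemma~\ref{lem:aistats} and work with the power sums $s=S_1$, $q=S_2$ at a second-order critical point of \eqref{eqn:obj-aistats}. The paper tests the Hessian only at the two directions $x\pm\mathbf 1$ (your $\lambda=\pm1$), extracts a quadratic inequality in $\sqrt q$ parametrised by $c=s^2/q$, and after case analysis obtains $q\ge m^2/[2(m+8)]\ge m/18$; it then uses this to make the linear coefficient of the common cubic positive, forcing all $x_i$ equal. Your approach instead optimises over the full pencil $\mathbf 1+\lambda x$, turning positive semidefiniteness into the discriminant inequality $SR\ge 2s^2\alpha^2$, and combines it with the Cauchy--Schwarz sandwich $q^2\alpha/m\le S\le q\alpha$ (which incidentally already gives the ``trivial'' bound $q\le m$) to collapse everything to $(q-m)[\epsilon(m+2)-2]\ge 0$. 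This yields $q=m$ exactly, and equality in both Cauchy--Schwarz inequalities then forces $x=\pm\mathbf 1$ without invoking the cubic-root argument. A pleasant by-product is that your computation actually proves the theorem under the weaker hypothesis $\epsilon>2/(m+2)$, roughly a factor~$9$ better than the paper's $18/m$. The only suggestion is to make explicit that the ``trivial bound $q\le m$'' is precisely the sandwich $q^2\alpha/m\le S\le q\alpha$ you already recorded; a reader might otherwise look in vain for an a priori bound on $\|x\|^2$.
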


\begin{proof}
By Lemma \ref{lem:aistats}, we only need to prove that problem \eqref{eqn:obj-aistats} has no SSCPs. The conclusion holds when $\epsilon=1$ since the $\delta$-RIP$_{2,2}$ condition holds with $\delta=0$ and the results in \cite{zhang2019sharp} guarantee that there is no SSCP. In the remainder of the proof, we assume that $\epsilon\in[0,1)$. Suppose that $x^0\in \R^m$ is a second-order critical point of problem \eqref{eqn:obj-aistats}.
Denote
\[ S_k := {\sum}_{i=1}^m (x_i^0)^k,\quad \forall k\in\mathbb{N}. \]
Using the first-order optimality conditions, we have
\begin{align}\label{eqn:first}
0 &= \frac14 {\sum}_{i\in[m]} g_i(x^0;\epsilon) = (1-\epsilon)S_3 - (1-\epsilon)S_1 - m\epsilon S_1 + \epsilon S_1S_2,\\
\nonumber 0 &= \frac14 {\sum}_{i\in[m]} x_i^0 g_i(x^0;\epsilon) = (1-\epsilon)S_4 - (1-\epsilon)S_2 - \epsilon S_1^2 + \epsilon S_2^2.
\end{align}
Using the second-order necessary optimality conditions, the curvatures of the objective function along the directions
\[ c_+ := (x_1^0-1,\dots, x_m^0-1)\quad \text{and}\quad c_- := (x_1^0+1,\dots, x_m^0+1) \]
are given by
\begin{align*}
c_+^T H(x;\epsilon) c_+ / 4 &=3(1-\epsilon)(S_4-2S_3+S_2) + [\epsilon S_2 - (1-\epsilon)](S_2 - 2S_1 + m)\\
&\hspace{4em} + 2\epsilon(S_2^2 - 2S_2S_1 + S_1^2) - \epsilon(S_1^2 - 2n S_1 + m^2)\geq0,\\
c_-^T H(x;\epsilon) c_- / 4 &= 3(1-\epsilon)(S_4+2S_3+S_2) + [\epsilon S_2 - (1-\epsilon)](S_2 + 2S_1 + m)\\
&\hspace{4em}+ 2\epsilon(S_2^2 + 2S_2S_1 + S_1^2) - \epsilon(S_1^2 + 2n S_1 + m^2)\geq 0.
\end{align*}
Using the relations in \eqref{eqn:first}, we can write $S_3$ and $S_4$ in terms of $S_1$ and $S_2$, which leads to
\begin{align}\label{eqn:second}
    [m\epsilon + 5(1-\epsilon)]S_2 + 4\epsilon S_1^2 - 4[m\epsilon+(1-\epsilon)]\cdot |S_1| - [m^2\epsilon + m(1-\epsilon)] \geq 0.
\end{align}
Let $c$ be a positive number such that
\[ S_1^2 = c S_2. \]
Using H\"{o}lder's inequality, we have $c\in[1,m]$. We note that in the case when $S_2=0$, it holds that $S_1 = 0$ and we can choose $c$ to be any constant in $[1,m]$. Then, inequality \eqref{eqn:second} can be written as
\begin{align}\label{eqn:second-1}
[m\epsilon + 5(1-\epsilon)+4\epsilon c]S_2 - 4[m\epsilon+(1-\epsilon)]\sqrt{c} \cdot\sqrt{S_2} - [m^2\epsilon + m(1-\epsilon)] \geq 0.
\end{align}
Inequality \eqref{eqn:second-1} is a quadratic inequality in $\sqrt{S_2}$ and thus, it can be solved in closed form, namely, inequality \eqref{eqn:second-1} is equivalent to
\begin{align}\label{eqn:second-2}
&\sqrt{S_2}\\
\nonumber\geq& \frac{4[m\epsilon+(1-\epsilon)]\sqrt{c} + \sqrt{ 4[m\epsilon+(1-\epsilon)][8m\epsilon c + 4(1-\epsilon) c+m^2\epsilon+5m(1-\epsilon) ]} }{2[m\epsilon + 5(1-\epsilon)+4\epsilon c]}\\
\nonumber= & m\sqrt{m\epsilon + (1-\epsilon)} \cdot \bigg[ \sqrt{ [8m\epsilon+4(1-\epsilon)]c + m^2\epsilon + 5m(1-\epsilon) }\\
\nonumber&\hspace{21em} - \sqrt{ 4[m\epsilon+(1-\epsilon)]c } \bigg]^{-1}.
\end{align}
Consider the function
\begin{align*} 
e(c) :=& \sqrt{ [8m\epsilon+4(1-\epsilon)]c + m^2\epsilon + 5m(1-\epsilon) } - \sqrt{ 4[m\epsilon+(1-\epsilon)]c },\\
&\hspace{24em}\forall c\in[1,m], 
\end{align*}
which is the negative of a unimodal function\footnote{In this work, we say a function $f:\R\mapsto\R$ is a \textit{unimodal function} if there exists a constant $c\in\R$ such that $f$ is increasing on $(-\infty,c]$ and decreasing on $[c,+\infty)$.}. Hence, the maximum value of $e(c)$ on $[1,m]$ is attained at $1$ or $m$. Let
\[ C := m\epsilon > 18. \]
We calculate that
\begin{align*}
e(m) &= \sqrt{ 9m[m\epsilon + (1-\epsilon)] } - \sqrt{ 4m[m\epsilon + (1-\epsilon)] }\\
&= \sqrt{m[m\epsilon + (1-\epsilon)]} \leq \sqrt{m(C + 1)} \leq \sqrt{2mC},\\
e(1) &= \sqrt{ 8m\epsilon+4(1-\epsilon) + m^2\epsilon + 5m(1-\epsilon) } - \sqrt{ 4[m\epsilon+(1-\epsilon)] }\\
&\leq \sqrt{8C + 4 + mC + 5m} \leq \sqrt{2(m+8) C}.
\end{align*}
Hence, we have
\[ e(c) \leq \sqrt{2(m+8) C},\quad \forall c\in[1,m]. \]
By combining with \eqref{eqn:second-2}, it follows that
\begin{align*}
\sqrt{S_2} &\geq m\sqrt{C + (1-\epsilon)} \cdot \left[\sqrt{2(m+8) C}\right]^{-1}\\
&\geq m\sqrt{C} \cdot \left[\sqrt{2(m+8) C}\right]^{-1} = \frac{m}{\sqrt{2(m+8)}},
\end{align*}
which further leads to
\begin{align}\label{eqn:second-3} 
S_2 \geq \frac{m^2}{2(m+8)} \geq \frac{m}{18}. 
\end{align}
Therefore, we obtain that
\[ \frac{\epsilon}{1-\epsilon} S_2 -1 \geq \frac{\epsilon m}{18} - 1 > 0. \]
Using the first-order optimality condition, each component $x_i^0$ is the solution to the third-order polynomial equation
\begin{align}\label{eqn:first-1} 
    g_i(x;\epsilon) = x_i^3 + \left[ \frac{\epsilon}{1-\epsilon} S_2 - 1 \right] x_i - \frac{\epsilon}{1-\epsilon} S_1 = 0,\quad \forall i\in[m]. 
\end{align}
Since the first-order coefficient $\epsilon/[(1-\epsilon) S_2] - 1$ is positive, the derivative of the polynomial is positive and the equation has a unique real root $x_0$. Hence, we know
\[ x_1^0 =\cdots = x_m^0 = x_0. \]
The equation in \eqref{eqn:first-1} now becomes
\[ x_0^3 + \left[ \frac{\epsilon}{1-\epsilon} \cdot m x_0^2 - 1 \right] x_0 - \frac{\epsilon}{1-\epsilon} \cdot  m x_0 = \left[ \frac{m\epsilon}{1-\epsilon} + 1 \right](x_0^3 - x_0) = 0, \]
which gives $x_0 \in\{-1,0,1\}$. If $x_0 \in \{-1,1\}$, then the point $x^0$ is a global optimum. Otherwise if $x_0 = 0$, it follows that $x^0 = 0$ and $S_2 = 0$, which contradicts \eqref{eqn:second-3}. Combining the two cases, we conclude that problem \eqref{eqn:obj-aistats} does not have SSCPs, which implies that the instance $\cMC(C^\epsilon,u^*)$ also has no SSCPs. 
\qed\end{proof}

Then, we consider the regime of $\epsilon$ where the instance $\cMC(C^\epsilon, u^*)$ has spurious solutions. The following theorem studies the case when $m$ is an even number.
\begin{theorem}\label{thm:even}
Suppose that $m$ is an even number. If $\epsilon < 1 / (m+1)$, then the instance $\cMC(C^\epsilon, u^*)$ has at least $2^{m/2}$ spurious local minima.
\end{theorem}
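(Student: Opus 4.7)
The approach is to invoke Lemma~\ref{lem:aistats} and exhibit at least $2^{m/2}$ second-order critical points of the auxiliary problem~\eqref{eqn:obj-aistats} that are spurious (not global minima), have all components nonzero, and have a positive-definite Hessian. For every subset $I \subset [m]$ with $|I| = m/2$, I will consider the candidate $x^I \in \R^m$ defined by $x^I_i = +a$ for $i \in I$ and $x^I_i = -a$ otherwise, where the positive scalar $a$ is chosen so that $x^I$ is first-order stationary. Since $S_1 = 0$ and $S_2 = ma^2$ at such a point, the cubic stationarity equation~\eqref{eqn:first-1} forces
\[ a^2 = \frac{1-\epsilon}{1+\epsilon(m-1)}, \]
which lies strictly between $0$ and $1$ for every $\epsilon \in (0,1)$.

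The main obstacle will be verifying that the Hessian $H(x^I;\epsilon)$ is strictly positive definite whenever $\epsilon < 1/(m+1)$. Let $v \in \{-1,+1\}^m$ denote the sign vector defined by $v_i = x^I_i/a$. Using the identity $\epsilon(m-1)a^2 = 1-\epsilon-a^2$ that follows from the value of $a^2$, the explicit expressions for $H_{ii}$ and $H_{ij}$ given just above Theorem~\ref{thm:upper} yield the clean decomposition
\[ \tfrac14\,H(x^I;\epsilon) = 2a^2(1-\epsilon)\,I_m + 2\epsilon a^2\,vv^T - \epsilon\,\mathbf{1}\mathbf{1}^T. \]
Because $|I|=m/2$, the vectors $v$ and $\mathbf{1}$ are orthogonal and each has squared norm $m$, so this matrix is simultaneously diagonalized on $\mathrm{span}\{v\}$, $\mathrm{span}\{\mathbf{1}\}$, and their common orthogonal complement. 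The three resulting eigenvalues are $2(1-\epsilon)$, $2a^2(1-\epsilon) - \epsilon m$, and $2a^2(1-\epsilon)$. The first and third are clearly positive for $\epsilon<1$, and the middle one rewrites as $2(1-\epsilon)^2/(1+\epsilon(m-1)) - \epsilon m$; a short calculation shows that this quantity vanishes exactly at $\epsilon = 1/(m+1)$ with a strictly negative derivative there, hence is strictly positive on $[0,1/(m+1))$. This sharp threshold is the precise reason behind the hypothesis $\epsilon < 1/(m+1)$.

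Since $|x^I_i|=a<1$ for $\epsilon>0$ and $x^I$ contains entries of both signs, $x^I \notin \{\pm\mathbf{1}\}$, so $x^I$ is a spurious SSCP of~\eqref{eqn:obj-aistats} with nonzero components and positive-definite Hessian. Distinct subsets $I$ yield distinct candidates $x^I$, so the number of such points is $\binom{m}{m/2}$. A simple pairing argument---pair the indices as $(2k-1,2k)$ for $k=1,\dots,m/2$ and, within each pair, choose which element enters $I$---shows that $\binom{m}{m/2} \geq 2^{m/2}$. Applying Lemma~\ref{lem:aistats} then produces at least $2^{m/2}$ spurious local minima of $\cMC(C^\epsilon,u^*)$, completing the proof.
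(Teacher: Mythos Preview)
Your proof is correct and follows the same overall strategy as the paper: reduce via Lemma~\ref{lem:aistats} to problem~\eqref{eqn:obj-aistats}, take the balanced sign pattern candidates $x^I$ with $(x^I_i)^2=(1-\epsilon)/(1+(m-1)\epsilon)$ and $\sum_i x^I_i=0$, verify stationarity, verify positive-definite Hessian, and count $\binom{m}{m/2}\ge 2^{m/2}$.

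The meaningful difference is in how you certify positive definiteness. The paper computes $c^TH(x^I;\epsilon)c$ for a generic $c$, invokes the Cauchy--Schwarz inequality to identify the worst direction $c=\mathbf 1/\sqrt m$, and reduces to the scalar inequality $2-(m+4)\epsilon-(m-2)(m+1)\epsilon^2>0$, which it then checks using $(m+1)\epsilon<1$. Your rank-one decomposition $\tfrac14 H=2a^2(1-\epsilon)I_m+2\epsilon a^2\,vv^T-\epsilon\,\mathbf 1\mathbf 1^T$ (made possible by the identity $\epsilon(m-1)a^2=1-\epsilon-a^2$), together with the orthogonality $v\perp\mathbf 1$, gives the entire spectrum directly; the critical eigenvalue $2a^2(1-\epsilon)-\epsilon m$ is precisely the one attached to the $\mathbf 1$ direction, so the two arguments identify the same obstruction. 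Your route is more transparent and explains structurally why $\mathbf 1$ is the dangerous direction and why $1/(m+1)$ is the exact threshold.

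One minor point: the clause ``vanishes exactly at $\epsilon=1/(m+1)$ with a strictly negative derivative there, hence is strictly positive on $[0,1/(m+1))$'' is not by itself a complete argument---a sign at one endpoint plus a derivative at the other does not rule out intermediate zeros. It is easily fixed by noting that $2a^2(1-\epsilon)-\epsilon m = \bigl(2-(m+4)\epsilon-(m-2)(m+1)\epsilon^2\bigr)/\bigl(1+(m-1)\epsilon\bigr)$, whose numerator is (for $m\ge 3$) a concave quadratic with value $2$ at $\epsilon=0$ and a unique positive root at $1/(m+1)$, and (for $m=2$) the linear function $2-6\epsilon$; in either case positivity on $[0,1/(m+1))$ follows immediately.
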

\begin{proof}
By Lemma \ref{lem:aistats}, we only need to show that problem \eqref{eqn:obj-aistats} has at least $\binom{m}{m/2}$ SSCPs whose associated Hessian matrices are positive definite and whose components are nonzero. We consider a point $x^0\in\R^m$ such that
\[ (x_i^0)^2 = \frac{1-\epsilon}{1+(m-1)\epsilon} > 0,\quad \forall i\in[m];\quad {\sum}_{i\in[m]} x_i^0 = 0. \]
The above equations have a solution since $m$ is an even number. By a direct calculation, we can verify that the gradient $g(x^0;\epsilon)$ is equal to $0$. We only need to show that the Hessian matrix $H(x^0;\epsilon)$ is positive definite, namely
\[ c^T H(x^0;\epsilon) c > 0,\quad \forall c\in\mathbb{R}^m \backslash\{0\}. \]
The above condition is equivalent to
\begin{align*} 
    &\left[(3 + (m-3)\epsilon)\left(x_1^0\right)^2 - 1 + \epsilon\right] {\sum}_{i\in[m]} c_i^2 - \epsilon \left({\sum}_{i\in[m]}  c_i\right)^2\\
    &\hspace{9em} + 2\epsilon \left(x_1^0\right)^2 \left({\sum}_{i\in[m]}  \mathrm{sign}(x_i^0) c_i \right)^2 > 0,\quad \forall c\in\mathbb{R}^n \backslash\{0\}. 
\end{align*}
Under the normalization constraint $\|c\|_2 = 1$, the Cauchy inequality implies that the minimum of the left-hand side is attained by
\[ c_1= \cdots = c_m = 1 / \sqrt{m}. \]
Therefore, the Hessian is positive definite if and only if
\[ (3 + (m-3)\epsilon)\left(x_1^0\right)^2 - 1 + \epsilon > m\epsilon. \]
By substituting $(x_1^0)^2=(1-\epsilon)/[1+(m-1)\epsilon]$, the above condition is equivalent to
\[ 2 - (m+4)\epsilon - (m-2)(m+1)\epsilon^2 > 0. \]
Using the condition that $(m+1)\epsilon < 1$, we obtain that 
\begin{align*}
2 - (m+4)\epsilon - (m-2)(m+1)\epsilon^2 &> 1 - 3\epsilon - (m-2)\epsilon = 1 - (m+1)\epsilon > 0,
\end{align*}
where the first inequality is from the fact that $m\geq 2$, which follows from the assumption that $m>0$ is an even number.

To estimate the number of SSCPs, we observe that $m/2$ components of $x^0$ have a positive sign and the other $m/2$ components have a negative sign. Hence, there are at least 
\[ \binom{m}{m/2} \]
spurious SSCPs. The estimate on the combinatorial number is in light of the inequality $\binom{n}{k}\geq (n/k)^k$.
\qed\end{proof}

The estimation of the odd number case is similar and we present the result in the following theorem.
\begin{theorem}\label{thm:odd}
Suppose that $m$ is an odd number. If $\epsilon < 1 / [13(m+1)]$, then the instance $\cMC(C^\epsilon, u^*)$ has at least $[{2m}/(m+1)]^{(m+1)/2}$ spurious local minima.
\end{theorem}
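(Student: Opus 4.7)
The approach parallels Theorem \ref{thm:even}, replacing the ``balanced sign'' construction with an unbalanced analog adapted to odd $m$. By Lemma \ref{lem:aistats}, it suffices to exhibit $\binom{m}{(m+1)/2}$ distinct SSCPs of problem \eqref{eqn:obj-aistats} with nonzero components and positive-definite Hessians, since the standard estimate $\binom{n}{k} \geq (n/k)^k$ yields $\binom{m}{(m+1)/2} \geq [2m/(m+1)]^{(m+1)/2}$.

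For each subset $S \subset [m]$ with $|S| = (m+1)/2$, I would look for a critical point $x^S$ of the form $x_i^S = a > 0$ for $i \in S$ and $x_i^S = -b < 0$ for $i \notin S$, where $a, b$ depend only on $m, \epsilon$. Each component of $x^S$ must be a real root of the same depressed cubic $t^3 + p\,t + q = 0$ with $p = \epsilon S_2 /(1-\epsilon) - 1$ and $q = -\epsilon S_1/(1-\epsilon)$. Since this cubic has vanishing $t^2$-coefficient, imposing that both $a$ and $-b$ are roots (with the third root necessarily equal to $b - a$) reduces the first-order conditions via Vieta's formulas to the two-variable system
\begin{align*}
    \epsilon S_2/(1-\epsilon) &= 1 - (a^2 - ab + b^2), \\
    \epsilon S_1/(1-\epsilon) &= ab(a-b),
\end{align*}
where $S_1 = [(m+1)a - (m-1)b]/2$ and $S_2 = [(m+1)a^2 + (m-1)b^2]/2$. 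At $\epsilon = 0$ this admits the solution $(a,b) = (1,1)$, and the implicit function theorem extends it smoothly to positive $a, b$ with $|a - 1|, |b - 1| = O(\epsilon m)$ for $\epsilon$ in a neighborhood of $0$.

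Next, I would verify positive definiteness of the Hessian $H(x^S;\epsilon)$ at the resulting $x^S$. Because $x^S$ takes only two distinct values (with multiplicities $k = (m+1)/2$ and $m-k = (m-1)/2$), the Hessian decomposes as a scalar $A - P_1$ on the $(k-1)$-dimensional zero-sum subspace supported in $S$, a scalar $B - P_2$ on the $(m-k-1)$-dimensional zero-sum subspace supported in $S^c$, and a $2 \times 2$ symmetric block on the span of $\{e_S/\sqrt{k},\, e_{S^c}/\sqrt{m-k}\}$ whose off-diagonal entry is $-4\epsilon(2ab+1)\sqrt{k(m-k)}$. Positive definiteness is equivalent to a short list of scalar inequalities that can be checked directly from the perturbative expansions of $a, b$.

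The main obstacle will be the determinant condition for the $2 \times 2$ block: its off-diagonal entry is of order $\epsilon m$, while each diagonal entry is the sum of an $O(1)$ contribution from $3x_i^2 - 1 \approx 2$ and an $O(\epsilon m)$ cross-term contribution, so the margin is delicate. The hypothesis $\epsilon < 1/[13(m+1)]$ is tuned so that the $O(1)$ diagonal beats the $O(\epsilon m)$ off-diagonal uniformly; the constant $13$ absorbs the higher-order corrections in $\epsilon$ arising from the perturbative expansion. Once positive definiteness is established, the $\binom{m}{(m+1)/2}$ points are manifestly distinct (their positive-component supports differ), which completes the count.
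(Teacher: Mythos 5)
Your overall structure matches the paper's: both reduce via Lemma~\ref{lem:aistats} to exhibiting $\binom{m}{(m+1)/2}$ SSCPs of the reduced problem \eqref{eqn:obj-aistats} that take only two distinct values (positive on a set of size $(m+1)/2$, negative on the complement of size $(m-1)/2$), and both then verify positive definiteness of the Hessian block-by-block. Your Vieta-formula reduction of the first-order conditions to the two-variable system is a cleaner algebraic route than the paper's direct manipulation of the equations in \eqref{eqn:odd-1}, and it correctly captures the same structure.

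There is, however, a genuine quantitative gap. You invoke the implicit function theorem at $(a,b,\epsilon)=(1,1,0)$ to get a solution branch with $|a-1|,\,|b-1|=O(\epsilon m)$. This only yields existence for $\epsilon$ in an \emph{unspecified} neighborhood of $0$, while the theorem demands existence and positive definiteness for every $\epsilon < 1/[13(m+1)]$, i.e.\ a regime where $\epsilon m$ is bounded away from zero (roughly $1/13$). In that regime the perturbation of the critical point is $O(1)$, not small: the paper's explicit root-bracketing of the cubic in \eqref{eqn:odd-1} gives only $y_1\in[-2,-3/5]$ and $y_2\in[1/2,1]$, so the negative value can be as close to zero as $3/5$. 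Consequently your claim that each diagonal entry of the $2\times2$ block is ``an $O(1)$ contribution from $3x_i^2-1\approx 2$'' is not reliable; with $|x_i|\approx 3/5$ one gets $3x_i^2-1\approx 0.08$, so the margin against the $O(\epsilon m)$ off-diagonal is far tighter than your sketch suggests. To close the gap you would need to replace the IFT by quantitative interval estimates on $(a,b)$---exactly what the paper does via explicit evaluation of the cubic $g(z)$ at $z = 1-(2k+1)\epsilon$ and $z = 1-(3k/2+1)\epsilon$ and then bounding $y_1$ through the second equation of \eqref{eqn:odd-1}---and then carry those numerical bounds through the discriminant condition for the $2\times2$ block. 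The statement that ``the constant $13$ absorbs the higher-order corrections'' is where the actual work lies and cannot be left implicit.
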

\begin{proof}
We pursue a similar way as in Theorem \ref{thm:even} to construct spurious solutions. By Lemma \ref{lem:aistats}, we only need to show that problem \eqref{eqn:obj-aistats} has at least $\binom{m}{(m-1)/2}$ SSCPs whose Hessian matrices are positive definite and whose components are nonzero. Let $k:=(m-1)/2\in\mathbb{Z}$. We first choose a subset
\[ \mathcal{I} \subset [m],\quad |\mathcal{I}| = k. \]
Then, we consider the point $x\in\mathbb{R}^m$, where
\[ u_i = y_1,\quad \forall i\in\mathcal{I},\quad u_i = y_2,\quad\forall i\notin \mathcal{I}, \]
where $y_1$ and $y_2$ are real numbers such that
\begin{align}\label{eqn:odd-1} 
&(1+k\epsilon)(1+2k\epsilon) [ (1-\epsilon)y_2^2 ]^3 -2(1+k\epsilon)(1+(k-1)\epsilon)[ (1-\epsilon)y_2^2 ]^2\\
\nonumber& + (1+k\epsilon)(1+(k-1)\epsilon)(2k^2\epsilon^2+2k\epsilon^2-k\epsilon-\epsilon+1)[ (1-\epsilon)y_2^2 ]\\
\nonumber&\hspace{16em} - k^2\epsilon^2(1+(k-1)\epsilon)(1-\epsilon)^2 = 0,\\
\nonumber&y_1 = \frac{y_2}{k\epsilon} \cdot \frac{(1+k\epsilon)[ (1-\epsilon)y_2^2 ] - (k^2\epsilon^2+(k-1)\epsilon+1)}{[ (1-\epsilon)y_2^2 ] + (1+(k-1)\epsilon)}.
\end{align}
We first assume the existence of the constants $y_1$ and $y_2$. After some direct calculations, one can show that the conditions in \eqref{eqn:odd-1} imply the first-order optimality condition of the instance $\cMC(C^\epsilon, u^*)$, i.e.,
\begin{align*}
y_1^3 - y_1 + \epsilon[ (k-1)y_1^2 + (k+1)y_2^2 ]y_1 - \epsilon[ (k-1)y_1 + (k+1)y_2 ] &= 0,\\
y_2^3 - y_2 + \epsilon[ k y_1^2 + ky_2^2 ]y_2 - \epsilon[ ky_1 + ky_2 ] &= 0.
\end{align*}
Therefore, the point $x$ is a first-order critical point of the instance $\cMC(C^\epsilon, u^*)$. In addition, the following relations result from the condition \eqref{eqn:odd-1}:
\begin{align}\label{eqn:odd-7}
    (1-\epsilon)y_1y_2(y_1+y_2) &= -\epsilon[ ky_1 + (k+1)y_2 ],\\
    \nonumber (1-\epsilon)(y_1^2+y_1y_2+y_2^2-1) &= -\epsilon[  ky_1^2 + (k+1)y_2^2  ].
\end{align}

Now, we prove the existence of $y_1,y_2$ and estimate their values. We note that the first equation in \eqref{eqn:odd-1} is a third-order polynomial equation for $(1-\epsilon)y_2^2$, which has at least one real root. To show that the equation has a positive root, we observe that the coefficient of the third-order term is $(1+k\epsilon)(1+2k\epsilon)>0$ and the value at zero is $- k^2\epsilon^2(1+(k-1)\epsilon)(1-\epsilon)^2<0$. Therefore, the polynomial equation for $(1-\epsilon)y_2^2$ has at least one positive root and $y_2$ is well defined. We provide a more accurate estimate to $y_1$ and $y_2$, namely, we show that there exists a solution $(y_1,y_2)$ to equations \eqref{eqn:odd-1} such that
\begin{align*} 
y_1 \in [-2, -3/5],\quad y_2 \in [ 1/2, 1 ].
\end{align*}
Define the polynomial function
\begin{align*}
g(z) :=& (1+k\epsilon)(1+2k\epsilon) z^3 -2(1+k\epsilon)(1+(k-1)\epsilon)z^2\\
& + (1+k\epsilon)(1+(k-1)\epsilon)(2k^2\epsilon^2+2k\epsilon^2-k\epsilon-\epsilon+1)z - k^2\epsilon^2(1+(k-1)\epsilon)(1-\epsilon)^2.
\end{align*}
We first estimate $g( 1 - (2k+1)\epsilon )$ as follows:
\begin{align*}
&g( 1 - (2k+1)\epsilon )\\
= &(1+k\epsilon)[1 - (2k+1)\epsilon]\Big[ (1+2k\epsilon)[1-(2k+1)\epsilon]^2 - 2[1+(k-1)\epsilon][1-(2k+1)\epsilon]\\
&+ [1+(k-1)\epsilon][ 1-(k+1)\epsilon +2k(k+1)\epsilon^2] \Big] - k^2\epsilon^2(1+(k-1)\epsilon)(1-\epsilon)^2\\
=& (1+k\epsilon)[1 - (2k+1)\epsilon]\Big[ k^2\epsilon^2 + 2k^2(5k+4)\epsilon^3 \Big] - k^2\epsilon^2(1+(k-1)\epsilon)(1-\epsilon)^2\\
\geq& k^2\epsilon^2(1+k\epsilon)[1 - (2k+1)\epsilon][ 1 + 2(5k+4)\epsilon ] - k^2\epsilon^2(1+             k\epsilon)(1-\epsilon)^2\\
=& k^2\epsilon^2(1+k\epsilon)[ (8k+9)\epsilon - [2(2k+1)(5k+4)+1]\epsilon^2 ]\\
\geq& k^2\epsilon^2(1+k\epsilon)[ (8k+8)\epsilon - 20(k+1)^2\epsilon^2 ] > 0,
\end{align*}
where the last inequality is due to $(k+1)\epsilon = (n+1)\epsilon/2 < 2/5$. Next, we estimate $g( 1 - (3k/2+1)\epsilon )$ as follows:
\begin{align*}
&g( 1 - (3k/2+1)\epsilon )\\
= &(1+k\epsilon)[1 - (k+1)\epsilon ]\Big[ (1+2k\epsilon)[1-(3k/2+1)\epsilon ]^2 - 2[1+(k-1)\epsilon][1-(3k/2+1)\epsilon ]\\
&+ [1+(k-1)\epsilon][ 1-(k+1)\epsilon +2k(k+1)\epsilon^2] \Big] - k^2\epsilon^2(1+(k-1)\epsilon)(1-\epsilon)^2\\
=& (1+k\epsilon)[1 - (3k/2 +1)\epsilon] \Big[ k^2\epsilon^2/4 + k^2(13k/2+6)\epsilon^3 \Big] - k^2\epsilon^2(1+(k-1)\epsilon)(1-\epsilon)^2\\
=& k^2\epsilon^2(1+k\epsilon)[1 - (3k/2 +1)\epsilon] [ 1/4 + (13k/2+6)\epsilon ] - k^2\epsilon^2(1+(k-1)\epsilon)(1-\epsilon)^2\\
\leq& k^2\epsilon^2(1+k\epsilon)[1 - (3k/2 +1)\epsilon] [ 1/4 + (13k/2+6)\epsilon ] - k^2\epsilon^2\cdot [(1+k\epsilon)/2] \cdot (1-\epsilon)^2\\
\leq& k^2\epsilon^2(1+k\epsilon)\Big[ [1 - (3k/2+1)\epsilon] [ 1/4 + (13k/2+6)\epsilon ] - (1-\epsilon)^2 / 2\Big]\\
=& k^2\epsilon^2(1+k\epsilon)\Big[ -1/4 + (49k/8+27/4)\epsilon - (39k^2/4+31k/2+13/2)\epsilon^2 \Big]\\
\leq& k^2\epsilon^2(1+k\epsilon)\Big[ -1/4 + 27(k+1)/4\epsilon - 39(k+1)^2\epsilon^2/4 \Big] < 0,
\end{align*}
where the last inequality is in light of $(k+1)\epsilon = (n+1)\epsilon/2 < 1/26 $.
%
%
Combining the above two estimates, we conclude that there exists a solution $y_2$ to the first equation in \eqref{eqn:odd-1} such that
\begin{align}
\label{eqn:odd-2} 
(1-\epsilon)y_2^2 \in [ 1-(2k+1)\epsilon, 1-(3k/2+1)\epsilon ].
\end{align}
Hence,
\begin{align}\label{eqn:odd-5}
y_2 &\leq \sqrt{\frac{1-(3k/2+1)\epsilon}{1-\epsilon}} \leq 1
\end{align}
and
\begin{align}\label{eqn:odd-6}
y_2 &\geq \sqrt{\frac{1-(2k+1)\epsilon}{1-\epsilon}} \geq \sqrt{1-(2k+1)\epsilon} \geq \frac{1}{2}.
\end{align}
Now, we use the second equation in \eqref{eqn:odd-1} to estimate $y_1$, which leads to
\begin{align*}
&\frac{(1+k\epsilon)[ (1-\epsilon)y_2^2 ] - (k^2\epsilon^2+(k-1)\epsilon+1)}{k\epsilon}\\
\geq& \frac{(1+k\epsilon)[ 1-(2k+1)\epsilon ] - (k^2\epsilon^2+(k-1)\epsilon+1)}{k\epsilon} \\
=& -2 - (3k+1)\epsilon
\end{align*}
and
\begin{align*}
&\frac{(1+k\epsilon)[ (1-\epsilon)y_2^2 ] - (k^2\epsilon^2+(k-1)\epsilon+1)}{k\epsilon}\\
\leq &\frac{(1+k\epsilon)[ 1-(3k/2+1)\epsilon ] - (k^2\epsilon^2+(k-1)\epsilon+1)}{k\epsilon} \\
= &-\frac{3}{2} - \left(\frac{5k}{2}+1\right)\epsilon.
\end{align*}
On the other hand, we have
\begin{align*}
\frac{y_2}{[ (1-\epsilon)y_2^2 ] + (1+(k-1)\epsilon)} &= \frac{1}{(1-\epsilon)(y_2+y_2^{-1}) + k\epsilon} \leq \frac{1}{2(1-\epsilon) + k\epsilon}.
\end{align*}
Using the bound in \eqref{eqn:odd-2}, it holds that
\[ y_2 \geq \sqrt{\frac{1-(2k+1)\epsilon}{1-\epsilon}} \geq \frac{1-(2k+1)\epsilon}{1-\epsilon} = \frac12 - \frac{1-(4k+1)\epsilon}{2(1-\epsilon)} \geq \frac12. \]
Therefore,
\begin{align*}
\frac{y_2}{[ (1-\epsilon)y_2^2 ] + (1+(k-1)\epsilon)} &= \frac{1}{(1-\epsilon)(y_2+y_2^{-1}) + k\epsilon} \geq \frac{1}{2.5(1-\epsilon) + k\epsilon}.
\end{align*}
Combining the above inequalities and the second equation in \eqref{eqn:odd-1} yields that 
\begin{align}\label{eqn:odd-3}
y_1 &\geq \frac{-2-(3k+1)\epsilon}{2(1-\epsilon) + k\epsilon} \geq -\left( 1 + \frac{5\epsilon}{1-\epsilon} \right) \geq -2
\end{align}
and
\begin{align}\label{eqn:odd-4}
y_1 &\leq \frac{-3/2-(5k/2+1)\epsilon}{2.5(1-\epsilon) + k\epsilon} \leq -\frac{1.5}{2.5} = -\frac{3}{5},
\end{align}
where the last inequality in \eqref{eqn:odd-3} results from $\epsilon \leq 1/(3(k+1)) \leq 1/6$.
In summary, inequalities \eqref{eqn:odd-5}-\eqref{eqn:odd-4} lead to
\[ y_1 \in [-2, -3/5],\quad y_2 \in [ 1/2, 1 ]. \]
We then prove that $y_1 + 2y_2 > y_2 \geq 0.5$, which is equivalent to
\begin{align*}
\frac{y_2}{k\epsilon} \cdot \frac{(1+k\epsilon)[ (1-\epsilon)y_2^2 ] - (k^2\epsilon^2+(k-1)\epsilon+1)}{[ (1-\epsilon)y_2^2 ] + (1+(k-1)\epsilon)} + y_2 > 0.
\end{align*}
Since $y_2 > 0$, we only need to prove that
\begin{align*}
0 &< (1+k\epsilon)[ (1-\epsilon)y_2^2 ] - (k^2\epsilon^2+(k-1)\epsilon+1) + k\epsilon\Big[[ (1-\epsilon)y_2^2 ] + (1+(k-1)\epsilon) \Big]\\
&= (1+2k\epsilon)[ (1-\epsilon)y_2^2 ] - (k^2\epsilon^2+(k-1)\epsilon+1) + k\epsilon(1+(k-1)\epsilon).
\end{align*}
Using inequality \eqref{eqn:odd-2}, it suffices to show that
\begin{align*}
&(1+2k\epsilon)[1-(3k/2+1)\epsilon)] + k\epsilon(1+(k-1)\epsilon) > 1+(k-1)\epsilon+k^2\epsilon^2\\
\iff & \frac{1}{2} k\epsilon > 3k\left( k + \frac{3}{2} \right)\epsilon^2 \iff 3(2k+3)\epsilon < 1 \Leftarrow 6(k+1)\epsilon < 1 ,
\end{align*}
where the last inequality holds since $(k+1)\epsilon = (n+1)\epsilon/2 < 1/6$.

Now, we verify the second-order sufficient optimality condition. For every $c\in\mathbb{R}^m\backslash\{0\}$, we calculate that
\begin{align*}
c^T H(x;\epsilon)c &= \sum_{i\in\mathcal{I}}\left[ 3y_1^2 - 1 + \epsilon( (k-1)y_1^2 + (k+1)y_2^2 ) \right]c_i^2\\
&\quad + \sum_{i\notin\mathcal{I}}\left[ 3y_2^2 - 1 + \epsilon( ky_1^2 + ky_2^2 ) \right]c_i^2 + \sum_{i,j\in\mathcal{I},i\neq j} \epsilon\left( 2y_1^2 - 1 \right)c_ic_j\\
&\quad + \sum_{i,j\notin\mathcal{I},i\neq j} \epsilon\left( 2y_2^2 - 1 \right)c_ic_j + 2\sum_{i\in\mathcal{I},j\notin\mathcal{I}} \epsilon\left( 2y_1y_2 - 1 \right)c_ic_j\\
&= \left[ 3y_1^2 - 1 + \epsilon( (k-1)y_1^2 + (k+1)y_2^2 ) - \epsilon\left( 2y_1^2 - 1 \right) \right]\sum_{i\in\mathcal{I}}c_i^2\\
&\quad + \left[ 3y_2^2 - 1 + \epsilon( ky_1^2 + ky_2^2 ) - \left( 2y_2^2 - 1 \right) \right] \sum_{i\notin\mathcal{I}}c_i^2\\
&\quad + \epsilon\left( 2y_1^2 - 1 \right)\left(\sum_{i\in\mathcal{I}}c_i\right)^2 + \epsilon\left( 2y_2^2 - 1 \right)\left(\sum_{i\notin\mathcal{I}}c_i\right)^2\\
&\quad + 2 \epsilon\left( 2y_1y_2 - 1 \right)\left(\sum_{i\in\mathcal{I}}c_i\right)\left(\sum_{i\notin\mathcal{I}}c_i\right).
\end{align*}
Using the Cauchy inequality, the above expression is positive if and only if
\begin{align*}
&\left[ 3y_1^2 - 1 + \epsilon( (k-1)y_1^2 + (k+1)y_2^2 ) - \epsilon\left( 2y_1^2 - 1 \right) \right]\cdot\frac{1}{k}\left(\sum_{i\in\mathcal{I}}c_i\right)^2\\
+& \left[ 3y_2^2 - 1 + \epsilon( ky_1^2 + ky_2^2 ) - \left( 2y_2^2 - 1 \right) \right] \cdot\frac{1}{k+1}\left(\sum_{i\notin\mathcal{I}}c_i\right)^2\\
+& \epsilon\left( 2y_1^2 - 1 \right)\left(\sum_{i\in\mathcal{I}}c_i\right)^2 + \epsilon\left( 2y_2^2 - 1 \right)\left(\sum_{i\notin\mathcal{I}}c_i\right)^2\\
+& 2 \epsilon\left( 2y_1y_2 - 1 \right)\left(\sum_{i\in\mathcal{I}}c_i\right)\left(\sum_{i\notin\mathcal{I}}c_i\right)>0.
\end{align*}
We denote
\[ A := {\sum}_{i\in\mathcal{I}}c_i,\quad B := {\sum}_{i\notin\mathcal{I}}c_i. \]
Then, the second-order sufficient condition is equivalent to
\begin{align*}
&\left[ 3y_1^2 - 1 + \epsilon( (k-1)y_1^2 + (k+1)y_2^2 ) - \epsilon\left( 2y_1^2 - 1 \right) \right]\cdot\frac{1}{k}A^2\\
+& \left[ 3y_2^2 - 1 + \epsilon( ky_1^2 + ky_2^2 ) - \left( 2y_2^2 - 1 \right) \right]\\
&\hspace{7em} \cdot\frac{1}{k+1}B^2 + 2\epsilon(y_1A + y_2B)^2 - \epsilon(A+B)^2 > 0.
\end{align*}
The above inequality is a quadratic inequality in $A$ and $B$, which can be rewritten as
\begin{align*}
&\left[\frac{1}{k}\left[ 3y_1^2 - 1 + \epsilon( (k-1)y_1^2 + (k+1)y_2^2 ) - \epsilon\left( 2y_1^2 - 1 \right) \right] + \epsilon (2y_1^2-1)\right] A^2\\
+& 2\epsilon(2y_1y_2 - 1)AB\\
&\hspace{0em} + \left[\frac{1}{k+1}\left[ 3y_2^2 - 1 + \epsilon( ky_1^2 + ky_2^2 ) - \epsilon\left( 2y_2^2 - 1 \right) \right] + \epsilon (2y_2^2-1)\right] B^2 > 0.
\end{align*}
Therefore, the positivity condition can be verified through the discriminant, namely,
\begin{align*}
\epsilon^2(2y_1y_2 - 1)^2 <& \left[\frac{1}{k}\left[ 3y_1^2 - 1 + \epsilon( (k-1)y_1^2 + (k+1)y_2^2 ) - \epsilon\left( 2y_1^2 - 1 \right) \right] + \epsilon (2y_1^2-1)\right]\\
& \cdot \left[\frac{1}{k+1}\left[ 3y_2^2 - 1 + \epsilon( ky_1^2 + ky_2^2 ) - \epsilon\left( 2y_2^2 - 1 \right) \right] + \epsilon (2y_2^2-1)\right].
\end{align*}
Using the second property in \eqref{eqn:odd-7}, the above condition can be simplified into
\begin{align*}
&-(1-\epsilon)^2(y_2-y_1)^2(2y_1+y_2)(y_1+2y_2) + (k+1)\epsilon(1-\epsilon)(2y_2^2-1)(y_1-y_2)(2y_1+y_2)\\
&\quad + k\epsilon(1-\epsilon)(2y_1^2-1)(y_2-y_1)(y_1+2y_2) > k(k+1)\epsilon^2(y_1-y_2)^2.
\end{align*}
Since $y_2 > y_1$, it suffices to have
\begin{align*}
&-(1-\epsilon)^2(y_2-y_1)(2y_1+y_2)(y_1+2y_2) - (k+1)\epsilon(1-\epsilon)(2y_2^2-1)(2y_1+y_2)\\
&\hspace{13em} + k\epsilon(1-\epsilon)(2y_1^2-1)(y_1+2y_2) > k(k+1)\epsilon^2(y_2-y_1).
 \end{align*}
We can estimate that
\begin{align*}
&-(1-\epsilon)^2(y_2-y_1)(2y_1+y_2)(y_1+2y_2) - (k+1)\epsilon(1-\epsilon)(2y_2^2-1)(2y_1+y_2)\\
&\quad + k\epsilon(1-\epsilon)(2y_1^2-1)(y_1+2y_2) - k(k+1)\epsilon^2(y_2-y_1)\\
&\geq [1-(k+1)\epsilon]^2 \cdot 1.1 \cdot 0.2 \cdot 0.5 - (k+1)\epsilon \cdot 1 \cdot 0.5 \cdot 2 -(k+1)\epsilon \cdot 1 \cdot 0.64 \cdot 1.4 - (k+1)^2\epsilon^2 \cdot 3\\
&= 0.11[1-(k+1)\epsilon]^2 - 1.896(k+1)\epsilon - 3(k+1)^2\epsilon^2= 0.11 - 2.116(k+1)\epsilon - 2.89(k+1)^2\epsilon^2\\
&\geq 0.11 - 2.116(k+1)\epsilon - 2.89(k+1)^2\epsilon^2 > 0,
 \end{align*}
where the last inequality is due to $(k+1)\epsilon = (n+1)\epsilon/2 < 1/26 $. Thus, we have shown that the Hessian matrix is positive definite and the point $x$ is a SSCP.

To count the number of spurious solutions, we notice that the subset $\mathcal{I}$ has $\binom{m}{(m+1)/2}$ different choices. Hence, the total number of SSCPs is at least $\binom{m}{(m+1)/2}$. The estimate on the combinatorial number follows from $\binom{n}{k}\geq (n/k)^k$.
\qed\end{proof}
By combining Theorems \ref{thm:upper}-\ref{thm:odd}, we complete the proof of Theorem \ref{thm:upper-lower}.

\subsection{Proof of Theorem \ref{thm:aistats-1}}
\label{apd:aistats-1}

The proof of Theorem \ref{thm:aistats-1} relies on the following lemma, which calculates the complexity metric of the instance $\cMC(C^\epsilon,u^*)$. The proof of Lemma \ref{lem:aistats-1} is similar to that of Theorem \ref{thm:max-dist}.
%
%
\begin{lemma}\label{lem:aistats-1}
Suppose that $n\geq m \geq5$, $\alpha\in[0,1]$ and $\epsilon\in[0,1]$. 
The complexity metric $\mathbb{D}_\alpha(C^\epsilon,u^*)$ has the closed form
\begin{align*}
    [\mathbb{D}_\alpha(C^\epsilon,u^*)]^{-1} = \min\bigg\{ \frac{2\alpha}{Z_\epsilon} + \frac{2(1-\alpha)(m-1)}{m},\frac{4\alpha \epsilon}{Z_\epsilon} +& \frac{2(1-\alpha)(m-2)}{m},\\
    &\hspace{3em}\frac{4\alpha (m-1)\epsilon}{Z_\epsilon}  \bigg\}.
\end{align*}
Moreover, $\mathbb{D}_\alpha(C^\epsilon,u^*)$ is strictly decreasing in $\epsilon$ on $[0,1/2 ]$.
\end{lemma}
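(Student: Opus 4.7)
The plan is to apply Theorem \ref{thm:unique-closure} to reduce the infimum in \eqref{eqn:metric-new} to a combinatorial optimization over perturbations $(\tilde C,\tilde u^*)$ that either disconnect $\mathbb{G}_1(\tilde C,\tilde u^*)$, make it bipartite, or produce a nonempty $\cI_{00}(\tilde C,\tilde u^*)$. I would parameterize by $a:=|\cS\cap\mathrm{supp}(\tilde u^*)|\in\{1,\dots,m\}$ and argue that the three terms of the claimed minimum correspond to $a=1$, $a=2$, and $a=m$, respectively.

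For a fixed $\tilde u^*$ with support $T$, I would first establish the bound $\|u^*-\tilde u^*\|_1 \geq 2(m-a)/m$. Decomposing the $\ell_1$-distance into the lost mass $(m-a)/m$ on $\cS\setminus T$, the contribution $\sum_{i\in\cS\cap T}|1/m-\tilde u^*_i|$ from the overlap, and $\sum_{i\in T\setminus\cS}|\tilde u^*_i|$ from the nodes outside $\cS$, and then combining the triangle inequality with $\|\tilde u^*\|_1=1$, gives the bound, with equality attained by $\tilde u^*$ supported on some $T_1\subseteq\cS$ of size $a$ with $\tilde u^*_i=1/a$. In particular, one can restrict attention to this canonical choice without loss of generality in the $u^*$-cost.

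Fixing such a minimal $\tilde u^*$ with support $T_1$, I would then compute the minimum $\|C^\epsilon-\tilde C\|_1$ achieving $(\tilde C,\tilde u^*)\in\overline{\cD}$. The induced subgraph $\mathbb{G}_1(C^\epsilon,\tilde u^*)$ consists of $a$ self-looped nodes pairwise joined by $\epsilon$-weighted edges, so the cheapest local perturbations are: removing the self-loop to bipartize ($a=1$, cost $2/Z_\epsilon$); removing the single $\epsilon$-edge to disconnect ($a=2$, cost $4\epsilon/Z_\epsilon$); and isolating one node of $T_1$ by deleting $a-1$ $\epsilon$-edges ($a\geq 3$, cost $4(a-1)\epsilon/Z_\epsilon$). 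Each cost already incorporates the factor $2$ from the mass that must be re-added to preserve the $\ell_1$-unit-sphere constraint, which follows from the triangle-inequality argument $\|C^\epsilon-\tilde C\|_1\geq 2\sum_{(i,j)\in\cN}C^\epsilon_{ij}$ used throughout Section \ref{sec:exm}. After verifying that the alternative strategies (bipartizing for $a\geq 3$, the $\cI_{00}$ route via a node in $[n]\setminus\cS$, or an $\tilde u^*$ extended outside $\cS$) are not cheaper, combining the $u^*$ and $C$ costs yields $C(1)=F_1$, $C(2)=F_2$, $C(m)=F_3$, and a linear-in-$a$ expression on $\{3,\dots,m-1\}$. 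Linearity then forces the overall minimum to be attained at an endpoint, reducing it to $\min(F_1,F_2,F_3)$.

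For the monotonicity claim, a direct computation gives $F_1-F_2 = \frac{2\alpha(1-2\epsilon)}{Z_\epsilon}+\frac{2(1-\alpha)}{m}\geq 0$ on $[0,1/2]$, so on that interval the minimum collapses to $\min(F_2,F_3)$. Both $F_2$ and $F_3$ are strictly increasing in $\epsilon$, since $\frac{d}{d\epsilon}(\epsilon/Z_\epsilon)=(2|\mathbb{E}|+n)/Z_\epsilon^2>0$, and hence so is their minimum, yielding strict decrease of $\mathbb{D}_\alpha(C^\epsilon,u^*)$ on $[0,1/2]$. The main obstacle I expect is the lower-bound half of the argument: ruling out the $\cI_{00}$-based perturbations through nodes $i\in[n]\setminus\cS$ (whose post-renormalization cost is $4d(i,T_1)/Z_\epsilon$) requires a careful case analysis exploiting the maximality of $\cS$ together with $\epsilon\in[0,1]$, and this is the main technical step that makes the three canonical strategies optimal.
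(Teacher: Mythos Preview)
Your plan matches the paper's proof: both parameterize by the support size of $\tilde u^*$, case-split via Theorem~\ref{thm:unique-closure} into the disconnected / bipartite / nonempty-$\cI_{00}$ configurations to obtain the $C$-cost lower bound $N(\ell)/Z_\epsilon$ (with $N(1)=2$ and $N(\ell)=4(\ell-1)\epsilon$ for $\ell\geq 2$), combine with the $u^*$-cost bound $2(m-\ell)/m$, reduce by linearity to $\ell\in\{1,2,m\}$, and then derive monotonicity by showing $F_1\geq F_2$ on $[0,1/2]$ together with $\tfrac{d}{d\epsilon}(\epsilon/Z_\epsilon)>0$. The step you single out as the main obstacle---the $\cI_{00}$ route through a node in $[n]\setminus\cS$---is precisely the case the paper treats most tersely (its Case~II says only that the analysis is ``similar'' and omits the proof ``for brevity''), so your caution there is well placed.
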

\begin{proof}
We fix $\epsilon$, $\alpha$ and $m$ in the proof.
Let $\cMC(\tilde{C}, \tilde{u}^*)$ be an instance that attains the minimum in \eqref{eqn:metric-new} and $\ell:=|\cI_1(\tilde{C},\tilde{u}^*)|$.
%
%
Denote 
\[ d_\alpha := \alpha\|C - \tilde{C}\|_1 + (1-\alpha)\|u^* - \tilde{u}^*\|_1. \]
Then, we investigate three different cases.

\paragraph{Case I.} Suppose that $\mathbb{G}_1(\tilde{C},\tilde{u}^*)$ is disconnected. In this case, at least $2(\ell-1)$ non-diagonal entries of $\tilde{C}$ are equal to $0$. This implies that
\begin{align}\label{eqn:aistats-2}
    \|C^\epsilon - \tilde{C}\|_1 \geq 4(\ell-1) \cdot ({\epsilon}/{Z_\epsilon}).
\end{align}

\paragraph{Case II.} The case when $\cI_{00}(\tilde{C},\tilde{u}^*)$ is non-empty can be analyzed similarly as \textit{Case I} and the inequality \eqref{eqn:aistats-2} holds. We omit the proof for brevity.

\paragraph{Case III.} Finally, we consider the case when $\mathbb{G}_1(\tilde{C},\tilde{u}^*)$ is bipartite. If $\ell \geq 5$, at least $2(\ell-1)$ non-diagonal entries of $\tilde{C}$ are equal to $0$ and inequality \eqref{eqn:aistats-2} holds. If $\ell=4$, at least $4$ non-diagonal entries and $4$ diagonal entries of $\tilde{C}$ are equal to $0$. Hence, we have
\begin{align}\label{eqn:aistats-3}
    \|C^\epsilon - \tilde{C}\|_1 \geq 8 \cdot \frac{\epsilon}{Z_\epsilon} + 8\cdot \frac{1}{Z_\epsilon} = \frac{8\epsilon+8}{Z_\epsilon} \geq \frac{12\epsilon}{Z_\epsilon}.
\end{align}
Similarly, it follows from analyzing the cases with $\ell=1,2,3$ that
\begin{align}\label{eqn:aistats-4}
    \|C^\epsilon - \tilde{C}\|_1 &\geq (4\epsilon+6)/(Z_\epsilon) \geq 8\epsilon/Z_\epsilon,\\
    \nonumber\|C^\epsilon - \tilde{C}\|_1 &\geq {4}/{Z_\epsilon} \geq 4\epsilon/Z_\epsilon,\\
    \nonumber\|C^\epsilon - \tilde{C}\|_1 &\geq {2}/{Z_\epsilon}.
\end{align}

Combining inequalities \eqref{eqn:aistats-2}, \eqref{eqn:aistats-3} and \eqref{eqn:aistats-4}, we know that
\begin{align}\label{eqn:aistats-5}
    \|C^\epsilon - \tilde{C}\|_1 \geq {N(\ell)}/{Z_\epsilon},
\end{align}
where $N(\ell) := 4(\ell-1)\epsilon$ if $\ell\geq 2$ and $N(1) := 2$.

Now, we consider the optimal choice of $\tilde{u}^*$. Since the distance in \eqref{eqn:aistats-5} is increasing in $\ell$, it is not optimal to choose $\ell > m$. For every $\ell\in[m]$, at least $m-\ell$ of the first $m$ entries of $\tilde{u}^*$ are $0$. Hence, we have the lower bound
\begin{align}\label{eqn:aistats-6}
    \|u^* - \tilde{u}^*\|_1 \geq 2(m-\ell) \cdot m^{-1}.
\end{align}
Combining inequalities \eqref{eqn:aistats-5} and \eqref{eqn:aistats-6}, we have
\begin{align*}
    d_\alpha \geq \frac{N(\ell)\cdot\alpha}{Z_\epsilon} + \frac{2(1-\alpha)(m-\ell)} {m}.
\end{align*}
Taking the minimum over $\ell\in[m]$ leads to
\begin{align*}
    d_\alpha \geq \min_{\ell\in[m]} \left[ \frac{N(\ell)\cdot\alpha}{Z_\epsilon} + \frac{2(1-\alpha)(m-\ell)} {m} \right].
\end{align*}
We note that the above inequality indeed attains equality with a suitable choice of $\tilde{C}$ and $\tilde{u}^*$. For all $\ell \geq 2$, we can set $\tilde{u}_i^*=0$ for all $i\in\{\ell+1,m\}$ and make node $1$ disconnected from nodes $\{2,\dots,\ell\}$. If $\ell=1$, we can remove the self-loop at node $1$. Therefore, it holds that
\begin{align*}
    d_\alpha = \min_{\ell\in[m]} \left[ \frac{\alpha N(\ell)}{Z_\epsilon} + \frac{2(1-\alpha)(m-\ell)} {m} \right].
\end{align*}
The minimum in the above equality is attained at one of the points $1,2,m$, which gives
\begin{align*}
    d_\alpha = \min\left\{ \frac{2\alpha}{Z_\epsilon} + \frac{2(1-\alpha)(m-1)}{m},\frac{4\alpha \epsilon}{Z_\epsilon} + \frac{2(1-\alpha)(m-2)}{m}, \frac{4\alpha (m-1)\epsilon}{Z_\epsilon}  \right\}.
\end{align*}
Since each component in the minimization is an increasing function in $\epsilon$, the distance $d_\alpha$ is also increasing in $\epsilon$. Results for $\mathbb{D}_\alpha(C^\epsilon,u^*)$ follow accordingly by taking the inverse of $d_\alpha$. 

Since the closed form expression of $\mathbb{D}_\alpha(C^\epsilon,u^*)$ is the minimum of three monotone functions in $\epsilon$, the complexity metric is the negative of a unimodal function. For every $\epsilon \leq 1/(2m)$, we can prove that
\[ \frac{2\alpha}{Z_\epsilon} + \frac{2(1-\alpha)(m-1)}{m} > \min\left\{\frac{4\alpha\epsilon}{Z_\epsilon} + \frac{2(1-\alpha)(m-2)}{m}, \frac{4\alpha(m-1)\epsilon}{Z_\epsilon} \right\}. \]
Therefore, in the regime $[0,1/2]$, the complexity metric $\mathbb{D}_\alpha(C^\epsilon,u^*)$ is the minimum of two strictly decreasing functions and, thus, is also strictly decreasing in $\epsilon$.
\qed\end{proof}
Combining \revise{Theorem \ref{thm:upper-lower}} and Lemma \ref{lem:aistats-1}, we are able to estimate the range of the complexity metric.
\begin{proof}[Proof of Theorem \ref{thm:aistats-1}]
By defining constants $\delta:=1/26$ and $\Delta:=18$, \revise{Theorem \ref{thm:upper-lower} implies} that
\begin{enumerate}
    \item If $\epsilon < \delta / m$, the instance $\cMC(C^\epsilon, u^*)$ has spurious local minima;
    \item If $\epsilon > \Delta / m$, the instance $\cMC(C^\epsilon, u^*)$ has no spurious local minima.
\end{enumerate}
Then, we study two different cases.

\paragraph{Case I.} We first consider the case when $m \epsilon$ is large. Since $\epsilon < \Delta/m \leq 1/2$, the threshold is located in the regime where $\mathbb{D}_\alpha(C^\epsilon,u^*)$ is strictly decreasing. Hence, it suffices to show that
\[ \left[\frac{2\alpha \Delta}{n^2} + \min\left\{ 4\alpha \Delta \cdot \frac{ m}{n^2}, 2(1-\alpha) \right\}\right]^{-1}  \]
is a lower bound on $\mathbb{D}_\alpha(C^\epsilon,u^*)$ when $\epsilon = \Delta / m$. 
By Lemma \ref{lem:aistats-1}, it holds that
\begin{align*}
    &\left[\mathbb{D}_\alpha(C^\epsilon,u^*)\right]^{-1}\\
    = &\min\left\{ \frac{2\alpha}{Z_\epsilon} + \frac{2(1-\alpha)(m-1)}{m},\frac{4\alpha \epsilon}{Z_\epsilon} + \frac{2(1-\alpha)(m-2)}{m}, \frac{4\alpha (m-1)\epsilon}{Z_\epsilon}  \right\}\\
    \leq &\min\left\{ \frac{4\alpha \epsilon}{Z_\epsilon} + \frac{2(1-\alpha)(m-2)}{m}, \frac{4\alpha (m-1)\epsilon}{Z_\epsilon}  \right\}\\
    = & \frac{4\alpha \epsilon}{Z_\epsilon} + (m-2)\min\left\{ \frac{4\alpha \epsilon}{Z_\epsilon}, \frac{2(1-\alpha)}{m} \right\} \leq \frac{4\alpha \epsilon}{Z_\epsilon} + m\min\left\{ \frac{4\alpha \epsilon}{Z_\epsilon}, \frac{2(1-\alpha)}{m} \right\}.
\end{align*}
%
Since the graph $\mathbb{G}$ does not contain any independence set with $m+1$ nodes, Tur\'{a}n's theorem \cite{aigner1995turan} implies that the graph $\mathbb{G}$ has at least $n^2 / (2m)$ edges, namely, 
\[ |\mathbb{E}| \geq {n^2}/(2m). \]
We note that the above bound is asymptotically tight and is attained by the Tur\'{a}n graph. Hence, we obtain that 
\[ Z_\epsilon=2|\mathbb{E}|+n+m(m-1)\epsilon \geq 2|\mathbb{E}| \geq n^2 / m. \]
By substituting into the estimate of $\mathbb{D}_\alpha(C^\epsilon,u^*)$, it follows that
\begin{align*}
\left[\mathbb{D}_\alpha(C^\epsilon,u^*)\right]^{-1} &\leq \frac{4\alpha \epsilon \cdot m}{n^2} + m\min\left\{ \frac{4\alpha \epsilon \cdot m}{n^2}, \frac{2(1-\alpha)}{m} \right\}\\
&= \frac{2\alpha \Delta}{n^2} + \min\left\{ 4\alpha \Delta \cdot \frac{ m}{n^2}, 2(1-\alpha) \right\}.
\end{align*}
%

\paragraph{Case II.} Next, we consider the case when $\epsilon m$ is small. Similar to \textit{Case I}, it suffices to show that 
\[ \frac{18}{17} \max\left\{ \frac{n^2}{4\alpha \delta}, \frac{1}{2(1-\alpha)} \right\} \]
is an upper bound for $\mathbb{D}_\alpha(C^\epsilon,u^*)$ when $\epsilon = \delta / m$. Since $\delta < 1/2$, we have
\[ {2\alpha}/{Z_\epsilon} > {4\alpha \epsilon}/{Z_\epsilon}. \]
By Lemma \ref{lem:aistats-1}, it holds that
\begin{align*}
    &\left[\mathbb{D}_\alpha(C^\epsilon,u^*)\right]^{-1}\\
    = &\min\left\{ \frac{2\alpha}{Z_\epsilon} + \frac{2(1-\alpha)(m-1)}{m},\frac{4\alpha \epsilon}{Z_\epsilon} + \frac{2(1-\alpha)(m-2)}{m}, \frac{4\alpha (m-1)\epsilon}{Z_\epsilon}  \right\}\\
    = &\min\left\{ \frac{4\alpha \epsilon}{Z_\epsilon} + \frac{2(1-\alpha)(m-2)}{m}, \frac{4\alpha (m-1)\epsilon}{Z_\epsilon}  \right\}\\
    = & \frac{4\alpha \epsilon}{Z_\epsilon} + (m-2)\min\left\{ \frac{4\alpha \epsilon}{Z_\epsilon}, \frac{2(1-\alpha)}{m} \right\} \geq \frac{17}{18} \min\left\{ \frac{4\alpha \epsilon m}{Z_\epsilon}, 2(1-\alpha) \right\},
\end{align*}
where the last inequality is from $m \geq 36$. Since $\epsilon\leq 1$, the definition of $Z_\epsilon$ implies that $Z_\epsilon \leq n^2$. By substituting into the estimate of $\mathbb{D}_\alpha(C^\epsilon,u^*)$, it follows that
\begin{align*}
    \left[\mathbb{D}_\alpha(C^\epsilon,u^*)\right]^{-1} &\geq \frac{17}{18} \min\left\{ \frac{4\alpha \epsilon m}{n^2}, 2(1-\alpha) \right\} = \frac{17}{18} \min\left\{ \frac{4\alpha \delta}{n^2}, 2(1-\alpha) \right\}.
\end{align*}

By combining \textit{Cases I} and \textit{II}, we complete the proof.
\qed\end{proof}

\section{Proofs in Section \ref{sec:theory}}

\subsection{Proof of Lemma \ref{lem:ssp}}
\label{apd:ssp}
\begin{proof}
Without loss of generality, we assume that
\[ u^0_i = 1 / n,\quad \forall i\in[n]. \]
We first consider the scaled problem instance
\begin{align}\label{eqn:ssp-5} \min_{x\in\mathbb{R}^n} {\sum}_{i,j\in[n], i\neq j} (x_ix_j - 1)^2. \end{align}
We denote the gradient and the Hessian matrix of problem \eqref{eqn:ssp-5} as $g(x)\in\Rn$ and $H(x)\in\R^{n\times n}$, respectively. Then, we can calculate that
\begin{align*}
\frac14 g_i(x) &= -x_i^3 + (\|x\|_2^2 + 1) x_i - {\sum}_{k\in[n]} x_k,\quad\forall i\in[n];\\
\frac14 H_{ii}(x) &= {\sum}_{k\in[n],k\neq i} x_k^2,\quad \frac14 H_{ij}(x) = 2x_ix_j - 1,\quad \forall i,j\in[n].
\end{align*}
Let $c$ be a small positive constant and define $\epsilon:=c/n$. Suppose that $x\in\Rn$ satisfies
\begin{align}\label{eqn:ssp-2} \|g(x)\|_\infty < 4\epsilon. \end{align}
Then, we study three different cases.

\paragraph{Case I.} We first consider the case when $\sum_{i\in[n]} x_i > 2\epsilon$. For all $i\in[n]$, the condition \eqref{eqn:ssp-2} implies that
\begin{align}\label{eqn:ssp-1} 
\frac14|g_i(x)| = \left| \left({\sum}_{j\in[n], j\neq i} x_j^2 + 1 \right)x_i - {\sum}_{j\in[n]} x_j \right| < \epsilon. 
\end{align}
If $x_i \leq \epsilon$, it holds that
\[ \left({\sum}_{j\in[n],j\neq i}x_j^2 + 1 \right)x_i - {\sum}_{j\in[n]} x_j \leq x_i - {\sum}_{j\in[n]} x_j < -\epsilon, \]
which contradicts \eqref{eqn:ssp-1}. Hence,
\[ x_i > \epsilon ,\quad \forall i\in[n]. \]
Define three index sets
\begin{align*} 
\mathcal{I}_1 &:= \{ i\in[n] ~|~ x_i \geq 1 + \epsilon \},~ \mathcal{I}_2 := \{ i\in[n] ~|~ x_i \leq 1 - \epsilon \},~ \mathcal{I}_3 := [n] \backslash (\cI_1 \cup \cI_2). 
\end{align*}
Choosing the perturbation direction $q\in\mathbb{R}^n$ to be
\[ q_i = -x_i,\quad \forall i\in\mathcal{I}_1;\quad q_i = x_i,\quad \forall i\in\mathcal{I}_2;\quad q_i = 0,\quad \forall i\in\mathcal{I}_3, \]
we can calculate that
\begin{align}\label{eqn:ssp-3}
\frac14 q^T g(x) &= \sum_{i,j\in\mathcal{I}_1,i\neq j} -x_ix_j(x_ix_j - 1) + \sum_{i,j\in\mathcal{I}_2,i\neq j} x_ix_j(x_ix_j - 1)\\
\nonumber&\quad  + \sum_{i\in\mathcal{I}_1,j\in\mathcal{I}_3} -x_ix_j(x_ix_j - 1) + \sum_{i\in\mathcal{I}_2,j\in\mathcal{I}_3} x_ix_j(x_ix_j - 1).
\end{align}
We then consider four sub-cases.
\paragraph{Case I-1.} We first assume that $|\mathcal{I}_1| \geq 2$. In this case, we have
\begin{align}\label{eqn:ssp-4}
\sum_{i,j\in\mathcal{I}_1,i\neq j} -x_ix_j(x_ix_j - 1) &\leq \sum_{i,j\in\mathcal{I}_1,i\neq j} -x_ix_j[(1+\epsilon)^2 - 1] \leq -2\epsilon \sum_{i,j\in\mathcal{I}_1,i\neq j} x_ix_j\\
\nonumber&\leq -2\epsilon (|\mathcal{I}_1|-1)\|x_{\mathcal{I}_1}\|_1 \leq -2 \|x_{\mathcal{I}_1}\|_1 \cdot \epsilon,\\
\nonumber\sum_{i,j\in\mathcal{I}_2,i\neq j} x_ix_j(x_ix_j - 1) & = \sum_{i,j\in\mathcal{I}_2,i\neq j} -x_i\cdot [ x_j - x_ix_j^2 ]\\
\nonumber& \leq \sum_{i,j\in\mathcal{I}_2,i\neq j} x_i\cdot [ x_j - (1-\epsilon) x_j^2 ]\\
\nonumber&\leq \sum_{i\in\mathcal{I}_2} x_i \max(|\mathcal{I}_2| - 1, 0) \cdot \epsilon[ 1 - (1-\epsilon)\epsilon ]\\
\nonumber&= -\max(|\mathcal{I}_2| - 1, 0) \|x_{\mathcal{I}_2}\|_1  \cdot \epsilon + O(n\epsilon^2) ,\\
\nonumber\sum_{i\in\mathcal{I}_1,j\in\mathcal{I}_3} -x_ix_j(x_ix_j - 1) &= \sum_{i\in\mathcal{I}_1,j\in\mathcal{I}_3}-\frac14 (2x_ix_j - 1)^2 + \frac14\\
\nonumber& \leq |\mathcal{I}_1||\mathcal{I}_3| \left[-\frac14 [ 2(1+\epsilon)(1-\epsilon) - 1 ]^2 + \frac14\right],\\
\nonumber&= |\mathcal{I}_1||\mathcal{I}_3| \left(\epsilon^2 - \epsilon^4\right) = O(n^2\epsilon^2),\\
\nonumber\sum_{i\in\mathcal{I}_2,j\in\mathcal{I}_3} x_ix_j(x_ix_j - 1) &= \sum_{i\in\mathcal{I}_2,j\in\mathcal{I}_3} \frac14 (2x_ix_j - 1)^2 - \frac14\\
\nonumber & \leq |\mathcal{I}_2||\mathcal{I}_3| \left[\frac14 [2(1+\epsilon)(1-\epsilon) - 1]^2 - \frac14 \right] \leq 0.
\end{align}
%
Choosing $\epsilon$ to be small enough and substituting the above four estimates into \eqref{eqn:ssp-3}, we obtain that
\begin{align*} 
\frac14 q^T g(x) &\leq -2\epsilon \|x_{\mathcal{I}_1}\|_1 -\max(|\mathcal{I}_2| - 1, 0) \epsilon \|x_{\mathcal{I}_2}\|_1 + O(n^2\epsilon^2)\\
&\leq - \left[\|x_{\mathcal{I}_1}\|_1 + \max(|\mathcal{I}_2| - 1, 0) \|x_{\mathcal{I}_2}\|_1 \right] \cdot \epsilon. 
\end{align*}
If $|\mathcal{I}_2| \geq 2$, it follows from H{\"o}lder's inequality that
\[ \|g(x)\|_\infty \geq \frac{4(\|x_{\mathcal{I}_1}\|_1 + \|x_{\mathcal{I}_2}\|_1) \cdot \epsilon}{\|q\|_1} = \frac{\|x_{\mathcal{I}_1}\|_1 + \|x_{\mathcal{I}_2}\|_1}{\|x_{\mathcal{I}_1}\|_1 + \|x_{\mathcal{I}_2}\|_1} \cdot 4\epsilon = 4\epsilon. \]
which is a contradiction to \eqref{eqn:ssp-2}. Otherwise if $|\mathcal{I}_2|\leq 1$, it also follows from H{\"o}lder's inequality that
\[ \|g(x)\|_\infty \geq \frac{4\|x_{\mathcal{I}_1}\|_1 \epsilon}{\|q\|_1} = \frac{4\|x_{\mathcal{I}_1}\|_1}{\|x_{\mathcal{I}_1}\|_1 + \|x_{\mathcal{I}_2}\|_1} \cdot \epsilon \geq \frac{\|x_{\mathcal{I}_1}\|_1}{\|x_{\mathcal{I}_1}\|_1 + 1} \cdot 4\epsilon \geq 2\epsilon. \]
In summary, in this sub-case, we have
\[ \|g(x)\|_\infty \geq {2\epsilon}. \]

\paragraph{Case I-2.} Now, we consider the case when $|\mathcal{I}_1| = 1$ and $|\mathcal{I}_2| \geq 2$. Assume without loss of generality that $\mathcal{I}_1 = \{1\}$. A similar calculation as \eqref{eqn:ssp-4} leads to
\[ \frac14 q^T g(x) \leq -\max(|\mathcal{I}_2| - 1, 0) \epsilon \|x_{\mathcal{I}_2}\|_1 + O(n^2\epsilon^2) \leq -\frac12 \|x_{\mathcal{I}_2}\|_1 \cdot \epsilon. \]
If $x_1 \leq 2\epsilon^{-1}$, H\"{o}lder's inequality gives
\[ \|g(x)\|_\infty \geq \frac{4\epsilon\|x_{\mathcal{I}_2}\|_1}{2\|q\|_1} = 2\epsilon \cdot \frac{\|x_{\mathcal{I}_2}\|_1}{\|x_{\mathcal{I}_1}\|_1 + \|x_{\mathcal{I}_2}\|_1} \geq 2\epsilon \cdot \frac{2\epsilon}{2\epsilon^{-1} + 2\epsilon} \geq 2\epsilon \cdot \frac{\epsilon^2}{2} = \epsilon^3. \]
Now, we assume that $x_1 > 2\epsilon^{-1}$. The first component of the gradient is
\begin{align*}
\frac14 g_1(x) &= {\sum}_{j\in[n],j\neq 1}( x_j^2 x_i - x_j ) \geq {\sum}_{j\in[n],j\neq 1}( \epsilon^2 x_i - \epsilon )\\
&= (n-1)\epsilon^2 \cdot x_1 - (n-1)\epsilon > (n-1)\epsilon > \epsilon,
\end{align*}
%
which contradicts \eqref{eqn:ssp-2}. 
In summary, in this sub-case, we have
\[ \|g(x)\|_\infty \geq {\epsilon^3}. \]

\paragraph{Case I-3.} In this case, we assume $|\mathcal{I}_1| = 1$ and $|\mathcal{I}_2| \leq 1$. In addition, we assume $\mathcal{I}_1 = \{1\}$. If $x_1 \geq (1-\epsilon)^{-1} + \epsilon$, the third estimate in \eqref{eqn:ssp-4} becomes
\begin{align*}
{\sum}_{j\in\mathcal{I}_3} -x_1x_j(x_1x_j - 1) &\leq {\sum}_{j\in\mathcal{I}_3} - x_1 (1-\epsilon)[ x_1(1-\epsilon) - 1 ]\\
&\leq -(1-\epsilon)^2\epsilon x_1 \leq -\frac12 \|x_{\mathcal{I}_1}\|_1 \cdot \epsilon.
\end{align*}
Then, using a similar analysis and by applying H\"{o}lder's inequality, it follows that
\[ \frac14 q^T g(x) \leq -\frac12 \|x_{\mathcal{I}_1}\|_1 \epsilon \quad \text{ and } \quad  \|g(x)\|_\infty \geq 2\epsilon \cdot \frac{\|x_{\mathcal{I}_1}\|_1}{\|x_{\mathcal{I}_1}\|_1 + \|x_{\mathcal{I}_2}\|_1} > \epsilon. \]
%
Otherwise, if $x_1 < (1-\epsilon)^{-1} + \epsilon$,
\[ |x_1 - 1| < \frac{\epsilon}{1-\epsilon} + \epsilon < 3\epsilon. \]
Hence,
\[ \|x - x^0\|_1 \leq 3\epsilon + (n-1)\epsilon = (n+2)\epsilon. \]
In summary, in this sub-case, we have
\[ \|g(x)\|_\infty < {\epsilon}/{4}\quad \text{or} \quad \|x - x^0\|_1 \leq (n+2)\epsilon. \]

\paragraph{Case I-4.} Finally, we assume $|\mathcal{I}_1| = 0$. If $|\mathcal{I}_2|\geq 2$, we can use a similar analysis as \textit{Case I-2} to conclude that 
\[ \frac14 q^T g(x) \leq -\frac12 \|x_{\mathcal{I}_2}\|_1 \cdot \epsilon + O(n\epsilon^2) \]
and thus 
\[ \|g(x)\|_\infty \geq {\epsilon}. \]
Next, we consider the case when $|\mathcal{I}_2| = 1$ and we assume $\mathcal{I}_2 = \{1\}$. The fourth term in \eqref{eqn:ssp-4} can be estimated as
\begin{align*}
{\sum}_{i\in\mathcal{I}_2,j\in\mathcal{I}_3} x_ix_j(x_ix_j - 1) &= \left({\sum}_{j=2}^n x_j^2\right) x_1^2 - \left({\sum}_{j=2}^n x_j\right) x_1.
\end{align*}
Since $x_j \in [1-\epsilon,1+\epsilon]$ for all $j\in\{2,\dots,n\}$, it holds that
\[ \frac{\sum_{j=2}^n x_j}{\sum_{j=2}^n x_j^2} \geq \frac{1}{1+\epsilon} > 1-\epsilon. \]
Therefore, 
\begin{align*}
{\sum}_{i\in\mathcal{I}_2,j\in\mathcal{I}_3} x_ix_j(x_ix_j - 1) &= \left({\sum}_{j=2}^n x_j^2\right) x_1^2 - \left({\sum}_{j=2}^n x_j\right) x_1\\
& \leq \left({\sum}_{j=2}^n x_j^2\right) (1-\epsilon)^2 - \left({\sum}_{j=2}^n x_j\right) (1-\epsilon)\\
&={\sum}_{j=2}^n \left[ (1-\epsilon)^2 x_j^2 - (1-\epsilon) x_j \right]\\
& \leq {\sum}_{j=2}^n \left[ (1-\epsilon)^2 (1+\epsilon)^2 - (1-\epsilon)(1+\epsilon) \right]\\
&\leq -(n-1)\epsilon^2 + O(n\epsilon^3).
\end{align*}
Thus, it holds that
\[ \frac14 q^T g(x) \leq -(n-1)\epsilon^2 + O(n\epsilon^3) \geq -\epsilon^2. \]
H\"{o}lder's inequality implies that
\[ \|g(x)\|_\infty \geq \frac{4\epsilon^2}{\|q\|_1} = \frac{4\epsilon^2}{x_1} \geq \frac{4\epsilon^2}{1-\epsilon} \geq 4\epsilon^2. \]
The only remaining case is when $|\mathcal{I}_2| = 0$. In this case, we have
\[ x_i \in [1-\epsilon, 1+\epsilon],\quad \forall i\in[n]. \]
Therefore, it holds that
\[ \|x - x^0\|_1 \leq n\epsilon. \]
In summary, in this sub-case, we have
\[ \|g(x)\|_\infty \geq 4\epsilon^2\quad \text{or} \quad \|x - x^0\|_1 \leq n\epsilon. \]

Combining \textit{Cases I-1} to \textit{I-4} yields that 
\[ \|g(x)\|_\infty \geq {\epsilon^3} \quad \text{or} \quad \|x - x^0\|_1 \leq (n+4)\epsilon \]
in \textit{Case I}.

\paragraph{Case II.} For the case when $\sum_{i\in[n]} x_i < -2\epsilon$, one can obtain the same conclusions as \textit{Case I} by the symmetry of the landscape.

\paragraph{Case III.} We finally consider the case when $\sum_{i\in[n]} x_i \in [-2\epsilon,2\epsilon]$. Considering the assumption \eqref{eqn:ssp-2}, we have
\[ \frac14 g_i(x) = \left( {\sum}_{j\in[n],j\neq i} x_j^2 + 1 \right) x_i - {\sum}_{j\in[n]} x_j \in [-\epsilon,\epsilon],\quad \forall i\in[n]. \]
Combined with the assumption that $\sum_{i\in[n]} x_i \in [-2\epsilon,2\epsilon]$, it follows that
\[ \left( {\sum}_{j\in[n],j\neq i} x_j^2 + 1 \right) x_i \in [-3\epsilon,3\epsilon]. \]
Furthermore, since $\sum_{j\in[n],j\neq i} x_j^2 + 1 \geq 1$, we have
\[ x_i \in [-3\epsilon, 3\epsilon],\quad \forall i\in[n]. \]
We consider the descent direction $p\in\mathbb{R}^n$, where
\[ p_i = {1}/{\sqrt{n}},\quad \forall i\in[n]. \]
Then, we can calculate that
\begin{align*}
\frac14 p^T H(x) p &= {\sum}_{i, j\in[n] j\neq i} \left[x_j^2 p_i^2 + (2x_ix_j - 1) p_ip_j \right]\\
& = \frac{1}{n} {\sum}_{i, j\in[n] j\neq i} \left[x_j^2 + (2x_ix_j - 1) \right]\\
&= \frac{1}{n} \left[ (n-1) {\sum}_{i\in[n]} x_i^2 + 2{\sum}_{i,j\in[n], i\neq j} x_ix_j - n(n-1)\right]\\
&\leq \frac{1}{n} \left[ (n-1) \cdot 9n\epsilon^2 + 2n(n-1) \cdot 9\epsilon^2 - n(n-1)\right]\\
&= 27(n-1)\epsilon^2 - (n-1) \leq -{n}/{2},
\end{align*}
where the last inequality is because $\epsilon$ is sufficiently small.

Combined \textit{Cases I-III}, we have proved that under assumption \eqref{eqn:ssp-2}, it holds that
\[ \min\{\|x - x^0\|_1,\|x + x^0\|_1\} \leq (n+4)\epsilon \quad \text{or} \quad \|g(x)\|_\infty \geq {\epsilon^3} \quad \text{or} \quad \lambda_{min}[H(x)] \leq -2n. \]
Letting $\epsilon := \eta / (n+4) \ll 1$, we know that the property stated in the theorem holds for problem \eqref{eqn:ssp-5} with
\[ \beta(\eta) = \frac{\eta^3}{(n+4)^3},\quad \gamma(\eta) = 2n. \]
In addition, we have $\eta_0 = O(1)$, $\beta(\eta) = O(n^{-3}\eta^3)$ and $\gamma(\eta) = O(n)$. 
Transforming back to the instance $(C^0, u^0)$, the property stated in the theorem holds with
\[ \eta_0 = O(n^{-0.5}),\quad \beta(\eta) = O(n^{-6.5}\eta^3),\quad \gamma(\eta) = O(n^{-2}). \]
This completes the proof.
\qed\end{proof}

\subsection{Proof of Lemma \ref{lem:pert}}
\label{apd:pert}
\begin{proof}
Similar to Lemma \ref{lem:ssp}, it is equivalent to prove the results for the scaled instance $\cMC( n(n-1)\tilde{C}, n\tilde{u}^*)$. With a little abuse of notations, we use $(\tilde{C}\tilde{u}^*)$ to denote the scaled pair of parameters.
Denote
\[ \delta := \max\left\{ \frac{n(n-1)\epsilon}{\alpha^*}, \frac{n \epsilon}{1-\alpha^*} \right\}. \]
Then, the condition stated in the lemma implies that
\[ \tilde{C}_{ij} \in [1 -\delta, 1+\delta],\quad \forall i,j\in[n] \quad \st\quad i\neq j;\quad \tilde{C}_{ii} \in[0,\delta],\quad \tilde{u}^*_i \in [1-\delta,1+\delta],\quad \forall i\in[n]. \]
Let $R > 0$ be a large enough constant. Suppose that $u\in\mathbb{R}^n$ is a stationary point of the instance $(\tilde{C}, \tilde{u}^*)$ such that $\|u\|_2 = R$. Denote the gradient and the Hessian matrix of the instance $\cMC(\tilde{C},\tilde{u}^*)$ at $u$ as $g(u)\in\Rn$ and $H(u)\in\R^{n\times n}$, respectively. Then, it holds that
\begin{align}\label{eqn:pert-1} 
\frac14 g_i(u) = {\sum}_{j\in[n]} \tilde{C}_{ij}u_j( u_iu_j - \tilde{u}_i^*\tilde{u}_j^* ) = 0,\quad \forall i\in[n]. 
\end{align}
We assume without loss of generality that 
\[ u_1 = {\max}_{i\in[n]} |u_i| \geq R / \sqrt{n} > 0. \]
If $u_i = 0$ for all $i\in[n]\backslash\{1\}$, we have
\begin{align*}
\frac14 g_2(u) &= \left( \tilde{C}_{21} u_1^2 + {\sum}_{j\geq 2} \tilde{C}_{2j}u_j^2 \right) u_2 - \left( \tilde{C}_{21} \tilde{u}_1^*\tilde{u}_2^*u_1 + {\sum}_{j\geq 2} \tilde{C}_{2j} \tilde{u}_j^*\tilde{u}_2^*u_j \right)\\
&= - \tilde{C}_{21} \tilde{u}_1^*\tilde{u}_2^*u_1 \leq -(1-\delta)\cdot (1-\delta)^2\cdot R < 0,
\end{align*}
where the last inequality is in light of $\tilde{C}_{21} > 1-\delta$ and $\tilde{u}^*_i > 1-\delta$. This contradicts the stationarity of point $x$ and thus
\[ {\sum}_{j\geq 2} u_j^2 > 0. \]
Moreover, since $\tilde{C}_{1j} > 1-\delta$ for all $j\in[n]\backslash\{1\}$, we have
\[ {\sum}_{j\in[n]} \tilde{C}_{1j} u_j^2 \geq {\sum}_{j\geq 2} \tilde{C}_{1j} u_j^2 > (1-\delta){\sum}_{j\geq 2} u_j^2 > 0. \]
Similarly, for all $i\in[n]\backslash\{1\}$, it holds that
\[ {\sum}_{j\in[n]} \tilde{C}_{ij} u_j^2 \geq {\sum}_{j\in[n],j\neq i} \tilde{C}_{ij} u_j^2 > (1-\delta){\sum}_{j\in[n],j\neq i} u_j^2 > (1-\delta) u_i^2 > 0. \]
Solving \eqref{eqn:pert-1} for all $i\in[n]$, we conclude that
\begin{align}\label{eqn:pert-4} u_i = \frac{\sum_{j\in[n]} \tilde{C}_{ij}\tilde{u}_i^* \tilde{u}_j^* u_j }{\sum_{j\in[n]} \tilde{C}_{ij}{u}_j^2 }. \end{align}
Assuming that 
\[ u_1 < R - \frac{2n}{(1-\delta)R}, \]
it follows that
\begin{align}\label{eqn:pert-2} 
{\sum}_{j\in [n]} \tilde{C}_{1j} u_j^2 > (1-\delta){\sum}_{j\geq 2} u_j^2 \geq 4n - \frac{4n^2}{(1-\delta)R^2}. 
\end{align}
In addition, we can calculate that
\begin{align}\label{eqn:pert-3}
{\sum}_{j\in [n]} \tilde{C}_{1j}\tilde{u}_1^* \tilde{u}_j^* u_j &\leq {\sum}_{j\in [n]} \tilde{C}_{1j}\tilde{u}_1^* \tilde{u}_j^* |u_j|\\
\nonumber&< (1+\delta)\cdot(1+\delta)^2 {\sum}_{j\in [n]}|u_j| \leq 2\|u\|_1 \leq 2\sqrt{n}R,
\end{align}
where the second last inequality is because $\delta$ is a sufficiently small constant.
Combining inequalities \eqref{eqn:pert-2}-\eqref{eqn:pert-3}, we have
\[ u_1 = \frac{\sum_{j\in [n]} \tilde{C}_{1j}\tilde{u}_1^* \tilde{u}_j^* u_j }{\sum_{j\in [n]} \tilde{C}_{1j}{u}_j^2 } < \frac{2\sqrt{n}R}{4n - 4n^2 / [(1-\delta)^2R^2]}. \]
Choosing $R \geq 4n \geq 2(1-\delta)^{-1}n$, the above inequality leads to
\[ u_1 < \frac{2\sqrt{n}R}{4n - 4n^2 / [(1-\delta)^2R^2]} < \frac{2\sqrt{n}{R}}{2n} = \frac{R}{\sqrt{n}}, \]
which contradicts the assumption that $u_1 \geq R / \sqrt{n}$. 
Therefore,
\[ u_1 \geq R - \frac{2n}{(1-\delta)R}. \]
Using the condition that $\|x\|_2 = R$, it holds that
\[ {\sum}_{j\geq 2} u_j^2 \leq \frac{2n}{1-\delta} - \frac{4n^2}{(1-\delta)^2R^2} < \frac{2n}{1-\delta}. \]
For all $i\in[n]\backslash\{1\}$, the relation \eqref{eqn:pert-4} implies that
\begin{align*}
u_i &= \frac{\sum_{j\in [n]} \tilde{C}_{ij}\tilde{u}_i^* \tilde{u}_j^* u_j }{\sum_{j\in [n]} \tilde{C}_{ij}{u}_j^2 } = \frac{\tilde{C}_{1i}\tilde{u}_i^*\tilde{u}_1^*u_1 + \sum_{j\geq 2} \tilde{C}_{ij}\tilde{u}_i^* \tilde{u}_j^* u_j}{\sum_{j\in [n]} \tilde{C}_{ij}{u}_j^2 }\\
&\geq \frac{ (1-\delta)\cdot(1-\delta)^2  (R - {2n}/[(1-\delta)R]) - (1+\delta)\cdot(1+\delta)^2 \sqrt{ n \cdot \sum_{j\geq 2} u_j^2 } }{\sum_{j\in [n]} \tilde{C}_{ij}{u}_j^2}\\
&\geq \frac{ (1-\delta)\cdot(1-\delta)^2 (R - {2n}/[(1-\delta)R]) - (1+\delta)\cdot(1+\delta)^2 \sqrt{ n \cdot 2n / (1-\delta) } }{\sum_{j\in [n]} \tilde{C}_{ij}{u}_j^2}\\
&\geq \frac{ 1/2 \cdot (R - 1) - 2n \sqrt{ 2 / (1-\delta) } }{\sum_{j\in [n]} \tilde{C}_{ij}{u}_j^2} \geq \frac{ R/2 - 1/2 - 4n }{\sum_{j\in [n]} \tilde{C}_{ij}{u}_j^2} > 0,
\end{align*}
where the last inequality is due to choosing $R > 8n + 1$ and the second last inequality results from the fact that $\delta$ is sufficiently small. Using the same relation, it follows that
\begin{align*}
u_i &= \frac{\sum_{j\in [n]} \tilde{C}_{ij}\tilde{u}_i^* \tilde{u}_j^* u_j }{\sum_{j\in [n]} \tilde{C}_{ij}{u}_j^2 } \geq \frac{ \tilde{C}_{1i}\tilde{u}_i^* \tilde{u}_1^* u_1 }{\sum_{j\in[n]} \tilde{C}_{ij}{u}_j^2 } \geq \frac{(1-\delta)(1-\delta)^2 u_1}{(1+\delta) \cdot R^2 }\\
& \geq \frac{1}{4R^2} \cdot \left(R - \frac{2n}{(1-\delta)R} \right) \geq \frac{1}{8R},
\end{align*}
where the last inequality is due to choosing $R \geq 8n \geq 4(1-\delta)^{-1}n$. 
Furthermore, using the relation \eqref{eqn:pert-4} with $i=1$, we have
\begin{align*}
{\sum}_{j\geq2}u_j &\geq \frac{1}{(1+\delta)(1+\delta)^2} {\sum}_{j\geq 2} \tilde{C}_{1j}\tilde{u}_i^* \tilde{u}_j^* u_j\\
& = \frac{1}{(1+\delta)(1+\delta)^2}\cdot u_1 \left[{\sum}_{j\geq 2} \tilde{C}_{1j} u_j^2 + \tilde{C}_{11}[u_1^2 - (\tilde{u}_1^*)^2] \right] \\
&\geq \frac{1}{(1+\delta)(1+\delta)^2}\cdot u_1 \bigg[(1-\delta){\sum}_{j\geq 2} u_j^2\\
&\hspace{13em} + \tilde{C}_{11} [(R-2n/[(1-\delta)R])^2 - (1+\delta)^2] \bigg]\\
&\geq \frac{1-\delta}{(1+\delta)(1+\delta)^2} \cdot u_1\left({\sum}_{j\geq 2} u_j^2 \right)\\
& \geq \frac{1-\delta}{(1+\delta)(1+\delta)^2}\left( R - \frac{2n}{(1-\delta)R} \right) \cdot {\sum}_{j\geq 2} u_j^2\\
&\geq \frac14 \left( R - \frac{2n}{(1-\delta)R} \right) \cdot {\sum}_{j\geq 2} u_j^2.
\end{align*}
Since $\sum_{j\geq2}u_j \leq \sqrt{ n(\sum_{j\geq2}u_j^2) }$, it follows that
\begin{align*}
\sqrt{ n \left({\sum}_{j\geq2}u_j^2\right) } \geq \frac14 \left( R - \frac{2n}{(1-\delta)R} \right) \cdot {\sum}_{j\geq 2} u_j^2,
\end{align*}
which further implies that
\begin{align*}
{\sum}_{j\geq 2} u_j^2 \leq \frac{16n}{ ( R - {2n}/[(1-\delta)R] )^2 } \leq \frac{16n}{(R-1)^2} \leq \frac{1}{4},
\end{align*}
where the last inequality is because of choosing $R \geq 1 + 8\sqrt{n}$.
Now, we consider the descent direction $q\in\mathbb{R}^n$, where
\[ q_1 = - u_1;\quad q_i = u_i ,\quad \forall i\in[n]\backslash\{1\}. \]
Similar to the proof of Lemma \ref{lem:ssp}, we can calculate that
\begin{align*}
\frac14 \langle g(u), q \rangle &= {\sum}_{i,j\geq2,i\neq j}\tilde{C}_{ij} u_i u_j \left( u_iu_j  - \tilde{u}_i^*\tilde{u}_j^* \right) - \tilde{C}_{11}u_1^2 [u_1^2 - (\tilde{u}_1^*)^2]\\
&\hspace{16em} + {\sum}_{i\geq 2} \tilde{C}_{ii} u_i^2[u_i^2 - (\tilde{u}_i^*)^2]  \\
&\leq {\sum}_{i,j\geq2,i\neq j}\tilde{C}_{ij} u_i u_j \left( u_iu_j  - \tilde{u}_i^*\tilde{u}_j^* \right) + {\sum}_{i\geq 2} \tilde{C}_{ii} u_i^2[u_i^2 - (\tilde{u}_i^*)^2]  \\
&\leq {\sum}_{i,j\geq2,i\neq j}\tilde{C}_{ij} u_i u_j \left[ 1/4  - (1-\delta)^2 \right]\\
&\hspace{14em} + {\sum}_{i\geq 2} \tilde{C}_{ii} u_i^2 [1/4 - (1-\delta)^2 ] \\
&\leq {\sum}_{i,j\geq2,i\neq j}(1-\delta) \cdot (8R)^{-2} \cdot \left( 1/4  - 1/2 \right)\\
&\hspace{12em} + {\sum}_{i\geq 2} \delta \cdot (8R)^{-2} \cdot ( 1/4 - 1/2 ) < 0,
\end{align*}
which contradicts the assumption that $x$ is a stationary point. Therefore, the above analysis implies that the instance $(\tilde{C},\tilde{u}^*)$ has no stationary point in the region $\{u\in\mathbb{R}^n~|~\|u\|_2 > 8n+1\}$.

Now, We focus on the compact region $\{u\in\mathbb{R}^n~|~\|u\|_2 \leq 8n+1\}$. Since the gradient and the Hessian matrix are continuous functions of $(C,u^*)$, the $\ell_\infty$-norm of the gradient and the eigenvalues of the Hessian matrix are also continuous functions of $(C,u^*)$. Intuitively, a small perturbation to $(C,u^*)$ would not significantly change the norms of the gradient and the Hessian matrix. Thus, the strict-saddle property still holds after a small perturbation. More rigorously, let $(C^0,u^0)\in\cM$ and $\eta \in (0,\eta_0]$. In the region
\[ \mathcal{R}_\eta := \{ u\in\mathbb{R}^n~|~ \|u\|_2 \leq 8n+1, \|u - u^0\|_1 \geq \eta, \|u + u^0\|_1 \geq \eta \}, \]
at least one of the following properties holds:
\[ \|\nabla g(u;C^0,u^0)\|_\infty \geq \beta(\eta),\quad \lambda_{min}[\nabla^2 g(u;C^0,u^0)] \leq - \gamma(\eta). \]
Since $\mathcal{R}_\eta$ is a compact set and we constrain $(C,u^*)$ by $\|C\|_1 = 1$ and $\|u^*\|_1 = 1$, the functions
\[ \|\nabla g(u;C,u^*)\|_\infty \quad \text{and} \quad \lambda_{min}[\nabla^2 g(x;C,u^*)] \]
are Lipschitz continuous in $(C,u^*)$. Suppose that the Lipschitz constants are $L_g$ and $L_H$ under the weighted $\ell_1$-norm, namely
\begin{align*} 
&\left| \|\nabla g(u;C,u^*)\|_\infty - \|\nabla g(u;\tilde{C},\tilde{u}^*)\|_\infty \right|\\
&\hspace{14em} \leq L_g \left[ \alpha^*\|\tilde{C} - C\|_1 + (1-\alpha^*) \|\tilde{u}^* - u^*\|_1 \right],\\
&\left| \lambda_{min}[\nabla^2 g(u;C,u^*)] - \lambda_{min}[\nabla^2 g(u;\tilde{C},\tilde{u}^*)] \right|\\
&\hspace{14em} \leq L_H \left[ \alpha^*\|\tilde{C} - C\|_1 + (1-\alpha^*) \|\tilde{u}^* - u^*\|_1 \right],\\
&\hspace{15em} \forall x\in\mathcal{R}_\eta,~ (C,u^*)\quad \mathrm{s.t.}\quad \|C\|_1 = \|u^*\|_1 = 1. 
\end{align*}
Let 
\[ \epsilon := \min\left\{ \frac{\beta(\eta)}{2L_g}, \frac{\gamma(\eta)}{2L_H} \right\}. \]
Then, for every pair $(\tilde{C},\tilde{u}^*)$ satisfying
\[ \alpha^*\|\tilde{C} - C^0\|_1 + (1-\alpha^*) \|\tilde{u}^* - u^0\|_1 < \epsilon, \]
at least one of the following properties holds for all $x\in\mathcal{R}_\eta$:
\[ \|\nabla g(u;\tilde{C},\tilde{u}^*)\|_\infty \geq \beta(\eta)/2,\quad \lambda_{min}[\nabla^2 (u;\tilde{C},\tilde{u}^*)] \leq - \gamma(\eta)/2. \]
This implies that the strict-saddle property holds for the the perturbed instance $\cMC(\tilde{C}, \tilde{u}^*)$. Letting $\eta \rightarrow 0$, it follows that $\pm \tilde{u}^*$ are the only points satisfying the second-order necessary optimality conditions, and thus $\cMC(\tilde{C}, \tilde{u}^*)$ does not have SSCPs.
\qed\end{proof}

\subsection{Proof of Theorem \ref{thm:large-1}}
\label{adp:large-1}

The proof of Theorem \ref{thm:large-1} directly follows from the next two lemmas.
%
\begin{lemma}\label{lem:large-1}
Suppose that $(C,u^*)\in\cSD$ and that $u^0$ is a global solution to $\cMC(C,u^*)$. Then, for all $k\in[n_1]$, it holds that $u_i^0u_j^0 = u_i^*u_j^*$ for all $i,j\in\cI_{1k}$. In addition, $u^0_i = 0$ for all $i\in\cI_0(C,u^*)$.
\end{lemma}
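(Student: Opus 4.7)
The plan is to recycle the sufficiency argument used in the proof of Theorem \ref{thm:unique}, but applied separately to each connected component of $\mathbb{G}_1(C,u^*)$, since the only obstruction to global uniqueness in $\cSD$ is the disconnectedness of $\mathbb{G}_1(C,u^*)$. Throughout, fix $(C,u^*) \in \cSD$ and a global solution $u^0$ of $\cMC(C,u^*)$, so that
\[
u_i^0 u_j^0 = u_i^* u_j^*, \quad \forall \{i,j\} \in \mathbb{E}(C,u^*).
\]

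First, I would fix an arbitrary connected component $\mathbb{G}_{1k}(C,u^*)$ with node set $\cI_{1k}$. By the definition of $\cSD$, this component is non-bipartite, so it contains an odd cycle entirely within $\cI_{1k}$. Exactly as in the sufficiency part of Theorem \ref{thm:unique}, walking around this odd cycle and multiplying the relations $u_i^0 u_j^0 = u_i^* u_j^*$ with alternating exponents $\pm 1$ gives $(u_{i_0}^0)^2 = (u_{i_0}^*)^2$ for any node $i_0$ on the cycle, hence $u_{i_0}^0 = \sigma_k \, u_{i_0}^*$ for some sign $\sigma_k \in \{-1,+1\}$. Propagating this sign along edges of $\mathbb{G}_{1k}(C,u^*)$ via $u_i^0 = u_i^0 u_j^0 / u_j^0 = u_i^* u_j^* / (\sigma_k u_j^*) = \sigma_k u_i^*$ for every neighbour $i$ of $j$ in $\cI_{1k}$ (noting $u_j^* \neq 0$ for $j \in \cI_{1k} \subset \cI_1$), I obtain $u_i^0 = \sigma_k u_i^*$ for every $i \in \cI_{1k}$. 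In particular, $u_i^0 u_j^0 = \sigma_k^2 u_i^* u_j^* = u_i^* u_j^*$ for all $i,j \in \cI_{1k}$, which is the first claim. Note that the sign $\sigma_k$ may differ across different components, which is precisely what allows $u^0 (u^0)^T \neq u^*(u^*)^T$ and places $(C,u^*) \in \cD$.

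Second, for any $i \in \cI_0(C,u^*)$, the assumption $\mathcal{I}_{00}(C,u^*) = \emptyset$ guarantees some $j \in \cI_1(C,u^*)$ with $\{i,j\} \in \mathbb{E}(C,u^*)$. From the previous paragraph, $u_j^0 = \sigma_{k} u_j^* \neq 0$ where $j \in \cI_{1k}$, while $u_i^* = 0$ gives $u_i^0 u_j^0 = u_i^* u_j^* = 0$. Dividing by $u_j^0$ yields $u_i^0 = 0$, establishing the second claim.

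I do not expect any serious obstacle: every ingredient (odd-cycle sign recovery, propagation along edges, and the role of $\cI_{00} = \emptyset$) is already present in the sufficiency portion of Theorem \ref{thm:unique}. The only conceptual point worth emphasising is that the proof of Theorem \ref{thm:unique} really produced a per-component sign, and it is the \emph{connectedness} hypothesis there that collapsed all per-component signs into one global sign; dropping connectedness but retaining non-bipartiteness of each piece, together with $\cI_{00} = \emptyset$, is exactly what the definition of $\cSD$ encodes, so the argument transfers without modification.
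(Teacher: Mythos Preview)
Your proposal is correct and follows essentially the same approach as the paper's proof: both use the odd cycle in each non-bipartite component $\mathbb{G}_{1k}$ to recover $(u_{i_0}^0)^2 = (u_{i_0}^*)^2$, propagate along edges via connectivity to obtain $u_i^0 u_j^0 = u_i^* u_j^*$ on $\cI_{1k}$, and then use $\cI_{00} = \emptyset$ to force $u_i^0 = 0$ on $\cI_0$. Your explicit handling of the per-component sign $\sigma_k$ is slightly more detailed than the paper's terse version, but the argument is identical.
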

\begin{proof}
Denote $M^* := u^*(u^*)^T$. 
We first consider nodes in $\mathcal{G}_{1k}$ for some $k\in[n_1]$. Since the subgraph is not bipartite, there exists a cycle with an odd length $2\ell+1$, which we denote as
\[ \{i_1,\dots,i_{2\ell+1}\}. \]
Then, we have
\begin{align*} 
    (u^0_{i_1})^2 &= {\prod}_{s=1}^{2\ell+1} (u^0_{i_s}u^0_{i_{s+1}})^{(-1)^{s-1}} = {\prod}_{s=1}^{2\ell+1}(M^*_{i_si_{s+1}})^{(-1)^{s-1}}\\
    &= {\prod}_{s=1}^{2\ell+1} (u^*_{i_s}u^*_{i_{s+1}})^{(-1)^{s-1}} = (u_{i_1}^*)^2,
\end{align*}
which implies that the conclusion holds for $i=j=i_1$. Using the connectivity of $\mathbb{G}_{1k}(C,u^*)$, we know
\[ u^0_{i}u^0_{j} = u^*_{i}u^*_{k},\quad \forall i,j\in\mathcal{I}_{1k}(C,u^*). \]
%
Then, we consider nodes in $\mathcal{I}_0(C,u^*)$. Since $\cI_{00}(C,u^*)$ is empty, for every node $i\in\mathcal{I}_0(C,u^*)$, there exists another node $j\in\mathcal{I}_1(C,u^*)$ such that $C_{ij}>0$. Hence, we have
\[ u^0_i = {M_{ij}^*}/{u^0_j} = 0. \]
This completes the proof.
\qed\end{proof}

The following lemma provides a necessary and sufficient condition for instances with a positive definite Hessian matrix at global solutions, which is stronger than what Theorem \ref{thm:large-1} requires.
\begin{lemma}\label{lem:large-2}
Suppose that $u^0\in\mathbb{R}^n$ is a global minimizer of the instance $\cMC(C,u^*)$ such that the conditions in Lemma \ref{lem:large-1} hold. Then, the Hessian matrix is positive definite at $u^0$ if and only if
\begin{enumerate}
	\item $\mathbb{G}_{1i}(C,u^*)$ is \revise{not bipartite} for all $i\in[n_1]$;
	\item $\mathcal{I}_{00}(C,u^*) = \emptyset$.
\end{enumerate}
\end{lemma}
\begin{proof}
We first construct counterexamples for the necessity part and then prove the positive definiteness of the Hessian matrix for the sufficiency part.

\paragraph{Necessity.} 
We construct counterexamples by discussing two different cases.
\paragraph{Case I.} We first consider the case when there exists $k\in[n_1]$ such that $\mathbb{G}_{1k}(C,u^*)$ is bipartite. Suppose that $\mathbb{G}_{1i}(C,u^*) = \mathbb{G}_{1k1} \cup \mathbb{G}_{1k2}$ is a partition of $\mathbb{G}_{1k}(C,u^*)$. Let the sets $\mathcal{I}_{1k},\mathcal{I}_{1k1}$ and $\mathcal{I}_{1k2}$ be the node sets of the corresponding graphs. Define $q\in\mathbb{R}^n$ as
\[ q_i := u^0_i ,\quad \forall i\in\mathcal{I}_{1k1};\quad q_i := -u^0_i,\quad \forall i\in\mathcal{I}_{1k2};\quad q_i := 0,\quad \forall i\notin \mathcal{I}_{1k}. \]
Then, the curvature of the Hessian along the direction $q$ is
\begin{align*}
&\frac14 [\nabla^2 g(u^0;C,u^*)](q,q)\\
=& \sum_{i\in \mathcal{I}_{1k1},j \in \mathcal{I}_{1k2}} C_{ij}\left[(u_i^0)^2q_j^2 + (u_j^0)^2q_i^2 \right] + 2\sum_{i\in \mathcal{I}_{1k1},j \in \mathcal{I}_{1k2}} C_{ij}(2u_i^0u_j^0 - u_i^*u_j^*) q_iq_j\\
=& \sum_{i\in \mathcal{I}_{1k1},j \in \mathcal{I}_{1k2}} C_{ij}\left[(u_i^0)^2q_j^2 + (u_j^0)^2q_i^2 \right] + 2\sum_{i\in \mathcal{I}_{1k1},j \in \mathcal{I}_{1k2}} C_{ij}u_i^0u_j^0q_iq_j\\
=& \sum_{i\in \mathcal{I}_{1k1},j \in \mathcal{I}_{1k2}} 2C_{ij}(u_i^0u_j^0)^2 - 2\sum_{i\in \mathcal{I}_{1k1},j \in \mathcal{I}_{1k2}} C_{ij}(u_i^0u_j^0)^2 = 0.
\end{align*}
We note that there is no self-loop in $\mathcal{G}_{1k}(C,u^*)$ and, thus, the diagonal entries of the weight matrix are equal to $0$. Therefore, the Hessian matrix has a zero curvature along $q$ and is not positive definite.

\paragraph{Case II.} We consider the case when $\mathcal{I}_{00}(C,u^*) \neq \emptyset$. Suppose that $k\in\mathcal{I}_{00}(C,u^*)$. Define the vector $q\in\mathbb{R}^n$ as
\[ q_k := 1;\quad q_i := 0,\quad \forall i\neq k. \]
The curvature of the Hessian along the direction $q$ is
\begin{align*}
\frac14 [\nabla^2 g(u^0;C,u^*)](q,q) &= C_{kk}\left[2(u^0_k)^2 - (u_k^*)^2 \right] q_k^2 + \sum_{j\in\cI_0(C,u^*), j\neq k} C_{kj} (u^0_{j})^2q_k^2\\
&= C_{kk}(u^0_k)^2 + \sum_{j\in\cI_0(C,u^*), j\neq k} C_{kj} (u^0_{j})^2 = 0.
\end{align*}
Therefore, the Hessian matrix is not positive-definite at $u^0$.

\paragraph{Sufficiency.} Next, we consider the sufficiency part, namely, we prove that the Hessian matrix is positive definite under the two conditions stated in the theorem. Suppose that there exists a nonzero vector $q\in\mathbb{R}^n$ such that
\[ [\nabla^2 g(u^0;C,u^*)](q,q) = 0. \]
Then, after straightforward calculations, we arrive at
\begin{align*}
u^0_i q_j + u^0_j q_i = &0 ,\quad \forall i,j \quad \st\quad C_{ij} > 0,~ i\neq j;\\
[2(u^0_i)^2 - (u^*_i)^2]q_i^2 = (u^0_iq_i)^2 = &0 ,\quad \forall i \quad \mathrm{s.t.}\quad C_{ii} > 0.
\end{align*}
The two conditions can be written compactly as
\begin{align}\label{eqn:gen-3}
u^0_i q_j + u^0_j q_i = &0 ,\quad \forall i,j \quad \mathrm{s.t.}\quad C_{ij} > 0.
\end{align}
Consider the index set $\mathcal{I}_{1k}(C,u^*)$ for some $k\in[n_1]$. The equality \eqref{eqn:gen-3} implies that
\begin{align}\label{eqn:gen-4} 
{q_i}/{u^0_i} + {q_j}/{u^0_j} = 0,\quad \forall i,j \in \mathcal{I}_{1k}(C,u^*).
\end{align}
Since the graph $\mathbb{G}_{1k}(C,u^*)$ is \revise{not bipartite}, there exists a cycle with an odd length $2\ell+1$, which we denote as
\[ \{i_1,i_2,\dots,i_{2\ell+1}\}. \]
Denoting $i_{2\ell+2} := i_1$, we can calculate that
\begin{align*}
2\frac{q_{i_1}}{u^0_{i_1}} = \sum_{s=1}^{2\ell+1} (-1)^{s-1} \left(\frac{q_{i_s}}{u^0_{i_s}} + \frac{q_{i_{s+1}}}{u^0_{i_{s+1}}}\right) = 0,
\end{align*}
which leads to $q_{i_1} = 0$. Using the connectivity of $\mathcal{G}_{1k}$ and the relation \eqref{eqn:gen-4}, it follows that
\[ q_{i} = 0,\quad \forall i\in\cI_{1k}(C,u^*). \]
Moreover, the same conclusion holds for all $k\in[n_1]$ and, thus, we conclude that
\[ q_{i} = 0,\quad \forall i\in\mathcal{I}_1(C,u^*). \]
Since $\mathcal{I}_{00}(C,u^*) = \emptyset$, for every node $i\in\mathcal{I}_0(C,u^*)$, there exists another node $j\in\mathcal{I}_1(C,u^*)$ such that $C_{ij} > 0$. Considering the relation \eqref{eqn:gen-4}, we obtain that
\[ q_j = -{u^0_jq_i}/{u^0_i} = 0. \]
In summary, we have proved that $q_i = 0$ for all $i\in[n]$, which contradicts the assumption that $q\neq 0$. Hence, the Hessian matrix at $u^0$ is positive definite.
\qed\end{proof}

\subsection{Application of the implicit function theorem}
\label{apd:implicit}

Using the positive-definiteness of the Hessian matrix, we are able to apply the implicit function theorem to certify the existence of spurious local minima.
\begin{lemma}\label{lem:local}
Suppose that $\alpha\in[0,1]$ and consider a pair $(C,u^*)\in\cSD$. Then, there exists a small constant $\delta(C,x^*) > 0$ such that for every instance $\cMC(\tilde{C},\tilde{u}^*)$ satisfying
\[ \alpha\|\tilde{C} - C\|_1 + (1-\alpha)\|\tilde{u}^* - u^*\|_1 < \delta(C,u^*), \]
the instance $\cMC(\tilde{C},\tilde{u}^*)$ has spurious local minima.
\end{lemma}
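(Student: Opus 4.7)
The plan is to exploit the fact that any $(C,u^*)\in\cSD$ is, by construction, a point in $\cD$, so there exists a spurious global solution $u\in\R^n$ of $\cMC(C,u^*)$ with $uu^T\neq u^*(u^*)^T$. This $u$ is in particular a local minimum of $g(\cdot;C,u^*)$ (it attains the value $0$, which is the global minimum of the nonnegative objective). The essential structural input supplied by $\cSD$ is the positive-definiteness of the Hessian at $u$, which we get for free from Theorem \ref{thm:large-1}. Armed with a nondegenerate local minimum, the natural tool to propagate it under perturbations is the implicit function theorem applied to the stationarity equation $\nabla g(v;\tilde C,\tilde u^*)=0$.

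Concretely, I would first observe that the map $(v,C,u^*)\mapsto \nabla g(v;C,u^*)$ is a polynomial in all of its arguments, hence $C^\infty$. At the base point $(u,C,u^*)$ we have $\nabla g(u;C,u^*)=0$ and, by Theorem \ref{thm:large-1}, the Jacobian in $v$, which is precisely $\nabla^2 g(u;C,u^*)$, is positive definite and therefore invertible. The implicit function theorem then yields an open neighborhood $U\subset\R^{n\times n}\times\R^n$ of $(C,u^*)$ and a $C^\infty$ map $(\tilde C,\tilde u^*)\mapsto \tilde u(\tilde C,\tilde u^*)$ with $\tilde u(C,u^*)=u$ and $\nabla g(\tilde u;\tilde C,\tilde u^*)=0$ throughout $U$. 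By continuity of eigenvalues of symmetric matrices, after shrinking $U$ if necessary, $\nabla^2 g(\tilde u;\tilde C,\tilde u^*)$ remains positive definite, so $\tilde u$ is a strict local minimum of $\cMC(\tilde C,\tilde u^*)$.

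It remains to certify that $\tilde u$ is \emph{spurious}, i.e.\ $\tilde u\tilde u^T\neq \tilde u^*(\tilde u^*)^T$. Since $uu^T\neq u^*(u^*)^T$, the quantity $\kappa:=\|uu^T-u^*(u^*)^T\|_F$ is strictly positive. Both $(\tilde C,\tilde u^*)\mapsto \tilde u(\tilde C,\tilde u^*)\tilde u(\tilde C,\tilde u^*)^T$ and $(\tilde C,\tilde u^*)\mapsto \tilde u^*(\tilde u^*)^T$ are continuous, so by shrinking $U$ once more we can guarantee $\|\tilde u\tilde u^T-\tilde u^*(\tilde u^*)^T\|_F>\kappa/2>0$ throughout $U$. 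Finally, any $(\tilde C,\tilde u^*)$ with $\alpha\|\tilde C-C\|_1+(1-\alpha)\|\tilde u^*-u^*\|_1$ smaller than some $\delta(C,u^*)>0$ will lie inside $U$; we take $\delta(C,u^*)$ to be such a value.

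The main obstacle is not analytic but definitional: if a perturbation of $\cMC(\tilde C,\tilde u^*)$ is restricted to $\Sn_{+,1}\times\Snn_1$, one might worry the implicit-function-theorem neighborhood is too generous. This is a non-issue because the $\ell_1$-ball $\{(\tilde C,\tilde u^*):\alpha\|\tilde C-C\|_1+(1-\alpha)\|\tilde u^*-u^*\|_1<\delta\}$ intersected with the normalized set is still an open neighborhood of $(C,u^*)$ within that set, and smoothness of $\nabla g$ in $(v,C,u^*)$ holds without any norm restriction; so we may apply the implicit function theorem on the full ambient space and then restrict. A second mild technical point is that the ``local minimum'' produced is only guaranteed to be a critical point with positive-definite Hessian, which is standard sufficient second-order optimality, so this yields a genuine strict local minimum as required.
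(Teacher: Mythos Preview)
Your proposal is correct and follows essentially the same approach as the paper: invoke Theorem~\ref{thm:large-1} to obtain a spurious global minimizer with positive-definite Hessian, apply the implicit function theorem to the stationarity equation $\nabla g=0$, and then use continuity of the Hessian and of the map $(\tilde C,\tilde u^*)\mapsto \tilde u\tilde u^{T}-\tilde u^{*}(\tilde u^{*})^{T}$ to retain both strict local optimality and spuriousness after shrinking the neighborhood. Your extra remarks about restricting to the normalized parameter set and about second-order sufficiency are sound but not needed for the argument as stated.
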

\begin{proof}
By Theorem \ref{thm:large-1}, there exists a global solution $u^0$ to the instance $\cMC(C,u^*)$ such that
\[ u^0(u^0)^T \neq u^*(u^*)^T,\quad \nabla^2 g(u^0;C,u^*) \succ 0. \]
Consider the system of equations:
\begin{align*}
    \nabla g(u;C,u^*) = 0. 
\end{align*}
%
Since the Jacobi matrix of $\nabla g(u;C,u^*)$ with respect to $u$ is the Hessian matrix $\nabla^2 g(u;C,u^*)$ and $(u^0, C, u^*)$ is a solution, the implicit function theorem guarantees that there exists a small constant $\delta(C,u^*)>0$ such that in the neighborhood
\begin{align*} 
    \cN := \left\{ (\tilde{C}, \tilde{u}^*) ~\big|~  \alpha\|\tilde{C} - C\|_1 + (1-\alpha)\|\tilde{u}^* - u^*\|_1 < \delta(C,u^*) \right\}, 
\end{align*}
there exists a function $u(\tilde{C}, \tilde{u}^*):\cN\mapsto \Rn$ such that
\begin{enumerate}
	\item $u(C,u^*) = u^0$;
	\item $u(\cdot,\cdot)$ is a continuous function in $\cN$;
	\item $\nabla g[u(\tilde{C}, \tilde{u}^*); \tilde{C}, \tilde{u}^*] = 0$.
\end{enumerate}
%
Using the continuity of the Hessian matrix and $u(\cdot,\cdot)$, we can choose $\delta(C,u^*)$ to be small enough such that
\[ u(\tilde{C},\tilde{u}^*)[u(\tilde{C},\tilde{u}^*)]^T \neq \tilde{u}^*(\tilde{u}^*)^T,\quad \nabla^2 g\left[u(\tilde{C}, \tilde{u}^*); \tilde{C}, \tilde{u}^*\right] \succ 0,\quad \forall (\tilde{C},\tilde{u}^*) \in \cN. \]
Therefore, the point $u(\tilde{C}, \tilde{u}^*)$ is a spurious local minimum of the instance $\cMC(\tilde{C}, \tilde{u}^*)$.
\qed\end{proof}

\section{Analysis for the asymmetric case}
\label{sec:asym}

In this section, we extend the analysis of the \textit{symmetric} weighted matrix completion problem \eqref{eqn:gmc} to the \textit{asymmetric} weighted matrix completion problem, which is defined as
\begin{align}
\label{eqn:gmc-asym}
    \min_{u\in\R^m, v\in\R^n} {\sum}_{i\in[m], j\in[n]} C_{ij}(u_iv_j - M^*_{ij})^2,
\end{align}
where $M^*\in\R^{m\times n}$ is the ground truth matrix and $C\in\R^{m\times n}$ is the weight matrix. We note that in the asymmetric case, we do not assume that the weight matrix $C$ is symmetric. Similar to the symmetric case, we assume that $M^* = u^*(v^*)^T$ has rank-$1$, where $u^*\in\R^m$ and $v^*\in\R^n$. We denote each instance of problem \eqref{eqn:gmc-asym} as $\cMC(C, u^*, v^*)$, where $C$ is the weight matrix and $u^*(v^*)^T$ is the ground truth matrix. Moreover, since the degenerate instances where $C=0$ or $M^* = 0$ can be easily analyzed separately, we utilize the ``scale-free'' property of problem \eqref{eqn:gmc-asym} and extend the normalization assumption (Assumption \ref{asp:norm}) to the asymmetric case:
\begin{assumption}
Assume that $C\in\Smn_{+,1}$, $u^*\in\Smm_1$ and $v^*\in\Snn_1$, i.e., $\|C\|_1 = \|u^*\|_1 = \|v^*\|_1 = 1$.
\end{assumption}
Define the objective function of problem \eqref{eqn:gmc-asym} as
\[ h(u,v; C, u^*, v^*) := {\sum}_{i\in[m], j\in[n]} C_{ij}(u_iv_j - u^*_iv^*_j)^2. \]
Then, the set of degenerate instances is defined as
\begin{align*}
    \cD_{asym} := \{ &(C, u^*, v^*) ~|~ C\in\Sn_{+,1},u^*\in\Snn_1,v^*\in\Smm_1,\\
    &\exists u\in\R^{m},~v\in\R^n\quad \st\quad h(u, v; C,u^*,v^*) = 0,~ uv^T \neq u^*(v^*)^T \}.
\end{align*}
Using graphical notations, we can establish an exact characterization for the set $\cD_{asym}$. The weighted graph $\mathbb{G}(C,u^*,v^*)=[\mathbb{V}(C,u^*,v^*),\mathbb{E}(C,u^*,v^*),\mathbb{W}(C,u^*,v^*)]$ is defined by
\begin{align*}
    &\mathbb{V}(C,u^*,v^*) := [m+n],~\\
    &\mathbb{E}(C,u^*,v^*) := \left\{ \{i,j+m\}~|~ C_{ij} > 0, i\in[m],j\in[n] \right\},\\
    &[\mathbb{W}(C,u^*,v^*)]_{i,j+m} := C_{ij},\quad \forall i\in[m],j\in[n]\quad\st~\{i,j+m\}\in\mathbb{E}(C,u^*,v^*).
\end{align*}
%
To include the information of $u^*$ and $v^*$, we define
\begin{align*} 
\mathcal{I}_1^u(C,u^*,v^*) &:= \{i\in[m] ~|~ u_{i}^* \neq 0\},\quad \mathcal{I}_0^u(C,u^*,v^*) := [m]\backslash \mathcal{I}_1^u(C,u^*,v^*),\\
\mathcal{I}_1^v(C,u^*,v^*) &:= \{j+m ~|~ j\in[m], v_{j}^* \neq 0\},\\
\mathcal{I}_0^v(C,u^*,v^*) &:= \{m+1,\dots,m+n\}\backslash \mathcal{I}_1^v(C,u^*,v^*),\\
\mathcal{I}^u_{00}(C,u^*,v^*) &\\
&\hspace{-3em}:= \{ i\in\mathcal{I}_0^u(C,u^*,v^*) ~|~ \{ i,j+m \}\notin\mathbb{E}(C,u^*,v^*) ,~ \forall j\in\mathcal{I}^v_1(C,u^*,v^*) \},\\
\mathcal{I}^v_{00}(C,u^*,v^*) &\\
&\hspace{-4em}:= \{ j+m\in\mathcal{I}_0^v(C,u^*,v^*) ~|~ \{ i,j+m \}\notin\mathbb{E}(C,u^*,v^*) ,~ \forall i\in\mathcal{I}^u_1(C,u^*,v^*) \}.
\end{align*}
The sub-graph $\mathbb{G}_1(C,u^*,v^*)$ is induced by $\cI_1^u(C,u^*,v^*)\cup\cI_1^v(C,u^*,v^*)$. The following theorem provides necessary and sufficient conditions for instances in $\cD_{asym}$ and $\overline{\cD}_{asym}$.
\begin{theorem}\label{thm:unique-asym}
Given $C\in\Smn_{+,1}$, $u^*\in\Smm_1$ and $v^*\in\Snn_1$, it holds that $(C,u^*,v^*)$ does not belong to $\cD_{asym}$ if and only if
\begin{enumerate}
	\item $\mathbb{G}_1(C,u^*,v^*)$ is connected;
	\item $\mathcal{I}^u_{00}(C,u^*,v^*) = \emptyset$ and $\mathcal{I}^v_{00}(C,u^*,v^*) = \emptyset$.
\end{enumerate}
Moreover, the following relation holds:
\begin{align*} 
&\overline{\cD}_{asym} \\
=& \{ (C,u^*,v^*) ~|~C\in\Smn_{+,1},u^*\in\Smm_1,v^*\in\Snn_1, \mathbb{G}_1(C,u^*,v^*) \text{ is disconnected} \}\\
\cup& \{ (C,u^*,v^*) ~|~ C\in\Smn_{+,1},u^*\in\Smm_1,v^*\in\Snn_1, \cI_{00}^u(C,u^*,v^*) \text{ is not empty} \}\\
\cup& \{ (C,u^*,v^*) ~|~ C\in\Smn_{+,1},u^*\in\Smm_1,v^*\in\Snn_1, \cI_{00}^v(C,u^*,v^*) \text{ is not empty} \}.
\end{align*}
\end{theorem}
The proof of Theorem \ref{thm:unique-asym} is based on a slight modification of the proof of Theorem \ref{thm:unique} and therefore, we omit the proof here. Similarly, the proofs of all subsequent theorems in this section follow directly from those of the symmetric case and are omitted for brevity. The new complexity metric for the asymmetric problem is given by
\begin{align*}
    &\mathbb{D}_\alpha^{asym}(C,u^*,v^*)\\
    \nonumber:= &\left[ \inf_{(\tilde{C},\tilde{u}^*,\tilde{v}^*)\in\cD_{asym}} \alpha \|C - \tilde{C}\|_1 + (1-\alpha) (\|u^* - \tilde{u}^*\|_1 + \|v^* - \tilde{v}^*\|_1) \right]^{-1}\\
    \nonumber= &\left[ \min_{(\tilde{C},\tilde{u}^*,\tilde{v}^*)\in\overline{\cD}_{asym}} \alpha \|C - \tilde{C}\|_1 + (1-\alpha) (\|u^* - \tilde{u}^*\|_1 + \|v^* - \tilde{v}^*\|_1) \right]^{-1}.
\end{align*}

\subsection{Connection to existing results}

Now, we derive upper bounds on the complexity metric under several different existing conditions. We first develop an upper bound on the complexity metric under the RIP condition, which is stated in the following theorem.
\begin{theorem}\label{thm:rip-1-asym}
Suppose that $\delta\in[0,1)$ is a constant and the instance $\cMC(C,u^*,v^*)$ satisfies the $\delta$-RIP$_{2,2}$ condition. Then, it holds that
\[ \mathbb{D}_\alpha^{asym}(C,u^*,v^*) \leq \frac{mn(1+\delta) - 2\delta}{2\alpha(1-\delta)}. \]
The maximum complexity is attained by the instance $\cMC(C^\delta,u^\delta,v^\delta)$, where
\begin{align*} 
C_{11}^\delta &= \frac{1-\delta}{(1+\delta)mn - 2\delta};\quad C_{ij}^\delta = \frac{1+\delta}{(1+\delta)mn - 2\delta},~\forall (i,j)\in[m]\times[n]\backslash\{(1,1)\};\\
u_1^\delta &= 1;\quad u_i^\delta = 0,\quad \forall i\geq 2,\quad v_1^\delta = 1;\quad v_j^\delta = 0,\quad \forall j\geq 2.  
\end{align*}
\end{theorem}
We note that the upper bound in Theorem \ref{thm:rip-1-asym} is $O(\min\{m,n\})$ larger than the smallest possible complexity, which is $O(\max\{m,n\})$. Following the same path as in the symmetric case, we improve the upper bound using the incoherence information. We first give the definition of the incoherence in the asymmetric case.
\begin{definition}[\cite{jain2013low}]
Given constants $\mu_1\in[1,m]$ and $\mu_2\in[1,n]$, the ground truth matrix $M^*\in\R^{m\times n}$ is said to be \textbf{$(\mu_1,\mu_2)$-incoherent} if
\begin{align*}
\| (e_i^m)^T U^* \|_F \leq \sqrt{ {\mu_1 r}/{m} },\quad \forall i\in[m],\quad \| (e_j^n)^T V^* \|_F \leq \sqrt{ {\mu_2 r}/{n} },\quad \forall j\in[n], 
\end{align*}
where $U^*\Sigma^* (V^*)^T$ is the truncated SVD of $M^*$, $e_i^m$ is the $i$-th standard basis of $\R^m$ and $e_j^n$ is the $j$-th standard basis of $\Rn$. Moreover, the ground truth matrix $M^*\in\R^{m\times n}$ is said to be \textbf{$\mu$-incoherent} if it is $(\mu_1,\mu_2)$-incoherent with some $\mu_1,\mu_2 \leq \mu$.
\end{definition}
As a counterpart of Theorem \ref{thm:rip-2}, the upper bound can be improved to $O[\mu\max\{m, n\}]$.  
\begin{theorem}\label{thm:rip-2-asym}
Suppose that the instance $\cMC(C,u^*,v^*)$ satisfies the $\delta$-RIP$_{2,2}$ condition and $u^*(v^*)^T$ is $(\mu_1,\mu_2)$-incoherent. Then, it holds that
\begin{align*} 
&\mathbb{D}_\alpha^{asym}(C,u^*,v^*) \leq \max\left\{\frac{\max\{\rho_1,\rho_2\}mn(1+\delta)}{4\alpha(1-\delta)}, \frac{1}{2(1-\alpha)} \right\}\\
&\hspace{16em} \times \min\left\{ \left(1 - \max\{\rho_1,\rho_2\}\right)^{-1}, 3 \right\},
\end{align*}
where $\rho_1 := \mu_1 / m$ and $\rho_2 := \mu_2 / n$. Moreover, suppose that the instance $\cMC(C,u^*,v^*)$ satisfies the $\delta$-RIP$_{2,2}$ condition and $u^*(v^*)^T$ is  $\mu$-incoherent. Then, it holds that
\begin{align*} 
&\mathbb{D}_\alpha^{asym}(C,u^*,v^*) \leq \max\left\{\frac{\max\{m,n\}(1+\delta)}{4\alpha(1-\delta)}, \frac{1}{2(1-\alpha)\mu} \right\}\\
&\hspace{16em} \times \min\left\{ \left(\frac{1}{\mu} - \frac{1}{\min\{m, n\}}\right)_+^{-1}, 3\mu \right\},
\end{align*}
where we define $x_+ := \max\{x,0\}$ and $1 / 0 = +\infty$.
\end{theorem}
If we choose $1-\alpha = \Theta (n^{-1})$, the complexity can be upper-bounded by
\[ \mathbb{D}^{asym}_\alpha(C,u^*,v^*) = O\left( \mu \max\{m,n\} \cdot  \frac{1+\delta}{1-\delta} \right). \]
In the case when $1-\delta=\Theta(1)$ and $\mu = O(1)$, the upper bound is on the same order (i.e., $O(\max\{m,n\})$) as the minimum possible complexity.

Next, we consider the case when components of $M^*$ are observed under the Bernoulli model with parameter $p$.
\begin{theorem}\label{thm:incoh-asym}
Given $\mu\in[1,n]$ and $p\in(0,1]$, suppose that the weight matrix $C$ obeys the Bernoulli model with the parameter $p$ and that $u^*$ has incoherence $\mu$. If $\eta>2$ is a constant and the sampling rate satisfies
\[ p \geq  \min\left\{ 1,  \frac{(m+n)[16(1+\eta\mu)\log(mn) + 16]}{mn} \right\}, \]
then it holds with probability at least $1 - O[(mn)^{-\eta/2 + 1}]$ that
\begin{align*} 
&\mathbb{D}^{asym}_\alpha(C,u^*,v^*) \leq \max \left\{ \frac{3\max\{m,n\}}{4\alpha}, \frac{1}{2(1-\alpha)\mu} \right\}\\
&\hspace{16em} \times \min\left\{ \left(\frac{1}{\mu} - \frac{1}{\min\{m, n\}}\right)_+^{-1}, 3\mu \right\}.
\end{align*}
%
\end{theorem}
In the case when $1-\alpha = \Theta(n^{-1})$ and $\mu = O(1)$, the upper bound is on the order of $O(\max\{m,n\})$, which is also the same as the minimum possible complexity.

\subsection{Theoretical results}

Now, we extend the theoretical results in Section \ref{sec:theory} to the asymmetric case. We first prove that if the complexity metric is on the order of $O(\max\{m,n\})$, there does not exist spurious second-order critical point. This result is established in the case when we choose $\alpha = \alpha^*_{asym}$, where $\alpha^*_{asym}$ is the minimizer of the minimum possible complexity metric:
\[ \mathbb{D}_\alpha^{min, asym} := \min_{C\in\Smn_{+,1}, u^*\in\Smm_1, v^*\in\Snn_1 }~ \mathbb{D}^{asym}_\alpha(C,u^*,v^*). \]
The following theorem provides a characterization of the complexity metric when $\alpha=\alpha^*_{asym}$.
\begin{theorem}
It holds that
\[ \alpha^*_{asym} = 1 - \frac{1}{\max\{m,n\} +1},\quad \mathbb{D}_{\alpha^*_{asym}}^{min, asym} = \frac{\max\{m,n\}}{2\alpha^*_{asym}}. \]
Moreover, the complexity metric $\mathbb{D}_{\alpha^*_{asym}}^{asym}(C,u^*,v^*)$ is equal to $\mathbb{D}_{\alpha^*_{asym}}^{min, asym}$ if and only if
\begin{align*}
    C_{ij} = \frac{1}{mn},\quad \forall i\in[m],~j\in[n],\quad u_i^* = \frac1m,\quad\forall i\in[m],\quad v_j^* = \frac1n,\quad \forall j\in[n].
\end{align*}
\end{theorem}
The next theorem states that the optimization landscape is benign when the complexity is close to $\mathbb{D}_{\alpha^*_{asym}}^{min, asym}$.
\begin{theorem}
Suppose that $\alpha = \alpha^*_{asym}$. Then, there exists a constant $\delta > 1/2$ such that for every instance $\cMC(C,u^*,v^*)$ satisfying
\[ \mathbb{D}^{asym}_{\alpha^*_{asym}}(C,u^*,v^*) \leq {\delta \max\{m,n\}}/{\alpha^*_{asym}}, \]
the instance $\cMC(C,u^*,v^*)$ does not have any SSCPs.
\end{theorem}

Next, we consider instances with a large complexity. We note that the landscape of problem \eqref{eqn:gmc-asym} is ``scale-invariant''. Namely, if $(u,v)$ is a stationary point of problem \eqref{eqn:gmc-asym}, the scaled point $(c_1 u, c_1^{-1} v)$ is also a stationary point of problem \eqref{eqn:gmc-asym} for all constants $c_1 \neq 0$. To deal with this problem, consider a regularized version of problem \eqref{eqn:gmc-asym}:
\begin{align}
\label{eqn:gmc-asym-reg}
    \min_{u\in\R^m, v\in\R^n} {\sum}_{i\in[m], j\in[n]} C_{ij}(u_iv_j - M^*_{ij})^2 + \lambda (u^Tu - v^Tv)^2,
\end{align}
where $\lambda > 0$ is the regularization parameter. We denote instances of problem \eqref{eqn:gmc-asym-reg} as $\cMC_{reg}(C,u^*,v^*)$. It is proved in \cite{zhu2018global} that problems \eqref{eqn:gmc-asym} and \eqref{eqn:gmc-asym-reg} are equivalent in the sense that they have the same set of local minima under scaling; see \cite{zhang2021general} for a more detailed discussion. We note that adding the regularizer to problem \eqref{eqn:gmc-asym} will not affect the existence of SSCPs, and we consider problem \eqref{eqn:gmc-asym-reg} since it is desirable to construct degenerate instances with a positive definite Hessian matrix at global minima. Similar to the symmetric case, we define the following subset of $\cD_{asym}$:
\begin{align*}
    \cSD_{asym} := \{ (C,u^*,v^*)\in\cD_{asym} ~|~ &\mathbb{G}_1(C,u^*,v^*)\text{ is disconnected},\\
    &\mathcal{I}_{00}^u(C,u^*,v^*) = \mathcal{I}_{00}^v(C,u^*,v^*) = \emptyset\}.
\end{align*}
The following theorem proves that the Hessian matrix is positive definite at global solutions for instances in $\cSD_{asym}$.
\begin{theorem}\label{thm:large-1-asym}
Suppose that $(C,u^*,v^*)\in\cSD_{asym}$. Then, the Hessian matrix of the objective function of problem \eqref{eqn:gmc-asym-reg} is positive definite at all global solutions of the instance $\cMC(C,u^*,v^*)$.
\end{theorem}
The next step is to consider a closed subset of $\cSD_{asym}$, which is defined as
\begin{align*} 
    \mathcal{SD}_{asym,\epsilon} := \big\{ (C,u^*,v^*) \in\mathcal{SD}_{asym} ~|~  &C_{ij} \in \{0\} \cup [\epsilon,1],\quad \forall i\in[m],~ j\in[n],\\
    &\hspace{-10em} |u_i^*| \in \{0\} \cup [\epsilon,1],\quad \forall i\in[m],\quad |v_j^*| \in \{0\} \cup [\epsilon,1],\quad \forall j\in[n] \big\}.
\end{align*}
Define the alternative complexity metric as
\begin{align*}
    &\mathbb{D}_{\alpha,\epsilon}^{asym}(C,u^*,v^*)\\
    := &\left[ \min_{(\tilde{C},\tilde{u}^*,\tilde{v}^*)\in\cSD_{asym,\epsilon}} \alpha \|C - \tilde{C}\|_1 + (1-\alpha)(\|u^* - \tilde{u}^*\|_1 + \|v^* - \tilde{v}^*\|_1) \right]^{-1}.
\end{align*}
The new metric $\mathbb{D}_{\alpha,\epsilon}^{asym}$ is a lower bound on the original metric $\mathbb{D}_{\alpha}^{asym}$. The following theorem provides a sufficient condition on the existence of spurious local minima for problems \eqref{eqn:gmc-asym} and \eqref{eqn:gmc-asym-reg}.
\begin{theorem}\label{thm:global-asym}
Suppose that $\epsilon > 0 $ is a constant. Then, there exists a large constant $\Delta(\epsilon)>0$ such that for every instance $\cMC(C,u^*,v^*)$ satisfying
\[ \mathbb{D}_{\alpha,\epsilon}^{asym}(C,u^*,v^*) \geq \Delta(\epsilon), \]
both instances $\cMC(C,u^*,v^*)$ and $\cMC_{reg}(C,u^*,v^*)$ have spurious local minima.
\end{theorem}

\end{appendix}

\begin{acknowledgements}
This work was supported by grants from ARO, AFOSR, ONR and NSF.
\end{acknowledgements}

%
%

\end{document}